\mathchardef\varSigma="0106  
\font\medit=ptmri at 11pt
\font\smallit=ptmri at 10pt
\font\medsmallit=ptmri at 9pt
\font\smallmedbf=ptmbo at 11pt
\begin{document}
\theoremstyle{plain} 
\newtheorem{theorem}{\bf Theorem}[section]
\newtheorem{lemma}[theorem]{\bf Lemma}
\newtheorem{corollary}[theorem]{\bf Corollary}
\newtheorem{proposition}[theorem]{\bf Proposition}
\theoremstyle{definition} 
\newtheorem{definition}[theorem]{\sc Definition}
\newtheorem{remark}[theorem]{\smallmedbf Remark}
\newtheorem{example}[theorem]{\smallmedbf Example}
%
\renewcommand{\proofname}{\smallmedbf Proof.}
\renewcommand{\theequation}{\thesection.\arabic{equation}}

\mathchardef\varSigma="0106
\mathchardef\varGamma="0100 
\mathchardef\varTheta="0102 
\mathchardef\varLambda="0103 
\mathchardef\varXi="0104 
\mathchardef\varPi="0105 
\mathchardef\varSigma="0106 
\mathchardef\varPhi="0108 
\mathchardef\varPsi="0109 
\mathchardef\varOmega="010A
\def\af{af\-fine}
\def\afs{af\-fine space}
\def\dr{\hbox{D\'\i az\hh-}\hskip0ptRa\-mos}
\def\fmf{\hbox{four\hh-}\hskip0ptman\-i\-fold}
\def\gr{Gar\-\hbox{c\'\i a\hs-}\hskip0ptR\'\i o}
\def\jos{Jor\-dan-Osser\-man}
\def\itjos{Jor\-\hbox{dan\hs-}\hskip0ptOsser\-man}
\def\ku{Ku\-pe\-li}
\def\nw{\hbox{non\hh-\hskip-1.5pt}\hskip0ptWalk\-er}
\def\itnw{\hbox{non\hh-\hskip-1.8pt}\hskip0ptWalk\-er}
\def\pse{pseu\-\hbox{do\hs-}\hskip0ptEuclid\-e\-an}
\def\psr{pseu\-\hbox{do\hs-\hn}\hskip0ptRiem\-ann\-i\-an}
\def\tvs{translation vector space}
\def\va{V\nh\'az\-que\hbox{z\hh-}\hskip0ptAbal}
\def\vl{\hbox{V\nh\'az}\-\hbox{quez\hh-}\hskip0ptLo\-ren\-zo}
\def\vs{vector space}
\def\dpi{\hbox{$(\hskip-.6ptd\pi\hskip-1pt)$}} 
\def\bbC{{\mathchoice {\setbox0=\hbox{$\displaystyle\mathrm{C}$}\hbox{\hbox 
to0pt{\kern0.4\wd0\vrule height0.9\ht0\hss}\box0}} 
{\setbox0=\hbox{$\textstyle\mathrm{C}$}\hbox{\hbox 
to0pt{\kern0.4\wd0\vrule height0.9\ht0\hss}\box0}} 
{\setbox0=\hbox{$\scriptstyle\mathrm{C}$}\hbox{\hbox 
to0pt{\kern0.4\wd0\vrule height0.9\ht0\hss}\box0}} 
{\setbox0=\hbox{$\scriptscriptstyle\mathrm{C}$}\hbox{\hbox 
to0pt{\kern0.4\wd0\vrule height0.9\ht0\hss}\box0}}}} 
\def\lie{\bbL}
\def\bbL{\mathrm{I\!L}}
\def\bbP{\mathrm{I\!P}} 
\def\bbR{\mathrm{I\!R}} 
\def\rto{\bbR\hskip-.5pt^2} 
\def\rtr{\bbR\hskip-.7pt^3} 
\def\rn{{\bbR}^{\hskip-.6ptn}} 
\def\rk{{\bbR}^{\hskip-.6ptk}} 
\def\tw{{\hskip.7pt\widetilde{\hskip-.7ptw\hskip-.4pt}\hskip.4pt}}
\def\bbT{{\mathchoice {\setbox0=\hbox{$\displaystyle\rm
T$}\hbox{\hbox to0pt{\kern0.3\wd0\vrule height0.9\ht0\hss}\box0}}
{\setbox0=\hbox{$\textstyle\mathrm{T}$}\hbox{\hbox
to0pt{\kern0.3\wd0\vrule height0.9\ht0\hss}\box0}}
{\setbox0=\hbox{$\scriptstyle\mathrm{T}$}\hbox{\hbox
to0pt{\kern0.3\wd0\vrule height0.9\ht0\hss}\box0}}
{\setbox0=\hbox{$\scriptscriptstyle\mathrm{T}$}\hbox{\hbox
to0pt{\kern0.3\wd0\vrule height0.9\ht0\hss}\box0}}}}
\def\tx{\mathrm{K}_\xi}
\def\bbZ{\mathbf{Z}} 
\def\md{{\vbox{\hbox{$\mathbf.$}\vskip1.3pt}}}
\def\hyp{\hskip.5pt\vbox 
{\hbox{\vrule width3ptheight0.5ptdepth0pt}\vskip2.2pt}\hskip.5pt} 
\def\ea{a}
\def\eb{b}
\def\ec{c}
\def\eg{g}
\def\ep{p}
\def\eq{q}
\def\er{r}
\def\es{s}
\def\et{t}
\def\eu{u}
\def\ev{v}
\def\ew{w}
\def\ex{x}
\def\ey{y}
\def\ez{z}
\def\yi{\mathrm{I}}
\def\ym{\mathrm{M}}
\def\yn{\mathrm{N}}
\def\yx{\mathrm{X}}
\def\fn{{f}}
\def\nsu{\nabla_{\hskip-2.2pt\eu}}
\def\nsv{\nabla_{\hskip-2.2pt\ev}}
\def\nsvu{\nsv\hh\eu}
\def\nsvw{\nsv\hh\ew}
\def\nsw{\nabla_{\hskip-2pt\ew}}
\def\nswu{\nsw\eu}
\def\nswv{\nsw\ev}
\def\nsww{\nsw\hn\ew}
\def\nav{\nabla\hskip-.4pt\ev}
\def\naw{\nabla\nh\ew}
\def\dvw{\mathrm{div}\hskip1.4pt\ew}
\def\df{d\hskip-.8pt\ef} 
\def\dfn{d\hskip-.8pt\fn} 
\def\dfh{d\hskip-.8pt\fh} 
\def\trf{[\hskip-1.7pt[\hskip-.4ptF\hskip.8pt]\hskip-1.7pt]} 
\def\trfk{[\hskip-1.7pt[\hskip-.4ptF^K\hskip0pt]\hskip-1.7pt]} 
\def\trfo{[\hskip-2pt[\hskip-.4ptF_0]\hskip-2pt]} 
\def\trz{[\hskip-2pt[\hskip.4pt\zeta\hskip.7pt]\hskip-2pt]} 
\def\trfz{[\hskip-2pt[\hskip.8ptf\zeta\hskip.7pt]\hskip-2pt]} 
\def\tb{T\hskip.3pt\bs} 
\def\tab{{T\hskip.2pt^*\hskip-.9pt\bs}} 
\def\tayb{{T_y^*\hskip-1pt\bs}} 
\def\taxb{{T_x^*\hskip-1pt\bs}} 
\def\tyb{{T\hskip-2pt_y\hskip-.2pt\bs}} 
\def\txm{{T\hskip-2.3pt_x\hskip-.5ptM}} 
\def\tym{{T\hskip-2pt_y\hskip-.9ptM}} 
\def\txn{{T\hskip-2pt_x\hskip-.9ptN}} 
\def\txtm{{T\hskip-2pt_{x(t)}\hskip-.9ptM}} 
\def\txom{{T\hskip-2pt_{x(0)}\hskip-.9ptM}} 
\def\txtsm{{T\hskip-2pt_{x(t,s)}\hskip-.9ptM}} 
\def\olm{\hskip2.1pt\overline{\hskip-2.1ptM}} 
\def\bw{\hskip1.8pt\overline{\hskip-1.8ptw\hskip-.8pt}\hskip.8pt}
\def\bu{\hskip1pt\overline{\hskip-1ptu\hskip-.4pt}\hskip.4pt} 
\def\bv{\hskip.8pt\overline{\hskip-.8ptv\hskip-.4pt}\hskip.4pt} 
\def\bc{\hskip1pt\overline{\hskip-1pt\mathrm{D}\hskip-1.8pt}\hskip1.8pt} 
\def\br{\hskip3.3pt\overline{\hskip-3.3ptR\hskip-.7pt}\hskip.7pt} 
\def\brh{\hskip1.3pt\overline{\hskip-1.3pt\rho\hskip-.1pt}\hskip.1pt} 
\def\hd{\widehat{\mathrm{D}}} 
\def\tm{{T\hskip0pt\ym}} 
\def\tn{{T\hskip-.3ptN}} 
\def\tu{{T\hskip-.3ptU}} 
\def\tam{{T^*\!M}} 
\def\so{\mathfrak{so}\hh} 
\def\gi{\mathfrak{g}} 
\def\hi{\mathfrak{h}} 
\def\fh{f} 
\def\hf{\varLambda} 
\def\line{\Lambda} 
\def\plane{\Pi} 
\def\ds{\mathrm{S}} 
\def\plne{\mathrm{S}} 
\def\spce{\mathrm{E}}
\def\spac{\mathrm{V}}
\def\cst{^{\mathrm{cst}}} 
\def\lin{^{\mathrm{lin}}} 
\def\qdr{^{\mathrm{qdr}}} 
\def\cub{^{\mathrm{cub}}} 
\def\qrt{^{\mathrm{qrt}}} 
\def\qnt{^{\mathrm{qnt}}} 
\def\rc{c} 
\def\rd{X} 
\def\kri{\mathrm{Ker}\hskip1pt\ri} 
\def\kr{\mathrm{Ker}\hskip1ptR} 
\def\kw{\mathrm{Ker}\hskip2ptW} 
\def\kb{\mathrm{Ker}\hskip1.5ptB} 
\def\xc{\mathcal{X}_c} 
\def\fd{F\hskip-2pt.} 
\def\ob{\mathcal{B}} 
\def\od{\mathcal{D}} 
\def\op{\mathcal{P}} 
\def\as{\mathcal{S}} 
\def\sop{\as^+}
\def\soz{\as^{\hh0}}
\def\xz{\mathcal{X}} 
\def\yz{\mathcal{Y}} 
\def\oz{\mathcal{Z}} 
\def\tim{\hskip1.5pt\widetilde{\hskip-1.5ptM\hskip-.5pt}\hskip.5pt} 
\def\hm{\hskip1.9pt\widehat{\hskip-1.9ptM\hskip-.2pt}\hskip.2pt} 
\def\hmt{\hskip1.9pt\widehat{\hskip-1.9ptM\hskip-.5pt}_t} 
\def\hmz{\hskip1.9pt\widehat{\hskip-1.9ptM\hskip-.5pt}_0} 
\def\hmp{\hskip1.9pt\widehat{\hskip-1.9ptM\hskip-.5pt}_p} 
\def\hg{\hskip1.2pt\widehat{\hskip-1.2ptg\hskip-.4pt}\hskip.4pt} 
\def\nao{\hbox{$\nabla\!\!^{^{^{_{\!\!\circ}}}}$}} 
\def\ro{\hbox{$R\hskip-4.5pt^{^{^{_{\circ}}}}$}{}} 
\def\mppp{\hbox{$-$\hskip1pt$+$\hskip1pt$+$\hskip1pt$+$}} 
\def\mpdp{\hbox{$-$\hskip1pt$+$\hskip1pt$\dots$\hskip1pt$+$}} 
\def\mmpp{\hbox{$-$\hskip1pt$-$\hskip1pt$+$\hskip1pt$+$}} 
\def\mmp{\hbox{$-$\hskip1pt$-$\hskip1pt$+$}} 
\def\mpp{\hbox{$-$\hskip1pt$+$\hskip1pt$+$}} 
\def\nl{\hbox{$-$\hskip1pt$+$}} 
\def\mmmp{\hbox{$-$\hskip1pt$-$\hskip1pt$-$\hskip1pt$+$}} 
\def\pppp{\hbox{$+$\hskip1pt$+$\hskip1pt$+$\hskip1pt$+$}} 
\def\mpmp{\hbox{$-$\hskip1pt$\pm$\hskip1pt$+$}} 
\def\mpmpp{\hbox{$-$\hskip1pt$\pm$\hskip1pt$+$\hskip1pt$+$}} 
\def\mmpmp{\hbox{$-$\hskip1pt$-$\hskip1pt$\pm$\hskip1pt$+$}} 
\def\q{q} 
\def\bq{\hat q} 
\def\p{p} 
\def\xh{\chi}
\def\ef{f}
\def\xf{\psi}
\def\x{v} 
\def\y{y} 
\def\vdx{\vd{}\hskip-4.5pt_x} 
\def\bz{b\hh} 
\def\cy{{y}} 
\def\rkwo{\,\hs\mathrm{rank}\hskip2.7ptW\hskip-2.7pt=\hskip-1.2pt1} 
\def\rkwho{\,\hs\mathrm{rank}\hskip2.2ptW^h\hskip-2.2pt=\hskip-1pt1} 
\def\rkw{\,\hs\mathrm{rank}\hskip2.4ptW\hskip-1.5pt} 
\def\rw{\varPsi} 
\def\nd{Q} 
\def\ft{\Psi} 
\def\js{J} 
\def\ism{H} 
\def\fe{F} 
\def\fy{f} 
\def\dfc{dF\hskip-2.3pt_\cy\hskip.4pt} 
\def\dfct{dF\hskip-2.3pt_\cy(t)\hskip.4pt} 
\def\dic{d\im\hskip-1.4pt_\cy\hskip.4pt} 
\def\qt{\mathcal{E}} 
\def\tqt{\tilde{\qt}} 
\def\vh{h} 
\def\mv{V} 
\def\sv{\mathcal{V}} 
\def\vy{\mathcal{V}} 
\def\xv{\mathcal{X}} 
\def\yv{\mathcal{Y}} 
\def\iv{\mathcal{I}} 
\def\gkp{\Sigma} 
\def\bs{\Sigma} 
\def\dbs{\dot\bs} 
\def\das{\hskip3pt\dot{\hskip-3pt\as}} 
\def\hs{\hskip.7pt} 
\def\hh{\hskip.4pt} 
\def\nh{\hskip-.7pt} 
\def\hn{\hskip-.4pt} 
\def\nnh{\hskip-1pt} 
\def\hrz{^{\hskip.5pt\mathrm{hrz}}} 
\def\vrt{^{\hskip.2pt\mathrm{vrt}}} 
\def\vt{\varTheta} 
\def\lf{\lambda} 
\def\zh{\zeta} 
\def\vg{\varGamma} 
\def\bvg{\hskip3.3pt\overline{\hskip-3.3pt\vg\hskip-.7pt}\hskip.7pt} 
\def\gm{g}
\def\gp{\mathrm{G}} 
\def\hp{\mathrm{H}} 
\def\kp{\mathrm{K}} 
\def\yp{\mathrm{Y}} 
\def\Gm{\Gamma} 
\def\Dt{\Delta} 
\def\sj{\sigma} 
\def\lg{\langle} 
\def\rg{\rangle} 
\def\lr{\lg\,,\rg} 
\def\uv{\underline{v\hskip-.8pt}\hskip.8pt} 
\def\uvp{\underline{v\hh'\hskip-.8pt}\hskip.8pt} 
\def\uw{\underline{w\hskip-.8pt}\hskip.8pt} 
\def\uxs{\underline{x_s\hskip-.8pt}\hskip.8pt} 
\def\vs{vector space} 
\def\vf{{q}} 
\def\tf{tensor field} 
\def\tvn{the vertical distribution} 
\def\dn{distribution} 
\def\pt{point} 
\def\tc{tor\-sion\-free connection} 
\def\rt{Ric\-ci tensor} 
\def\pde{partial differential equation} 
\def\pf{projectively flat} 
\def\pfs{projectively flat surface} 
\def\pfc{projectively flat connection} 
\def\pftc{projectively flat tor\-sion\-free connection} 
\def\su{surface} 
\def\sco{simply connected} 
\def\psr{pseu\-\hbox{do\hs-\hn}\hskip0ptRiem\-ann\-i\-an} 
\def\inv{-in\-var\-i\-ant} 
\def\trinv{trans\-la\-tion\inv} 
\def\feo{dif\-feo\-mor\-phism} 
\def\feic{dif\-feo\-mor\-phic} 
\def\feicly{dif\-feo\-mor\-phi\-cal\-ly} 
\def\Feicly{Dif\-feo\-mor\-phi\-cal\-ly} 
\def\nw{\hbox{non\hh-\hskip-1.5pt}\hskip0ptWalk\-er} 
\def\itnw{\hbox{non\hh-\hskip-1.8pt}\hskip0ptWalk\-er} 
\def\diml{-di\-men\-sion\-al} 
\def\prl{-par\-al\-lel} 
\def\skc{skew-sym\-met\-ric} 
\def\sky{skew-sym\-me\-try} 
\def\Sky{Skew-sym\-me\-try}
\def\dbly{-dif\-fer\-en\-ti\-a\-bly} 
\def\cs{con\-for\-mal\-ly symmetric} 
\def\cf{con\-for\-mal\-ly flat} 
\def\ls{locally symmetric} 
\def\kf{Killing field} 
\def\om{\omega} 
\def\vol{\varOmega} 
\def\dt{\delta} 
\def\ve{\varepsilon} 
\def\zt{\zeta}
\def\cn{\kappa} 
\def\mf{manifold} 
\def\mfd{-man\-i\-fold} 
\def\bmf{base manifold} 
\def\bd{\hs\overline{\nh\mathrm{D}\hskip-1.4pt}\hskip1.4pt}
\def\ed{T} 
\def\bna{\hs\overline{\nh\nabla\nh}\hs} 
\def\gs{s} 
\def\hj{t} 
\def\de{\dot\ea}
\def\ha{{H}}
\def\dh{\dot\ha}
\def\qa{{Q}}
\def\dq{\dot\qa}
\def\paf{{P}}
\def\dpaf{\dot\paf}
\def\qx{Q} 
\def\lx{L} 
\def\lp{\lx\nnh^+} 
\def\lm{\lx\nnh^-} 
\def\lmo{\lx_1^{\hskip-1.3pt-}} 
\def\lpt{\lx_2^{\hskip-1.3pt+}} 
\def\ty{T} 
\def\gy{G} 
\def\ny{N\nh} 
\def\oy{{I\hskip-3.3ptP\hskip-.8pt}} 
\def\apb{{\mathcal{A}}} 
\def\alb{{\mathcal{J}}} 
\def\pb{{\mathcal{P}}} 
\def\lb{{\mathcal{L}}} 
\def\ki{\mathcal{L}}
\def\tl{{\mathcal{D}}} 
\def\ow{{\mathcal{W}}} 
\def\tlp{\tl^\perp} 
\def\tbd{tangent bundle} 
\def\ctb{cotangent bundle} 
\def\bp{bundle projection} 
\def\pr{pseu\-d\hbox{o\hs-}\hskip0ptRiem\-ann\-i\-an} 
\def\prc{pseu\-d\hbox{o\hs-}\hskip0ptRiem\-ann\-i\-an metric} 
\def\prd{pseu\-d\hbox{o\hs-}\hskip0ptRiem\-ann\-i\-an manifold} 
\def\Prd{pseu\-d\hbox{o\hs-}\hskip0ptRiem\-ann\-i\-an manifold} 
\def\npd{null parallel distribution} 
\def\pj{-pro\-ject\-a\-ble} 
\def\pd{-pro\-ject\-ed} 
\def\lcc{Le\-vi-Ci\-vi\-ta connection} 
\def\vb{vector bundle} 
\def\vbm{vec\-tor-bun\-dle morphism} 
\def\kerd{\mathrm{Ker}\hskip2.7ptd} 
\def\ro{\rho} 
\def\sy{\varepsilon} 
\def\ts{t} 
\def\pmb{\pi} 
\def\ri{{\rho}}

\title[{{\smallit Noncompactness and maximum mobility}}]{NONCOMPACTNESS AND 
MAXIMUM MOBILITY OF TYPE III RICCI-FLAT SELF-DUAL NEUTRAL WALKER 
FOUR-MANIFOLDS}
\author[A. DERDZINSKI]{ANDRZEJ DERDZINSKI}
\address{\medit(Department of Mathematics, The Ohio State University, 
Columbus, OH 43210, USA)}

\begin{abstract}
\begin{center}{\bf Abstract}\end{center}
\vskip1pt
It is shown that a self-dual neutral Einstein four-manifold of Petrov type 
III, admitting a two\hh-dimensional null parallel distribution compatible with 
the orientation, cannot be compact or locally homogeneous, and its maximum 
possible degree of mobility is \hskip2pt3. \hbox{D\'\i az\hh-}\hn Ra\-mos, 
Garc\'\i a\hs-\hn R\'\i o and V\hn\'azquez\hs-\hn Lorenzo found a general 
coordinate form of such manifolds. The present paper also provides a modified 
version of that coordinate form, valid in a suitably defined generic case and, 
in a sense, ``\hs more canonical\hskip-.2pt'' than the usual formulation. 
Moreover, the lo\-\hbox{cal\hh-}\hskip0ptisometry types of manifolds as above 
having the degree of mobility equal to \hs3\hs\ are classified. Further results 
consist in explicit descriptions, first, of the kernel and image of the 
Killing operator for any tor\-sion\-free surface connection with 
everywhere\hh-nonzero, \hbox{skew\hh-}\hskip0ptsymmetric Ric\-ci tensor, and, 
secondly, of a moduli curve for surface connections with the properties just 
mentioned that are, in addition, locally homogeneous. Finally, hyperbolic 
plane geometry is used to exhibit examples of co\-di\-men\-sion-two foliations 
on compact manifolds of dimensions greater than \hs2\hs\ admitting a transversal 
tor\-sion\-free connection, the Ric\-ci tensor of which is 
\hbox{skew\hh-}\hskip0ptsymmetric and nonzero everywhere. No such connection 
exists on any closed surface, so that there are no analogous examples in 
dimension 2.
\end{abstract}

\email{andrzej@math.ohio-state.edu}
\acknow{}

\maketitle

\setcounter{theorem}{0}

\voffset=28pt\hoffset=-7pt 

\baselineskip14pt
\section{Introduction}\label{intr}
\setcounter{equation}{0}
A trace\-less en\-do\-mor\-phism of a 
pseu\-d\hbox{o\hs-}\hskip0ptEuclid\-e\-an $\,3$-space is said to be of 
{\it Pe\-trov type\/} III if it is self-ad\-joint and sends some 
ordered basis 
$\,p,q,r\,$ to 
$\,0,p,q$.

By a {\it type\/}~III {\it SDNE manifold\/} we mean a {\it self-du\-al neutral 
Ein\-stein \hbox{four\hskip.4pt-}\hskip0ptman\-i\-fold\/} $\,(\ym,\gm)\,$ {\it 
of Pe\-trov type\/}~III. In other words, $\,(\ym,\gm)\,$ is assumed to be a 
self-du\-al oriented Ein\-stein \hbox{four\hskip.4pt-}\hskip0ptman\-i\-fold of 
the neutral metric signature $\,(\mmpp)$, such that the self-du\-al Weyl 
tensor $\,W^+$ of $\,(\ym,g)$, acting on self-du\-al $\,2$-forms, is of 
Pe\-trov type~III at every point.

This paper deals with type~III SDNE manifolds $\,(\ym,\gm)\,$ having the {\it 
Walker property}, that is, admitting a 
\hbox{two\hh-}\hskip0ptdi\-men\-sion\-al null parallel distribution which is 
compatible with the orientation in the sense explained immediately before 
Remark~\ref{liebr}. The main results, Theorems~\ref{nocpl} and~\ref{maxmo}, 
state that such $\,(\ym,\gm)\,$ is never compact, while its degree of mobility 
is at most $\,3$, and so $\,(\ym,\gm)\,$ cannot be locally homogeneous. In the 
case where the degree of mobility equals $\,3$, the lo\-cal-i\-som\-e\-try 
types of $\,(\ym,\gm)\,$ are explicitly classified (Theorem~\ref{three}).

Questions about type~III SDNE manifolds arise for two reasons. First, these 
are precisely the {\it type\/} III {\it 
Jor\-\hbox{dan\hskip.7pt-}\hskip0ptOsser\-man 
\hbox{four\hskip.4pt-}\hskip0ptman\-i\-folds\/} 
\cite[Remark 2.1]{diaz-ramos-garcia-rio-vazquez-lorenzo}, a subclass of the 
class of Jor\-\hbox{dan\hskip.4pt-}\hskip0ptOsser\-man manifolds, studied by 
many authors \cite{garcia-rio-kupeli-vazquez-lorenzo,gilkey}. Secondly, 
type~III SDNE metrics are all curvature homogeneous 
\cite[pp.\ 247--248]{blazic-bokan-rakic}, so that understanding their 
structure is a step towards a description of all 
cur\-va\-\hbox{ture\hskip1.3pt-}\hskip0ptho\-mo\-ge\-ne\-ous 
pseu\-\hbox{do\hs-\hn}\hskip0ptRiem\-ann\-i\-an Ein\-stein metrics in 
dimension four.

Of the two main results mentioned above, Theorem~\ref{nocpl} follows from the 
divergence formula: as shown in Lemma~\ref{dvten}, every type~III SDNE 
Walker manifold carries a natural vector field with nonzero constant 
divergence. The proof of Theorem~\ref{maxmo} uses, in turn, some conclusions 
about pairs $\,(\bs,\nnh\nabla)\,$ formed by a surface $\,\bs\,$ and a 
tor\-sion\-free connection $\,\nabla\hn\,$ on $\,\bs\,$ with 
eve\-ry\-\hbox{where\hskip1pt-}\hskip0ptnon\-zero, skew-sym\-met\-ric Ric\-ci 
tensor. Specifically, Sections~\ref{kers}, \ref{ioko} and~\ref{ngrs} contain a 
characterization of the image and kernel of the {\it Kil\-ling operator\/} of 
$\,(\bs,\nnh\nabla)$, which sends each $\,1$-form to its sym\-met\-riz\-ed 
\hbox{$\,\nabla\nnh$-\hh}\hskip0ptco\-var\-i\-ant derivative. The other  
conclusion, Theorem~\ref{modul}, describes a moduli curve for pairs 
$\,(\bs,\nnh\nabla)\,$ with the properties just listed that are, in addition, 
locally homogeneous. (A canonical coordinate form of such locally homogeneous 
pairs $\,(\bs,\nnh\nabla)\,$ was first found by Ko\-wal\-ski, O\-poz\-da and 
Vl\'a\-\v sek \cite{kowalski-opozda-vlasek-00}.)

Pairs $\,(\bs,\nnh\nabla)\,$ as above are naturally related to type~III 
SDNE Walker manifolds. Namely, \dr, \gr\ and \vl\ proved in 
\cite[Theorem~3.1(ii.3)]{diaz-ramos-garcia-rio-vazquez-lorenzo} that, locally, 
type~III SDNE Walker metrics are nothing else than Patter\-son and Walker's 
Rie\-mann-ex\-ten\-sion metrics associated with triples 
$\,(\bs,\nnh\nabla\nnh,\tau)\,$ consisting of any such pair 
$\,(\bs,\nnh\nabla)\,$ and an arbitrary symmetric $\,2$-ten\-sor 
$\,\tau\,$ on $\,\bs$. For details, see Section~\ref{tstd}. Although 
$\,\tau$, unlike $\,\bs\,$ and $\,\nabla\nnh$, is not a geometric invariant of 
the metric $\,\gm$, a canonical choice of $\,\tau\,$ is possible for a 
class of type~III SDNE Walker manifolds satisfying a 
gen\-er\-al-po\-si\-tion requirement introduced in Section~\ref{ttwg}.

The result of \cite[Theorem~3.1(ii.3)]{diaz-ramos-garcia-rio-vazquez-lorenzo} 
implies that every type~III SDNE Walker manifold carries a 
co\-di\-men\-sion-two foliation admitting a transversal tor\-sion\-free 
connection with eve\-ry\-\hbox{where\hskip1pt-}\hskip0ptnon\-zero, 
skew-sym\-met\-ric Ric\-ci tensor. The fact that type~III SDNE Walker 
manifolds are noncompact (Theorem~\ref{nocpl}) cannot be derived just from the 
presence of such a foliation. Namely, as shown in Proposition~\ref{genus} and 
Corollary~\ref{hidim}, foliations with the stated property exist 
on compact manifolds of all dimensions $\,n\ge3\,$ (though not for 
$\,n=2$).

It is unknown whether the conclusion about non\-com\-pact\-ness of all 
type~III SDNE Walker manifolds, established in Theorem~\ref{nocpl}(a), remains 
valid in the \hbox{non\hh-\hskip-1.5pt}\hskip0ptWalk\-er case. That the two 
cases differ in global properties other than compactness is exemplified by 
{\it vertical completeness}, introduced in Section~\ref{glpr}. Specifically, 
type~III SDNE manifolds $\,(\ym,\gm)\,$ are sometimes vertically complete, 
yet, according to Theorem~\ref{compl}, this can happen only if $\,\gm\,$ is a 
Walker metric.

\section{Preliminaries}\label{prel} 
\setcounter{equation}{0} 
Manifolds are by definition connected. All manifolds, bundles, their 
sections and sub\-bun\-dles, as well as connections and mappings, including 
bundle morphisms, are assumed to be $\,C^\infty\nnh$-dif\-fer\-en\-ti\-a\-ble. 
A bundle morphism always operates between two bundles with the same base 
manifold, and acts by identity on the base.

By the {\it degree of mobility\/} of a connection $\,\nabla\hn\,$ (or, of a 
pseu\-d\hbox{o\hs-}\hskip0ptRiem\-ann\-i\-an metric $\,\gm$) on a manifold 
$\,\bs\,$ we mean the function assigning to each $\,y\in\bs\,$ the dimension 
of the Lie algebra $\,\mathfrak{a}_y$ (or, $\,\mathfrak{i}\hh_y$) formed by 
the germs at $\,y\,$ of infinitesimal af\-fine transformations for 
$\,\nabla\hn\,$ (or, respectively, of Kil\-ling fields for $\,\gm$). This 
function is constant when $\,\nabla\hn\,$ (or, $\,\gm$) is locally homogeneous.

For a connection $\,\nabla\hn\,$ in a real vector bundle over a manifold 
$\,\bs$, sections $\,\alpha\,$ of the bundle and vector fields $\,\ev,\ew\,$ 
tangent to $\,\bs$, our sign convention about the curvature tensor $\,R\,$ of 
$\,\nabla\hn\,$ is
\begin{equation}\label{cur}
R\hh(\hn\ev,\ew)\hs\alpha\,=\,\nsw\nsv\hs\alpha\,-\,\nsv\nsw\hs\alpha\,
+\,\nabla_{\nh[\ev,\ew]}\alpha\hs.
\end{equation}
If $\,\nabla\nh\,$ is a connection on $\,\bs\,$ (that is, in the tangent  
bundle $\,\tb$), we treat the covariant derivative $\,\naw\,$ of any vector 
field $\,\ew\,$ as
\begin{equation}\label{nwt}
\mathrm{\ \ the\ bundle\ morphism\ \ }\naw:\tb\to\tb\mathrm{\ \ sending\ each\ 
vector\ field\ \ }\ev\mathrm{\ \ to\ \ }\nsvw\hh.
\end{equation}
Given a manifold $\,\bs\,$ and a bundle morphism $\,A:\tb\to\tb$,
\begin{equation}\label{ast}
A^{\nh*}\nh:\tab\to\tab\mathrm{\ \ denotes\ the\ bundle\ morphism\ dual\ to\ \ 
}A\hh,
\end{equation}
so that $\,A^{\nh*}$ sends any $\,1$-form $\,\xi\,$ to the composite 
$\,A^{\nh*}\hn\xi=\xi A\,$ in which $\,A:\tb\to\tb\,$ is followed by the 
morphism $\,\xi\,$ from $\,\tb\,$ to the product bundle $\,\bs\times\bbR\hh$.

For the Ric\-ci tensor $\,\rho\,$ of a tor\-sion\-free connection 
$\,\nabla\hn\,$ on a manifold $\,\bs$, and any tangent vector field $\,\ew$, 
the {\it Boch\-ner identity\/} states that
\begin{equation}\label{bch}
\mathrm{a)}\hskip9pt\rho\hs(\,\cdot\,,\ew)\,=\,\hs\mathrm{div}\hs[\naw]\,
-\,\hs d\hs[\dvw]\hh,\hskip8pt\mathrm{where}\hskip29pt\mathrm{b)}
\hskip9pt\dvw=\mathrm{tr}\hskip2.5pt\naw\hh.
\end{equation}
Cf.\ \cite[formula~(4.39) on p.\ 449]{dillen-verstraelen}. 
Here $\,\mathrm{tr}\hskip2.5pt\naw:\bs\to\bbR\,$ is the pointwise trace of 
(\ref{nwt}).

In fact, the coordinate form of (\ref{bch}.a), 
$\,R_{jk}\ew^{\hs k}\nh=\ew^{\hs k}{}_{,jk}-\ew^{\hs k}{}_{,\hs kj}$, arises 
by contraction in $\,l=k\,$ from the Ric\-ci identity $\,\ew^{\hs l}{}_{,jk}
-\ew^{\hs l}{}_{,\hs kj}=R_{jks}{}^l\ew^s\nnh$, which in turn is nothing else 
than (\ref{cur}).

For the tensor, exterior and symmetric products, and the exterior derivative 
of $\,1$-forms $\,\beta,\xi\,$ on a manifold, any tangent vector fields 
$\,\eu,\ew$, and any fixed tor\-sion\-free connection $\,\nabla\nnh$, we have
\begin{equation}\label{bwa}
\begin{array}{rl}
\mathrm{a)}&[\beta\otimes\hs\xi](\hn\eu,\ew)=\beta(\hn\eu)\hs\xi(\ew)\hh,
\hskip12pt\beta\wedge\hs\xi=\beta\otimes\hs\xi-\xi\otimes\beta\hh,
\hskip12pt2\hh\beta\odot\hs\xi=\beta\otimes\hs\xi+\xi\otimes\beta\hh,\\
\mathrm{b)}&[d\hh\beta\hh](\hn\eu,\ew)\,
=\,\hs d_\eu\hs[\hh\beta(\ew)]\,-\,\hs d_\ew\hs[\hh\beta(\hn\eu)]\,
-\,\beta([\eu,\ew])\hh,\\
\mathrm{c)}&[d\hh\beta\hh](\hn\eu,\ew)\,
=\,[\nabla_{\hskip-2.2pt\eu}\beta\hh](\ew)\,
-\,[\nabla_{\hskip-2.2pt\ew}\beta\hh](\hn\eu)\hh.
\end{array}
\end{equation}
Since $\,\lie_\ev\nh=d\hs\imath_\ev\nh+\imath_\ev d\,$ for the Lie derivative 
$\,\lie_\ev$ acting on differential forms, it follows that
\begin{equation}\label{lvz}
\lie_\ev\zeta\,\,=\,\,d\hs[\zeta(\hn\ev,\,\cdot\,)]
\end{equation}
whenever $\,\zeta\,$ is a $\,2$-form on a surface and $\,\ev\,$ is a 
tangent vector field.

Suppose now that $\,\spce\,$ is a 
fi\-\hbox{nite\hh-}\hskip0ptdi\-men\-sion\-al real vector space.
\begin{enumerate}
  \def\theenumi{{\rm\romen{enumi}}}
\item[(i)] Whenever a subspace $\,\spce\hh'$ of $\,\spce\,$ contains the image 
of an en\-do\-mor\-phism $\,\ed\,$ of $\,\spce$, the trace of 
$\,\ed:\spce\to\spce\,$ is obviously equal to the trace of the restriction 
$\,\ed:\spce\hh'\nnh\to\spce\hh'\nnh$.
\item[(ii)] Given an $\,m$-form $\,\zeta\in[\spce^*]^{\wedge m}\nnh$, where 
$\,m=\dim\hskip2pt\spce$, and any en\-do\-mor\-phism $\,\ed\,$ of $\,\spce$, 
the sum $\,\zeta(\ed\ev_1,\ev_2,\dots,\ev_m)
+\zeta(\hn\ev_1,\ed\ev_2,\ev_3,\dots,\ev_m)+\ldots
+\zeta(\hn\ev_1,\ev_2,\dots,\ev_{m-1},\ed\ev_m)\,$ equals 
$\,\zeta(\hn\ev_1,\ev_2,\dots,\ev_m)\,\mathrm{tr}\,\ed$, for any 
$\,\ev_1,\dots,\ev_m\in\spce$. One sees this using the matrix of $\,\ed\,$ in 
the basis $\,\ev_1,\dots,\ev_m$, if $\,\ev_1,\dots,\ev_m$ are linearly 
independent, and noting that both sides vanish for reasons of 
skew-sym\-me\-try, if $\,\ev_1,\dots,\ev_m$ are linearly dependent.
\item[(iii)] If $\,\dim\hskip1.6pt\spce=2\,$ and a tri\-lin\-e\-ar mapping 
$\,(\hn\ev,\ev\hh'\nnh,\ev\hh''\hh)\mapsto\chi(\hn\ev,\ev\hh'\nnh,\ev\hh'')\,$ 
from $\,\spce\,$ into any vector space is skew-sym\-met\-ric in 
$\,\ev\hh'\nnh,\ev\hh''\nnh$, then $\,\chi(\hn\ev,\ev\hh'\nnh,\ev\hh'')\,$ 
summed cyclically over $\,\ev,\ev\hh'\nnh,\ev\hh''$ yields $\,0$. In fact, the 
cyclic sum depends on $\,\ev,\ev\hh'$ and $\,\ev\hh''$ 
skew-sym\-met\-ri\-cal\-ly, so that it vanishes as $\,\spce\,$ is 
\hbox{two\hh-}\hskip0ptdi\-men\-sion\-al.
\end{enumerate}
It is well-known (see, e.g., 
\cite[Proposition~37.1(i) on p.\ 638]{dillen-verstraelen}) that any null 
\hbox{two\hh-}\hskip0ptdi\-men\-sion\-al subspace $\,\plne\,$ in a 
pseu\-d\hbox{o\hs-}\hskip0ptEuclid\-e\-an $\,4$-space $\,\spce\,$ of the 
neutral signature $\,(\mmpp)\,$ naturally distinguishes an orientation of 
$\,\spce$, namely, the one which, for some/\nnh any basis $\,u,v\,$ of 
$\,\plne$, makes the bi\-vec\-tor $\,u\wedge v\,$ self-du\-al. This makes it 
meaningful to say that a null distribution of dimension $\,2\,$ on a 
pseu\-d\hbox{o\hs-}\hskip0ptRiem\-ann\-i\-an 
\hbox{four\hh-}\hskip0ptman\-i\-fold $\,(\ym,\gm)\,$ of the neutral metric 
signature is, or is not, {\it compatible\/} with a prescribed orientation of 
$\,\ym$.
\begin{remark}\label{liebr}Let $\,\mathcal{V}\,$ be an integrable distribution 
on a manifold $\,\ym$. The maximal integral manifolds of $\,\mathcal{V}\hs$ 
will be simply referred to as the {\it leaves of\/} $\,\mathcal{V}\nnh$. (They 
are the leaves of the foliation on $\,\ym$, the tangent bundle of which is 
$\,\mathcal{V}$.) We will also speak of {\it sections of\/} 
$\,\mathcal{V}\nnh$, treating $\,\mathcal{V}\,$ as a vector sub\-bun\-dle of 
$\,\tm$. Finally, by a $\,\mathcal{V}\nh${\it-pro\-ject\-a\-ble local vector 
field\/} in $\,\ym\,$ we will mean any vector field $\,w\,$ defined on a 
nonempty open set $\,\,U\subset\ym\,$ and such that, whenever $\,v\,$ is a 
section of $\,\mathcal{V}\,$ defined on $\,\,U\nh$, so is $\,[w,v]$. If, in 
addition,
\begin{equation}\label{bdl}
\begin{array}{l}
\mathrm{the\ \hskip.94ptleaves\ \hskip.94ptof\ 
\hskip.94pt}\,\mathcal{V}\hs\mathrm{\ \hskip.94ptrestricted\ \hskip.94ptto\ 
\hskip.94pt}\,U\mathrm{\ \hskip.94ptare\ \hskip.94ptall\ 
\hskip.94ptcontractible\ \hskip.94ptand\ \hskip.94ptconstitute}\\
\mathrm{the\ fibres\ of\ a\ 
bundle\ projection\ }\,\,
\pi:U\to\bs\,\,\mathrm{\ over\ some\ manifold\ }\,
\bs\hh,
\end{array}
\end{equation}
then $\,\mathcal{V}\nh$-pro\-ject\-a\-bil\-i\-ty of a vector field $\,w\,$ 
defined on $\,\,U\,$ is equivalent to its $\,\pi$-pro\-ject\-a\-bil\-i\-ty. 
(This is easily seen in suitable local coordinates.)
\end{remark}

\section{The Co\-daz\-zi and Kil\-ling operators}\label{cako}
\setcounter{equation}{0}
By a $\,k${\it-ten\-sor\/} on a manifold $\,\bs\,$ we always mean a $\,k\,$ 
times covariant tensor field on $\,\bs$. For instance, the curvature 
$\,4\hh$-ten\-sor $\,\br\,$ of a pseu\-d\hbox{o\hs-}\hskip0ptRiem\-ann\-i\-an 
manifold $\,(\ym,\gm)\,$ is characterized by 
$\,\br\hh(\hn\eu,\ev,\eu\hh'\nnh,\ev\hh'\hh)
=\gm(\br\hh(\hn\eu,\ev)\hh\eu\hh'\nnh,\ev\hh'\hh)$, where 
$\,\eu,\ev,\eu\hh'\nnh,\ev\hh'$ are any tangent vector fields and $\,\br\,$ on 
the right-hand side is defined as in (\ref{cur}) for the Le\-vi-Ci\-vi\-ta 
connection $\,\bna\,$ of $\,\gm$.

A connection $\,\nabla\hs$ on a manifold $\,\bs\,$ gives rise to two 
first-or\-der linear differential operators that will repeatedly appear in our 
discussion. One is the {\it Co\-daz\-zi operator\/} $\,d\hh^\nabla\nnh$, 
sending each symmetric $\,2$-ten\-sor $\,\tau\,$ on $\,\bs\,$ to the 
$\,3\hh$-ten\-sor equal to twice the skew-sym\-met\-ri\-za\-tion of the 
\hbox{$\,\nabla\nnh$-\hh}\hskip0ptco\-var\-i\-ant derivative of $\,\tau\,$ in 
the first two arguments. The other is the {\it Kil\-ling operator\/} $\,\ki$, 
which sends each $\,1$-form $\,\xi\,$ on $\,\bs\,$ to the symmetric 
$\,2$-ten\-sor obtained by sym\-met\-riz\-ing the 
\hbox{$\,\nabla\nnh$-\hh}\hskip0ptco\-var\-i\-ant derivative of $\,\xi$. 
Explicitly, for any tangent vector fields $\,\eu,\ev$,
\begin{equation}\label{cod}
\mathrm{a)}\hskip6pt[\hh d\hh^\nabla\hskip-2pt\tau](\hn\eu,\ev,\,\cdot\,)
=[\nsu\tau](\hn\ev,\,\cdot\,)-[\nsv\tau](\hn\eu,\,\cdot\,)\hh,\hskip12pt
\mathrm{b)}\hskip6pt2\hh[\hh\ki\hh\xi](\hn\eu,\ev)
=[\nsu\xi](\hn\ev)+[\nsv\xi](\hn\eu)\hh.
\end{equation}
We denote by $\,\mathrm{Ker}\,\ki\,$ the space of all 
$\,C^\infty\nnh$-dif\-fer\-en\-ti\-a\-ble $\,1$-forms $\,\xi\,$ with 
$\,\ki\hh\xi=0$.

If $\,\nabla\hn\,$ is tor\-sion\-free, the second covariant derivative 
$\,\nabla\nabla\xi\,$ of any $\,1$-form $\,\xi\,$ on $\,\bs\,$ and the tensor 
field $\,\tau=\ki\hh\xi\,$ satisfy the following well-known relation, 
immediate from the Ric\-ci and Bian\-chi identities, cf.\ \cite[the bottom of 
p.\ 572]{derdzinski-roter}, in which both sides are $\,2$-ten\-sors:
\begin{equation}\label{nvn}
\nsv\nabla\xi\,=\,-\hs\xi[R(\,\cdot\,,\,\cdot\,)\hh\ev]\,
+\,[\hh d\hh^\nabla\hskip-2pt\tau](\,\cdot\,,\,\cdot\,,v)\,+\,\nsv\tau
\hskip10pt\mathrm{whenever}\hskip5pt\tau=\ki\hh\xi\hh.
\end{equation}
Here $\,\ev\,$ stands for an arbitrary vector field, $\,d\hh^\nabla\nnh$ 
denotes the Co\-daz\-zi operator with (\ref{cod}.a), and $\,\nabla\xi\,$ is 
treated as a $\,2$-ten\-sor acting on vector fields $\,\eu,\ev\,$ by 
$\,[\nabla\xi](\hn\eu,\ev)=[\nsu\xi](\hn\ev)$. In coordinates, (\ref{nvn}) 
reads 
$\,\xi_{\hh j,kl}=-R_{kjl}{}^s\hs\xi_s+\tau_{jl,k}-\tau_{kl,j}+\tau_{kj,l}$, 
with $\,\tau_{kj}\nh=(\xi_{\hh j,k}\nh+\xi_{\hh k,j})/2$.

\section{Riemann extensions}\label{riex}
\setcounter{equation}{0}
Let $\,\ym=\tab\,$ be the total space of the cotangent bundle of a manifold 
$\,\bs\,$ carrying a tor\-sion\-free connection $\,\nabla\nnh$, and let 
$\,\pi:\tab\to\bs\,$ denote the bundle projection. Following Pat\-ter\-son and 
Walker \cite[p.\ 26]{patterson-walker}, by a {\it Riemann extension metric\/} 
associated with $\,\nabla\hn\,$ we mean any $\,2$-ten\-sor on $\,\ym\,$ having 
the form $\,\gm\,=\,\gm\nnh^\nabla\nnh+\hs2\hh\pi^*\nnh\tau$, where $\,\tau\,$ 
is a symmetric $\,2$-ten\-sor on $\,\bs$, and $\,\,\gm\nnh^\nabla$ stands for 
the pseu\-d\hbox{o\hs-}\hskip0ptRiem\-ann\-i\-an metric on $\,\tab\,$ defined 
by requiring that all vertical and all $\,\nabla$-hor\-i\-zon\-tal vectors be 
$\,\gm\nnh^\nabla\nnh$-null, while 
$\,\gm_x\hskip-4.3pt^\nabla\hskip-2pt(\xi,w)=\xi(d\pmb_xw)\,$ for any 
$\,x\in\ym=\tab$, any $\,w\in T\hskip-2pt_x\hskip-.9pt\ym$, and any vertical 
vector $\,\xi\in\kerd\pi_x=\tayb$, with $\,y=\pi(x)$. Such $\,\gm\,$ is 
clearly a pseu\-d\hbox{o\hs-}\hskip0ptRiem\-ann\-i\-an metric of the neutral 
signature. In local coordinates $\,y^{\hs j}\nnh,q_j$ for $\,\tab\,$ arising 
from a coordinate system $\,y^{\hs j}$ for $\,\bs\,$ in which $\,\nabla\hn\,$ 
has the components $\,\vg_{\!kl}^j$,
\begin{equation}\label{rex}
\gm\,=\,2\hs dq_j\odot\hskip2ptdy^{\hs j}+2\hs(\hn\tau_{kl}
-q_j\vg_{\!kl}^j\hh)\,dy^k\odot\hs dy^{\hh l},
\end{equation}
cf.\ \cite[formula (28)]{patterson-walker}, with the symmetric multiplication 
$\,\odot\,$ given by (\ref{bwa}.a).

We use the term `Riemann extension' narrowly, as dictated by the specific 
applications described in Section~\ref{tstd}. Wider classes of Riemann 
extensions have been discussed by many authors, for instance, in 
\cite{patterson-walker}, \cite{afifi} and, most recently, 
\cite{calvino-louzao-garcia-rio-gilkey-vazquez-lorenzo}.

Part (a) of the following lemma goes back to Patter\-son and Walker 
\cite[\S8]{patterson-walker}.
\begin{lemma}\label{kxpgn}Let\/ $\,\bs,\nabla\hn\,$ and\/ $\,\tau\,$ have the 
properties listed above.
\begin{enumerate}
  \def\theenumi{{\rm\alph{enumi}}}
\item[(a)] Any\/ $\,1$-form\/ $\,\xi\,$ on\/ $\,\bs\,$ gives rise to a 
dif\-feo\-mor\-phism\/ $\,\tx:\ym\to\ym$, acting as the translation by\/ 
$\,\xi_y$ in the fibre $\,\tayb\,$ of\/ $\,\ym=\tab$, for every\/ $\,y\in\bs$, 
and the\/ $\,\tx$-pull\-back of\/ 
$\,\gm\nnh^\nabla\nnh+\hs2\hh\pi^*\nnh\tau\,$ is\/ 
$\,\gm\nnh^\nabla\nnh+\hs2\hh\pi^*\nh(\tau+\ki\hh\xi)$, with\/ $\,\ki\,$ as 
in\/ {\rm(\ref{cod}.b)}.
\item[(b)] In particular, for any\/ $\,1$-form\/ $\,\xi\,$ on\/ $\,\bs$, the 
metrics\/ $\,\gm\nnh^\nabla\nnh+\hs2\hh\pi^*\nnh\tau\,$ and\/ 
$\,\gm\nnh^\nabla\nnh+\hs2\hh\pi^*\nh(\tau+\ki\hh\xi)\,$ on\/ $\,\ym\,$ are 
isometric to each other.
\item[(c)] If\/ $\,\vt:\tab\to\tab\,$ is a vec\-tor-bun\-dle isomorphism 
and the\/ $\,\vt$-pull\-back of\/ $\,\gm\nnh^\nabla\nnh+\hs2\hh\pi^*\nnh\tau$, 
restricted to some nonempty open set in\/ $\,\tab\,$ interseting\/ $\,\tayb\,$ 
for each\/ $\,y\in\bs$, coincides with\/ 
$\,\gm\nnh^\nabla\nnh+\hs2\hh\pi^*\nnh\tau\hh'$ for some symmetric\/ 
$\,2$-ten\-sor\/ $\,\tau\hh'$ on\/ $\,\bs$, then\/ $\,\vt=\mathrm{Id}\,$ and\/ 
$\,\tau\hh'\nnh=\tau$.
\end{enumerate}
\end{lemma}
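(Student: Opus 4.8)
The plan is to prove the three parts of Lemma~\ref{kxpgn} in order, with part~(a) being the computational heart and parts~(b), (c) following essentially formally.

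\textbf{Part (a).} First I would compute the $\,\tx$-pull\-back of $\,\gm\nnh^\nabla\nnh+\hs2\hh\pi^*\nnh\tau\,$ directly in the local coordinates $\,y^{\hs j}\nnh,q_j\,$ of (\ref{rex}). The map $\,\tx\,$ sends $\,(y^{\hs j}\nnh,q_j)\,$ to $\,(y^{\hs j}\nnh,q_j+\xi_j(y))$, so $\,\tx^*dy^{\hs j}\nh=dy^{\hs j}$ while $\,\tx^*dq_j\nh=dq_j+\xi_{j,k}\hs dy^k$, where $\,\xi_{j,k}\,$ is the ordinary partial derivative. Substituting into (\ref{rex}), the term $\,2\hs dq_j\odot dy^{\hs j}\,$ picks up an extra $\,2\hs\xi_{j,k}\hs dy^k\odot dy^{\hs j}$, and the term $\,-2\hh q_j\vg_{\!kl}^j\,dy^k\odot dy^{\hh l}\,$ picks up $\,-2\hh\xi_j\vg_{\!kl}^j\,dy^k\odot dy^{\hh l}$; the pulled-back metric is thus $\,\gm\nnh^\nabla$ plus $\,2\hh\pi^*$ of the $\,2$-ten\-sor with components $\,\tau_{kl}+\frac12(\xi_{k,l}+\xi_{l,k})-\xi_j\vg_{\!kl}^j$. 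Now $\,\xi_{k,l}-\xi_j\vg_{\!kl}^j\,$ is precisely the $\,\nabla$-co\-var\-i\-ant derivative $\,[\nabla_l\xi]_k$, so the symmetrized version equals $\,[\ki\hh\xi]_{kl}\,$ by (\ref{cod}.b), giving $\,\tau_{kl}+[\ki\hh\xi]_{kl}\,$ as claimed. The only point requiring a word of care is checking that $\,\tx\,$ is well-defined globally and is a diffeomorphism — but that is immediate since fiberwise translation by a smooth section of $\,\tab\,$ is a bundle automorphism covering the identity, with inverse $\,\kp_{-\xi}$.

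\textbf{Part (b).} This is a one-line consequence: the $\,\tx$-pull\-back being $\,\gm\nnh^\nabla\nnh+\hs2\hh\pi^*\nh(\tau+\ki\hh\xi)\,$ says exactly that $\,\tx\,$ is an isometry from $\,(\ym,\gm\nnh^\nabla\nnh+\hs2\hh\pi^*\nh(\tau+\ki\hh\xi))\,$ onto $\,(\ym,\gm\nnh^\nabla\nnh+\hs2\hh\pi^*\nnh\tau)$.

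\textbf{Part (c).} Here I would exploit the coordinate-free characterization of $\,\gm\nnh^\nabla$: its null distribution of vertical vectors is the unique parallel null $\,2$-plane field, and the pairing $\,\gm_x\hskip-4.3pt^\nabla\hskip-2pt(\xi,w)=\xi(d\pmb_xw)\,$ determines the horizontal spaces up to the vertical ones. Since the $\,\vt$-pull\-back of $\,\gm\nnh^\nabla\nnh+\hs2\hh\pi^*\nnh\tau\,$ is again a Riemann extension metric of the \emph{same} $\,\nabla$, and since $\,\pi^*\nnh\tau\,$ and $\,\pi^*\nnh\tau\hh'$ vanish on pairs of vertical vectors, a vec\-tor-bun\-dle isomorphism $\,\vt\,$ preserving the metric (on a dense-in-fibers open set, hence everywhere by analyticity of the two sides as functions of $\,q\,$ — in fact they are affine in $\,q$) must send the vertical distribution to itself and the null-and-$\,\nabla$-hor\-i\-zon\-tal conditions to themselves. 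A bundle isomorphism $\,\vt:\tab\to\tab\,$ fixing the zero section and preserving both the vertical and all horizontal spaces, and acting by identity on the base, is forced to be fiberwise linear and to commute with the identification $\,\gm_x\hskip-4.3pt^\nabla\hskip-2pt(\,\cdot\,,w)=(\,\cdot\,)(d\pmb_xw)$; tracing through this forces $\,\vt=\mathrm{Id}$, after which comparing the $\,\pi^*$-terms gives $\,\tau\hh'\nnh=\tau$. The main obstacle in (c) is making the ``restricted to some nonempty open set'' hypothesis do its job: I expect to handle it by noting that both metrics, written in the fiber coordinate $\,q_j$, are polynomial (indeed affine) in $\,q_j$, so agreement of the pull-backs on an open set meeting every fiber propagates to all of $\,\tab$, and then the rigidity argument above applies globally.

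\emph{Remark on organization.} Parts (a) and (b) are what Sections~\ref{tstd} and the sequel actually use; (c) is a uniqueness statement ensuring that the triple $\,(\bs,\nabla\nnh,\tau)\,$ is recovered from the metric up to the gauge freedom $\,\tau\mapsto\tau+\ki\hh\xi$. I would present (a) with the explicit coordinate computation above, relegate (b) to its immediate corollary, and give (c) in the coordinate-free language so as to make the subsequent canonical-form discussion transparent.
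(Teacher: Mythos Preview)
Your treatment of (a) and (b) is correct and is exactly the paper's approach, carried out in more detail: the paper's entire proof is the one-line instruction to replace $q_j$ by $q_j+\xi_j$ (for $\tx$) or by $\vt_j^{\,l}q_l$ (for $\vt$) in (\ref{rex}) and compare.

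For (c), however, you take an unnecessary detour and introduce an unjustified step. Your claim that $\vt$ must ``preserve both the vertical and all horizontal spaces'' is not established: the $\nabla$-horizontal distribution is not, in any obvious way, a metric invariant of $g^\nabla+2\pi^*\tau$ (the horizontals are $g^\nabla$-null but generally not null for the full metric), and invoking it risks circularity with later results (Lemma~\ref{trvcn}(b)). The correct kernel of your argument is actually present---the pairing $g^\nabla_x(\xi,w)=\xi(d\pi_xw)$ between vertical vectors and arbitrary tangent vectors is the same for both Riemann extension metrics, and preserving it forces $\vt_y=\mathrm{Id}$ on each fibre---but it is buried under the horizontal-space discussion and the vague phrase ``tracing through this forces $\vt=\mathrm{Id}$.'' The paper's approach is far simpler and uniform with (a): substitute $q_j\mapsto\vt_j^{\,l}q_l$ in (\ref{rex}); the $dq\odot dy$ block of the pulled-back metric becomes $2\,\vt_j^{\,l}\,dq_l\odot dy^{\hs j}$, and matching this against $2\,dq_l\odot dy^{\hs l}$ gives $\vt_j^{\,l}=\delta_j^{\,l}$ immediately, after which $\tau'=\tau$ drops out. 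Your observation that both sides are affine in $q_j$, so that agreement on an open set meeting every fibre extends globally, is correct and is the only subtlety worth noting.
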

\begin{proof}This is immediate if one replaces the ingredients 
$\,q_j,y^{\hs j},\tau_{kl},\vg_{\!kl}^j$ of formula (\ref{rex}) with their 
pull\-backs under $\,\tx$ (or, under $\,\vt$), that is, with 
$\,q_j\nh+\xi_{\hs j}$ (or, $\,\vt_{\hskip-1.5ptj}^{\hs l}q_l^{\phantom i}$), 
$\,y^{\hs j},\tau_{kl}$ and $\,\vg_{\!kl}^j$.
\end{proof}
To avoid confusion caused by the presence of both upper and lower indices in 
$\,y^{\hs j}\nnh,q_j$, we fix a nonsingular square matrix 
$\,[\gm_{j\lambda}]\,$ of constants, and replace $\,y^{\hs j}\nnh,q_j$ with 
the new coordinates $\,y^{\hs j}\nnh,x^\lambda\nnh$, related to the old ones 
by $\,q_j=\gm_{j\lambda}x^\lambda\nnh$. Keeping the Greek indices 
$\,\lambda,\mu\,$ always separate from the Roman indices $\,j,k,l,p,q,s$, even 
though both sets of indices range over $\,\{1,\dots,\dim\bs\}$, we easily 
verify that the components of $\,\gm=\gm\nnh^\nabla\nnh+\hs2\hh\pi^*\nnh\tau$, 
its reciprocal metric, the Le\-vi-Ci\-vi\-ta connection $\,\bna\,$ of $\,\gm$, 
and its curvature $\,4\hh$-ten\-sor $\,\br$, in the coordinates 
$\,y^{\hs j}\nnh,x^\lambda\nnh$, are given by
\begin{equation}\label{gjk}
\begin{array}{l}
\gm_{jk}=\hs2\hs(\hn\tau_{jk}
-\gm_{s\lambda}x^\lambda\vg_{\hskip-3ptjk}^s)\hs,\hskip12pt\gm_{\lambda\mu}
=\hs0,\hskip12pt\gm_{\lambda j}
=\hs\gm_{j\lambda}\hskip7pt\mathrm{(the\ fixed\ constants),}\\
\gm^{jk}=\hs0,\hskip12pt[\gm^{j\lambda}]=\hs[\gm_{j\lambda}]^{-1},\hskip12pt
\gm^{\lambda\mu}=\hs-\hs\gm^{j\lambda}\gm^{\hh k\mu}\gm_{jk}\hh,
\phantom{1^{1^1}}\\
\bvg_{\!\lambda\mu}^{\hh\md}=\hs0\hh,\hskip12pt\bvg_{\!\lambda\md}^j=\hs0\hh,
\hskip12pt\bvg_{\hskip-3ptjk}^{\hs l}=\hs\vg_{\hskip-3ptjk}^{\hs l}\hh,
\hskip12pt\bvg_{\hskip-3ptj\lambda}^{\hs\mu}
=\hs-\gm_{s\lambda}\gm^{\hh k\mu}\vg_{\hskip-3ptjk}^s,\\
\gm_{k\mu}\bvg_{\!jl}^\mu=\hs\gm_{s\lambda}x^\lambda(R_{lkj}{}^s
-\partial_j\nnh\vg_{\!lk}^s
+\vg_{\!kp}^s\vg_{\hskip-3ptjl}^p+\vg_{\!lp}^s\vg_{\hskip-3ptjk}^p)
+\tau_{\hs lk,\hs j}+(d\hh^\nabla\hskip-2pt\tau)_{lkj}\hh,\\
\br_{\lambda\mu\hs\md\md}=\hs\br_{\lambda\md\mu\md}=\hs0\hh,
\hskip12pt\br_{jkl\lambda}=\hs\gm_{s\lambda}R_{jkl}{}^s,\phantom{_{j_j}}\\
\br_{jklp}=\hs\gm_{s\lambda}x^\lambda(R_{lpj}{}^s{}_{\nnh,\hs k}
-R_{lpk}{}^s{}_{\nnh,\hs j}
+\vg_{\hskip-3ptjq}^sR_{lpk}{}^q
-\vg_{\!kq}^sR_{lpj}{}^q
+\vg_{\!lq}^sR_{jkp}{}^q
-\vg_{\!pq}^sR_{jkl}{}^q)\\
\phantom{\br_{jklp}\hskip1.3pt}+\,R_{kjp}{}^s\tau_{sl}-R_{kjl}{}^s\tau_{sp}
+(d\hh^\nabla\hskip-2pt\tau)_{lpj,\hs k}
-(d\hh^\nabla\hskip-2pt\tau)_{lpk,\hs j}\hh.\phantom{1^{1^1}}
\end{array}
\end{equation}
Here the dots stand for indices of either kind, $\,\vg_{\hskip-3ptjk}^{\hs l}$ 
and $\,R_{jkl}{}^s$ for the components of $\,\nabla\hn\,$ and its curvature 
tensor $\,R\,$ in the coordinates $\,y^{\hs j}\nnh$, the commas for 
\hbox{$\,\nabla\nnh$-\hh}\hskip0ptco\-var\-i\-ant derivatives, 
$\,d\hh^\nabla\nnh$ for the Co\-daz\-zi operator of $\,\nabla\hn\,$ (see 
(\ref{cod}.a)), and $\,[\gm_{j\lambda}]^{-1}$ for the matrix inverse of 
$\,[\gm_{j\lambda}]$.

One easily obtains the following conclusion, due to Patter\-son and Walker 
\cite[p.\ 26]{patterson-walker}:
\begin{lemma}\label{rcflt}For\/ 
$\,\gm=\gm\nnh^\nabla\hn+\hs2\hh\pi^*\nnh\tau\,$ defined as above, the 
following three conditions are equivalent\/{\rm:}
\begin{enumerate}
  \def\theenumi{{\rm\roman{enumi}}}
\item[(i)] $\gm\,$ is an Ein\-stein metric,
\item[(ii)] $\gm\,$ is Ric\-ci-flat,
\item[(iii)] the Ric\-ci tensor\/ $\,\rho\,$ of\/ $\,\nabla\hn\,$ is 
\hbox{skew\hs-}\hskip0ptsym\-met\-ric at every point.
\end{enumerate}
\end{lemma}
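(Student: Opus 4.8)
The plan is to read the Ricci tensor of $\,\gm\,$ directly off the curvature formulas in (\ref{gjk}), the point being that the degenerate block structure of the reciprocal metric leaves almost nothing to compute. In the coordinates $\,y^{\hs j}\nnh,x^\lambda$ used there one has $\,\gm^{jk}=0$, the block $\,\gm^{j\lambda}\,$ is the invertible matrix $\,[\gm_{j\lambda}]^{-1}$, the remaining block $\,\gm^{\lambda\mu}\,$ carries no Roman index, and, among the components of the curvature $\,4\hh$-tensor $\,\br$, every one having two or more Greek indices vanishes --- as follows from the two explicitly vanishing types $\,\br_{\lambda\mu\hs\md\md}=\br_{\lambda\md\mu\md}=0\,$ in (\ref{gjk}) together with the symmetries of $\,\br$ --- while every component with exactly one Greek index equals, up to those symmetries, $\,\br_{jkl\lambda}=\gm_{s\lambda}R_{jkl}{}^s$, with $\,R\,$ the curvature of $\,\nabla\nnh$.

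Next I would compute the Ricci tensor $\,\hat\rho\,$ of $\,\gm\,$ by the usual contraction of $\,\br$. Such a contraction pairs two slots of $\,\br\,$ against a block of the reciprocal metric; the block $\,\gm^{jk}\,$ being zero and any component of $\,\br\,$ with two Greek slots being zero, the only surviving possibility is a contraction against $\,\gm^{j\lambda}\,$ in which the two remaining, free slots of $\,\br\,$ are both Roman. Consequently $\,\hat\rho_{\lambda\mu}=0\,$ and $\,\hat\rho_{j\lambda}=0\,$ identically, while the remaining block $\,\hat\rho_{jk}\,$ is assembled purely from the one-Greek-index components $\,\gm_{s\lambda}R_{jkl}{}^s$; bringing each contributing term to that shape via the symmetries of $\,\br$ and then using the inverse-matrix identity $\,\gm^{m\mu}\gm_{k\mu}=\delta^m_k\,$ yields $\,\hat\rho_{jk}=\rho_{jk}+\rho_{kj}$, twice the symmetric part of the Ricci tensor $\,\rho\,$ of $\,\nabla\nnh$. (Incidentally this shows $\,\hat\rho\,$ is independent of $\,\tau$.)

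With these facts the three equivalences are immediate. Condition (iii) is precisely $\,\rho_{jk}+\rho_{kj}=0$, hence $\,\hat\rho_{jk}=0$, which together with $\,\hat\rho_{\lambda\mu}=\hat\rho_{j\lambda}=0\,$ is exactly the Ricci-flatness (ii) of $\,\gm$; and (ii) trivially gives (i). Conversely, if $\,\gm\,$ is Einstein, so $\,\hat\rho=c\hh\gm\,$ for some function $\,c\,$ on $\,\ym$, then evaluating on the mixed block gives $\,0=\hat\rho_{j\lambda}=c\hh\gm_{j\lambda}$; since $\,[\gm_{j\lambda}]\,$ is the fixed nonsingular matrix this forces $\,c\equiv0$, so $\,\gm\,$ is Ricci-flat and $\,0=\hat\rho_{jk}=\rho_{jk}+\rho_{kj}$, which is (iii). (No separate treatment of $\,\dim\bs=1\,$ is needed, as there $\,\rho=0\,$ automatically.)

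The only genuine computation is the contraction of the second paragraph, and it is routine index bookkeeping with the conventions of (\ref{cur}) and (\ref{gjk}); the one point to watch is that a term such as $\,\gm^{m\mu}\br_{mj\mu k}\,$ must first be brought, through the symmetries of $\,\br$, to the form in which the factor $\,\gm^{m\mu}\gm_{s\mu}\,$ appears, after which the inverse-matrix relation applies. Nothing deeper enters, which is why the lemma is recorded as one "easily obtained".
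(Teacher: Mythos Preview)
Your proof is correct and follows essentially the same route as the paper's: both read off from (\ref{gjk}) that $\,\brh_{\lambda\mu}=\brh_{j\lambda}=0\,$ and $\,\brh_{jk}=\rho_{jk}+\rho_{kj}$, after which the equivalences are immediate. The only cosmetic difference is in the step (i)$\Rightarrow$(ii): the paper observes that the scalar curvature $\,\gm^{jk}\brh_{jk}\,$ vanishes (since $\,\gm^{jk}=0$), whereas you use the nonsingularity of $\,[\gm_{j\lambda}]\,$ against $\,\brh_{j\lambda}=0$; both are one-line arguments.
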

\begin{proof}By (\ref{gjk}), the Ric\-ci tensor $\,\brh\,$ of $\,\gm\,$ has 
the components $\,\brh_{jk}=\gm^{s\lambda}\br_{jsk\lambda}
+\gm^{\lambda s}\br_{j\lambda ks}=\rho_{jk}+\rho_{kj}$ and 
$\,\brh_{j\lambda}=\brh_{\lambda\mu}=0$, and the so scalar curvature of 
$\,\gm\,$ is $\,\gm^{jk}\brh_{jk}=0$.
\end{proof}
The relations 
$\,\gm_{\lambda\mu}=\bvg_{\!\lambda\mu}^{\hh\md}=\bvg_{\!\lambda\md}^j
=\br_{\lambda\md\mu\md}=0\,$ in (\ref{gjk}) state that the vertical 
distribution $\,\mathcal{V}=\kerd\pi\,$ is $\,\gm$-null, $\,\gm$-par\-al\-lel, 
and satisfies the following curvature condition (cf.\ Remark~\ref{liebr}):
\begin{equation}\label{ccn}
\br\hh(\hn\eu,\,\cdot\,,\ev,\,\cdot\,)\,=\,0\hskip7pt\mathrm{for\ any\ sections\ 
}\,\eu,\ev\,\mathrm{\ of\ }\,\,\mathcal{V}\nh.
\end{equation}
The properties just listed form an intrinsic local characterization of Riemann 
extension metrics, which is a result of Afifi \cite{afifi}, stated below as 
Theorem~\ref{riext}. We preceed it by a more general discussion, beginning 
with a lemma phrased in  the language of Remark~\ref{liebr}:
\begin{lemma}\label{trvcn}Let an \hbox{$\,m$\hh-}\hskip0ptdi\-men\-sion\-al 
null parallel distribution\/ $\,\mathcal{V}\,$ on a 
pseu\-\hbox{do\hs-\hn}\hskip0ptRiem\-ann\-i\-an manifold\/ $\,(\ym,\gm)\,$ 
with\/ $\,\dim\ym=2m\,$ satisfy condition\/ {\rm(\ref{bdl})} for\/ $\,\,U=\ym$.
\begin{enumerate}
  \def\theenumi{{\rm\alph{enumi}}}
\item[(a)] The requirement that $\,\pi^*\xi=g(\hn\ev,\,\cdot\,)\,$ defines a 
natural bijective correspondence between sections\/ $\,\ev\,$ of\/ 
$\,\mathcal{V}\,$ parallel along $\,\,\mathcal{V}\hs$ and sections\/ 
$\,\xi\nh\,$ of\/ $\,\tab$.
\item[(b)] If\/ {\rm(\ref{ccn})} holds as well, then there exists a unique 
tor\-sion\-free connection\/ $\,\nabla\hn\,$ on\/ $\,\bs\,$ such that, for 
any\/ $\,\pi$-pro\-ject\-a\-ble vector fields\/ $\,\eu,\eu\hh'$ on\/ $\,\ym$, 
the covariant derivative\/ $\,\bna_{\hskip-2pt\eu}\hn\eu\hh'\nnh$, relative to 
the Le\-vi-Ci\-vi\-ta connection\/ $\,\bna\,$ of\/ $\,g$, is\/ 
$\,\pi$-pro\-ject\-a\-ble onto the vector field\/ 
$\,\nabla_{\hskip-2pt\ew}\hn\ew\hh'$ on\/ $\,\bs$, where\/ $\,\ew,\ew\hh'$ 
are the\/ $\,\pi$-im\-ages of\/ $\,\eu\,$ and\/ $\,\eu\hh'\nnh$.
\end{enumerate}
\end{lemma}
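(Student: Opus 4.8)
I would prove the two parts separately. Part (a) is essentially formal, relying only on $\,\mathcal{V}\,$ being null and $\,\bna$-par\-al\-lel; in part (b) the curvature hypothesis (\ref{ccn}) is what makes the argument go through, and it is used at exactly one place.

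For part (a), note first that, since $\,\mathcal{V}\,$ is null of dimension $\,m=\frac12\dim\ym$, it equals its own $\,\gm$-or\-thog\-o\-nal complement: $\,\mathcal{V}^\perp\nnh=\mathcal{V}$. Consequently $\,\ev\mapsto\gm(\hn\ev,\,\cdot\,)\,$ is a vec\-tor-bun\-dle iso\-mor\-phism of $\,\mathcal{V}\,$ onto the bundle of those $\,1$-forms on $\,\ym\,$ that annihilate $\,\mathcal{V}=\kerd\pi$, that is, the \emph{semibasic} $\,1$-forms; in particular, for every $\,1$-form $\,\xi\,$ on $\,\bs\,$ there is a unique section $\,\ev\,$ of $\,\mathcal{V}\,$ with $\,\gm(\hn\ev,\,\cdot\,)=\pi^*\xi$. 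Thus it remains to verify that, for $\,\alpha:=\gm(\hn\ev,\,\cdot\,)$, the section $\,\ev\,$ is parallel along $\,\mathcal{V}\,$ exactly when $\,\alpha\,$ is a $\,\pi$-pull\-back from $\,\bs$. For a section $\,\eu\,$ of $\,\mathcal{V}$, metric compatibility of $\,\bna\,$ gives $\,\bna_\eu\alpha=\gm(\bna_\eu\ev,\,\cdot\,)$, and $\,\bna$-par\-al\-lel\-ism of $\,\mathcal{V}\,$ shows that the correction term $\,\alpha(\bna_{\,\cdot\,}\eu)\,$ relating $\,\lie_\eu\alpha\,$ to $\,\bna_\eu\alpha\,$ (one has $\,\lie_\eu\alpha=\bna_\eu\alpha+\alpha(\bna_{\,\cdot\,}\eu)\,$ since $\,\bna\,$ is tor\-sion\-free) vanishes, because $\,\alpha\,$ is semibasic and $\,\bna_{\,\cdot\,}\eu\,$ is $\,\mathcal{V}$-val\-ued; so $\,\lie_\eu\alpha=\gm(\bna_\eu\ev,\,\cdot\,)$. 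Hence $\,\bna_\eu\ev=0\,$ for all sections $\,\eu\,$ of $\,\mathcal{V}\,$ is the same as $\,\lie_\eu\alpha=0\,$ for all such $\,\eu$; and, the fibres of $\,\pi\,$ being connected (indeed contractible, by (\ref{bdl})) and $\,\alpha\,$ semibasic, the latter is in turn equivalent to $\,\alpha\,$ being basic. (That a basic form $\,\pi^*\xi\,$ is $\,\lie_\eu$-in\-var\-i\-ant for every section $\,\eu\,$ of $\,\mathcal{V}\,$ follows at once from $\,\pi^*\xi(\eu)=\xi(d\pi\,\eu)=0\,$ and $\,\pi^*(d\xi)(\eu,\,\cdot\,)=d\xi(d\pi\,\eu,\,\cdot\,)=0$; the converse direction uses connectedness of the fibres.) The bijection thereby obtained is natural, being built from $\,\gm$, $\,\bna\,$ and $\,\pi\,$ alone.

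For part (b), uniqueness is immediate: by local triviality of $\,\pi\,$ (see (\ref{bdl})), every vector field on $\,\bs\,$ possesses a local $\,\pi$-pro\-ject\-a\-ble lift to $\,\ym$, and then the required property forces $\,\nabla_\ew\ew'$ to equal the $\,\pi$-im\-age of $\,\bna_\eu\eu'$; this determines $\,\nabla\,$ on every chart, hence on $\,\bs$. Existence I would split into three steps: (1) $\,\bna_\eu\eu'\,$ is $\,\pi$-pro\-ject\-a\-ble whenever $\,\eu,\eu'\,$ are; (2) its $\,\pi$-im\-age depends only on the $\,\pi$-im\-ages $\,\ew,\ew'$ of $\,\eu,\eu'$, so (1)--(2) yield a well-defined operation $\,(\ew,\ew')\mapsto\nabla_\ew\ew'$ on $\,\bs$; (3) this operation is a tor\-sion\-free connection, and the connections so defined over the various coordinate patches agree on overlaps. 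Two elementary observations are used throughout: (i) since $\,\mathcal{V}\,$ is $\,\bna$-par\-al\-lel, $\,\bna_{\,\cdot\,}\ev\,$ is $\,\mathcal{V}$-val\-ued for every section $\,\ev\,$ of $\,\mathcal{V}$; and (ii) $\,\bna_\ev\eu'\,$ is a section of $\,\mathcal{V}\,$ whenever $\,\ev\,$ is a section of $\,\mathcal{V}\,$ and $\,\eu'$ is $\,\pi$-pro\-ject\-a\-ble, because $\,\bna_\ev\eu'=\bna_{\eu'}\ev+[\ev,\eu']\,$ with the first summand handled by (i) and the second a section of $\,\mathcal{V}\,$ by Remark~\ref{liebr}.

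Step (1) is the heart of the proof and, I expect, the main obstacle, since (\ref{ccn}) enters there and only there. For a section $\,\ev\,$ of $\,\mathcal{V}$, torsion-freeness of $\,\bna\,$ and the curvature identity (\ref{cur}) give
\[
[\bna_\eu\eu',\hh\ev]\,=\,\bna_{\bna_\eu\eu'}\hn\ev\,-\,\br(\hn\eu,\ev)\hs\eu'\,-\,\bna_\eu(\bna_\ev\eu')\,+\,\bna_{[\eu,\ev]}\eu'\hh.
\]
The first term is a section of $\,\mathcal{V}\,$ by (i); the third because $\,\bna_\ev\eu'\,$ is a section of $\,\mathcal{V}\,$ by (ii), to which (i) then applies; the fourth because $\,[\eu,\ev]\,$ is a section of $\,\mathcal{V}\,$ (as $\,\eu\,$ is $\,\pi$-pro\-ject\-a\-ble), so (ii) applies with $\,[\eu,\ev]\,$ in place of $\,\ev$. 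For the remaining, curvature, term, the sym\-me\-tries of the curvature $\,4$-ten\-sor give, for any section $\,\ew\,$ of $\,\mathcal{V}$,
\[
\gm(\br(\hn\eu,\ev)\hs\eu'\nnh,\ew)\,=\,\br(\hn\eu,\ev,\eu'\nnh,\ew)\,=\,\br(\ev,\eu,\ew,\eu')\,=\,0\hh,
\]
the last equality holding by (\ref{ccn}), whose first and third arguments are now the sections $\,\ev,\ew\,$ of $\,\mathcal{V}$; hence $\,\br(\hn\eu,\ev)\hs\eu'\in\mathcal{V}^\perp\nnh=\mathcal{V}$. Therefore $\,[\bna_\eu\eu',\ev]\,$ is a section of $\,\mathcal{V}\,$ for every section $\,\ev\,$ of $\,\mathcal{V}$, which by Remark~\ref{liebr} means that $\,\bna_\eu\eu'\,$ is $\,\pi$-pro\-ject\-a\-ble. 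For step (2): if $\,\eu,\eu'\,$ and $\,\tilde\eu,\tilde\eu'\,$ are $\,\pi$-pro\-ject\-a\-ble lifts of the same pair $\,\ew,\ew'$, then $\,\eu-\tilde\eu\,$ and $\,\eu'-\tilde\eu'\,$ are sections of $\,\mathcal{V}$, so $\,\bna_\eu\eu'-\bna_{\tilde\eu}\tilde\eu'=\bna_{\eu-\tilde\eu}\eu'+\bna_{\tilde\eu}(\eu'-\tilde\eu')\,$ is a section of $\,\mathcal{V}\,$ by (ii) and (i), and the two $\,\pi$-im\-ages coincide. For step (3): lifting $\,f\ew\,$ and $\,f\ew'\,$ (with $\,f\in C^\infty(\bs)$) to the $\,\pi$-pro\-ject\-a\-ble fields $\,(\pi^*\nnh f)\eu\,$ and $\,(\pi^*\nnh f)\eu'$, and using $\,\eu(\pi^*\nnh f)=\pi^*(\ew f)$, gives $\,C^\infty(\bs)$-lin\-ear\-i\-ty in $\,\ew\,$ and the Leibniz rule in $\,\ew'$, while $\,\bna_\eu\eu'-\bna_{\eu'}\eu=[\eu,\eu']$, together with the fact that $\,[\eu,\eu']\,$ is $\,\pi$-pro\-ject\-a\-ble onto $\,[\ew,\ew']$, shows $\,\nabla\,$ is tor\-sion\-free; finally, the connections defined over the various patches coincide on overlaps by the uniqueness already noted, so they assemble into the required global connection $\,\nabla\,$ on $\,\bs$.
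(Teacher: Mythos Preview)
Your proof is correct and follows essentially the same route as the paper's: part~(a) via the identification of $\,\mathcal{V}\,$ with semibasic $1$-forms (you use the Lie-derivative characterization of basic forms where the paper computes $\,\bna_\eu(\pi^*\xi)\,$ directly, but the content is the same), and part~(b) via the curvature identity~(\ref{cur}) together with~(\ref{ccn}) to show $\,\bna_\eu\eu'\,$ is $\,\pi$-pro\-ject\-a\-ble. Your steps~(2) and~(3) spell out well-definedness and the connection axioms that the paper leaves implicit after establishing projectability.
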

\begin{proof}The pull\-back $\,\pi^*\xi\,$ of any given section $\,\xi\,$ of 
$\,\tab\,$ determines $\,\xi\,$ uniquely (since $\,\pi\,$ is a submersion), 
and equals $\,\gm(\hn\ev,\,\cdot\,)\,$ for a unique vector field $\,\ev\,$ on 
$\,\ym$, which defines an injective assignment $\,\xi\mapsto\ev$. For 
sections $\,\eu\,$ of $\,\mathcal{V}=\kerd\pi\,$ we have 
$\,(\pi^*\xi)(\hn\eu)=0\,$ (so that $\,\ev\,$ is a section of 
$\,\mathcal{V}^\perp\nnh=\mathcal{V}$) and 
$\,\gm(\nsu\ev,\,\cdot\,)=\nsu(\pi^*\xi)=0\,$ (which gives $\,\nsu\ev=0$), as 
one sees noting that, since $\,\mathcal{V}\,$ is parallel, 
$\,[\nsu(\pi^*\xi)](\ew)=-\hs(\pi^*\xi)(\nsu\ew)=
-\hs(\pi^*\xi)(\nsw\eu)=0\,$ for any $\,\pi$-pro\-ject\-a\-ble vector 
field $\,\ew\,$ on $\,\ym$, in view of the Lei\-bniz rule and 
Remark~\ref{liebr}. Finally, the assignment $\,\xi\mapsto\ev\,$ is 
surjective: for a section $\,\ev\,$ of $\,\mathcal{V}\,$ parallel along 
$\,\mathcal{V}\nh$, we define a section $\,\xi\,$ of $\,\tab\,$ by 
$\,\xi(\tw)=\gm(v,w)$, for any vector field $\,\tw\,$ on $\,\bs$, where 
$\,w\,$ is any $\,\pi$-pro\-ject\-a\-ble vector field on $\,\ym\,$ with the 
$\,\pi$-im\-age $\,\tw$. Since $\,\mathcal{V}=\kerd\pi\,$ is a $\,\gm$-null 
distribution, $\,\xi(\tw)\,$ does not depend on the choice of $\,w$. Also, 
for any section $\,\eu\,$ of $\,\mathcal{V}\nh$, we have $\,\nsu\ev=0$, and so 
$\,d_u[\hs\xi(\tw)]=d_u[\hh \gm(v,w)]=\gm(v,\nsu\ew)=\gm(v,\nsw\eu)\,$ in view 
of Remark~\ref{liebr}, which in turn vanishes, as $\,\mathcal{V}\,$ is 
parallel and null. Therefore, $\,\xi(\tw)\,$ may be treated as a function 
$\,\bs\to\bbR\hs$, and so $\,\xi\,$ is well defined. This proves (a).

Assuming (\ref{ccn}), let us fix $\,\eu,\eu\hh'$ as in (b) and a section 
$\,\ev\,$ of $\,\mathcal{V}\nh$. Thus, 
$\,\bna_{\hskip-2.2pt\ev}\hn\eu\,\approx\,0$, where $\,\approx\,$ 
means {\it differ by a section of\/} $\,\mathcal{V}\nh$. (In fact, 
$\,[\ev,\eu]\,\approx\,0$, cf.\ Remark~\ref{liebr}, and 
$\,\bna_{\hskip-2pt\eu}\hn\ev\,\approx\,0\,$ since $\,\mathcal{V}\,$ is
parallel.) Also, $\,\br\hh(\hn\ev,\eu)\hh\eu\hh'$ is, by (\ref{ccn}), a 
section of $\,\mathcal{V}^\perp\nnh=\mathcal{V}\nh$. This, along with 
(\ref{cur}) and Remark~\ref{liebr}, gives 
$\,[\ev,\bna_{\hskip-2pt\eu}\hn\eu\hh'\hh]\,
\approx\,\bna_{\hskip-2.2pt\ev}\bna_{\hskip-2pt\eu}\hn\eu\hh'\hs\approx\,0$, 
so that $\,\bna_{\hskip-2pt\eu}\hn\eu\hh'$ is $\,\pi$-pro\-ject\-a\-ble, and 
(b) follows.
\end{proof}
For $\,\mathcal{V}\,$ as in Lemma~\ref{trvcn}, assertion (b) describes a 
transversal connection $\,\nabla\nnh$, in the sense of Molino \cite{molino}, 
for the foliation on $\,\ym\,$ tangent to $\,\mathcal{V}\nh$. (See also 
Section~\ref{trrs}.) We will refer to $\,\nabla\hn\,$ as the {\it transversal 
connection\/} on $\,\bs$, corresponding to $\,g\,$ and $\,\mathcal{V}\nh$.

Patter\-son and Walker \cite[p.\ 26]{patterson-walker} were the first to 
observe that, in the case where $\,\gm\,$ is a Riemann extension metric 
$\,\gm\nnh^\nabla\hn+\hs2\hh\pi^*\nnh\tau\,$ on $\,\tab$, both $\,\bs\,$ and 
$\,\nabla\hn\,$ (though not $\,\tau$) are, locally, determined just by 
$\,\gm\,$ and $\,\mathcal{V}=\kerd\pi$.

Using (\ref{gjk}), we now describe vertical Kil\-ling fields in Riemann 
extensions, cf.\  \cite{toomanian}:
\begin{lemma}\label{vrtki}Under the assumptions of Lemma\/~{\rm\ref{trvcn}}, a 
section\/ $\,\ev\,$ of\/ $\,\mathcal{V}\,$ is a Kil\-ling field for\/ 
$\,(\ym,\gm)\,$ if and only if\/ $\,\ki\hh\xi=0$, where\/ $\,\xi\,$ 
corresponds to\/ $\,\ev\,$ as in Lemma\/~{\rm\ref{trvcn}(a)} and\/ 
$\,\ki\,$ is the Kil\-ling operator, with\/ {\rm(\ref{cod}.b)}, of the 
connection\/ $\,\nabla\hn\,$ described in Lemma\/~{\rm\ref{trvcn}(b)}.
\end{lemma}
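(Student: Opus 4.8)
The plan is to compute the Killing equation for the vertical vector field $\ev$ directly in the coordinates $\,y^{\hs j}\nnh,x^\lambda\,$ introduced before (\ref{gjk}), and to show that it reduces to the two equations $[d^\nabla(\ki\hh\xi)]_{\dots}=0$ and $\nabla\ki\hh\xi$ being symmetric — both of which, for a torsionfree connection, amount to $\ki\hh\xi=0$. First I would use Lemma~\ref{trvcn}(a) to write $\ev$ in the given coordinates. A section of $\,\mathcal{V}=\kerd\pi\,$ has the form $\ev=a^\lambda\partial_{x^\lambda}$, and the condition that $\ev$ be parallel along $\,\mathcal{V}$ — equivalently, that $\pi^*\xi=\gm(\ev,\cdot\,)$ for a $1$-form $\xi$ on $\bs$ — forces $a^\lambda$ to be a function of the $\,y^{\hs j}$ alone with $\gm(\ev,\partial_{y^{\hs j}})=\gm_{j\lambda}a^\lambda=\xi_j$; so $\ev$ is the vertical lift determined by $\xi$, with components $a^\lambda=\gm^{j\lambda}\xi_j$ (using $[\gm^{j\lambda}]=[\gm_{j\lambda}]^{-1}$ from (\ref{gjk})).

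Next I would impose $\,\lie_\ev\gm=0$, i.e.\ $\,\bna_{\!\cdot\,}\ev\,$ is skew-adjoint, or equivalently $\,(\bna\ev)_{ab}+(\bna\ev)_{ba}=0\,$ in all index combinations, using the Christoffel symbols $\,\bvg\,$ listed in (\ref{gjk}). The components $\,\bvg_{\!\lambda\mu}^{\hh\md}=\bvg_{\!\lambda\md}^j=0\,$ and $\,\bvg_{\!j\lambda}^{\hs\mu}=-\gm_{s\lambda}\gm^{\hh k\mu}\vg_{\hskip-3ptjk}^s\,$ show that $\,\nabla_{\partial_{x^\lambda}}\ev=0\,$ and that the only surviving pieces of $\,\bna\ev\,$ live in the $\,(\partial_{y^{\hs j}},\partial_{y^k})\,$ and $\,(\partial_{y^{\hs j}},\partial_{x^\mu})\,$ slots. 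Lowering indices with the metric (\ref{gjk}), the $\,(y,x)\,$-component of the symmetrized $\,\bna\ev\,$ collapses — because $\,\gm_{\lambda\mu}=0\,$ and $\,\gm_{j\lambda}\,$ is constant — to exactly $\,2[\ki\hh\xi]_{jk}=\xi_{j,k}+\xi_{k,j}$, so the Killing condition already yields $\,\ki\hh\xi=0\,$ from these components alone. The remaining $\,(y,y)\,$-components, after substituting $\,\ki\hh\xi=0$, are then to be checked to vanish automatically; here the relation (\ref{nvn}), which expresses $\,\nsv\nabla\xi\,$ in terms of $\,R$, $\,d^\nabla\tau\,$ and $\,\nsv\tau\,$ with $\,\tau=\ki\hh\xi$, together with the line $\,\gm_{k\mu}\bvg_{\!jl}^\mu=\dots\,$ in (\ref{gjk}), does the bookkeeping: when $\,\ki\hh\xi=0\,$ the $\,x^\lambda$-dependent curvature terms cancel in the symmetrization by the Ricci (and first Bianchi) identity, exactly as in the coordinate form of (\ref{nvn}).

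Conversely, if $\,\ki\hh\xi=0\,$ I would run the same computation backwards: $\,\nabla_{\partial_{x^\lambda}}\ev=0\,$ automatically, the $\,(y,x)\,$-block vanishes since it is $\,2\ki\hh\xi$, and the $\,(y,y)\,$-block vanishes by (\ref{nvn}) with $\,\tau=0$. Hence $\,\lie_\ev\gm=0$, completing the equivalence. An alternative, cleaner route that I would at least mention: Lemma~\ref{kxpgn}(a) says the translation $\,\tx\,$ pulls $\,\gm\nnh^\nabla\nnh+2\pi^*\tau\,$ back to $\,\gm\nnh^\nabla\nnh+2\pi^*(\tau+\ki\hh\xi)$; differentiating the one-parameter group $\,t\mapsto\mathrm{K}_{t\xi}\,$ at $\,t=0\,$ identifies the generator with the vertical lift $\,\ev\,$ of $\,\xi$, and gives $\,\lie_\ev\gm=2\pi^*(\ki\hh\xi)$, whence $\,\lie_\ev\gm=0\iff\ki\hh\xi=0\,$ because $\,\pi\,$ is a submersion. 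I expect the main obstacle to be purely organizational: keeping the index conventions of (\ref{gjk}) straight — separating Greek from Roman, and tracking the constant matrix $\,[\gm_{j\lambda}]\,$ — while verifying that the curvature terms in the $\,(y,y)\,$-block really do cancel under $\,\ki\hh\xi=0$; this is where invoking (\ref{nvn}) verbatim, rather than re-deriving it, saves the day.
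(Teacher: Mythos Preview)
Your alternative route --- differentiating the one-parameter family $t\mapsto\mathrm{K}_{t\xi}$ and invoking Lemma~\ref{kxpgn}(a) to get $\lie_\ev\gm=2\hh\pi^*(\ki\hh\xi)$ --- is correct and is essentially what the paper does, though the paper obtains that same Lie-derivative identity by a two-line coordinate computation rather than by differentiating $\mathrm{K}_{t\xi}$. Either way, once $\lie_\ev\gm=2\hh\pi^*(\ki\hh\xi)$ is established, the equivalence follows immediately since $\pi$ is a submersion.

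Your primary coordinate approach, however, is muddled in a way that would cost you points if left as written. You have the blocks swapped: it is the $(y,y)$-block of $\lie_\ev\gm$ --- not the $(y,x)$-block --- that equals $2(\ki\hh\xi)_{jk}$, and this happens \emph{directly}, with no curvature terms to cancel and no need for~(\ref{nvn}). The paper's trick is to lower indices first: from $\ev^{\hs j}=0$, $\,\gm_{j\lambda}\ev^\lambda=\xi_j$, and $\gm_{\lambda\mu}=0$ one gets $\ev_j=\xi_j$ and $\ev_\lambda=0$; then $\ev_{j;k}=\partial_k\xi_j-\bvg^{\hs l}_{\!kj}\xi_l-\bvg^{\hs\mu}_{\!kj}\ev_\mu=\xi_{j,k}$ (the $\nabla$-covariant derivative on $\bs$, since $\ev_\mu=0$ and $\bvg^{\hs l}_{\!kj}=\vg^{\hs l}_{\!kj}$), while $\ev_{j;\lambda}=\ev_{\lambda;j}=\ev_{\lambda;\mu}=0$ using $\bvg^{\hs l}_{\!\lambda j}=0$ from~(\ref{gjk}). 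So the entire Lie derivative is $2\hh\pi^*(\ki\hh\xi)$ with no residual $(y,y)$ terms to check, and your plan to ``verify that the curvature terms in the $(y,y)$-block really do cancel'' via~(\ref{nvn}) is chasing something that was never there.
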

In fact, since $\,\ev^{\hs j}\nh=0\,$ and 
$\,\xi_{\hs j}\nh=\gm_{j\lambda}\ev^\lambda\nnh$, (\ref{gjk}) gives 
$\,\ev_j\nh=\xi_{\hs j}$ and $\,\ev_\lambda\nh=0\,$ (with 
$\,\gm$-low\-er\-ed indices), which implies the Lie-de\-riv\-a\-tive 
relation $\,\lie_\ev\gm=2\hh\pi^*\nh(\ki\hh\xi)$.

As mentioned earlier, the following intrinsic local characterization of 
Riemann extension metrics is a special case of a result of Afifi \cite{afifi}.
\begin{theorem}\label{riext}Let an\/ 
\hbox{$\,m$\hh-}\hskip0ptdi\-men\-sion\-al null parallel distribution\/ 
$\,\mathcal{V}\,$ on a pseu\-d\hbox{o\hs-}\hskip0ptRiem\-ann\-i\-an manifold\/ 
$\,(\ym,\gm)\,$ with\/ $\,\dim\ym=2m\,$ satisfy the curvature condition\/ 
{\rm(\ref{ccn})}. Then, for every point\/ $\,x\in\ym$, there exist a 
manifold\/ $\,\bs\,$ of dimension\/ $\,m$, a tor\-sion\-free connection 
$\,\nabla\hs$ on $\,\bs$, a symmetric $\,2$-ten\-sor\/ $\,\tau\,$ on\/ 
$\,\bs$, and a dif\-feo\-mor\-phism of a neighborhood of\/ $\,x\,$ onto an 
open subset of\/ $\,\tab$, which sends
\begin{enumerate}
  \def\theenumi{{\rm\roman{enumi}}}
\item[(i)] $\gm\,$ to the Riemann extension metric\/ 
$\,\gm\nnh^\nabla\hn+\hs2\hh\pi^*\nnh\tau$,
\item[(ii)] $\mathcal{V}\,$ to the vertical distribution\/ $\,\kerd\pi\hs$ of 
the bundle projection\/ $\,\pi:\tab\to\bs$,
\item[(iii)] the transversal connection described in 
Lemma\/~{\rm\ref{trvcn}(b)} to\/ $\,\nabla$.
\end{enumerate}
\end{theorem}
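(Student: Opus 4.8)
The plan is to localize so that the leaves of $\mathcal{V}$ become the fibres of a submersion onto an $m$-manifold $\bs$, to take $\nabla$ to be the transversal connection of $\gm$ and $\mathcal{V}$, to build the required diffeomorphism fibre by fibre out of the flat affine structure that $(\ref{ccn})$ imposes on those leaves, and finally to recognize the transported metric as a Riemann extension. Conclusions (ii) and (iii) then come almost for free; the substance is in (i). Concretely, I would first replace $\ym$ by a neighbourhood $U$ of $x$ on which $(\ref{bdl})$ holds, with bundle projection $\pi:U\to\bs$, $\dim\bs=m$, and let $\nabla$ be the torsionfree transversal connection on $\bs$ supplied by Lemma~$\ref{trvcn}$(b) — this will be the connection named in the theorem. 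Since $\mathcal{V}$ is parallel, every leaf $L$ of $\mathcal{V}$ is totally geodesic, so $\bna$ induces on $TL=\mathcal{V}|_L$ a torsionfree connection whose curvature is $\br$ evaluated on sections of $\mathcal{V}$; by the pair symmetry of $\br$, the hypothesis $(\ref{ccn})$, and nondegeneracy of $\gm$, that curvature vanishes, so the leaves are flat. After shrinking $U$ once more and fixing a section $\sigma:\bs\to U$ of $\pi$, I may assume that for each $y\in\bs$ the inverse exponential map $(\exp_{\sigma(y)}^{L_y})^{-1}:L_y\to\mathcal{V}_{\sigma(y)}$ of the induced flat connection on $L_y=\pi^{-1}(y)$ is a diffeomorphism onto an open set, smoothly in $y$.

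I would then define $\Phi:U\to\tab$ by letting $\Phi(x')$, for $x'\in L_y$ with $y=\pi(x')$, be the covector $\xi\in\tayb$ with $\pi^*\xi=\gm_{\sigma(y)}\bigl((\exp_{\sigma(y)}^{L_y})^{-1}(x'),\,\cdot\,\bigr)$ at $\sigma(y)$. This is well posed: $\mathcal{V}$ being $\gm$-null, $\gm_{\sigma(y)}(V,\,\cdot\,)$ annihilates $\kerd\pi_{\sigma(y)}$ and descends to $\tayb$, and, $\mathcal{V}$ being maximal null, $V\mapsto\xi$ is a linear isomorphism $\mathcal{V}_{\sigma(y)}\to\tayb$. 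Hence $\Phi$ restricts on each leaf to a diffeomorphism onto an open subset of the corresponding fibre of $\tab$; covering $\mathrm{id}_\bs$, it is a local diffeomorphism, so — after the shrinking already performed — a diffeomorphism of $U$ onto an open subset of $\tab$, and, being fibre-preserving, it carries $\mathcal{V}$ onto $\kerd\pi$, which is (ii). Since $\pi\circ\Phi=\pi$, $\Phi$ also intertwines the transversal connection of $(\gm,\mathcal{V})$ with that of $(\Phi_*\gm,\kerd\pi)$, and the former is $\nabla$ by construction, giving (iii). Note that $\Phi$ realizes pointwise the correspondence of Lemma~$\ref{trvcn}$(a): if $u\in\mathcal{V}_{x'}$ and $\ev$ denotes the parallel transport of $u$ over $L_y$, then, by the computation proving that lemma, $\gm(\ev,\,\cdot\,)|_{L_y}$ is the $\pi$-pullback of a fixed covector $\xi_y\in\tayb$, and the vertical vector $d\Phi_{x'}u$, read through $\kerd\pi_{\Phi(x')}\cong\tayb$, equals $\xi_y$.

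For (i) I would check first that $\Phi_*\gm$ and $\gm\nnh^\nabla$ agree on any pair of tangent vectors of which one is vertical: for $\Xi=d\Phi_{x'}u$ with $u\in\mathcal{V}_{x'}$, arbitrary $W$, and $w=(d\Phi)^{-1}W$, the remark above gives $(\Phi_*\gm)(\Xi,W)=\gm_{x'}(u,w)=\gm_{x'}(\ev,w)=(\pi^*\xi_y)(w)=\xi_y(d\pi W)=\gm\nnh^\nabla(\Xi,W)$. Thus $h:=\Phi_*\gm-\gm\nnh^\nabla$ is a symmetric $2$-tensor on $\Phi(U)$ with $\imath_\Xi h=0$ for all vertical $\Xi$, and it remains only to see that $h$ is invariant under the flows of vertical vector fields, since a semibasic symmetric $2$-tensor with that invariance is $\pi^*$ of a (necessarily symmetric) $2$-tensor on $\bs$, which I may name $2\hh\tau$. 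This invariance is the step I expect to be the real obstacle. The cleanest route I see uses that, because $h$ is semibasic, the $1$-form $\pi^*\xi$ has the same metric dual $\ev$ — a $\kerd\pi$-parallel section of $\kerd\pi$, by Lemma~$\ref{trvcn}$(a) — for $\Phi_*\gm$ and for $\gm\nnh^\nabla$, and that both metrics have transversal connection $\nabla$ (for $\gm\nnh^\nabla$ by the $\tau=0$ case of $(\ref{gjk})$); one then invokes the identity $\lie_\ev\gm\nnh'=2\hh\pi^*(\ki\hh\xi)$, valid for any metric $\gm\nnh'$ with $\kerd\pi$ null and parallel — for $\gm\nnh^\nabla$ this is the relation recorded just after Lemma~$\ref{vrtki}$, and for $\Phi_*\gm$ it follows from the same computation done coordinate-free, via $\bna(\pi^*\xi)=\pi^*(\nabla\xi)$ and Lemma~$\ref{trvcn}$(b). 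Applying it to both metrics with one and the same $\xi$ yields $\lie_\ev(\Phi_*\gm)=2\hh\pi^*(\ki\hh\xi)=\lie_\ev(\gm\nnh^\nabla)$, hence $\lie_\ev h=0$; since such $\ev$ span $\kerd\pi$ at every point, $h$ is basic, i.e.\ $\Phi_*\gm=\gm\nnh^\nabla+2\hh\pi^*\nnh\tau$.

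Should that coordinate-free Lie-derivative computation be judged too fussy, there is a concrete substitute for the same step: write everything in coordinates $y^{\hs j},x^\lambda$ in which $\Phi$ is the identity — legitimate precisely because $\Phi$ is affine on each fibre, so the fibre coordinates are linear — and read off from $(\ref{gjk})$ that $\Phi_*\gm-\gm\nnh^\nabla$ has only $dy^k\odot dy^{\hh l}$ components, with coefficients free of $x^\lambda$. Either way, this verification that the sole surviving fibre-dependence is the universal connection term $-\,\gm_{s\lambda}x^\lambda\vg^s_{jk}$ of $(\ref{rex})$ is the crux of the argument; the remaining ingredients — flatness and contractibility of the leaves, the descent of $\gm(V,\,\cdot\,)$ to $\tab$, the local-diffeomorphism bookkeeping, and the identification of $\nabla$ with the transversal connection — are routine once Lemma~$\ref{trvcn}$ is available.
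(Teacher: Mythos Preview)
Your argument is correct and follows essentially the same route as the paper's: both identify a neighbourhood of $x$ with an open set in $\tab$ via the flat affine structure that $(\ref{ccn})$ induces on the leaves together with a choice of zero section, take $\nabla$ to be the transversal connection of Lemma~\ref{trvcn}(b), check that $\gm-\gm^\nabla$ annihilates vertical vectors (the paper's $(\ref{gmg})$), and then show that this difference is constant along the fibres. For that last step the paper uses the Koszul-type identity $(\ref{dvh})$, applied once with $h=\gm$ and once with $h=\gm^\nabla$, while you argue via $\lie_\ev\hs\gm'=2\hh\pi^*(\ki\hh\xi)$ for both metrics; the two computations are equivalent and both hinge on Lemma~\ref{trvcn}(b). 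One caution: your proposed ``concrete substitute'' of reading the conclusion off from $(\ref{gjk})$ is circular as stated, since $(\ref{gjk})$ records the components of a metric already known to be a Riemann extension, not of an arbitrary $\gm$ with $\mathcal{V}$ null and parallel.
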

\begin{proof}By Lemma~\ref{trvcn}(a), the leaves of $\,\mathcal{V}\,$ 
coincide, in a neighborhood of any given point $\,x\in\ym$, with the fibres of 
an affine bundle which, through any fixed choice of a zero section, becomes 
identified with the vector bundle $\,\tab\,$ for a local leaf space $\,\bs\,$ 
of $\,\mathcal{V}\nh$. Let $\,\nabla\hn\,$ be the transversal connection on 
$\,\bs$, corresponding to $\,\gm\,$ and $\,\mathcal{V}\,$ as in 
Lemma~\ref{trvcn}(b). Clearly,
\begin{equation}\label{gmg}
(\gm-\gm\nnh^\nabla\hh)(\hn\ev,\,\cdot\,)\,=\,0\hskip12pt\mathrm{for\ every\ 
section\ }\,\,\ev\,\,\mathrm{of\ }\,\,\mathcal{V}.
\end{equation}
If $\,\bd\,$ is the Le\-vi-Ci\-vi\-ta connection of any 
pseu\-\hbox{do\hs-\hn}\hskip0ptRiem\-ann\-i\-an metric $\,h\,$ on a 
neighborhood of $\,x\,$ and $\,\ev,\ew\,$ are vector fields, 
$\,d_\ev[\hh h(\ew,\ew)]/2=h(\bd_\ev\hn\ew,\ew)
=h(\bd_\ew\ev,\ew)+h([\ev,\ew],\ew)$, and so 
\begin{equation}\label{dvh}
d_\ev[\hh h(\ew,\ew)]/2\,=\,d_\ew[\hh h(\hn\ev,\ew)]\,
-\,h(\hn\ev,\bd_\ew\ew)+h([\ev,\ew],\ew)\hs.
\end{equation}
Our two choices of $\,h\,$ are $\,h=\gm\,$ and $\,h=\gm\nnh^\nabla\nnh$. If 
$\,\ev\,$ is a section of $\,\mathcal{V}\,$ parallel along $\,\mathcal{V}\,$ 
and $\,\ew\,$ is $\,\mathcal{V}\nh$-pro\-ject\-a\-ble (Remark~\ref{liebr}), 
then each term on the right-hand side of (\ref{dvh}) is the same for 
$\,h=\gm\,$ as it is for $\,h=\gm\nnh^\nabla\nnh$. In the case of 
$\,d_\ew[\hh h(\hn\ev,\ew)]\,$ and $\,h([\ev,\ew],\ew)\,$ this is obvious from 
(\ref{gmg}) since, according to Remark~\ref{liebr}, $\,[\ev,\ew]\,$ is a 
section of $\,\mathcal{V}\nh$. On the other hand, the term 
$\,h(\hn\ev,\bd_\ew\ew)$, for either choice of $\,h$, equals 
$\,\xi(\nabla_{\hskip-2pt\pi\nh\ew}(\pi\hn\ew))$, where $\,\xi\,$ corresponds 
to $\,\ev\,$ as in Lemma~\ref{trvcn}(a). (That $\,\nabla\hn\,$ 
is also the transversal connection for $\,\gm\nnh^\nabla$ is immediate from 
the formula $\,\bvg_{\hskip-3ptjk}^{\hs l}=\vg_{\hskip-3ptjk}^{\hs l}$ in 
(\ref{gjk}) with $\,\tau=0$.)

Subtracting the two versions of (\ref{dvh}), for $\,h=\gm\,$ and 
$\,h=\gm\nnh^\nabla\nnh$, and using (\ref{gmg}), we see that 
$\,\gm-\gm\nnh^\nabla\nnh=2\hh\pi^*\nnh\tau\,$ for some symmetric 
$\,2$-ten\-sor $\,\tau\,$ on $\,\bs$, which completes the proof.
\end{proof}
The following lemma describes local isometries between two Riemann extension 
metrics, sending one vertical distribution onto the other. We use the symbol 
$\,\pi\,$ for both bundle projections $\,\tab\to\bs\,$ and 
$\,T\hskip.2pt^*\hskip-.9pt\plne\to\plne$, the meaning of $\,\tx\,$ is the 
same as in Lemma~\ref{kxpgn}(a), $\,\ki\,$ is the Kil\-ling operator for 
$\,\nabla\nnh$, given  by (\ref{cod}.b), while 
$\,F^*:T\hskip.2pt^*\hskip-.9pt\plne\to\tab\,$ denotes the 
dif\-feo\-mor\-phism induced by $\,F:\bs\to\plne$, and, at the same time, 
$\,F^*\nh\ts\,$ stands for the $\,F$-pull\-back of the $\,2$-ten\-sor $\,\ts$.
\begin{lemma}\label{ismtr}Suppose that a triple\/ $\,(\bs,\nabla\nnh,\tau)\,$ 
consists of a manifold\/ $\,\bs\,$ with a tor\-sion\-free connection\/ 
$\,\nabla\hn\,$ and a symmetric\/ $\,2$-ten\-sor\/ $\,\tau\,$ on\/ $\,\bs$. 
Let\/ $\,(\plne,\mathrm{D},\ts)\,$ be another such triple.
\begin{enumerate}
  \def\theenumi{{\rm\roman{enumi}}}
\item[(i)] For any dif\-feo\-mor\-phism $\,F:\bs\to\plne\,$ sending\/ 
$\,\nabla\hn\,$ to\/ $\,\mathrm{D}\,$ and any\/ $\,1$-form $\,\xi\,$ on\/ 
$\,\bs\,$ such that\/ $\,F^*\nh\ts=\tau+\ki\hh\xi$, the composite\/ 
$\,\varPhi=\tx\circ F^*$ is an isometry of\/ 
$\,(T\hskip.2pt^*\hskip-.9pt\plne,\gm^{\mathrm{D}}\hn
+\hs2\hh\pi^*\hn\ts)\,$ onto\/ 
$\,(\tab,\gm\nnh^\nabla\hn+\hs2\hh\pi^*\nnh\tau)\,$ sending one vertical 
distribution onto the other.
\item[(ii)] Conversely, if\/ $\,x\in T\hskip.2pt^*\hskip-.9pt\plne\,$ and\/ 
$\,\varPhi\,$ an isometry of a connected neighborhood of\/ $\,x\,$ in\/ 
$\,(T\hskip.2pt^*\hskip-.9pt\plne,\gm^{\mathrm{D}}\hn
+\hs2\hh\pi^*\hn\ts)\,$ onto an open sub\-man\-i\-fold of\/ 
$\,(\tab,\gm\nnh^\nabla\hn+\hs2\hh\pi^*\nnh\tau)$, sending one vertical 
distribution onto the other, then\/ $\,\varPhi\hs$ restricted to some 
neighborhood of\/ $\,x\,$ equals\/ $\,\tx\circ F^*\nnh$, where\/ 
$\,F:\bs\hh'\nnh\to\plne\hh'$ and\/ $\,\xi\,$ are defined on\/ $\,\bs\hh'$ 
and have the properties listed in\/ {\rm(i)}, for some open 
sub\-man\-i\-folds\/ $\,\bs\hh'\nnh\subset\bs\,$ and\/ 
$\,\plne\hh'\nnh\subset\plne$.
\end{enumerate}
\end{lemma}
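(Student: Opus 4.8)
The plan is to prove (i) by a direct computation and then derive (ii) by showing that any isometry respecting the vertical distributions is forced, after composition with a suitable $\tx$, to be of the form $F^*$ for a connection‑preserving diffeomorphism $F$. For part (i), the key observation is that $F^*:T^*\hskip-.9pt\plne\to\tab$ is a vec\-tor-bun\-dle isomorphism covering $F^{-1}$, and that pulling a Riemann extension metric back under $F^*$ only involves the connection $\mathrm{D}$ through its components: since $F$ sends $\nabla$ to $\mathrm{D}$, formula (\ref{rex}) shows that $(F^*)^*(\gm^{\mathrm D}\nh+2\pi^*\ts)=\gm\nnh^\nabla\nh+2\pi^*(F^*\ts)$, because the term $q_j\vg_{\!kl}^j$ transforms exactly like a connection under $F$ while $\ts$ pulls back to $F^*\ts$. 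Composing with $\tx$ and invoking Lemma \ref{kxpgn}(a), the $\tx$-pull\-back converts $\gm\nnh^\nabla\nh+2\pi^*(F^*\ts)$ into $\gm\nnh^\nabla\nh+2\pi^*(F^*\ts-\ki\hh\xi)=\gm\nnh^\nabla\nh+2\pi^*\nnh\tau$ once $F^*\ts=\tau+\ki\hh\xi$. That $\varPhi=\tx\circ F^*$ sends one vertical distribution onto the other is clear since both $F^*$ and $\tx$ cover maps of the base (for $\tx$, the covered map is the identity).

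For part (ii), I would proceed as follows. An isometry $\varPhi$ of the stated kind carries the $\gm^{\mathrm D}$-null, $\gm^{\mathrm D}$-par\-al\-lel vertical distribution of $T^*\hskip-.9pt\plne$ to the analogous distribution of $\tab$; hence it descends to a local diffeomorphism $F^{-1}$ of leaf spaces, i.e.\ there is a diffeomorphism $F:\bs\hh'\to\plne\hh'$ between open sub\-man\-i\-folds with $\pi\circ\varPhi=F^{-1}\circ\pi$. Next, by Lemma \ref{trvcn}(b) the transversal connection is an isometry invariant that, on a Riemann extension $\gm\nnh^\nabla\nh+2\pi^*\nnh\tau$, equals $\nabla$ (the parenthetical remark at the end of the proof of Theorem \ref{riext}); therefore $F$ sends $\mathrm D$ to $\nabla$, equivalently $\nabla$ to $\mathrm D$. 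Now consider $\varPsi:=(F^*)^{-1}\circ\varPhi$, an isometry of (an open subset of) $(T^*\hskip-.9pt\plne,\gm^{\mathrm D}\nh+2\pi^*\hn\ts)$ onto an open subset of $(T^*\hskip-.9pt\plne,\gm^{\mathrm D}\nh+2\pi^*\hn(\ts-\ki\,\eta))$ for the appropriate pulled‑back tensor, which now covers the identity of the base $\plne\hh'$ and preserves the vertical distribution. Being a fibre‑preserving isometry of a cotangent bundle that is the identity on the base, $\varPsi$ restricts on each fibre to an \emph{af\-fine} map of the vector space $T^*_y\hskip-.9pt\plne$; the linear part is an isometry of the relevant induced quadratic form, and the compatibility with $\gm^{\mathrm D}_x\hskip-4.3pt(\xi,w)=\xi(d\pi_xw)$ in the coordinate description (\ref{rex}) forces the linear part to be the identity. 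Thus $\varPsi$ is a fibrewise translation, i.e.\ $\varPsi=\tx$ for some section $\xi$ of $T^*\hskip-.9pt\plne$ (equivalently a $1$-form on $\plne\hh'$, transported to $\bs\hh'$ via $F$); applying Lemma \ref{kxpgn}(a) to identify the change in the $\tau$-term yields $F^*\ts=\tau+\ki\hh\xi$, and unwinding the definitions gives $\varPhi=\tx\circ F^*$ with $F,\xi$ as in (i).

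The main obstacle is the rigidity step in (ii): showing that a vertical‑distribution‑preserving isometry of $\gm\nnh^\nabla\nh+2\pi^*\nnh\tau$ covering the identity must be a fibrewise translation, with no nontrivial linear part. The clean way is to use the coordinate normal form (\ref{rex}) together with Lemma \ref{kxpgn}(c): that lemma already asserts that if a vec\-tor-bun\-dle isomorphism $\vt$ of $\tab$ pulls $\gm\nnh^\nabla\nh+2\pi^*\nnh\tau$ back to another Riemann extension $\gm\nnh^\nabla\nh+2\pi^*\nnh\tau'$ on an open set meeting every fibre, then $\vt=\mathrm{Id}$ and $\tau'=\tau$. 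So the real work is to verify that $\varPsi$ (or a localization of it) is in fact such a vec\-tor-bun\-dle isomorphism composed with a translation — i.e.\ that an \emph{arbitrary} fibre‑preserving local isometry covering the identity is automatically \emph{affine} on fibres. This follows because the vertical distribution is $\gm^{\mathrm D}$-par\-al\-lel and totally geodesic, so $\varPsi$ maps (vertical) geodesics to (vertical) geodesics, and the affine structure on each fibre $T^*_y\hskip-.9pt\plne$ is exactly its geodesic structure; hence $\varPsi$ is affine on each fibre, after which Lemma \ref{kxpgn}(a),(c) finish the argument. I would present (i) in full and (ii) by the above reduction, citing Lemma \ref{kxpgn}(c) for the concluding uniqueness.
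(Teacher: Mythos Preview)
Your proposal is correct and follows essentially the same route as the paper: part (i) by direct pullback plus Lemma~\ref{kxpgn}(a), and part (ii) by descending to a base diffeomorphism $F$, recognizing $\nabla$ and the fibre affine structure as metric invariants, decomposing the residual self-map as $\vt\circ\tx$, and then killing $\vt$ with Lemma~\ref{kxpgn}(c). The only cosmetic difference is in how you justify that the fibre-preserving self-map covering the identity is affine on fibres: you argue via preservation of vertical geodesics, while the paper invokes Lemma~\ref{trvcn} (the affine structure on a fibre is determined by which sections of $\mathcal{V}$ are parallel along $\mathcal{V}$, a metric invariant); both arguments are valid and amount to the same observation that the induced flat connection on each fibre is preserved.
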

\begin{proof}If $\,F^*\mathrm{D}=\nabla\nnh$, the $\,F^*\nnh$-pull\-backs of 
$\,\gm\nnh^\nabla$ and $\,\pi^*\nnh F^*\nh\ts\,$ are $\,\gm^{\mathrm{D}}$ and 
$\,\pi^*\hn\ts$. Thus, (i) is immediate from Lemma~\ref{kxpgn}(a).

Conversely, given $\,x\,$ and $\,\varPhi\,$ as in (ii), we may assume 
(\ref{bdl}) for suitable neighborhoods $\,\,U\,$ of $\,x\,$ in 
$\,T\hskip.2pt^*\hskip-.9pt\plne\,$ and $\,\,U\hh'$ of $\,\varPhi(x)\,$ in 
$\,\tab\,$ with some base manifolds which are open connected sets 
$\,\plne\hh'\nnh\subset\plne\,$ and $\,\bs\hh'\nnh\subset\bs$, in such a way 
that $\,\pi\nh\circ\varPhi=F^{-1}\nnh\circ\pi\,$ for some dif\-feo\-mor\-phism 
$\,F:\bs\hh'\nnh\to\plne\hh'\nnh$. 

In view of Lemma~\ref{trvcn} and the formula 
$\,\bvg_{\hskip-3ptjk}^{\hs l}=\vg_{\hskip-3ptjk}^{\hs l}$ in (\ref{gjk}), the 
af\-fine structures of the fibres in $\,T\hskip.2pt^*\hskip-.9pt\plne\,$ and 
$\,\tab$, as well as the connections $\,\mathrm{D}\,$ and $\,\nabla\hn\,$ are 
local geometric invariants associated with the metrics 
$\,\gm^{\mathrm{D}}\hn+\hs2\hh\pi^*\hn\ts$, 
$\,\gm\nnh^\nabla\hn+\hs2\hh\pi^*\nnh\tau\,$ and the respective vertical 
distributions. Therefore, $\,F^*\mathrm{D}=\nabla\hn\,$ and, on a neighborhood 
of $\,x$, we have $\,\varPhi=\vt\circ\tx\circ F^*$ for some vec\-tor-bun\-dle 
isomorphism $\,\vt:\tab\hh'\nnh\to\tab\hh'$ and some $\,1$-form $\,\xi\,$ 
on $\,\bs\hh'\nnh$. Now $\,F^*$ pushes 
$\,\gm^{\mathrm{D}}\hn+\hs2\hh\pi^*\hn\ts\,$ forward onto the metric 
$\,\gm\nnh^\nabla\hn+\hs2\hh\pi^*\hn F^*\nh\ts$, which, according to 
Lemma~\ref{kxpgn}(a), is pushed forward by $\,\tx\,$ onto 
$\,\gm\nnh^\nabla\hn+\hs2\hh\pi^*\nh(F^*\nh\ts-\ki\hh\xi)$. Since 
$\,\varPhi\,$ is an isometry, this last metric is the $\,\vt$-pull\-back of 
$\,\gm\nnh^\nabla\nnh+\hs2\hh\pi^*\nnh\tau$. By Lemma~\ref{kxpgn}(c), 
$\,\vt=\mathrm{Id}\,$ and $\,F^*\nh\ts-\ki\hh\xi=\tau$, which completes the 
proof.
\end{proof}

\section{RSTS connections}\label{rsts}
\setcounter{equation}{0}
By an {\it RSTS connection\/} we mean a `Ric\-ci skew-sym\-met\-ric 
tor\-sion\-free surface connection' or, more precisely, a tor\-sion\-free 
connection $\,\nabla\hn\,$ on a surface $\,\bs\,$ such that the Ric\-ci tensor of 
$\,\nabla\hn\,$ is skew-sym\-met\-ric at every point of $\,\bs$.

Wong \cite[Theorem~4.2]{wong} found a canonical coordinate form of RSTS 
connections. A simplified version of Wong's result can be phrased as follows.
\begin{theorem}\label{wongs}A tor\-sion\-free connection\/ $\,\nabla\hs$ 
on a surface\/ $\,\bs\,$ has skew-sym\-met\-ric Ric\-ci tensor if and only if, 
on some neighborhood of any point of $\hskip-8.5pt\phantom{^{1^{1^1}}}\bs$, 
there exist coordinates in which the component functions of 
$\hskip-8pt\phantom{^{1^{1^1}}}\nabla\hn\,$ are 
$\,\vg_{\hskip-2pt11}^1=-\hs\partial^{\phantom{j}}_1\nh\varphi$, 
$\,\vg_{\hskip-2pt22}^2=\hs\partial^{\phantom{j}}_{\hs2}\varphi\,$ for a 
function $\,\varphi$, and\/ $\,\vg_{\hskip-3ptjk}^{\hs l}=0\,$ unless 
$\,j=k=l$. The Ric\-ci tensor\/ $\,\rho\,$ of\/ $\,\nabla\hn\,$ then is given 
by\/ 
$\,\rho^{\phantom{j}}_{12}
=-\hs\partial^{\phantom.}_1\partial^{\phantom.}_{\hs2}\varphi$.
\end{theorem}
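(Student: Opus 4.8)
The plan is to prove both directions of Theorem~\ref{wongs}, the substance lying in the `only if' part and, within it, in one pointwise observation. First I would record the following. On a surface, the curvature tensor $\,R\,$ of a tor\-sion\-free connection $\,\nabla\nnh$, being skew-sym\-met\-ric in its first two arguments, is determined by the single en\-do\-mor\-phism $\,R(\eu,\ew)\,$ associated with an arbitrary pair of independent tangent vectors; and a short index computation from the coordinate form of (\ref{cur}), together with the Ric\-ci contraction $\,\rho_{jk}=R_{jlk}{}^{l}$, shows that $\,\rho\,$ is skew-sym\-met\-ric at a point if and only if $\,R(\eu,\ew)\,$ is there a scalar multiple of the identity. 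Hence skew-sym\-me\-try of $\,\rho\,$ on all of $\,\bs\,$ is equivalent to $\,R(\eu,\ew)=\rho\hh(\eu,\ew)\hskip1pt\mathrm{Id}\,$ for all tangent vectors $\,\eu,\ew$, with $\,\rho\,$ read as a $\,2$-form.

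Next, the construction. Assume $\,\rho\,$ is skew-sym\-met\-ric and work on a contractible neighborhood $\,\,U\,$ of an arbitrary point of $\,\bs$. By the observation, each $\,R(\eu,\ew)\,$ preserves every line in every tangent space, so parallel transport around loops in $\,\,U\,$ fixes every tangent line-ele\-ment; transporting a fixed line from the base point therefore produces a well-defined (path-in\-de\-pen\-dent) $\,\nabla$-par\-al\-lel line field on $\,\,U$, and two choices of the initial line that are distinct at the base point give $\,\nabla$-par\-al\-lel line fields $\,\ell_1,\ell_2\,$ which, by uniqueness of parallel transport, remain transversal on all of $\,\,U$. (Equivalently, the system $\,\nsu\ev=\theta(\eu)\hh\ev\,$ for an unknown vector field $\,\ev\,$ and $\,1$-form $\,\theta$, with $\,\ev\,$ prescribed up to scale at one point, is solvable because its integrability condition, obtained by feeding the equation into (\ref{cur}), reduces to $\,R(\eu,\ew)\hh\ev\in\mathrm{span}\hs\ev$.) The integral curves of a $\,\nabla$-par\-al\-lel line field are geodesics up to reparametrization, since a tangent field $\,\ev\,$ of such a curve satisfies $\,\nsv\ev\in\mathrm{span}\hs\ev$. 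On a possibly smaller neighborhood pick coordinates $\,x^1\nnh,x^2\,$ with $\,x^2\,$ constant on the leaves of $\,\ell_1\,$ and $\,x^1\,$ constant on the leaves of $\,\ell_2$; transversality of $\,\ell_1,\ell_2\,$ makes $\,(x^1\nnh,x^2)\,$ a coordinate system in which $\,\partial_1\,$ spans $\,\ell_1\,$ and $\,\partial_2\,$ spans $\,\ell_2$. Parallelism of $\,\ell_1\,$ gives $\,\nabla_{\hn\partial_j}\partial_1\in\mathrm{span}\hs\partial_1\,$ for $\,j=1,2$, hence $\,\vg_{11}^2=\vg_{12}^2=0$; parallelism of $\,\ell_2\,$ gives $\,\vg_{22}^1=\vg_{12}^1=0$; together with the symmetry $\,\vg_{jk}^l=\vg_{kj}^l$, the only components of $\,\nabla\,$ that can be nonzero are $\,\vg_{11}^1\,$ and $\,\vg_{22}^2$.

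Then I would produce $\,\varphi\,$ and close both directions. With $\,\nabla\,$ in this diagonal form the index computation of the first step (or a direct recomputation from (\ref{cur})) gives $\,\rho_{11}=\rho_{22}=0\,$ identically, together with $\,\rho_{12}=-\hs\partial_1\vg_{22}^2\,$ and $\,\rho_{21}=-\hs\partial_2\vg_{11}^1$. Skew-sym\-me\-try of $\,\rho$ — a coordinate-free condition — therefore forces $\,\partial_1\vg_{22}^2+\partial_2\vg_{11}^1=0$, i.e., the $\,1$-form $\,-\vg_{11}^1\hs dx^1+\vg_{22}^2\hs dx^2\,$ is closed and hence, on a still smaller neighborhood, equals $\,d\varphi\,$ for some function $\,\varphi$; this yields $\,\vg_{11}^1=-\hs\partial_1\varphi$, $\,\vg_{22}^2=\partial_2\varphi\,$ and $\,\rho_{12}=-\hs\partial_1\partial_2\varphi$. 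Conversely, if $\,\nabla\,$ has the stated components in some coordinates, the same evaluation of $\,\rho_{jk}\,$ via (\ref{cur}) shows at once that $\,\rho\,$ is skew-sym\-met\-ric with $\,\rho_{12}=-\hs\partial_1\partial_2\varphi$, which also verifies the final assertion of the theorem.

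The only step that is not bookkeeping is the passage, in the second paragraph, from skew-sym\-met\-ric Ric\-ci tensor to the two transversal $\,\nabla$-par\-al\-lel line fields; once the scalar form of $\,R(\eu,\ew)\,$ is established, this is the standard statement that flat holonomy on line-ele\-ments admits parallel sections, and the adapted coordinates then do everything else. I would also keep in mind that the leaf-space and defining-function constructions, as well as the transversality argument, are purely local, which is all that Theorem~\ref{wongs} requires.
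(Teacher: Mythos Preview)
Your proof is correct and self-contained. The paper itself does not prove this theorem but merely cites \cite[Section~6]{derdzinski-08}; your argument via the scalar form of $R(\eu,\ew)$ (which is exactly the paper's formula~(\ref{rer}.a)), the resulting $\nabla$-par\-al\-lel line fields, and the adapted coordinates is the standard route and matches Wong's original reasoning in spirit. One small remark: in your third paragraph you invoke skew-symmetry of $\rho$ a second time to close the $1$-form, but in fact once you have reached the diagonal form $\vg_{\hskip-3ptjk}^{\hs l}=0$ unless $j=k=l$, the relations $\rho_{11}=\rho_{22}=0$ already hold identically, so the only content of skew-symmetry at that stage is precisely the closedness condition $\partial_1\vg_{22}^2+\partial_2\vg_{11}^1=0$; this is what you say, so there is no gap, just a point worth making explicit.
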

\begin{proof}See \cite[Section~6]{derdzinski-08}.
\end{proof}
All general local properties of RSTS connections could in principle be derived 
from Theorem~\ref{wongs}. However, such derivations are often tedious, which 
is why in this and the following sections direct arguments will be used.

Denoting by $\,R\,$ and $\,\rho\,$ the curvature and Ric\-ci tensors 
of any RSTS connection $\,\nabla\nnh$, we have
\begin{equation}\label{rer}
\mathrm{a)}\hskip9ptR\hh(\hn\eu,\ev)\hh\ev\hh'\nh\,
=\,\rho\hs(\nh\eu,\ev)\hh\ev\hh',\hskip29pt
\mathrm{b)}\hskip9pt\beta\wedge[\hs\rho\hs(\hn\eu,\,\cdot\,)]\,
=\,\beta(\hn\eu)\hs\rho\hh,
\end{equation}
for all tangent vector fields $\,\eu,\ev,\ev\hh'$ and $\,1$-forms $\,\beta\,$ 
(notations of (\ref{bwa}.a)). In fact, (\ref{rer}.a) is a well-known special 
case of the fact that the Ric\-ci tensor of a tor\-sion\-free surface 
connection uniquely determines its curvature tensor. (See, for instance, 
\cite[Lemma 4.1]{derdzinski-08}.) That both sides of (\ref{rer}.b) agree on 
any given pair $\,(\hn\ev,\ev\hh'\hh)\,$ of vector fields is in turn obvious 
from (\ref{bwa}.a), along with (iii) in Section~\ref{prel} applied to the 
expression $\,\beta(\hn\ev)\hs\rho\hs(\hn\eu,\ev\hh'\hh)$, tri\-lin\-e\-ar in 
$\,\ev,\eu,\ev\hh'\nnh$.

In the remainder of this section we assume that $\,\nabla\hn\,$ is a 
tor\-sion\-free connection on a surface $\,\bs\,$ and its Ric\-ci tensor 
$\,\rho$, in addition to being skew-sym\-met\-ric, is nonzero everywhere.

Since $\,\rho\,$ trivializes the bundle $\,[\tab]^{\wedge2}\nnh$, there exist 
a unique $\,1$-form $\,\phi$, called the {\it re\-cur\-rence $1$-form\/} of 
$\,\nabla\nnh$, and a unique vector field $\,\ew\,$ on $\,\bs\,$ such that
\begin{equation}\label{rec}
\mathrm{i)}\hskip9pt\nabla\nnh\rho\,=\,\phi\otimes\rho\hh,\hskip22pt
\mathrm{ii)}\hskip9pt\phi\,=\,\rho\hs(\ew,\,\cdot\,)\hh,\hskip22pt\mathrm{iii)}
\hskip9pt\phi(w)\,=\,0\hh,\hskip22pt\mathrm{iv)}\hskip9ptd\hh\phi=2\rho\hh.
\end{equation}
(Relation (\ref{rec}.iv) is an easy consequence of the Ric\-ci identity; see 
\cite[formula (8.1)]{derdzinski-08}.) Furthermore, for $\,\ew\,$ defined by 
(\ref{rec}.ii) and any vector field $\,\ev\,$ on $\,\bs$,
\begin{equation}\label{dfr}
\mathrm{a)}\hskip9pt\dvw\,=\,2\hh,\hskip22pt
\mathrm{b)}\hskip9ptd\hs[\hs\rho\hs(\hn\ev,\,\cdot\,)]\,
=\,[\hs\mathrm{div}\hskip1.4pt\ev+\phi(\hn\ev)]\hs\rho\hh.
\end{equation}
In fact, if $\,\eu,\eu\hh'$ are arbitrary vector fields, 
$\,(\nabla_{\hskip-2.2pt\eu}[\hh\rho\hs(\hn\ev,\,\cdot\,)])(\hn\eu\hh'\hh)
=\phi(\hn\eu)\hs\rho\hs(\hn\ev,\eu\hh'\hh)+
\rho\hs(\nabla_{\hskip-2.2pt\eu}\ev,\eu\hh'\hh)\,$ by (\ref{rec}.i), so that 
(\ref{bwa}.c) and (\ref{rer}.b) yield (\ref{dfr}.b), since, according to 
(ii) in Section~\ref{prel} and (\ref{bch}.b), skew-sym\-met\-riz\-ing 
$\,\rho\hs(\nabla_{\hskip-2.2pt\eu}\ev,\eu\hh'\hh)\,$ in $\,\eu,\eu\hh'$ we 
obtain one-half of $\,\mathrm{div}\hskip1.4pt\ev\,$ times 
$\,\rho\hs(\hn\eu,\eu\hh'\hh)$. Now (\ref{dfr}.a) follows if one 
applies (\ref{dfr}.b) to $\,\ev=\ew$, using (\ref{rec}.iii), (\ref{rec}.ii) 
and (\ref{rec}.iv). By (\ref{dfr}.a),
\begin{equation}\label{dns}
\mathrm{the\ set\ }\,\,\bs\hh'\nh\subset\bs\,\,\mathrm{\ on\ which\ 
}\,\,\ew\ne0\,\,\mathrm{\ is\ open\ and\ dense\ in\ }\,\,\bs\hh.
\end{equation}
For $\,\bs\,$ and $\,\nabla\hn\,$ as above, still assuming that the Ric\-ci 
tensor $\,\rho\,$ is skew-sym\-met\-ric and $\,\rho\ne0\,$ everywhere, we 
define a vec\-tor-bun\-dle morphism $\,\nd:\tb\to\tb\,$ by
\begin{equation}\label{qef}
\mathrm{i)}\hskip9pt\nd\,=\,4\,+\,\naw\,+\,3\hs\phi\otimes\ew/4\hh,\hskip14pt
\mathrm{so\ that}\hskip10pt\hskip12pt\mathrm{ii)}
\hskip9pt\mathrm{tr}\hskip2.5pt\nd\,=\,10\hh.
\end{equation}
Here $\,\naw:\tb\to\tb\,$ as in (\ref{nwt}), $\,4\,$ means $\,4\,$ times the 
identity, $\,\phi,\ew\,$ are characterized by (\ref{rec}), and (\ref{qef}.ii) 
is immediate from (\ref{dfr}.a) along with (\ref{rec}.iii). Finally, we denote 
by $\,\ob\,$ and $\,\od\,$ the first-or\-der linear differential operators, 
sending symmetric $\,2$-ten\-sors $\,\tau\,$ to $\,1$-forms on $\,\bs$, or, 
respectively, $\,1$-forms $\,\xi\,$ on $\,\bs\,$ to functions 
$\,\bs\to\bbR\hs$, which are given by
\begin{equation}\label{bta}
\mathrm{a)}\hskip9pt[(\ob\tau)(\hn\ev)]\hh\rho\,
=\,[\hh d\hh^\nabla\hskip-2pt\tau](\,\cdot\,,\,\cdot\,,\ev)\hh,\hskip18pt
\mathrm{b)}\hskip9pt2\hs[\od\hh\xi]\hh\rho\,=\,\xi\wedge\phi\,-\,\hh d\hs\xi
\end{equation}
for any vector field $\,\ev$, where $\,d\hh^\nabla\nnh$ is the Co\-daz\-zi 
operator with (\ref{cod}.a). (Note that 
$\,[\hh d\hh^\nabla\hskip-2pt\tau](\,\cdot\,,\,\cdot\,,v)\,$ is a section of 
the bundle $\,[\tab]^{\wedge2}\nnh$, trivialized by $\,\rho$.) Using these 
$\,\phi,\ew,\ob\,$ and $\,\od$, we also define a third-or\-der linear 
differential operator $\,\oz$, sending each symmetric $\,2$-ten\-sor 
$\,\tau\,$ to the $\,1$-form
\begin{equation}\label{zte}
\oz\tau\,=\,2\hs d\hh[\od(\ob\tau)]+4\hh\ob\tau-\tau(\ew,\,\cdot\,)
+3\hh[\od(\ob\tau)]\hs\phi/2\hh.
\end{equation}

\section{The vertical distribution of a type~III SDNE manifold}\label{vert}
\setcounter{equation}{0}
Every type~III SDNE manifold $\,(\ym,\gm)\,$ carries a distinguished 
\hbox{two\hh-}\hskip0ptdi\-men\-sion\-al null distribution $\,\mathcal{V}\nh$, 
which, in addition, is integrable and has totally geodesic leaves. Namely, 
$\,W^+$ acting on self-du\-al $\,2$-forms, at any point $\,x$, has rank $\,2$, 
and hence its kernel is \hbox{one\hh-}\hskip0ptdi\-men\-sion\-al. Thus, we 
may declare $\,\mathcal{V}_{\nh x}$ to be the null\-space of some, or any, 
self-du\-al $\,2$-form at $\,x\,$ spanning 
$\,\mathrm{Ker}\hskip2ptW_{\nnh x}^+\nnh$. That $\,\mathcal{V}\,$ has the 
properties just listed was shown in \cite[Lemma 5.1]{derdzinski-09}.

We refer to $\,\mathcal{V}\,$ as the {\it vertical distribution\/} of 
$\,(\ym,\gm)$, and say that $\,\gm\,$ is a {\it type\/}~III {\it SDNE Walker 
metric\/} if its vertical distribution $\,\mathcal{V}\,$ is parallel. 
Similarly, a type~III SDNE metric $\,\gm\,$ is called {\it strictly \itnw\/} 
\cite[Section~6]{derdzinski-09} if the fundamental tensor of 
$\,\mathcal{V}\nnh$, which measures its deviation from being parallel 
\cite[Section~24]{derdzinski-09}, is nonzero everywhere.

For $\,\gm\,$ as above, being a Walker metric is equivalent to having the 
{\it Walker property\/} mentioned in the Introduction. Namely, the vertical 
distribution $\,\mathcal{V}\,$ of every type~III SDNE manifold $\,(\ym,\gm)\,$ 
is compatible with the orientation \cite[Theorem~6.2(i)]{derdzinski-09} and, 
if $\,(\ym,\gm)\,$ admits any \hbox{two\hh-}\hskip0ptdi\-men\-sion\-al null 
parallel distribution compatible with the orientation, then $\,\mathcal{V}\,$ 
is such a distribution, that is, $\,\mathcal{V}\,$ itself must be parallel 
\cite[Theorem~6.2(ii),\hs(iv)]{derdzinski-09}.

Two constructions of type~III SDNE manifolds are known. One, discovered by 
\dr, \gr\ and \vl\ 
\cite[Theorem~3.1(ii.3)]{diaz-ramos-garcia-rio-vazquez-lorenzo}, always leads 
to Walker metrics. (See also the next section.) The other, described in 
\cite[Theorem~22.1]{derdzinski-09}, gives rise to strictly \nw\ metrics. The 
resulting examples serve as {\it universal models\/}: as shown in 
\cite[Theorem~3.1(ii.3)]{diaz-ramos-garcia-rio-vazquez-lorenzo} and 
\cite[Theorem~22.1]{derdzinski-09}, locally, at points in general position, 
up to isometries, every type~III SDNE manifold arises from one of the two 
constructions just mentioned.

\section{The structure theorem of \hbox{D\'\i az\hh-}\hskip0ptRa\-mos, 
Gar\-\hbox{c\'\i a\hs-}\hskip0ptR\'\i o and 
\hbox{V\nh\'az}\-\hbox{quez\hh-}\hskip0ptLo\-ren\-zo}\label{tstd}
\setcounter{equation}{0}
In \cite[Theorem~3.1(ii.3)]{diaz-ramos-garcia-rio-vazquez-lorenzo}, \dr, \gr\ 
and \vl\ described the local structure of all type~III SDNE Walker metrics. 
With compatibility defined as in the lines preceding Remark~\ref{liebr}, one 
can state their result as follows. (See also \cite[p.\ 238]{derdzinski-08}.)
\begin{theorem}\label{maith}Let there be given a surface\/ $\,\bs$, a 
tor\-sion\-free connection\/ $\,\nabla\hn\,$ on $\,\bs$ such that the 
Ric\-ci tensor\/ $\,\rho\hn\,$ of\/ $\,\nabla\hn\,$ is skew-sym\-met\-ric and 
nonzero everywhere, and a symmetric $\,2$-ten\-sor\/ 
$\,\tau\,$ on $\,\bs$. Then, for a suitable orientation of the 
\hbox{four\hh-}\hskip0ptman\-i\-fold\/ $\,\ym=\tab$, the Riemann extension 
metric\/ $\,\gm\,=\,\gm\nnh^\nabla\nnh+\hs2\hh\pi^*\nnh\tau\,$ on\/ $\,\ym$, 
with the neutral signature\/ $\,(\mmpp)$, is Ric\-ci-flat and self-du\-al of 
Pe\-trov type\/ {\rm III}, the distribution\/ $\,\mathcal{V}=\kerd\pi\,$ is 
$\,\gm$-null, $\,g$-par\-al\-lel, compatible with the orientation, and 
constitutes the vertical distribution of\/ $\,\gm$, cf.\ 
Section\/~{\rm\ref{vert}}, while\/ $\,\gm\,$ and\/ $\,\mathcal{V}\,$ satisfy 
the curvature condition\/ {\rm(\ref{ccn})}, and the corresponding transversal 
connection on\/ $\,\bs$, described in Lemma\/~{\rm\ref{trvcn}(b)}, coincides 
with our original\/ $\,\nabla\nh$.

Conversely, if\/ $\,(\ym,\gm)\,$ is a neu\-tral-sig\-na\-ture oriented 
self-du\-al Ein\-stein \hbox{four\hskip.4pt-}\hskip0ptman\-i\-fold of Pe\-trov 
type\/ {\rm III} admitting a \hbox{two\hh-}\hskip0ptdi\-men\-sion\-al null 
parallel distribution\/ $\,\mathcal{V}\,$ compatible with the orientation, 
then, for every $\,x\in\ym$, there exist $\,\bs,\nnh\nabla\nnh,\tau\,$ as 
above and a dif\-feo\-mor\-phism of a neighborhood of\/ $\,x\,$ onto an open 
subset of\/ $\,\tab$, under which\/ $\,\gm\,$ corresponds to the metric\/ 
$\,\gm\,=\,\gm\nnh^\nabla\nnh+\hs2\hh\pi^*\nnh\tau$, and\/ $\,\mathcal{V}\,$ 
to the vertical distribution\/ $\,\kerd\pi$.
\end{theorem}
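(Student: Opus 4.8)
The statement is a restatement of the theorem of \cite{diaz-ramos-garcia-rio-vazquez-lorenzo}, and the plan is to obtain it by assembling the machinery developed above rather than by a fresh computation. For the first (``direct'') assertion I would read off from the curvature formulas (\ref{gjk}) almost everything at once. Ricci-flatness of $\,\gm=\gm\nnh^\nabla\nnh+\hs2\hh\pi^*\nnh\tau\,$ is Lemma~\ref{rcflt}, since $\,\rho\,$ is assumed skew-symmetric; the entries $\,\gm_{\lambda\mu}=\bvg_{\!\lambda\mu}^{\hh\md}=\bvg_{\!\lambda\md}^j=\br_{\lambda\md\mu\md}=0\,$ in (\ref{gjk}) say precisely that $\,\mathcal{V}=\kerd\pi\,$ is $\,\gm$-null, $\,\gm$-parallel, and satisfies the curvature condition (\ref{ccn}); and the entry $\,\bvg_{\hskip-3ptjk}^{\hs l}=\vg_{\hskip-3ptjk}^{\hs l}$ in (\ref{gjk}), together with Lemma~\ref{trvcn}(b), identifies the transversal connection attached to $\,\gm\,$ and $\,\mathcal{V}\,$ with the original $\,\nabla$. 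As the ``suitable orientation'' of $\,\ym\,$ I would take the one for which $\,\kerd\pi\,$ is compatible, which exists by the fact recalled just before Remark~\ref{liebr}.

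What is left of the direct assertion --- that $\,\gm\,$ is self-dual of Petrov type~III and that $\,\mathcal{V}\,$ is its vertical distribution in the sense of Section~\ref{vert} --- is the one genuinely computational step, and I expect it to be the main obstacle. I would feed into (\ref{gjk}) the surface identity (\ref{rer}.a), equivalently $\,R_{jkl}{}^s=\rho_{jk}\delta_l^s$, together with the recurrence relation (\ref{rec}.i), which is available because $\,\rho\ne0\,$ everywhere. Skew-symmetry of $\,\rho$, symmetry of $\,\tau$, and torsion-freeness of $\,\nabla\,$ then collapse almost all of the terms, leaving $\,\br_{jkl\lambda}=\gm_{l\lambda}\rho_{jk}\,$ and an expression for $\,\br_{jklp}\,$ that is affine in the fibre coordinates and built, via (\ref{rec}), out of $\,\rho\,$ and $\,d\hh^\nabla\hskip-2pt\tau$. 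From this one computes the Hodge star on $\,2$-forms, checks that $\,W^-=0\,$ for the chosen orientation, and produces a basis $\,p,q,r\,$ of self-dual $\,2$-forms with $\,W^+\nnh p=0$, with $\,W^+\nnh q\,$ a nonzero multiple of $\,p$, and with $\,W^+\nnh r\,$ a nonzero multiple of $\,q$, where $\,p\,$ is the self-dual $\,2$-form whose null-space is $\,\kerd\pi$. The hypothesis $\,\rho\ne0\,$ is exactly what forces the nilpotent operator $\,W^+$ to have order $\,3\,$ (not $\,2\,$ or $\,1$), so the Petrov type is precisely~III and $\,\mathrm{Ker}\,W^+$ is spanned by $\,p$, i.e.\ $\,\mathcal{V}=\kerd\pi\,$ is the vertical distribution; this is the computation of \cite{diaz-ramos-garcia-rio-vazquez-lorenzo}.

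For the converse, let $\,(\ym,\gm)\,$ be a type~III SDNE manifold carrying a $\,2$-dimensional null parallel distribution compatible with the orientation. By the structural facts about the vertical distribution recalled in Section~\ref{vert} (from \cite{derdzinski-09}), that distribution must be the vertical distribution $\,\mathcal{V}\,$ of $\,\gm$, so $\,\mathcal{V}\,$ is parallel and null; moreover $\,\mathcal{V}\,$ satisfies the curvature condition (\ref{ccn}) --- again a structural property of the vertical distribution of a type~III SDNE manifold, reflecting the total geodesy of its leaves and the type~III algebraic form of $\,W^+$. Afifi's Theorem~\ref{riext}, with $\,m=2$, then produces, near any $\,x\in\ym$, a surface $\,\bs$, a torsion-free connection $\,\nabla$, a symmetric $\,2$-tensor $\,\tau\,$ on $\,\bs$, and a diffeomorphism of a neighborhood of $\,x\,$ onto an open subset of $\,\tab\,$ carrying $\,\gm\,$ to $\,\gm\nnh^\nabla\nnh+\hs2\hh\pi^*\nnh\tau$, $\,\mathcal{V}\,$ to $\,\kerd\pi$, and the transversal connection to $\,\nabla$. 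Since $\,\gm\,$ is Einstein and, after this identification, a Riemann extension metric, Lemma~\ref{rcflt} gives that $\,\gm\,$ is Ricci-flat and that the Ricci tensor $\,\rho\,$ of $\,\nabla\,$ is skew-symmetric. Finally $\,\rho\,$ is nonzero everywhere: at a zero $\,y\,$ of $\,\rho\,$ the curvature of $\,\nabla\,$ would vanish by (\ref{rer}.a), so by (\ref{gjk}) the components $\,\br_{jkl\lambda}=\gm_{l\lambda}\rho_{jk}\,$ of $\,\gm\,$ would vanish along the whole fibre over $\,y$; but these are precisely the components through which the type~III structure of $\,W^+$ reaches the vertical directions, so $\,W^+$ could not be of type~III there, a contradiction. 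Hence $\,(\bs,\nabla,\tau)\,$ has all the required properties, and the converse follows.

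Thus the only part of the argument that is not routine bookkeeping with (\ref{gjk}), Lemmas~\ref{rcflt} and~\ref{trvcn}, Theorem~\ref{riext}, and Section~\ref{vert} is the explicit verification, from the simplified curvature formulas, that the Riemann extension built from a torsion-free surface connection with everywhere-nonzero skew-symmetric Ricci tensor, together with an arbitrary symmetric $\,2$-tensor, is self-dual and of Petrov type exactly~III.
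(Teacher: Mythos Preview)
The paper does not give its own proof of Theorem~\ref{maith}: it is stated as a reformulation of \cite[Theorem~3.1(ii.3)]{diaz-ramos-garcia-rio-vazquez-lorenzo} (with a cross-reference to \cite[p.\ 238]{derdzinski-08}), and the paper then simply uses it. So there is no in-paper proof to compare against; your proposal is a reconstruction from the paper's toolkit, and as such it is largely sound. You correctly isolate the one substantive computation not already packaged in the paper --- self-duality and Petrov type exactly~III for the Riemann extension built from an RSTS connection with $\rho\ne0$ --- and you correctly note that the rest of the direct assertion (Ricci-flatness, nullity and parallelism of $\mathcal{V}$, condition~(\ref{ccn}), recovery of $\nabla$ as the transversal connection) can be read off from (\ref{gjk}) together with Lemmas~\ref{rcflt} and~\ref{trvcn}(b).

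The one soft spot is in the converse. To invoke Theorem~\ref{riext} you need the curvature condition~(\ref{ccn}), and you assert it as ``a structural property of the vertical distribution of a type~III SDNE manifold, reflecting the total geodesy of its leaves and the type~III algebraic form of $W^+$.'' Total geodesy alone (or even parallelism and nullity of $\mathcal{V}$) does not give (\ref{ccn}): those hypotheses only force $\br(u,\,\cdot\,,v,\,\cdot\,)$ to vanish when one of the remaining arguments lies in $\mathcal{V}$, not in general. What actually yields (\ref{ccn}) is the full curvature structure of a type~III SDNE manifold --- essentially the decomposition $2\br=\zeta\otimes\eta+\eta\otimes\zeta$ with $\zeta$ vanishing on $\mathcal{V}$ that the paper later records as Lemma~\ref{qntpl}(i),(iii) --- but in the paper that lemma is \emph{derived from} Theorem~\ref{maith} via (\ref{rlm}), so you cannot cite it here without circularity. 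You would need either to extract the analogous algebraic fact directly from the type~III nilpotent form of $W^+$ acting on $\Lambda^+$ (using that the $\mathcal{V}$-bivector spans $\ker W^+$ and that the metric is Einstein, hence the curvature has no traceless-Ricci block), or to cite the relevant structural results in \cite{derdzinski-09} explicitly. Your sketch points in the right direction, but this step is a genuine verification, not bookkeeping.
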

According to Theorem~\ref{maith}, every type~III SDNE Walker metric $\,g\,$ 
can be locally identified with a Riemann extension metric 
$\,\gm\,=\,\gm\nnh^\nabla\nnh+\hs2\hh\pi^*\nnh\tau\,$ defined as at the 
beginning of Section~\ref{riex} in the special case where $\,\bs\,$ is a 
surface and the tor\-sion\-free connection $\,\nabla\hn\,$ on $\,\bs\,$ has 
the property that its Ric\-ci tensor $\,\rho\,$ is skew-sym\-met\-ric and 
nonzero at every point. By Lemma~\ref{rcflt}, $\,\gm\,$ is Ric\-ci-flat, while 
the formulae, appearing in (\ref{gjk}), for the components of its curvature 
$\,4\hh$-ten\-sor $\,\br\,$ in coordinates $\,y^{\hs j}\nnh,x^\lambda$ chosen 
for (\ref{gjk}) may be rewritten as
\begin{equation}\label{rlm}
\br_{\lambda\mu\hs\md\md}=\hs\br_{\lambda\md\mu\md}=\hs0\hh,
\hskip12pt\br_{jkl\lambda}=\hs\gm_{\hs l\lambda}\rho_{jk},\hskip12pt
\br_{jkls}=[\gm_{p\lambda}x^\lambda\ew\hh^p
+2\hh\od(\ob\tau)]\hs\rho_{jk}\hs\rho_{\hs ls}\hh,
\end{equation}
where $\,\ob\,$ and $\,\od\,$ are the operators defined by (\ref{bta}), and 
the index convention is the same as in (\ref{gjk}). In fact, by (\ref{rer}.a), 
$\,R_{jkl}{}^s\nh=\rho_{jk}\delta_l^s$, and so, on the right-hand side of the 
last equality in (\ref{gjk}), after interchaning the indices 
$\,p\,$ and $\,s$, we have 
$\,\vg_{\hskip-3ptjq}^pR_{lsk}{}^q-\vg_{\!kq}^pR_{lsj}{}^q
+\vg_{\!lq}^pR_{jks}{}^q-\vg_{\!sq}^pR_{jkl}{}^q
=(\vg_{\hskip-3ptjk}^p-\vg_{\!kj}^p)\rho_{\hs ls}
+(\vg_{\!ls}^p-\vg_{\!sl}^p)\rho_{jk}=0$, as well as 
$\,R_{kjs}{}^p\tau_{pl}-R_{kjl}{}^p\tau_{ps}
=(\tau_{sl}\nh-\tau_{\hs ls})\rho_{kj}=0$. Similarly, 
$\,R_{lsj}{}^p{}_{\nnh,\hs k}-R_{lsk}{}^p{}_{\nnh,\hs j}$ equals 
$\,\rho_{\hs ls,\hs k}\delta_j^p-\rho_{\hs ls,\hs j}\delta_k^p$, and hence 
$\,\ew\hh^p\rho_{jk}\hs\rho_{\hs ls}$, as $\,\rho_{\hs ls,\hs k}=\phi_{\hs 
k}\rho_{\hs ls}$ by (\ref{rec}.i), while $\,\phi_{\hs k}\delta_j^p
-\phi_j\delta_k^p=\ew\hh^p\rho_{jk}$ in view of (\ref{rec}.ii) and 
(\ref{rer}.b) for $\,\eu=\ew\,$ and the $\,1$-form $\,\beta\,$ with 
$\,\beta_j=\delta_j^p$. Finally, since 
$\,(d\hh^\nabla\hskip-2pt\tau)_{lsj}=(\ob\tau)_j\rho_{\hs ls}$ (cf.\ 
(\ref{bta}.a)) and $\,\rho_{\hs ls,\hs k}=\phi_{\hs k}\rho_{\hs ls}$ 
(see above), using (\ref{bta}.b), (\ref{bwa}.a) and (\ref{bwa}.c) we 
conclude that $\,(d\hh^\nabla\hskip-2pt\tau)_{lsj,\hs k}\nh
-(d\hh^\nabla\hskip-2pt\tau)_{lsk,\hs j}\nh
=2\hh\od(\ob\tau)\hs\rho_{jk}\hs\rho_{\hs ls}$.

\begin{remark}\label{trinv}Every type~III SDNE Walker metric $\,\gm$, 
restricted to a suitable neighborhood of any given point of the underlying 
\hbox{four\hh-}\hskip0ptman\-i\-fold, gives rise to a triple 
$\,(\bs,\nabla\nnh,[\tau])\,$ of invariants. Specifically, $\,\bs\,$ is a 
surface (a local leaf space of the vertical distribution $\,\mathcal{V}\nh$, 
cf.\ Section~\ref{vert}), $\,\nabla\hn\,$ is a tor\-sion\-free connection on 
$\,\bs\,$ with eve\-ry\-\hbox{where\hskip1pt-}\hskip0ptnon\-zero, 
skew-sym\-met\-ric Ric\-ci tensor (namely, the connection described in 
Theorem~\ref{maith}), and $\,[\tau]\,$ denotes a coset, in the vector space of 
all symmetric $\,2$-ten\-sors of class $\,C^\infty$ on $\,\bs$, of the image 
of the Kil\-ling operator $\,\ki\,$ for $\,\nabla\nnh$, given by 
(\ref{cod}.b). (Here the coset is chosen so as to contain the $\,2$-ten\-sor 
$\,\tau\,$ appearing in Theorem~\ref{maith}.) Although $\,\tau\,$ itself is 
not an invariant of $\,\gm$, the coset $\,[\tau]\,$ is, as one sees using 
Lemma~\ref{ismtr}(ii) for $\,\varPhi=\mathrm{Id}$, the local leaf space 
$\,\plne=\bs$, and $\,\mathrm{D}=\nabla\nnh$, with $\,\ts\,$ denoting the 
other choice of $\,\tau$.

Conversely, by Theorems~\ref{maith} and~\ref{imker}(c)), every triple 
$\,(\bs,\nabla\nnh,[\tau])\,$ with the properties just listed arises in this 
manner from some type~III generic SDNE Walker metric $\,\gm$, namely, the 
Riemann extension $\,\gm\,=\,\gm\nnh^\nabla\nnh+\hs2\hh\pi^*\nnh\tau$.

Finally, the original metric $\,\gm$, on a suitable neighborhood of the given 
point, is uniquely determined, up to an isometry, by the corresponding 
triple $\,(\bs,\nabla\nnh,[\tau])$. In fact, 
$\,\gm\,=\,\gm\nnh^\nabla\nnh+\hs2\hh\pi^*\nnh\tau\hh'$ for some 
$\,\tau\hh'$ that lies in the coset $\,[\tau]$, while, for any two choices 
of such $\,\tau\hh'\nnh$, the resulting metrics are isometric to each other 
(Lemma~\ref{kxpgn}(b)).
\end{remark}

\section{Some natural tensor fields on a type~III SDNE Walker 
manifold}\label{anvf}
\setcounter{equation}{0}
In the next two lemmas $\,(\ym,\gm)\,$ is assumed to be a type~III SDNE Walker 
manifold. By Theorem~\ref{maith}, $\,(\ym,\gm)\,$ may be identified, locally, 
with a Riemann extension $\,(\tab,\gm\nnh^\nabla\hn+\hs2\hh\pi^*\nnh\tau)\,$ 
for a surface $\,\bs\,$ with a tor\-sion\-free connection $\,\nabla\nnh$, the 
Ric\-ci tensor $\,\rho\,$ of which is skew-sym\-met\-ric and nonzero 
everywhere, and some symmetric $\,2$-ten\-sor $\,\tau\,$ on $\,\bs$. We will 
also choose, in $\,\ym$, local coordinates $\,y^{\hs j}\nnh,x^\lambda$ with 
(\ref{gjk}) and (\ref{rlm}).
\begin{lemma}\label{qntpl}For every type\/~{\rm III} SDNE Walker manifold\/ 
$\,(\ym,\gm)\,$ there exists a unique quintuple\/ 
$\,(\zeta,\eta,A,\gamma,\ev)\,$ of local geometric invariants of\/ $\,\gm\,$ 
consisting of\/ $\,2$-forms\/ $\,\zeta,\eta$, 
a bundle morphism\/ $\,A:\tm\to\tm$, a\/ $\,1$-form\/ $\,\gamma$, and a vector 
field\/ $\,\ev$, all defined globally on\/ $\,\ym$, such that
\begin{enumerate}
  \def\theenumi{{\rm\roman{enumi}}}
\item[(i)] $2\hh\br=\zeta\otimes\eta+\hs\eta\otimes\zeta$, where\/ $\,\br\,$ 
is the curvature $\,4\hh$-ten\-sor of\/ $\,(\ym,\gm)$,
\item[(ii)] $\bna\eta=2\gamma\otimes\zeta$, with\/ $\,\bna\,$ denoting the 
Le\-vi-Ci\-vi\-ta connection of\/ $\,(\ym,\gm)$,
\item[(iii)] $\zeta=-\hs2\hh\pi^*\nnh\rho$, the symbol\/ $\,\pi\,$ standing 
for the bundle projection\/ $\,\tab\to\bs$,
\item[(iv)] $\eta(\nh\eu,\,\cdot\,)=\gm(A\eu,\,\cdot\,)\,$ and\/ 
$\,\gm(\hn\ev,\eu)=4\hs[\hh\gamma(\hn\eu)-\gamma(A\eu)]\,$ for all vector 
fields\/ $\,\eu$.
\end{enumerate}
In coordinates\/ $\,y^{\hs j}\nnh,x^\lambda$ chosen as above, with\/ $\,\nd\,$ 
given by\/ {\rm(\ref{qef}.i)}, $\,\ev\,$ has the components
\begin{equation}\label{veq}
\ev^j\nh=0\hh,\hskip28pt\ev^\lambda=\gm^{j\lambda}(\gm_{k\mu}x^{\hs\mu}\nd_j^k
+\hs\xi_{\hs j})\hh,
\end{equation}
where\/ $\,\xi\,$ is a $\,1$-form on\/ $\,\bs\,$ which may depend on the 
choice of the special coordinates.
\end{lemma}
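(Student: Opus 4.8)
The plan is to prove uniqueness by a short algebraic argument, existence by an explicit computation in the special coordinates $\,y^{\hs j},x^\lambda\,$ of (\ref{gjk})--(\ref{rlm}), and globality by noting that the objects produced solve coordinate-free equations. \emph{Uniqueness:} condition (iii) determines $\,\zeta$, and $\,\zeta\ne0\,$ since $\,\rho\ne0$; for a nowhere-zero $\,2$-form $\,\zeta$, the equation $\,\zeta\otimes\eta+\eta\otimes\zeta=2\br\,$ has at most one solution $\,\eta\,$ (evaluating it on a pair of directions on which $\,\zeta\,$ is nonzero fixes $\,\eta\,$ on that pair, hence on all pairs), so (i) determines $\,\eta$; the first half of (iv) with nondegeneracy of $\,\gm\,$ determines $\,A$; (ii) with $\,\zeta\ne0\,$ determines $\,\gamma$; and the second half of (iv) with nondegeneracy of $\,\gm\,$ determines $\,\ev$.

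\emph{Existence.} By Theorem~\ref{maith} I work, near any point, with a Riemann extension $\,\gm=\gm\nnh^\nabla\nnh+2\hs\pi^*\tau\,$ over a surface $\,\bs\,$ whose connection $\,\nabla\,$ has skew-symmetric, nowhere-zero Ricci tensor $\,\rho$, in coordinates giving (\ref{gjk}) and (\ref{rlm}). Set $\,\zeta=-2\pi^*\rho\,$ (so (iii) holds), and let $\,\eta\,$ be the $\,2$-form with $\,\eta_{jk}=-\sigma\rho_{jk}/2$, $\,\eta_{j\lambda}=-\gm_{j\lambda}$, $\,\eta_{\lambda\mu}=0$, where $\,\sigma=\gm_{p\lambda}x^\lambda\ew^{\hs p}+2\hs\od(\ob\tau)\,$ is the scalar coefficient in the last line of (\ref{rlm}). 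Comparing $\,\zeta_{ab}\eta_{cd}+\eta_{ab}\zeta_{cd}\,$ with the components of $\,\br\,$ in (\ref{rlm}), one index block at a time, gives (i); this also re-proves that such $\,\eta\,$ is unique. Define $\,A\,$ by $\,\eta(\hn\eu,\,\cdot\,)=\gm(A\eu,\,\cdot\,)$; raising an index of $\,\eta\,$ with the inverse metric of (\ref{gjk}) yields $\,A^k_j=-\delta^k_j$, $\,A^\mu_\lambda=\delta^\mu_\lambda$, $\,A^k_\lambda=0\,$ and $\,A^\mu_j=\gm^{p\mu}\gm_{jp}-\sigma\gm^{l\mu}\rho_{jl}/2$, which is the first half of (iv).

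\emph{The main obstacle} is condition (ii). First I note that $\,\zeta\,$ is recurrent, $\,\bna\zeta=\pi^*\phi\otimes\zeta$: this follows in one line from (\ref{rec}.i) and (\ref{gjk}), the complicated Christoffel symbol $\,\bvg_{jl}^\mu\,$ playing no role since $\,\zeta\,$ has no vertical part. Then I compute $\,\bna\eta\,$ from (\ref{gjk}). Every component $\,(\bna_a\eta)_{bc}\,$ with $\,b\,$ or $\,c\,$ vertical vanishes, for similar structural reasons; $\,(\bna_\lambda\eta)_{jk}\,$ collapses, by symmetry of $\,\vg_{jk}^l$, to $\,-(\gm_{p\lambda}\ew^{\hs p})\rho_{jk}/2$, so $\,\gamma_\lambda=\gm_{p\lambda}\ew^{\hs p}/8$. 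The one substantive case is $\,(\bna_i\eta)_{jk}$, where the complicated Christoffel symbol enters through $\,\bvg_{ij}^\mu\eta_{\mu k}\,$ and $\,\bvg_{ik}^\mu\eta_{j\mu}$; after substituting (\ref{gjk}), using $\,R_{jkl}{}^s=\rho_{jk}\delta_l^s\,$ from (\ref{rer}.a) and $\,(d\hh^\nabla\hskip-2pt\tau)_{lkj}=(\ob\tau)_j\rho_{lk}\,$ from (\ref{bta}.a), the part of $\,\gm_{k\rho}\bvg_{ij}^\rho\,$ symmetric in $\,(j,k)\,$ (Christoffel squares, the $\,\tau_{jk,i}$-term) drops out of the antisymmetrization forced by $\,\eta_{j\mu}=-\eta_{\mu j}$, while the rest, combined with the ``surface part'' $\,\partial_i\eta_{jk}-\vg_{ij}^l\eta_{lk}-\vg_{ik}^l\eta_{jl}=-(\partial_i\sigma+\sigma\phi_i)\rho_{jk}/2\,$ (via (\ref{rec}.i)), yields $\,(\bna_i\eta)_{jk}=-4\gamma_i\rho_{jk}\,$ with $\,8\gamma_i=\partial_i\sigma+\sigma\phi_i+4\gm_{i\lambda}x^\lambda+4(\ob\tau)_i$. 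Hence $\,\bna\eta=2\gamma\otimes\zeta$, i.e.\ (ii). I expect this to be the hardest step, being essentially a re-run of the curvature bookkeeping behind (\ref{rlm}).

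\emph{Assembling $\,\ev$, and globality.} Condition (iv), after lowering the index of $\,\ev$, reads $\,\ev_a=4\gamma_b(\delta^b_a-A^b_a)$. The $\,A$-formulas give $\,\delta^b_\lambda-A^b_\lambda=0$, so $\,\ev_\lambda=0$, whence $\,\ev^{\hs j}=\gm^{j\lambda}\ev_\lambda=0\,$ (as $\,\gm^{jk}=0$), while $\,\ev_j=8\gamma_j-4\gamma_\mu A^\mu_j\,$ and $\,\ev^\lambda=\gm^{j\lambda}\ev_j$. Substituting the formulas for $\,\gamma\,$ and $\,A^\mu_j$, expanding $\,\partial_j\sigma\,$ via $\,\partial_j\ew^{\hs p}=\ew^{\hs p}{}_{,j}-\vg_{jm}^p\ew^{\hs m}$, and cancelling a matching pair of $\,x\hh\vg\hh\ew$-terms, the part linear in $\,x\,$ collects into $\,\gm_{k\mu}x^\mu\nd_j^k\,$ with $\,\nd\,$ as in (\ref{qef}.i) (the constant $\,4$ from $\,\delta_j^k$, the $\,\naw$-part from $\,\partial_j\sigma$, the $\,3\hh\phi\otimes\ew/4$-part from the recurrence), and the $\,x$-independent remainder is a $\,1$-form on $\,\bs\,$ — in fact $\,\oz\tau$, cf.\ (\ref{zte}) — which is the $\,\xi\,$ of (\ref{veq}). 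Finally, $\,\zeta,\eta,A,\gamma,\ev\,$ are characterized by the coordinate-free equations (i)--(iv) in terms of the globally defined $\,\gm,\br,\bna$, while $\,\zeta=-2\pi^*\rho\,$ depends only on the transversal connection $\,\nabla$, a local invariant respected by the transition maps between Riemann-extension charts (Lemma~\ref{ismtr}); so the local constructions agree on overlaps, and by uniqueness the quintuple is a geometric invariant of $\,\gm$, defined on all of $\,\ym$.
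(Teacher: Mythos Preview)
Your proof is correct and follows essentially the same route as the paper: define $\zeta,\eta$ by their components so that (\ref{rlm}) gives (i), then compute $\bna\eta$ in the coordinates of (\ref{gjk}) to produce $\gamma$ satisfying (ii), and finally raise indices to obtain $A$ and $\ev$ with (\ref{veq}). Your organization is cleaner (separating uniqueness, existence, globality), and you go one step further than the paper by identifying the $x$-independent remainder $\xi$ in (\ref{veq}) explicitly as $\oz\tau$; the paper leaves $\xi$ unspecified here and only recovers the relation $\nd^*\xi=\oz\tau$ later, in Lemma~\ref{xiqeq}.
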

\begin{proof}By (\ref{rlm}), (i) and (iii) hold for the $\,2$-forms $\,\zeta\,$ 
and $\,\eta\,$ defined by 
$\,\zeta_{jk}=-2\hh\rho_{jk}$, $\,\eta_{jk}=-\hs[\hh\od(\ob\tau)
+\hs\gm_{p\lambda}x^\lambda\ew\hh^p\nnh/\hs2\hh]\hs\rho_{jk}$, 
$\,\eta_{\lambda j}=-\hs\eta_{j\lambda}=\gm_{j\lambda}$, 
$\,\zeta_{\lambda j}=\hs\zeta_{j\lambda}=\hs\zeta_{\lambda\mu}
=\eta_{\lambda\mu}=\hs0$. Both $\,\zeta\,$ and $\,\eta\,$ 
are local geometric invariants of the metric: $\,\zeta\,$ by (iii) and 
Theorem~\ref{maith}, $\,\eta\,$ in view of (i) and the fact that symmetric 
multiplication has no zero divisors.

We now establish (ii) for a $\,1$-form $\,\gamma\,$ with the components 
satisfying the conditions
\begin{equation}\label{egl}
8\hs\gamma_\lambda=\hs\gm_{k\lambda}\ew\hh^k\hskip12pt\mathrm{and}\hskip13pt
8\hs\gamma_j\,\sim\,\hs\gm_{k\mu}x^{\hs\mu}(\nd_j^k-\vg_{\!jl}^k\ew\hh^l
+\phi_j\ew\hh^k\nnh/\hh4)\hh,
\end{equation}
the relation $\,\sim\,$ meaning, in the rest of 
the proof, that the two expressions differ by a function in $\,\bs\,$ (which 
may itself depend both on some indices and on the choice of our coordinates); 
in other words, their difference is allowed to depend on the coordinates 
$\,y^{\hs j}\nnh$, but not on $\,x^\lambda\nnh$. In fact, with the semicolons 
standing for \hbox{$\,\bna\nnh$-\hh}\hskip0ptco\-var\-i\-ant derivatives, 
using (\ref{gjk}), we easily obtain 
$\,\eta_{\lambda\mu;\nu}=\hh\eta_{\lambda\mu;j}
=\hh\eta_{\lambda j;\mu}=\hh\eta_{\lambda j;\hs k}=\hs0\,$ and 
$\,2\hh\eta_{jk;\hs\lambda}=-\hs\gm_{s\lambda}\ew\hh^s\rho_{jk}$. Next, 
$\,-\bvg_{\!jl}^\mu\eta_{k\mu}-\bvg_{\!jk}^\mu\eta_{\mu l}
=\gm_{k\mu}\bvg_{\!jl}^\mu-\gm_{\hs l\mu}\bvg_{\!jk}^\mu$ which, as a 
consequence of the formula for $\,\gm_{k\mu}\bvg_{\!jl}^\mu$ in (\ref{gjk}), 
equals 
$\,2\hs\gm_{s\lambda}x^\lambda R_{lkj}{}^s+2(d\hh^\nabla\hskip-2pt\tau)_{lkj}$ 
(note the numerous cancellations due to symmetry in $\,k,l$). Since 
$\,\rho_{kl,\hs j}=\phi_j\rho_{kl}$, 
$\,R_{lkj}{}^s\nh=\rho_{\hs lk}\delta_j^s$ and 
$\,(d\hh^\nabla\hskip-2pt\tau)_{lkj}=(\ob\tau)_j\rho_{\hs lk}$ (see 
(\ref{rec}.i), (\ref{rer}.a) and (\ref{bta}.a)), this gives, by (\ref{gjk}), 
$\,2\hh\eta_{kl;\hs j}=-\hs\gamma_j\rho_{kl}$ with $\,\gamma_j$ as in 
(\ref{egl}), thus proving (ii) and (\ref{egl}).

Assertion (iv) is simply a definition of $\,A\,$ and $\,\ev$, stating that 
they are obtained from $\,\eta\,$ and $\,4(\gamma-A^{\nh*}\nh\gamma)\,$ by index 
raising. (Notation of (\ref{ast}).) Using (\ref{gjk}) we thus get 
$\,A_j^k=-\hs\delta_j^k$, $\,A_\lambda^j=0$, 
$\,A_\lambda^\mu=\delta_\lambda^\mu$ and 
$\,A_j^\lambda\,\sim\,-\hs\gm^{k\lambda}\gm_{s\mu}x^\mu(2\vg_{\hskip-3ptjk}^s
+\ew\hh^s\rho_{jk}\nh/\hh2)$. Consequently, 
$\,\ev^j\nh=4\hh\gm^{j\lambda}(\gamma-A^{\nh*}\nh\gamma)_\lambda\nh
=4\hh\gm^{j\lambda}(\gamma_\lambda\nh-A_\lambda^\mu\gamma_\mu)=0$. Similarly, 
$\,\ev^\lambda\nh=4\hh\gm^{j\lambda}(\gamma-A^{\nh*}\nh\gamma)_j\nh
=4\hh\gm^{j\lambda}(2\hh\gamma_j\nh-A_j^\mu\gamma_\mu)$. Now (\ref{egl}) and 
the equality $\,\ew\hh^k\rho_{jk}=-\hs\phi_j$ (see (\ref{rec}.ii)) yield 
(\ref{veq}).
\end{proof}
\begin{lemma}\label{vrtvo}Every type\/~{\rm III} SDNE Walker manifold\/ 
$\,(\ym,\gm)\,$ admits a globally defined section\/ $\,\theta\,$ of the real 
line bundle\/ $\,[\mathcal{V}^*]^{\wedge2}\nnh$, where\/ $\,\mathcal{V}\,$ 
denotes the vertical distribution, such that the restriction of\/ $\,\theta\,$ 
to each leaf\/ $\,\yn\,$ of\/ $\,\mathcal{V}\,$ is nonzero and parallel 
relative to the connection on\/ $\,\yn\,$ induced by the Le\-vi-Ci\-vi\-ta 
connection of\/ $\,\gm$.
\end{lemma}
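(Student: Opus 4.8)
The plan is to manufacture $\theta$ from the Ricci form of the transversal connection by way of the structure theorem, and then to read off its two required properties from the coordinate formulae~(\ref{gjk}). Concretely, I would fix a point of $\ym$ and invoke Theorem~\ref{maith}, which identifies a neighborhood with an open subset of $\tab$, where $\bs$ is a surface carrying a torsionfree connection $\nabla\,$ whose Ricci tensor $\rho\,$ is skew-symmetric and nowhere zero, $\gm\,$ is the associated Riemann extension metric, and $\mathcal{V}=\kerd\pi$. On such a chart the vertical space $\mathcal{V}_\xi$ at a point $\xi\in\tab$ is canonically the vector space $T_{\pi(\xi)}^*\bs$ itself, the tangent space of a vector space being that same space; hence $\mathcal{V}$ is canonically isomorphic to the pullback $\pi^*(\tab)$, and so $[\mathcal{V}^*]^{\wedge2}$ is canonically isomorphic to $\pi^*([\tb]^{\wedge2})$. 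Since $\rho$ is a nowhere-zero section of $[\tab]^{\wedge2}=([\tb]^{\wedge2})^*$, it has a well-defined dual section $\rho^{-1}$ of $[\tb]^{\wedge2}$, namely the unique one pairing to $1$ with $\rho$; I would then define $\theta$ on the chart to be $\pi^*(\rho^{-1})$, regarded as a section of $[\mathcal{V}^*]^{\wedge2}$ through the identification just recalled.

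Next I would verify the two local claims in the special coordinates $y^{\hs j}\nnh,x^\lambda$ of~(\ref{gjk}). That identification of $\mathcal{V}$ with $\pi^*(\tab)$ sends $\partial/\partial x^\lambda$ to the covector $\sum_j\gm_{j\lambda}\,dy^{\hs j}$, so a short computation gives $\theta=c\cdot(dx^1\wedge dx^2)|_{\mathcal{V}}$ with $c=\det[\gm_{j\lambda}]\hs\rho_{12}^{-1}$, a nowhere-zero function of $y$ alone, since the matrix $[\gm_{j\lambda}]$ is a fixed nonsingular one and $\rho_{12}$ never vanishes; in particular $\theta$ vanishes nowhere. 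Moreover, the vanishing relations $\bvg_{\!\lambda\mu}^{\hh\md}=\bvg_{\!\lambda\md}^j=0$ recorded in~(\ref{gjk}) say exactly that the connection induced on each leaf of $\mathcal{V}$ — i.e.\ on each fibre of $\pi$ — by the Levi-Civita connection $\bna$ of $\gm$ is the standard flat connection in the coordinates $x^\lambda$; as $c$ is constant along a leaf, $\theta$ restricts to each leaf to a nonzero parallel $2$-form. (Alternatively, flatness of this leaf connection follows intrinsically from the curvature condition~(\ref{ccn}) together with the first Bianchi identity, which force $\br(\hn\eu,\ev)\hs\ev\hh'\nh=0$ for sections $\eu,\ev,\ev\hh'$ of $\mathcal{V}$.)

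There remains only the global consistency of $\theta$, and this I expect to be the sole step needing real care — although it is shallow, resting entirely on results already established. By Remark~\ref{trinv} the pair $(\bs,\nabla)$, and hence its Ricci tensor $\rho$, is a local invariant of $\gm$, determined along with the affine-bundle structure of the leaves; and by Lemma~\ref{ismtr}(ii) any two overlapping charts of the type above are related by a map of the form $\tx\circ F^*$, in which $F$ carries one copy of $\nabla$ to the other — so pulls back one Ricci form, hence one section $\rho^{-1}$, to the other — while the fibrewise translation $\tx$ commutes with $\pi$ and induces the identity on vertical tangent spaces. Consequently the locally defined sections $\theta$ coincide on overlaps and patch to a single section of $[\mathcal{V}^*]^{\wedge2}$ over $\ym$; since vanishing nowhere, and being parallel along each leaf, are local conditions, this global $\theta$ has precisely the properties claimed in the lemma.
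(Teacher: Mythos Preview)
Your proof is correct. Both you and the paper build $\theta$ from the Ricci form $\rho$ of the transversal connection, and the resulting sections agree up to a harmless constant factor; the verification of nonvanishing and leafwise parallelism via~(\ref{gjk}) is essentially the same in both arguments.

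The route to \emph{globality}, however, differs. You assemble $\theta$ chart by chart and invoke Lemma~\ref{ismtr}(ii) to check consistency on overlaps. The paper sidesteps this patching entirely: Lemma~\ref{qntpl} has already exhibited the $2$-form $\zeta=-2\hh\pi^*\nnh\rho$ as a globally defined geometric invariant of $(\ym,\gm)$, and since $\zeta$ annihilates $\mathcal{V}$ it descends to a nowhere-zero section of $\mathcal{E}^{\wedge2}$ with $\mathcal{E}=[(\tm)/\mathcal{V}]^*$. The metric $g$ (global and parallel) furnishes an isomorphism $\mathcal{V}\to\mathcal{E}$, through which $\zeta$ becomes a trivializing section of $\mathcal{V}^{\wedge2}$; its dual is $\theta$. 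In fact this $g$-isomorphism $\mathcal{V}\to\mathcal{E}\cong\pi^*(\tab)$ coincides with your canonical identification of the vertical space with the cotangent fibre, so the two $\theta$'s really are the same object. The paper's version is shorter because the global well-definedness has been paid for in advance by Lemma~\ref{qntpl}; your version is self-contained but has to redo that bookkeeping through the transition maps.
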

\begin{proof}The $\,2$-form $\,\zeta\,$ appearing in Lemma~\ref{qntpl}(iii) 
may be treated as a no\-where-zero section of the vector bundle 
$\,\qt^{\wedge2}$ over $\,\ym$, where $\,\qt\,$ stands for the dual of the 
quotient bundle $\,(\tm)/\hs\mathcal{V}\nh$. As $\,\mathcal{V}\,$ is a 
$\,\gm$-null subbundle of $\,\tm$, the metric $\,\gm\,$ constitutes a 
vec\-tor-bun\-dle isomorphism $\,\mathcal{V}\to\qt\nnh$, under which 
$\,\zeta\,$ corresponds to a trivializing section of 
$\,\mathcal{V}^{\wedge2}\nnh$. Our $\,\theta\,$ is its dual trivializing 
section in $\,[\mathcal{V}^*]^{\wedge2}\nnh$. That $\,\theta\,$ is parallel in 
the direction of $\,\mathcal{V}\,$ is immediate, since so are $\,\zeta$, as 
one sees using (\ref{gjk}), and $\,\gm$.
\end{proof}
The local geometric invariants of type~III SDNE Walker manifolds, described in 
this section, can be naturally generalized to arbitrary type~III SDNE 
manifolds, with or without the Walker property. See 
\cite[Lemma 5.1(c),\hs(e) and Theorem 6.2(ii)]{derdzinski-09}.

\section{Noncompactness of type~III SDNE Walker manifolds}\label{nect}
\setcounter{equation}{0}
We begin with two lemmas. The first is obvious from (\ref{bch}.b) and (i) in 
Section~\ref{prel}.
\begin{lemma}\label{dvedv}If\/ $\,\mathcal{V}\,$ is a\/ 
$\,\bna\nh$-par\-al\-lel distribution on a manifold\/ $\,\ym\,$ endowed with a 
tor\-sion\-free connection $\,\bna\nnh$, and\/ $\,\ev\,$ is a section of\/ 
$\,\mathcal{V}\nh$, then\/ 
$\,\mathrm{div}\hskip1.4pt\ev=\mathrm{div}^{\mathcal{V}}\nh\ev$. Here\/ 
$\,\mathrm{div}\,$ is the $\,\bna\nnh$-di\-ver\-gence, given by\/ 
{\rm(\ref{bch}.b)} for\/ $\,\nabla=\bna\nnh$, while the function\/ 
$\,\mathrm{div}^{\mathcal{V}}\nh\ev:\ym\to\bbR\,$ is defined so as to 
coincide, on each leaf\/ $\,\yn\,$ of\/ $\,\mathcal{V}\nh$, with the\/ 
\hbox{$\,\mathrm{D}$-\hn}\hskip0ptdi\-ver\-gence of the restriction of\/ 
$\,\ev\,$ to\/ $\,\yn$, where\/ $\,\mathrm{D}\,$ denotes the connection on\/ 
$\,\yn\,$ induced by\/ $\,\bna\nnh$.
\end{lemma}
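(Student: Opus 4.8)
The plan is to express both $\,\mathrm{div}\hskip1.4pt\ev\,$ and $\,\mathrm{div}^{\mathcal{V}}\nh\ev$, pointwise, as the trace of one endomorphism and the trace of its restriction to $\,\mathcal{V}$, and then to appeal to item~(i) of Section~\ref{prel}. (Here $\,\mathcal{V}\,$ is integrable — so that its leaves foliate $\,\ym\,$ — since $\,[\ev,\ew]=\bna_{\hn\ev}\ew-\bna_{\hn\ew}\ev\,$ lies in $\,\mathcal{V}\,$ whenever $\,\ev,\ew\,$ are sections of $\,\mathcal{V}$.)

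First I would fix a leaf $\,\yn\,$ of $\,\mathcal{V}\,$ and a point $\,x\in\yn$. By (\ref{bch}.b), applied with $\,\nabla=\bna$, the value of $\,\mathrm{div}\hskip1.4pt\ev\,$ at $\,x\,$ is the trace of the endomorphism $\,\mathrm{A}\,$ of $\,\txm\,$ sending each $\,\ew\in\txm\,$ to $\,(\bna_{\hn\ew}\ev)_x$, in the sense of (\ref{nwt}). Because $\,\mathcal{V}\,$ is $\,\bna\nh$-par\-al\-lel and $\,\ev\,$ is a section of $\,\mathcal{V}$, the vector $\,\bna_{\hn\ew}\ev\,$ belongs to $\,\mathcal{V}\,$ for every $\,\ew$, so that the image of $\,\mathrm{A}\,$ is contained in $\,\mathcal{V}_{\nh x}\subset\txm$. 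Item~(i) of Section~\ref{prel} therefore gives that the trace of $\,\mathrm{A}:\txm\to\txm\,$ equals the trace of the restriction $\,\mathrm{A}:\mathcal{V}_{\nh x}\to\mathcal{V}_{\nh x}$.

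Next I would identify that restricted trace with the value of $\,\mathrm{div}^{\mathcal{V}}\nh\ev\,$ at $\,x$. Precisely because $\,\mathcal{V}\,$ is $\,\bna\nh$-par\-al\-lel, the covariant derivative $\,\bna_{\hn\ew}\ev\,$ of a section $\,\ev\,$ of $\,\mathcal{V}\,$ again lies in $\,\mathcal{V}$, so that forming the connection $\,\mathrm{D}\,$ induced by $\,\bna\,$ on $\,\yn\,$ — under the identification of the restriction of $\,\mathcal{V}\,$ to $\,\yn\,$ with the tangent bundle of $\,\yn\,$ — requires no tangential projection, and $\,\mathrm{D}_{\hn\ew}(\ev|_{\yn})=\bna_{\hn\ew}\ev\,$ for every $\,\ew\,$ tangent to $\,\yn$, that is, every $\,\ew\in\mathcal{V}_{\nh x}$. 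Hence the restriction $\,\mathrm{A}:\mathcal{V}_{\nh x}\to\mathcal{V}_{\nh x}\,$ is exactly the endomorphism whose trace, by (\ref{bch}.b) for the connection $\,\mathrm{D}\,$ on $\,\yn$, equals the value of $\,\mathrm{div}^{\mathcal{V}}\nh\ev\,$ at $\,x$. Chaining the two trace identities yields $\,\mathrm{div}\hskip1.4pt\ev=\mathrm{div}^{\mathcal{V}}\nh\ev\,$ at $\,x$, hence everywhere on $\,\ym$, as $\,x\,$ was an arbitrary point of an arbitrary leaf.

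I do not expect a genuine obstacle. The only step worth stating explicitly is the identity $\,\mathrm{D}_{\hn\ew}(\ev|_{\yn})=\bna_{\hn\ew}\ev\,$ for $\,\ew\,$ tangent to a leaf: it is exactly what $\,\bna\nh$-par\-al\-lel\-ism of $\,\mathcal{V}\,$ supplies, and it is also what renders the leafwise divergence $\,\mathrm{div}^{\mathcal{V}}\nh\ev\,$ of the statement well defined — for a distribution that is merely integrable, an induced leafwise connection $\,\mathrm{D}\,$ would require additional data.
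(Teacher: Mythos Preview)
Your proof is correct and follows exactly the approach the paper indicates: the paper simply states that the lemma ``is obvious from (\ref{bch}.b) and (i) in Section~\ref{prel},'' and your argument spells out precisely those two ingredients --- the image of $\,\bna\ev\,$ lies in $\,\mathcal{V}\,$ by parallelism, so item~(i) equates the full trace with the trace on $\,\mathcal{V}$, which is the $\,\mathrm{D}$-divergence.
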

\begin{lemma}\label{dvten}If\/ $\,(\ym,\gm)\,$ is a type\/~{\rm III} SDNE 
Walker manifold and\/ $\,\ev\,$ denotes the vector field appearing in 
Lemma\/~{\rm\ref{qntpl}}, then\/ 
$\,\mathrm{div}\hskip1.4pt\ev=\mathrm{div}^{\mathcal{V}}\nh\ev=10$, with\/ 
$\,\mathrm{div}^{\mathcal{V}}$ as in Lemma\/~{\rm\ref{dvedv}} for the 
Le\-vi-Ci\-vi\-ta connection\/ $\,\bna\hn\,$ of\/ $\,\gm$, and the 
vertical distribution\/ $\,\mathcal{V}\,$ of\/ $\,(\ym,\gm)$, cf.\ 
Section\/~{\rm\ref{vert}}.
\end{lemma}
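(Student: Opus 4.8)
The plan is to read off $\mathrm{div}\hskip1.4pt\ev$ directly from the coordinate expression (\ref{veq}) for $\,\ev\,$ in the special coordinates $\,y^{\hs j}\nnh,x^\lambda$ with (\ref{gjk}) and (\ref{rlm}), and then invoke Lemma~\ref{dvedv} to identify $\,\mathrm{div}\hskip1.4pt\ev\,$ with $\,\mathrm{div}^{\mathcal{V}}\nh\ev$. Since $\,\mathcal{V}=\kerd\pi\,$ is $\,\bna\nnh$-par\-al\-lel (it is the vertical distribution of a type~III SDNE Walker metric, cf.\ Theorem~\ref{maith} and Section~\ref{vert}), and $\,\ev\,$ is a section of $\,\mathcal{V}\,$ by the first equation in (\ref{veq}), namely $\,\ev^j\nh=0$, Lemma~\ref{dvedv} applies verbatim and gives $\,\mathrm{div}\hskip1.4pt\ev=\mathrm{div}^{\mathcal{V}}\nh\ev$ without further work. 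So the whole content of the lemma reduces to computing one number.

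Next I would compute $\,\mathrm{div}\hskip1.4pt\ev\,$ using (\ref{bch}.b): in the coordinates $\,y^{\hs j}\nnh,x^\lambda$ this is the full trace $\,\ev^j{}_{;j}+\ev^\lambda{}_{;\lambda}$ of the covariant derivative with respect to $\,\bna\nnh$, where the semicolons denote \hbox{$\,\bna\nnh$-\hh}\hskip0ptco\-var\-i\-ant derivatives and the Christoffel symbols are those listed in (\ref{gjk}). Because $\,\ev^j\nh=0$, only the $\,x^\lambda$-derivative of $\,\ev^\lambda\nh=\gm^{j\lambda}(\gm_{k\mu}x^{\hs\mu}\nd_j^k+\hs\xi_{\hs j})\,$ survives in the ``pure-partial'' part; since $\,\gm^{j\lambda}\nnh,\nd_j^k\,$ and $\,\xi_{\hs j}\,$ are functions of the $\,y$-variables only, while the matrix $\,[\gm_{k\mu}]\,$ is constant, the $\,x$-derivative of $\,\ev^\lambda\,$ contracts to $\,\partial\ev^\lambda/\partial x^\lambda=\gm^{j\lambda}\gm_{k\lambda}\nd_j^k=\nd_j^j=\mathrm{tr}\hskip2.5pt\nd=10\,$ by (\ref{qef}.ii). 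It then remains to check that the Christoffel-symbol contributions $\,\bvg_{\!\md\md}^{\hs\md}\ev^{\hs\md}\,$ cancel: the relevant connection coefficients appearing in (\ref{gjk}) are $\,\bvg_{\hskip-3ptjk}^{\hs l}=\vg_{\hskip-3ptjk}^{\hs l}$, $\,\bvg_{\hskip-3ptj\lambda}^{\hs\mu}=-\gm_{s\lambda}\gm^{\hh k\mu}\vg_{\hskip-3ptjk}^s$, $\,\bvg_{\!\lambda\mu}^{\hh\md}=\bvg_{\!\lambda\md}^j=0$, together with $\,\gm_{k\mu}\bvg_{\!jl}^\mu\,$ given by the fourth line of (\ref{gjk}); since $\,\ev\,$ has only vertical components, the divergence picks up the terms $\,\bvg_{\!\lambda j}^{\hs\lambda}\ev^{\hs j}\,$ (which vanish as $\,\ev^{\hs j}\nh=0$) and $\,\bvg_{\!\lambda\mu}^{\hs\lambda}\ev^{\hs\mu}$, and one computes $\,\bvg_{\!\lambda\mu}^{\hs\lambda}=-\gm_{s\mu}\gm^{\hh\lambda\lambda}\vg_{\hskip-3pt\lambda s}^{\dots}$; more straightforwardly, $\,\bvg_{\!\lambda j}^{\hs\lambda}=-\gm_{s\lambda}\gm^{\hh\lambda k}\vg_{\hskip-3ptjk}^s$ summed on $\,\lambda\,$ gives $\,-\delta_s^k\vg_{\hskip-3ptjk}^s=-\vg_{\hskip-3ptjk}^k$, and all the ``vertical'' $\,\bvg$'s with an upper vertical index and needed lower indices either vanish or do not contribute because the corresponding component of $\,\ev\,$ is zero. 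Carrying out this bookkeeping (or, alternatively, observing that $\,\mathrm{div}\hskip1.4pt\ev=\mathrm{div}^{\mathcal{V}}\nh\ev\,$ already, so only the leafwise divergence matters) yields $\,\mathrm{div}\hskip1.4pt\ev=10$.

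Alternatively, and more cleanly, I would bypass the four-di\-men\-sion\-al Christoffel symbols altogether: by Lemma~\ref{dvedv} it suffices to compute the leafwise divergence $\,\mathrm{div}^{\mathcal{V}}\nh\ev$, i.e.\ the divergence of $\,\ev\,$ restricted to a leaf $\,\yn\,$ of $\,\mathcal{V}\,$ with respect to the induced connection $\,\mathrm{D}$. On a fibre of $\,\tab\,$, the coordinates $\,x^\lambda\,$ are af\-fine for $\,\mathrm{D}\,$ (this is part of the statement that the af\-fine structure of the fibres is a local invariant, used already in the proof of Lemma~\ref{ismtr}(ii), and is visible from $\,\bvg_{\!\lambda\mu}^{\hh\md}=0\,$ in (\ref{gjk})), so the $\,\mathrm{D}$-di\-ver\-gence of $\,\ev|_{\yn}\,$ is simply $\,\partial\ev^\lambda/\partial x^\lambda$. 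By (\ref{veq}) and the constancy of $\,[\gm_{k\mu}]$, this equals $\,\gm^{j\lambda}\gm_{k\lambda}\nd_j^k=\nd_j^j=\mathrm{tr}\hskip2.5pt\nd$, which is $\,10\,$ by (\ref{qef}.ii). Thus $\,\mathrm{div}^{\mathcal{V}}\nh\ev=10\,$ on every leaf, hence everywhere, and Lemma~\ref{dvedv} gives $\,\mathrm{div}\hskip1.4pt\ev=\mathrm{div}^{\mathcal{V}}\nh\ev=10$.

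I do not expect a serious obstacle here; the only point requiring a little care is confirming that the ``af\-fine-co\-or\-di\-nate'' shortcut in the last paragraph is legitimate, i.e.\ that the restriction of $\,\bna\,$ to a leaf has the $\,x^\lambda\,$ as af\-fine coordinates with vanishing Christoffel symbols. This is immediate from $\,\bvg_{\!\lambda\mu}^{\hh\md}=0\,$ in (\ref{gjk}), since those are precisely the components of the induced leaf connection $\,\mathrm{D}\,$ in the coordinates $\,x^\lambda$ on the leaf. With that observation in hand the computation is a one-liner, and the identity $\,\mathrm{div}\hskip1.4pt\ev=\mathrm{div}^{\mathcal{V}}\nh\ev\,$ follows formally from Lemma~\ref{dvedv} because $\,\ev\,$ is a section of the $\,\bna\nnh$-par\-al\-lel distribution $\,\mathcal{V}\,$ and $\,\bna\,$ is tor\-sion\-free.
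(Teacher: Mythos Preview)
Your proposal is correct, and your ``clean'' alternative is exactly the paper's proof: use $\,\bvg_{\!\lambda\mu}^{\hh\md}=0\,$ from (\ref{gjk}) to see that the $\,x^\lambda\,$ are affine coordinates on each leaf, compute $\,\mathrm{div}^{\mathcal{V}}\nh\ev=\partial_\mu\ev^{\hs\mu}=\gm^{j\mu}\gm_{k\mu}\nd_j^k=\mathrm{tr}\hskip2.5pt\nd=10\,$ from (\ref{veq}) and (\ref{qef}.ii), and then invoke Lemma~\ref{dvedv}. Your first, longer route via the full four-dimensional Christoffel symbols is unnecessary detour, as you yourself note.
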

\begin{proof}We use the notations and identifications described at the 
beginning of Section~\ref{anvf}. The equality 
$\,\bvg_{\!\lambda\mu}^{\hh\md}=0\,$ in (\ref{gjk}) states that 
$\,x^\lambda$ are affine coordinates on each leaf $\,\tayb=\pi^{-1}(y)$. Thus, 
by (\ref{veq}) and (\ref{qef}.ii), $\,\mathrm{div}^{\mathcal{V}}\nh\ev
=\partial_\mu\ev^{\hs\mu}=\gm^{j\mu}\gm_{k\mu}\nd\hn_j^{\hs k}
=\nd\hn_k^{\hs k}=10$, 
and our assertion is immediate from Lemma~\ref{dvedv}.
\end{proof}
Lemma~\ref{dvten} leads to the following conclusion.
\begin{theorem}\label{nocpl}Suppose that\/ $\,(\ym,\gm)\,$ is a type\/~{\rm 
III} SDNE Walker manifold. Then
\begin{enumerate}
  \def\theenumi{{\rm\alph{enumi}}}
\item[(a)] $\ym\,$ is not compact,
\item[(b)] the vertical distribution\/ $\,\mathcal{V}\,$ has no compact leaves.
\end{enumerate}
\end{theorem}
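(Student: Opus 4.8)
The plan is to derive both parts from Lemma~\ref{dvten} via the divergence theorem --- on the whole four-manifold $\,\ym\,$ for (a), and on a single leaf of $\,\mathcal{V}\,$ for (b). I would first record one elementary fact, used in both cases: if $\,\mathrm{D}\,$ is a tor\-sion\-free connection on an $\,m$-manifold carrying a $\,\mathrm{D}$-parallel volume form $\,\nu$, then $\,\lie_\ev\nu=(\mathrm{div}^{\mathrm{D}}\ev)\,\nu\,$ for every vector field $\,\ev$, where $\,\mathrm{div}^{\mathrm{D}}\ev\,$ is the trace of $\,X\mapsto\mathrm{D}_X\ev$. This comes out by expanding $\,\lie_\ev\nu\,$ with the Leibniz rule, rewriting each bracket $\,[\ev,X]\,$ as $\,\mathrm{D}_\ev X-\mathrm{D}_X\ev\,$ (torsion-freeness) and using $\,\mathrm{D}\nu=0$, after which the surviving terms are precisely the sum evaluated in (ii) of Section~\ref{prel}, so the whole expression collapses to $\,(\mathrm{tr}\,\mathrm{D}_\cdot\ev)\,\nu$.

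For (a), I would take $\,\nu\,$ to be the metric volume form of the oriented four-manifold $\,(\ym,\gm)$; since $\,\bna\nu=0$, the fact above applied to the vector field $\,\ev\,$ of Lemma~\ref{qntpl} gives $\,\lie_\ev\nu=(\mathrm{div}\hskip1.4pt\ev)\,\nu=10\,\nu\,$ by Lemma~\ref{dvten}. If $\,\ym\,$ were compact, then, using $\,d\nu=0$, the Cartan formula $\,\lie_\ev=d\hs\imath_\ev+\imath_\ev d\,$ and Stokes' theorem, I would get $\,10\int_\ym\nu=\int_\ym d(\imath_\ev\nu)=0$, contradicting $\,\int_\ym\nu\ne0$; hence $\,\ym\,$ is noncompact.

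For (b), suppose a leaf $\,\yn\,$ of $\,\mathcal{V}\,$ is compact. By Lemma~\ref{vrtvo} the restriction $\,\theta|_\yn\,$ is a no\-where-zero $\,2$-form on the surface $\,\yn\,$ that is parallel for the connection $\,\mathrm{D}\,$ induced on $\,\yn\,$ by $\,\bna$; in particular $\,\yn\,$ is orientable and $\,\theta|_\yn\,$ is a volume form. The field $\,\ev\,$ of Lemma~\ref{qntpl} is a section of $\,\mathcal{V}\,$ (its components satisfy $\,\ev^j=0$, see (\ref{veq})), so it restricts to a vector field on $\,\yn$, and the fact above --- now with $\,\mathrm{D}$, $\,m=2$, $\,\nu=\theta|_\yn\,$ --- together with Lemma~\ref{dvten} and the description of $\,\mathrm{div}^{\mathcal{V}}\,$ in Lemma~\ref{dvedv} gives $\,\lie_\ev(\theta|_\yn)=(\mathrm{div}^{\mathrm{D}}\ev)\,\theta|_\yn=10\,\theta|_\yn$. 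Then (\ref{lvz}) and Stokes' theorem on the closed surface $\,\yn\,$ yield $\,10\int_\yn\theta|_\yn=\int_\yn d(\imath_\ev(\theta|_\yn))=0$, again impossible; so $\,\mathcal{V}\,$ has no compact leaf.

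The argument is short, and I do not expect a genuine obstacle: the one point needing a line of checking is the identity $\,\lie_\ev\nu=(\mathrm{div}^{\mathrm{D}}\ev)\,\nu\,$ for a connection-parallel volume form, and the bookkeeping in (b) that identifies the leafwise divergence of $\,\ev\,$ with the constant $\,10\,$ of Lemma~\ref{dvten}. Everything else is the classical divergence theorem, which is incompatible with a nonzero constant divergence on a closed manifold.
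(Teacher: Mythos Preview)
Your proof is correct and follows essentially the same route as the paper's: both derive (a) and (b) from Lemma~\ref{dvten} and Lemma~\ref{vrtvo} via the divergence theorem applied to a tor\-sion\-free connection with a parallel volume form. The paper simply invokes the divergence formula as ``well-known'' (citing \cite[Remark 7.3]{derdzinski-roter}), while you spell out the underlying identity $\,\lie_\ev\nu=(\mathrm{div}^{\mathrm{D}}\ev)\,\nu\,$ and the Cartan--Stokes step explicitly; the content is the same.
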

\begin{proof}The divergence formula is well-known to remain valid for any 
compact manifold with a tor\-sion\-free connection admitting a global parallel 
volume element. (Cf.\ \cite[Remark 7.3]{derdzinski-roter}.) Thus, compactness 
of $\,\ym$, or of some leaf of $\,\mathcal{V}\nh$,  would contradict 
Lemmas~\ref{dvten} and~\ref{vrtvo}.
\end{proof}

\section{Left-in\-var\-i\-ant RSTS connections on a Lie group}\label{lirs}
\setcounter{equation}{0}
Ko\-wal\-ski, O\-poz\-da and Vl\'a\-\v sek \cite{kowalski-opozda-vlasek-00} 
found a canonical coordinate form of RSTS connections that are also {\it 
locally homogeneous}. More general results later appeared in \cite{opozda} and 
\cite{arias-marco-kowalski}.

It is convenient for us to rephrase the result of 
\cite{kowalski-opozda-vlasek-00} using left-in\-var\-i\-ant connections on a 
Lie group. This approach has the added benefit of providing a precise 
description of a local moduli space of the (non\-flat) connections in 
question, which turns out to be a moduli {\it curve}, namely, the union of two 
subsets ho\-me\-o\-mor\-phic to $\,\bbR\hs$, intersecting at one point. See 
Section~\ref{mcrs}.

We always identify the Lie algebra of a Lie group $\,\hp\,$ with the space 
$\,\hi\,$ of left-in\-var\-i\-ant vector fields on $\,\hp$. If $\,\hp\,$ is 
\hbox{two\hh-}\hskip0ptdi\-men\-sion\-al, 
\hbox{non\hs-\nh}\hskip0ptAbel\-i\-an, simply connected, and $\,\eu,\ew\,$ is 
a basis of $\,\hi\,$ such that $\,[\eu,\ew]=2\eu$, then there exists a 
function $\,\ef:\hp\to\bbR\,$ with
\begin{equation}\label{duf}
d_\eu\ef\,=\,0\hs,\hskip22ptd_\ew\ef\,=\,-2\hs\ef,\hskip22pt\ef\,>\,\hs0.
\end{equation}
Such functions $\,\ef\,$ are positive constant multiples of a specific 
Lie-group homomorphism from $\,\hp\,$ into the multiplicative group 
$\,(0,\infty)$. In fact, by (\ref{bwa}.b), the left-in\-var\-i\-ant $\,1$-form 
sending $\,\eu\,$ to $\,0\,$ and $\,\ew\,$ to $\,-2\,$ is closed, so that it 
equals $\,d\hskip2pt\mathrm{log}\hs\ef\,$ for some function $\,\ef>0$. 
Left-in\-var\-i\-ance of $\,d\hskip2pt\mathrm{log}\hs\ef\,$ means in turn that 
left translations act on $\,\ef\,$ via multiplications by constants, which 
characterizes nonzero multiples of homomorphisms $\,\hp\to(0,\infty)$.
\begin{lemma}\label{liegp}The left-in\-var\-i\-ant connections\/ $\,\nabla\hs$ 
with skew-sym\-met\-ric Ric\-ci tensor on any connected 
\hbox{two\hh-}\hskip0ptdi\-men\-sion\-al Lie group\/ $\,\hp\,$ are in a 
bijective correspondence with pairs $\,(\Psi,f\hs)\hs$ formed by a 
Lie-al\-ge\-bra homomorphism\/ 
$\,\Psi:\hi\to\mathfrak{sl}\hh(\hi)\,$ and a linear 
functional $\,\lf\in\hi^*\nnh$, where\/ $\,\hi\,$ is the Lie 
algebra of\/ $\,\hp$, consisting of left-in\-var\-i\-ant vector fields on\/ 
$\,\hp$, and\/ $\,\mathfrak{sl}\hh(\hi)\,$ stands for the Lie algebra 
of trace\-less vec\-tor-space en\-do\-mor\-phisms of\/ $\,\hi$.

The correspondence is given by 
$\,\nabla_{\hskip-2.2pt\eu}\ev=[\Psi\eu]\hh\ev+\lf(\hn\eu)\hh\ev\,$ for 
$\,\eu,\ev\in\hi$, and\/ $\,\nabla\hn\,$ has the Ric\-ci tensor 
$\,\rho\,$ with $\,\rho\hs(\nh\eu,\ev)=\lf([\eu,\ev])$.
\end{lemma}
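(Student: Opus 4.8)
The plan is to reduce the statement to linear algebra on the Lie algebra $\,\hi$. Recall the standard identification of a left-invariant connection $\,\nabla\,$ on $\,\hp\,$ with the $\,\bbR$-bilinear map $\,\hi\times\hi\to\hi\,$ carrying $\,(\eu,\ev)\,$ to $\,\nsu\ev$, equivalently with the linear map $\,\eu\mapsto A_\eu\in\mathrm{End}(\hi)\,$ given by $\,A_\eu\ev=\nsu\ev\,$ for left-invariant $\,\eu,\ev$. Since $\,\dim\hi=2$, each $\,A_\eu\,$ has a unique decomposition $\,A_\eu=\Psi\eu+\lf(\eu)\,\mathrm{Id}$, with $\,\lf(\eu)=(\mathrm{tr}\,A_\eu)/2\in\bbR\,$ and $\,\Psi\eu=A_\eu-\lf(\eu)\,\mathrm{Id}\in\mathfrak{sl}\hh(\hi)$, both linear in $\,\eu$. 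This yields a bijection between left-invariant connections $\,\nabla\,$ and pairs $\,(\Psi,\lf)$, where $\,\Psi:\hi\to\mathfrak{sl}\hh(\hi)\,$ is linear and $\,\lf\in\hi^*$, under which $\,\nsu\ev=[\Psi\eu]\,\ev+\lf(\eu)\,\ev$. It then remains to show that $\,\nabla\,$ has skew-symmetric Ricci tensor exactly when $\,\Psi\,$ is a Lie algebra homomorphism, and that for such $\,\nabla\,$ one has $\,\rho\hs(\eu,\ev)=\lf([\eu,\ev])$. (Torsion-freeness is neither assumed nor implied here; it is the additional linear condition $\,[\Psi\eu]\,\ev-[\Psi\ev]\,\eu+\lf(\eu)\,\ev-\lf(\ev)\,\eu=[\eu,\ev]\,$ on the pair $\,(\Psi,\lf)$.)

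The second step is to compute the curvature. For left-invariant $\,\eu,\ev\,$ one obtains from $\,\nsu\ev=A_\eu\ev\,$ and (\ref{cur}) that, as an endomorphism of $\,\hi$,
\[
R(\eu,\ev)\,=\,A_{[\eu,\ev]}-[A_\eu,A_\ev]\,=\,\Delta(\eu,\ev)+\lf([\eu,\ev])\,\mathrm{Id}\hh,\qquad\mathrm{where}\qquad\Delta(\eu,\ev)\,=\,\Psi[\eu,\ev]-[\Psi\eu,\Psi\ev]\hh.
\]
Here $\,[A_\eu,A_\ev]\,$ is the commutator in $\,\mathrm{End}(\hi)$, and the second equality holds because scalar endomorphisms are central. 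Thus $\,\Delta\,$ takes values in $\,\mathfrak{sl}\hh(\hi)\,$ (a commutator is traceless), is skew-symmetric in $\,\eu,\ev$, and is precisely the obstruction to $\,\Psi\,$ being a homomorphism; since $\,\hi\,$ is two-dimensional, $\,\Delta\,$ vanishes identically iff $\,\Delta(e_1,e_2)=0\,$ for one (hence any) basis $\,e_1,e_2\,$ of $\,\hi$.

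The third step is the Ricci contraction. Fix a basis $\,e_1,e_2\,$ and put $\,M=\Delta(e_1,e_2)\in\mathfrak{sl}\hh(\hi)$. Contracting the displayed identity to the Ricci tensor $\,\rho\,$ (with the convention underlying (\ref{bch})) expresses $\,\rho\,$ as the sum of the antisymmetric tensor $\,(\eu,\ev)\mapsto\lf([\eu,\ev])\,$ and a term bilinear in $\,M$; the former contributes nothing to the symmetric part of $\,\rho$, while a short $\,2\times2\,$ computation, using $\,\mathrm{tr}\,M=0$, shows that the symmetric part of the latter vanishes iff $\,M=0$. Hence $\,\rho\,$ is skew-symmetric iff $\,\Delta\equiv0$, i.e.\ iff $\,\Psi\,$ is a Lie algebra homomorphism; and in that case the same contraction leaves exactly $\,\rho\hs(\eu,\ev)=\lf([\eu,\ev])$. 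Bijectivity of the correspondence is then immediate from uniqueness of the decomposition $\,A_\eu=\Psi\eu+\lf(\eu)\,\mathrm{Id}$, and no case distinction between Abelian and non-Abelian $\,\hi\,$ is required.

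The point requiring the most care is the third step: one must pin down the exact contraction and sign conventions behind (\ref{bch}) and (\ref{cur}) so that the surviving antisymmetric term emerges as $\,\lf([\eu,\ev])\,$ rather than its negative, and then carry out the small verification that, in dimension $\,2$, vanishing of the symmetric part of the $\,M$-term forces the traceless endomorphism $\,M\,$ itself to vanish. The identification of left-invariant connections with bilinear maps, the curvature computation, and the bijectivity are all routine.
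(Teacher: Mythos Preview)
Your argument is correct and complete. The decomposition $A_\eu=\Psi\eu+\lf(\eu)\,\mathrm{Id}$, the curvature formula $R(\eu,\ev)=A_{[\eu,\ev]}-[A_\eu,A_\ev]=\Delta(\eu,\ev)+\lf([\eu,\ev])\,\mathrm{Id}$, and the $2\times2$ Ricci contraction all go through exactly as you describe; writing $M=\Delta(e_1,e_2)$ in a basis and using $\mathrm{tr}\,M=0$, one finds the symmetric part of $\rho$ has entries $M^2_{\ 1}$, $-M^1_{\ 1}$, $-M^1_{\ 2}$, so it vanishes iff $M=0$, and then only $\lf([\eu,\ev])$ survives. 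Your remark that torsion-freeness is an additional, independent linear condition on $(\Psi,\lf)$ is also correct and worth making explicit.

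The paper's own proof is entirely different in character: it simply invokes Theorem~7.2 and Lemma~4.1 of \cite{derdzinski-08}, so the argument is outsourced. Your direct computation is self-contained and more transparent for a reader who does not have that reference at hand; the cited results presumably establish the same decomposition and contraction in somewhat greater generality, but in dimension two your bare-hands approach is both shorter and more illuminating.
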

\begin{proof}This is obvious from \cite[Theorem~7.2]{derdzinski-08} and 
\cite[Lemma 4.1]{derdzinski-08}.
\end{proof}
\begin{example}\label{nonab}Let us fix a 
\hbox{two\hh-}\hskip0ptdi\-men\-sion\-al \hbox{non\hs-\nh}\hskip0ptAbel\-i\-an 
simply connected Lie group $\,\hp\,$ along with a basis $\,\eu,\ew\,$ of its 
Lie algebra $\,\hi\,$ such that $\,[\eu,\ew]=2\eu$. Given real parameters 
$\,\ea,\eb\,$ with $\,\ea\eb=0$, we define a left-in\-var\-i\-ant 
tor\-sion\-free connection $\,\nabla=\nabla(\ea,\eb)\,$ on $\,\hp\,$ by
\begin{equation}\label{nuu}
\begin{array}{ll}
\nabla_{\hskip-2.2pt\eu}\eu\,=\,(3+\ea)\hh\eu-\ea\hh\ew\hh,
&\nabla_{\hskip-2.2pt\eu}\ew\,=\,\hs\ea\hh\eu+(3-\ea)\hh\ew\hh,\\
\nabla_{\hskip-2.2pt\ew}\eu\,=\,(\ea-2)\hh\eu+(3-\ea)\hh\ew\hh,\hskip10pt
&\nsww\,=\,(\ea+\eb-1)\hh\eu+(2-\ea)\hh\ew\hh.
\end{array}
\end{equation}
The Ric\-ci tensor $\,\rho\,$ of $\,\nabla\hn\,$ then is skew-sym\-met\-ric 
and, for the re\-cur\-rence $\,1$-form $\,\phi\,$ of $\,\nabla\nnh$,
\begin{equation}\label{ruw}
\mathrm{a)}\hskip9pt\rho\hs(\hn\eu,\ew)=6\hh,\hskip18pt
\mathrm{b)}\hskip9pt\phi(\hn\eu)=-\hh6\hh,\hskip9pt\phi(\ew)=0\hh,
\end{equation}
while $\,\ew\,$ coincides with the vector field in (\ref{rec}.ii). Whenever 
$\,\ef:\hp\to\bbR\,$ satisfies (\ref{duf}),
\begin{equation}\label{riv}
\mathrm{i)}\hskip9pt\ef\nh\rho\hskip9pt\mathrm{is\ 
right}\mbox{-}\mathrm{in\-var\-i\-ant,}\hskip18pt\mathrm{ii)}\hskip9pt
d(\ef^s\nh\phi)=2(1-s)\ef^s\hskip-1.5pt\rho\hskip9pt\mathrm{for\ 
any}\hskip5pts\in\bbR\hh.
\end{equation}
There exists a function $\,\psi:\hp\to\bbR\,$ with $\,3\hs d\psi=-\ef\phi$, 
and, for any such $\,\psi$,
\begin{equation}\label{vot}
\ev_1^{\phantom i}\nh=\ef^{-1}\eu\hskip4.5pt\mathrm{and}\hskip4.5pt 
\ev_2^{\phantom i}\nh=\ef^{-1}\psi\eu-\ew\hskip5.5pt\mathrm{are\ 
right}\mbox{-}\mathrm{in\-var\-i\-ant\ vector\ fields,\hskip4.5ptwhile}
\hskip4.5pt[\ev_1^{\phantom i},\ev_2^{\phantom i}]=2\hh\ev_1^{\phantom i}\hh.
\end{equation}
Finally, if $\,(\ea,\eb)=(1,0)\,$ and $\,\oz\,$ is the operator given by 
(\ref{zte}), we have
\begin{equation}\label{zff}
\oz(\phi\otimes\phi)\,=\,15\hs\phi\nh/2\,\ne\,0\hh.
\end{equation}
In fact, $\,\nabla\hn\,$ corresponds as in Lemma~\ref{liegp} to the pair 
$\,(\Psi,\lf)\,$ such that $\,\lf(\hn\eu)=3$, $\,\lf(\ew)=0$, and the matrices 
representing $\,\Psi\eu\,$ and $\,\Psi\ew\,$ in the basis $\,\eu,\ew\,$ are
\begin{equation}\label{bue}
\mathfrak{B}_\eu\,
=\hs\left[\begin{matrix}\ea&\ea\\-\ea&-\ea\hs\end{matrix}\right],\hskip27pt
\mathfrak{B}_\ew\,
=\hs\left[\begin{matrix}\hh\ea-2&\ea+\eb-1\\\hh3-\ea&2-\ea\end{matrix}\right]
\nnh.
\end{equation}
We have 
$\,\mathfrak{B}_\eu\mathfrak{B}_\ew-\mathfrak{B}_\ew\mathfrak{B}_\eu
=2\hh\mathfrak{B}_\eu$. Hence $\,\Psi\,$ is a Lie-al\-ge\-bra homomorphism, 
and so, according to Lemma~\ref{liegp},  $\,\nabla\hn\,$ has 
skew-sym\-met\-ric Ric\-ci tensor with (\ref{ruw}.a), while, evaluating 
$\,d_\eu[\hh\rho\hs(\hn\eu,\ew)]\,$ and $\,d_\ew[\hh\rho\hs(\hn\eu,\ew)]\,$ 
via the Leib\-niz rule and (\ref{nuu}), we obtain (\ref{rec}.i,\hs ii) for 
$\,\phi\,$ with (\ref{ruw}.b). Next, (\ref{duf}) and (\ref{ruw}.a) give 
$\,\rho\hs(\hn\eu,\,\cdot\,)=-3\,d\hskip2pt\mathrm{log}\hs\ef$, so that 
(\ref{riv}.ii) follows from (\ref{rec}.iv) and the relation 
$\,\phi\wedge\df=2\ef\nh\rho$, immediate from (\ref{rer}.b) and (\ref{ruw}.b). 
As $\,\rho\hs(\hn\eu,\,\cdot\,)=-3\,d\hskip2pt\mathrm{log}\hs\ef$, 
(\ref{lvz}), (\ref{rec}.ii) and (\ref{riv}.ii) with $\,s=1\,$ yield 
$\,\lie_\eu(\ef\nh\rho)=\lie_\ew(\ef\nh\rho)=0$. Since the flows of 
left-in\-var\-i\-ant vector fields consist of right translations, this proves 
(\ref{riv}.i). For $\,\ev_j$ as in (\ref{vot}), using (\ref{duf}) and 
(\ref{ruw}.b) we easily get 
$\,\lie_\eu\ev_j\nh=\lie_\ew\ev_j\nh=0\,$ for $\,j=1,2$, which implies 
(\ref{vot}), closedness of the $\,1$-form $\,\ef\phi\,$ being obvious from
(\ref{riv}.ii). Finally, by (\ref{nuu}) with $\,(\ea,\eb)=(0,1)$, the 
Leib\-niz rule and (\ref{ruw}), $\,(\nsu\phi)(\hn\eu)=24$, 
$\,(\nsu\phi)(\ew)=6$, $\,(\nsw\phi)(\hn\eu)=-\hs6$, $\,(\nsw\phi)(\ew)=0$, so 
that $\,\nsw\phi=\phi$. Now (\ref{ruw}.b) gives 
$\,[\nsu(\phi\otimes\phi)](\ew,\,\cdot\,)=6\hh\phi\,$ and 
$\,[\nsw(\phi\otimes\phi)](\hn\eu,\,\cdot\,)=-12\hh\phi$, which, combined 
with (\ref{bta}.a) and (\ref{ruw}.a), yields $\,\ob(\phi\otimes\phi)=3\hh\phi$. 
However, $\,\od\phi=-1\,$ by (\ref{bta}.b) and (\ref{rec}.iv). Therefore, 
(\ref{zff}) follows from (\ref{zte}) for $\,\tau=\phi\otimes\phi\,$ along with 
(\ref{rec}.iii).
\end{example}
\begin{remark}\label{uwinv}If $\,(\ea,\eb)\ne(1,0)$, the vector fields 
$\,\eu\,$ and $\,\ew\,$ are local geometric invariants of the connection 
$\,\nabla=\nabla(\ea,\eb)\,$ given by (\ref{nuu}).

For $\,\ew\,$ this is clear, also when $\,(\ea,\eb)=(1,0)$, from 
(\ref{rec}.ii). On the other hand, (\ref{ruw}.b) determines $\,\eu\,$ 
uniquely up to its replacement by $\,\eu+\chi\hh\ew$, where $\,\chi\,$ is any 
function. As $\,\ea\eb=0$, the requirement that the equality 
$\,\nabla_{\hskip-2.2pt\eu}\ew=\ea\hh\eu+(3-\ea)\hh\ew\,$ in (\ref{nuu}) 
remain valid, even after $\,\eu\,$ has been replaced by $\,\eu+\chi\hh\ew$, 
easily gives $\,(\ea,\eb)=(1,0)\,$ unless $\,\chi\,$ is identically zero.
\end{remark}
\begin{proposition}\label{wnonz}If\/ $\,\nabla\hn\,$ is a tor\-sion\-free 
connection on a surface\/ $\,\bs\,$ with 
eve\-ry\-\hbox{where\hskip1pt-}\hskip0ptnon\-zero, skew-sym\-met\-ric 
Ric\-ci tensor, while\/ {\rm(\ref{nuu})} holds on a nonempty open set\/ 
$\,\bs\hh'\nh\subset\bs$, for some constants\/ $\,\ea,\eb\,$ with\/ 
$\,\ea\eb=0\,$ such that\/ $\,\ea+\eb\ne1$, and some vector fields\/ 
$\,\eu,\ew\,$ defined on\/ $\,\bs\hh'\nnh$, which are linearly independent 
at each point of\/ $\,\bs\hh'\nnh$, then
\begin{enumerate}
  \def\theenumi{{\rm\roman{enumi}}}
\item[(i)] $\ew\,$ is the restriction to\/ $\,\bs\hh'$ of the vector field\/ 
$\,\ew\,$ given by\/ {\rm(\ref{rec}.ii)},
\item[(ii)] the vector field\/ $\,\ew\,$ with\/ {\rm(\ref{rec}.ii)} is 
nonzero everywhere in the closure of\/ $\,\bs\hh'\nnh$.
\end{enumerate}
\end{proposition}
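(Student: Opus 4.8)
The plan is to read (i) off Example~\ref{nonab} and to deduce (ii) by a continuity argument in which the hypothesis $a+b\ne 1$ plays the decisive role. For (i): since $\nabla\nnh$ is torsion-free, (\ref{nuu}) forces $[u,w]=\nabla_u w-\nabla_w u=2u$ on $\Sigma'$, and the frame $(u,w)$ together with the components (\ref{nuu}) then realizes $\nabla\nnh$ on $\Sigma'$ as a local model of the connection $\nabla(a,b)$ of Example~\ref{nonab}. The verifications carried out there — that $\rho(u,w)=6$, that the recurrence form satisfies (\ref{ruw}.b), and that $w$ is the vector field of (\ref{rec}.ii) — are purely local moving-frame computations using only (\ref{nuu}), the relation $[u,w]=2u$, and the Leibniz rule, hence remain valid on $\Sigma'$. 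Since $\rho$ is nondegenerate everywhere, the vector field of (\ref{rec}.ii) is globally and unambiguously defined on all of $\Sigma$, so (i) follows.

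For (ii) I write $w$, from now on, for that global vector field and $\nabla w$ for the endomorphism field (\ref{nwt}); both are $C^\infty$ on all of $\Sigma$, because $\rho$ trivializes $[\tab]^{\wedge 2}\nnh$ (as recalled before (\ref{rec})), which makes $\phi$, and hence $w$, smooth. By (i) the frame field $w$ occurring in (\ref{nuu}) is the restriction of this global $w$, so on $\Sigma'$ the last equation of (\ref{nuu}) reads $(\nabla w)(w)=\nabla_w w=(a+b-1)\hs u+(2-a)\hs w$; since $a+b\ne 1$, this can be solved for $u$ on $\Sigma'$:
\[
u\,=\,\frac{1}{a+b-1}\bigl[(\nabla w)(w)-(2-a)\hs w\bigr].
\]
The right-hand side defines a smooth vector field $\tilde u$ on all of $\Sigma$ restricting to $u$ on $\Sigma'$, so the smooth function $y\mapsto\rho_y(\tilde u_y,w_y)$ equals $\rho(u,w)=6$ on the open set $\Sigma'$, and hence also on $\overline{\Sigma'}$. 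If now $w_y=0$ for some $y\in\overline{\Sigma'}$, then $\rho_y(\tilde u_y,w_y)=0\ne 6$, a contradiction; therefore $w$ is nowhere zero on $\overline{\Sigma'}$, proving (ii).

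The one place needing genuine care is the transfer of Example~\ref{nonab} to the abstract surface $\Sigma'$ in step (i) — i.e.\ checking that the curvature and recurrence-form computations there rely only on the structure relation $[u,w]=2u$ and the components (\ref{nuu}), not on any ambient group structure. Everything afterwards, including the limiting argument in (ii), is routine; its only substantive ingredient is the remark — essentially Remark~\ref{uwinv} — that $a+b\ne 1$ is precisely what allows one to recover $u$ from the globally defined $w$ and $\nabla w$.
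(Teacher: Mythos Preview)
Your proof is correct and follows essentially the same line as the paper's: for (i) you invoke the local computations of Example~\ref{nonab} after noting $[\eu,\ew]=2\eu$ from torsion-freeness, and for (ii) you use the last equation in (\ref{nuu}) together with $\ea+\eb\ne1$ to express $\eu$ in terms of the globally defined $\ew$ and $\nsww$, then reach a contradiction with (\ref{ruw}.a) by continuity. The paper phrases (ii) via a convergent sequence rather than your explicit extension $\tilde u$, but the substance is identical.
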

\begin{proof}Assertion (i) was established in Example~\ref{nonab}. 
(Since $\,\nabla\hn\,$ is tor\-sion\-free, $\,[\eu,\ew]=2\eu$.) If we now had 
$\,\ew\to0\,$ on some sequence of points of $\,\bs\hh'$ converging in $\,\bs$, 
it would follow that $\,\nsww\to0\,$ as well, and so the last equality in 
(\ref{nuu}) would give $\,(\ea+\eb-1)\hh\eu\to0$, that is, $\,\eu\to0$, 
contradicting (\ref{ruw}.a).
\end{proof}
As shown by the next two examples, the assumption that 
$\,(0,1)\ne(\ea,\eb)\ne(1,0)\,$ (or, equivalently, $\,\ea+\eb\ne1$) is 
essential for conclusion (ii) in  Proposition~\ref{wnonz}. In 
Section\/~\ref{lore} the same connections $\,\nabla(0,1)\,$ and 
$\,\nabla(1,0)\,$ are realized on Lo\-rentz\-i\-an quadric surfaces in a 
$\,3$-space.
\begin{example}\label{slsgp}Let $\,y^{\hn1}\nnh,y^2$ be the Cartesian 
coordinates in $\,\bs=\rto\nnh$. For the vector fields 
$\,\eu=(0,1/y^{\hn1})\,$ on the open set $\,\bs\hh'\nh\subset\bs\,$ where 
$\,y^{\hn1}\nh\ne0$, and $\,\ew=(2y^{\hn1}\nnh,0)\,$ on $\,\bs$, we have 
$\,[\eu,\ew]=2\eu$. Furthermore, the connection $\,\nabla\,$ defined by 
(\ref{nuu}) with $\,(\ea,\eb)=(0,1)\,$ has a $\,C^\infty$ extension from 
$\,\bs\hh'$ to $\,\bs$, since $\,\nabla_{\hskip-2.2pt(1,0)}(1,0)=0$, 
$\,\nabla_{\hskip-2.2pt(1,0)}(0,1)=\nabla_{\hskip-2.2pt(0,1)}(1,0)
=(3y^{\hn1}\nnh,0)\,$ and $\,\nabla_{\hskip-2.2pt(0,1)}(0,1)=(0,3y^{\hn1})$, 
as one sees noting that $\,(1,0)=\ew/(2y^{\hn1})$, $\,(0,1)=y^{\hn1}\eu$, 
$\,d_\eu y^{\hn1}\nh=0\,$ and $\,d_\ew y^{\hn1}\nh=2y^{\hn1}\nnh$. Our 
$\,\bs,\bs\hh'$ and $\,\nabla\,$ thus satisfy the assumptions of 
Proposition~\ref{wnonz} except for the condition $\,\ea+\eb\ne1$, and 
conclusion (ii) fails to hold: $\,\ew=0\,$ on 
$\,\bs\smallsetminus\bs\hh'\nnh$. (The Ric\-ci tensor $\,\rho\,$ is nonzero 
everywhere in $\,\bs$, since, by (\ref{ruw}.a), 
$\,6=\rho\hs(\hn\eu,\ew)=2\rho\hs((0,1),(1,0))\,$ on $\,\bs\hh'\nnh$.)

Both vector fields $\,\eu,\ew$, and hence also the connection $\,\nabla\nh$, 
are easily seen to be invariant under the group $\,\hp\,$ of af\-fine 
transformations of $\,\rto$ having a diagonal linear part of determinant 
$\,1\,$ and a translational part parallel to the $\,y^2$ axis 
$\,\bs\smallsetminus\bs\hh'\nnh$.
\end{example}
\begin{example}\label{slinv}Let $\,\plane\,$ be a 
\hbox{two\hh-}\hskip0ptdi\-men\-sion\-al real vector space with a fixed area 
form $\,\varOmega$. Thus, $\,\varOmega\,$ is an element of 
$\,[\plane^*]^{\wedge2}\nnh\smallsetminus\{0\}$, treated as a constant 
$\,2$-form on $\,\plane$. Denoting by $\,\ew\,$ the radial (identity) vector 
field on $\,\plane\,$ and by $\,c\,$ a fixed nonzero real constant, we define 
a connection $\,\nabla\hn\,$ on $\,\plane\,$ by requiring that, for all vector 
fields $\,\eu\,$ and all constant vector fields $\,\ev\,$ on $\,\plane$,
\begin{equation}\label{nuv}
\nsu\ev\,\,
=\,\,2\hh c\hs[\hh\varOmega(\ew,\eu)\hh\ev\,+\,\varOmega(\ew,\ev)\hh\eu]\,
-\,c\hh^2\varOmega(\ew,\eu)\hs\varOmega(\ew,\ev)\hh\ew\hs.
\end{equation}
Using $\,\eu,\ev\,$ which are both constant, one sees that $\,\nabla\hn\,$ is 
tor\-sion\-free and its pull\-back under any linear iso\-mor\-phism 
$\,A:\plane\to\plane\,$ is an analogous connection corresponding, instead of 
$\,c$, to $\,c\hskip2.4pt\mathrm{det}\hskip1ptA$. Thus, $\,\nabla\hn\,$ is 
invariant under the action of the uni\-mod\-u\-lar group 
$\,\mathrm{SL}\hs(\plane)$, and, although $\,\nabla\hn\,$ varies with the 
parameter $\,c$, its dif\-feo\-mor\-phic e\-quiv\-a\-lence class is 
independent of $\,c$.

Next, (iii) in Section~\ref{prel} easily implies both that 
$\,\ew=\varOmega(\ew,\ev\hh'\hh)\hh\ev-\varOmega(\ew,\ev)\hh\ev\hh'$ for 
constant vector fields $\,\ev,\ev\hh'$ with 
$\,\varOmega(\hn\ev,\ev\hh'\hh)=1$, and that, as a result,
\begin{equation}\label{nuw}
\nabla_{\hskip-2.2pt\eu}\ew\,
=\,\eu\,+\,2\hh c\hh\varOmega(\ew,\eu)\hh\ew\hs\hskip16pt\mathrm{for\ all\ 
vector\ fields\ }\,\eu\hh,
\end{equation}
even if one replaces $\,c\hh^2$ in (\ref{nuv}) with just any constant 
$\,c\hh'\nnh$. Let us now fix a nonzero constant vector field $\,\ev\,$ on 
$\,\plane\,$ and set $\,\eu=[\hh c\hh\varOmega(\ew,\ev)]^{-1}\ev\,$ on the 
open subset $\,\bs\hh'\nh=\plane\smallsetminus\bbR\ev$. Then (\ref{nuv}) and 
(\ref{nuw}) yield (\ref{nuu}) for $\,(\ea,\eb)=(1,0)\,$ and our 
$\,\eu,\ew$. Again, the assumptions of Proposition~\ref{wnonz} hold in this 
case, with $\,\bs=\plane$, except for $\,\ea+\eb\ne1$, and conclusion (iii) 
fails, as $\,\ew=0\,$ at $\,0$. (The Ric\-ci tensor $\,\rho\,$ is nonzero 
everywhere in $\,\bs$, since, by (\ref{ruw}.a), 
$\,c\rho=6\hs\varOmega$.)
\end{example}
\begin{lemma}\label{slivc}If\/ $\,\plane\,$ is a 
\hbox{two\hh-}\hskip0ptdi\-men\-sion\-al real vector space with a fixed area 
form\/ $\,\varOmega$, and an RSTS connection\/ $\,\nabla\hn\,$ on a nonempty 
connected open set $\,\,U\subset\plane\,$ is invariant under the infinitesimal 
action of\/ $\,\mathrm{SL}\hs(\plane)$, then\/ $\,\nabla\hn\,$ satisfies\/ 
{\rm(\ref{nuv})} for some\/ $\,c\in\bbR\hh$, all vector fields $\,\eu$, and 
all constant vector fields\/ $\,\ev\,$ on\/ $\,\,U\nh$.
\end{lemma}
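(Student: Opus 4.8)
The plan is to first describe all torsion-free connections on $\,U\,$ that are invariant under the infinitesimal action of $\,\mathrm{SL}(\plane)$, and then to single out the RSTS ones by computing one component of the Ricci tensor. Let $\,\mathrm{D}\,$ be the flat connection on $\,\plane\,$ coming from its vector-space structure, so that $\,\mathrm{D}_\eu\ev=0\,$ for every constant vector field $\,\ev$. The linear vector fields $\,\ep\mapsto A\ep\,$ with $\,A\in\mathfrak{sl}(\plane)\,$ are infinitesimal affine transformations for $\,\mathrm{D}$, and hence invariance of $\,\nabla\hn\,$ under the infinitesimal $\,\mathrm{SL}(\plane)$-action is equivalent to $\,\lie_X T=0\,$ for all such vector fields $\,X$, where $\,T=\nabla-\mathrm{D}\,$ is the difference tensor, a section over $\,U\,$ of $\,S^2\plane^*\!\otimes\plane\,$ which is symmetric since $\,\nabla\hn\,$ is torsion-free. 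Writing $\,\ew\,$ for the radial vector field on $\,\plane$, as in Example~\ref{slinv}, I introduce the two globally defined $\,\mathrm{SL}(\plane)$-invariant sections of $\,S^2\plane^*\!\otimes\plane$, namely $\,B_1(\hn\eu,\ev)=\varOmega(\ew,\eu)\hh\ev+\varOmega(\ew,\ev)\hh\eu\,$ and $\,B_2(\hn\eu,\ev)=\varOmega(\ew,\eu)\hh\varOmega(\ew,\ev)\hh\ew$. The key claim is that $\,T=aB_1+bB_2\,$ on all of $\,U\,$ for some constants $\,a,b\in\bbR$.

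To prove the claim, fix $\,\ep_0\in U\,$ with $\,\ep_0\ne0\,$ (possible, since $\,U\,$ is open and nonempty). The $\,\mathrm{SL}(\plane)$-orbit of $\,\ep_0\,$ is $\,\plane\smallsetminus\{0\}$, which is open, so the action is locally transitive near $\,\ep_0$, with one-dimensional isotropy algebra spanned by a nonzero nilpotent $\,N\in\mathfrak{sl}(\plane)$. Integrating $\,\lie_X T=0\,$ along paths in $\,\mathrm{SL}(\plane)\,$ shows that $\,T\,$ coincides, on a neighborhood of $\,\ep_0$, with the $\,\mathrm{SL}(\plane)$-equivariant extension of its value $\,T_{\ep_0}\in S^2\plane^*\!\otimes\plane$; and, evaluating $\,\lie_X T=0\,$ at $\,\ep_0\,$ for $\,X\,$ the linear vector field $\,\ep\mapsto N\ep\,$ (which vanishes at $\,\ep_0$), we see that $\,T_{\ep_0}\,$ lies in $\,\mathrm{Ker}\,N$, the kernel of the natural action of $\,N\,$ on $\,S^2\plane^*\!\otimes\plane$. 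Since $\,S^2\plane^*\!\otimes\plane\,$ decomposes under $\,\mathfrak{sl}(\plane)\,$ into irreducible summands of dimensions $\,4\,$ and $\,2$, on each of which a nonzero nilpotent has a one-dimensional kernel, $\,\mathrm{Ker}\,N\,$ is two-dimensional; as $\,(B_1)_{\ep_0}\,$ and $\,(B_2)_{\ep_0}\,$ are linearly independent elements of it, they span it. Choosing $\,a,b\,$ with $\,T_{\ep_0}=a(B_1)_{\ep_0}+b(B_2)_{\ep_0}$, we conclude that the equivariant tensor fields $\,T\,$ and $\,aB_1+bB_2\,$ agree near $\,\ep_0$. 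Finally, $\,T-aB_1-bB_2\,$ is a section of $\,S^2\plane^*\!\otimes\plane$, invariant under the infinitesimal $\,\mathrm{SL}(\plane)$-action, that vanishes on a nonempty open set; since $\,U\smallsetminus\{0\}\,$ is connected and $\,B_1,B_2\,$ both vanish at $\,0$, the identity $\,T=aB_1+bB_2\,$ holds throughout $\,U$.

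It remains to use the RSTS hypothesis. Choose a basis of $\,\plane\,$ on which $\,\varOmega\,$ has value $\,1$, with associated linear coordinates $\,x^1\nnh,x^2$ on $\,U$, so that $\,\ew=x^1\partial_1+x^2\partial_2$; then $\,\nabla=\mathrm{D}+aB_1+bB_2\,$ has explicit polynomial Christoffel symbols obtained by expanding $\,B_1\,$ and $\,B_2\,$ in this frame, and a direct computation of the Ricci tensor in these coordinates gives $\,\rho_{11}=(a^2+4b)\hh(x^2)^2$. Skew-symmetry of $\,\rho\,$ forces $\,\rho_{11}\equiv0\,$ on the open set $\,U$, hence $\,a^2+4b=0$; setting $\,c=a/2\,$ we obtain $\,a=2c\,$ and $\,b=-c^2$, so that $\,T=2c\,B_1-c^2B_2$. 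Since $\,\mathrm{D}_\eu\ev=0\,$ for constant $\,\ev$, this is exactly the statement that $\,\nabla\hn\,$ satisfies (\ref{nuv}) for the constant $\,c$, all vector fields $\,\eu$, and all constant vector fields $\,\ev$, as required.

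The step I expect to be the main obstacle is the Ricci computation of the last paragraph: although elementary, it needs enough care in the bookkeeping to confirm that the coefficient arising is precisely $\,a^2+4b$, and that it multiplies a function --- here $\,(x^2)^2\,$ --- which is not identically zero on $\,U$, so that the RSTS condition genuinely collapses the two-parameter family $\,\mathrm{D}+aB_1+bB_2\,$ to the one-parameter family (\ref{nuv}). By contrast, the invariant-theoretic reduction in the first two paragraphs is short once one knows the $\,\mathfrak{sl}(\plane)$-decomposition of $\,S^2\plane^*\!\otimes\plane$.
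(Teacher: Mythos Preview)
Your proof is correct and follows the same two-step strategy as the paper: reduce the difference tensor $\,T=\nabla-\mathrm{D}\,$ to a two-parameter family via $\,\mathrm{SL}(\plane)$-invariance, then impose the RSTS condition to eliminate one parameter. The execution differs in two minor ways. For the first step, the paper argues directly that the only isotropy-invariant symmetric $2$-tensors at a point $\,y\ne0\,$ are multiples of $\,\varOmega(y,\cdot)\otimes\varOmega(y,\cdot)\,$ (by ruling out rank~$2$ through an orthogonal-complement trick), and from this reads off that $\,\Xi_y(u,v)-2c\hh[\varOmega(y,u)\hh v+\varOmega(y,v)\hh u]\in\bbR y$; you instead invoke the Clebsch--Gordan decomposition $\,S^2\plane^*\!\otimes\plane\cong V_3\oplus V_1\,$ and the fact that a nilpotent has one-dimensional kernel on each irreducible. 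Your route is cleaner if one is willing to quote $\,\mathfrak{sl}_2$-representation theory; the paper's is more self-contained. For the second step, the paper computes the full curvature invariantly, obtaining $\,R(u,v)v'=-2\hh\varOmega(u,v)\hh[3cv'+2(c^2-c')\hh\varOmega(w,v')\hh w]\,$ and reading off that skew-symmetry of the Ricci tensor forces $\,c'=c^2$; your coordinate computation of $\,\rho_{11}\,$ reaches the equivalent conclusion $\,a^2+4b=0$. Both arguments are sound, and your Ricci computation, while requiring care in the bookkeeping, is indeed routine.
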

\begin{proof}Let $\,\hp\subset\mathrm{SL}\hs(\plane)\,$ be the isotropy 
subgroup of a fixed point $\,y\in U\smallsetminus\{0\}$. Thus, $\,\bbR y\,$ is 
the only line (\hbox{one\hh-}\hskip0ptdi\-men\-sion\-al vector sub\-space) in 
$\,T\hskip-2pt_yU=\plane\,$ with the property of 
$\,\hp${\it-in\-var\-i\-ance}, here meaning invariance under the infinitesimal 
action of $\,\hp$. 

Multiples of $\,\varOmega(y,\,\cdot\,)\otimes\varOmega(y,\,\cdot\,)\,$ are, in 
turn, the only $\,\hp$-in\-var\-i\-ant symmetric $\,2$-ten\-sors $\,\tau\,$ at 
$\,y$. In fact, such $\,\tau$, if nonzero, must be of rank $\,1$, so that its 
null\-space, being an $\,\hp$-in\-var\-i\-ant line, must coincide with 
$\,\bbR y$, the null\-space of 
$\,\varOmega(y,\,\cdot\,)\otimes\varOmega(y,\,\cdot\,)$. (The rank of 
$\,\tau\,$ cannot be $\,2$, or else $\,\tau\,$ would be a 
pseu\-d\hbox{o\hs-}\hskip0ptEuclid\-e\-an inner product, and so $\,\bbR y\,$ 
would give rise to a second $\,\hp$-in\-var\-i\-ant line: the 
$\,\tau$-or\-thog\-o\-nal complement of $\,\bbR y$, if $\,\bbR y\,$ is not 
$\,\tau$-null, or the other $\,\tau$-null line, if $\,\bbR y\,$ is 
$\,\tau$-null.)

Let $\,\mathrm{D}\,$ be the restriction to $\,\,U\,$ of the standard flat 
connection on $\,\plane$. The difference $\,\Xi=\nabla\nh-\mathrm{D}\,$ is an 
$\,\mathrm{SL}\hs(\plane)$-in\-var\-i\-ant section of 
$\,[T^*\nnh U]^{\odot2}\nnh\otimes TU\nh$, and its value at $\,y\,$ is an 
$\,\hp$-in\-var\-i\-ant symmetric bi\-lin\-e\-ar mapping 
$\,\Xi_y:\plane\times\plane\to\plane$. Since the $\,\hp$-in\-var\-i\-ant 
symmetric $\,2$-ten\-sor $\,\tau=\varOmega(y,\Xi_y(\,\cdot\,,\,\cdot\,))\,$ 
must equal 
$\,-\hs c\hs\varOmega(y,\,\cdot\,)\otimes\varOmega(y,\,\cdot\,)/4\,$ for 
some $\,c\in\bbR\hh$, so that 
$\,\Xi_y(\hn\eu,\ev)
-2\hh c\hs[\hh\varOmega(y,\eu)\hh\ev\,+\,\varOmega(y,\ev)\hh\eu]\,$ lies, 
for all $\,\eu,\ev\in\plane$, in 
$\,\mathrm{Ker}\hskip2pt\varOmega(y,\,\cdot\,)=\bbR y$. Therefore, if 
$\,\eu,\ev\,$ are {\it constant\/} vector fields, 
$\,\nsu\ev=\Xi(\hn\eu,\ev)\,$ is given by the formula obtained from 
(\ref{nuv}) by replacing the coefficient $\,c\hh^2$ with a constant $\,c\hh'$ 
unrelated to $\,c$. (The values of $\,\Xi\,$ at points other than $\,y\,$ are 
the images of $\,\Xi_y$ under the infinitesimal action of 
$\,\mathrm{SL}\hs(\plane)$.) Using (\ref{cur}), (\ref{nuw}) (still valid in 
this case), and (iii) in Section~\ref{prel} we get 
$\,R\hh(\hn\eu,\ev)\hh\ev\hh'\nh=-2\hh\varOmega(\hn\eu,\ev)\hs
[3\hh c\hh\ev\hh'\nh
+2(c\hh^2\nh-c\hh'\hh)\hs\varOmega(\ew,\ev\hh'\hh)\hh\ew\hh]\,$ for the 
curvature tensor $\,R\,$ of $\,\nabla\hn\,$ and all constant vector fields 
$\,\eu,\ev,\ev\hh'\nnh$. Thus, the Ric\-ci tensor of $\,\nabla\hn\,$ is 
skew-sym\-met\-ric if and only if $\,c\hh'\nh=c\hh^2\nnh$, which completes the 
proof.
\end{proof}
\begin{remark}\label{loclg}We will use the following well-known fact. Let 
$\,e_j$, $\,j=1,\dots,n\hs$, be vector fields on an 
\hbox{$\,n$-}\hskip0ptdi\-men\-sion\-al manifold $\,\bs$, trivializing the 
tangent bundle $\,\tb\,$ and spanning an 
\hbox{$\,n$-}\hskip0ptdi\-men\-sion\-al Lie algebra. (Thus, the Lie 
brackets $\,[e_j,e_k]\,$ are constant-coefficient combinations of 
$\,e_1^{\phantom i},\dots,e_n$.) Then, locally, $\,\bs\,$ may be 
dif\-feo\-mor\-phi\-cal\-ly identified with a Lie group so that 
$\,e_1^{\phantom i},\dots,e_n$ correspond to left-in\-var\-i\-ant vector 
fields. See, for instance, \cite[Appendix~B]{derdzinski-08}.
\end{remark}

\section{The moduli curve of locally homogeneous RSTS connections}\label{mcrs}
\setcounter{equation}{0}
Let $\,\nabla\hn\,$ be a tor\-sion\-free connection on a surface $\,\bs\,$ 
such that the Ric\-ci tensor $\,\rho\,$ of $\,\nabla\hn\,$ is 
skew-sym\-met\-ric and nonzero everywhere, and let $\,\ew\,$ be the vector 
field with (\ref{rec}.ii). As in Section~\ref{prel}, we denote by 
$\,\mathfrak{a}_y$ the Lie algebra of germs, at $\,y\in\bs$, of all 
infinitesimal af\-fine transformations of $\,\nabla\nnh$. If $\,\ew_y\ne0$, 
then $\,\dim\hs\mathfrak{n}_y\le1\,$ for 
$\,\mathfrak{n}_y=\{\ev\in\mathfrak{a}_y:\ev_y=0\}$, that is,
\begin{equation}\label{isa}
\mathrm{the\ isotropy\ subalgebra\ \ }\mathfrak{n}_y\,\mathrm{\ of\ \ 
}\mathfrak{a}_y\,\mathrm{\ is\ at\ most\ 
\hbox{one\hh-}\hskip0ptdi\-men\-sion\-al, \ and\ so\ }\,\dim\hs\mathfrak{a}_y
\le3\hh.
\end{equation}
Namely, the differentials at $\,y\,$ of af\-fine transformations in $\,\bs\,$ 
keeping $\,y\,$ fixed lie in the \hbox{one\hh-}\hskip0ptdi\-men\-sion\-al 
group of linear au\-to\-mor\-phism of $\,\tyb\,$ that preserve both the area 
form $\,\rho_y$ and the vector $\,\ew_y\ne0$, so that we get (\ref{isa}). By 
(\ref{isa}), for $\,y\in\bs\,$ with $\,\ew_y\ne0$,
\begin{equation}\label{oof}
\mathrm{the\ pair\ \ }(\dim\hs\mathfrak{a}_y,\hskip1pt\dim\hs\mathfrak{n}_y)
\mathrm{\ \ is\ one\ of\ \ }(3,1),\,(2,1),\,(2,0),\,(1,1),\,(1,0),\,(0,0)\hh.
\end{equation}
In addition, let $\,\ev_y\in\tyb\,$ be a vector naturally distinguished by 
$\,\nabla\nnh$. It follows that
\begin{equation}\label{let}
\mathrm{if\ }\,\ev_y\mathrm{\ and\ }\,\ew_y\mathrm{\ are\ linearly\ 
independent, \ then\ }\,\mathfrak{n}_y=\hs\{0\}\,\mathrm{\ and\ 
}\,\dim\hs\mathfrak{a}_y\le2\hh.
\end{equation}
In fact, the flows of elements of $\,\mathfrak{n}_y$ 
keep the basis $\,\ev_y,\ew_y$ of $\,\tyb\,$ fixed, while, in general,
\begin{equation}\label{aff}
\begin{array}{l}
\mathrm{an\ af\-fine\ transformation\ \ }F\,\mathrm{\ between\ two\ manifolds\ 
with\ tor\-sion\-free\ connec\mbox{-}}\\
\mathrm{tions\ is\ uniquely\ determined\ by\ its\ value\ and\ differential\ 
at\ any\ given\ point,}
\end{array}
\end{equation}
since, in geodesic coordinates, $\,F\,$ appears as a linear operator.
\begin{remark}\label{uslin}Let $\,\nabla\hn\,$ be a tor\-sion\-free connection 
on a surface $\,\bs\,$ such that the Ric\-ci tensor $\,\rho\,$ of 
$\,\nabla\hn\,$ is skew-sym\-met\-ric and nonzero at every point $\,y\in\bs$
\begin{enumerate}
  \def\theenumi{{\rm\alph{enumi}}}
\item[(a)] The inequality $\,\dim\hs\mathfrak{a}_y\le3\,$ in (\ref{isa}) 
remains valid, by (\ref{dns}), also when $\,\ew_y\nh=0$.
\item[(b)] Since $\,\dim\hs\mathrm{SL}\hs(\plane)=3$, (a) implies that the 
connections $\,\nabla\hn\,$ described in Example~\ref{slinv} have 
$\,\dim\hs\mathfrak{a}_y=3\,$ at each point $\,y$, while $\,\ew_y\nh=0\,$ if 
$\,y=0$.
\item[(c)] Conversely, if $\,\dim\hs\mathfrak{a}_z=3\,$ and $\,\ew_z\nh=0\,$ 
at a point $\,z\in\bs$, then the restriction of $\,\nabla\hn\,$ to some 
neighborhood of $\,z\,$ is dif\-feo\-mor\-phi\-cal\-ly equivalent to one of 
the connections in Example~\ref{slinv}.
\end{enumerate}
To verify (c), we first note that $\,(\naw)_z$ cannot have two distinct real 
eigenvalues: if it did, the same would be true of nearby points $\,y$, 
including, in view of (\ref{dns}), one with $\,\ew_y\nh\ne0\,$ and 
$\,\dim\hs\mathfrak{a}_y\nh=3$. The condition $\,\dim\hs\mathfrak{a}_y\nh=3\,$ 
would now contradict (\ref{let}) for $\,\ev_y$ chosen to be an eigenvector of 
$\,(\naw)_y$ such that $\,\rho\hs(\hn\ev_y,\ew_y)=1$. Consequently, 
$\,(\naw)_z$ must be a linear au\-to\-mor\-phism of 
$\,T\hskip-2pt_z\hskip-.2pt\bs$, for otherwise, according to (\ref{dfr}.a) and 
(\ref{bch}.b), $\,(\naw)_z$ would have the distinct real eigenvalues $\,0\,$ 
and $\,2$. Since the matrix $\,[(\partial_j\ew^{\hs k})(z)]\,$ of the 
components of $\,(\naw)_z$ in any local coordinates is nonsingular, the 
inverse mapping theorem implies that $\,z\,$ is an isolated zero of $\,\ew$. 
Thus, the flows of all elements of $\,\mathfrak{a}_y$ keep $\,z\,$ fixed, and 
so the isotropy subalgebra $\,\mathfrak{n}_y\nh=\mathfrak{a}_y$ is 
\hbox{three\hh-}\hskip0ptdi\-men\-sion\-al. The Lie algebra 
$\,\mathfrak{n}_y\nh=\mathfrak{a}_y$ treated as acting in 
$\,T\hskip-2pt_z\hskip-.2pt\bs\,$ preserves the area form $\,\rho_z$. Being 
\hbox{three\hh-}\hskip0ptdi\-men\-sion\-al, it must therefore coincide with 
$\,\mathfrak{sl}\hh(T\hskip-2pt_z\hskip-.2pt\bs)$, and (c) is immediate from 
Lemma~\ref{slivc} along with the line following (\ref{aff}).
\end{remark}
\begin{lemma}\label{dathr}Under the same assumptions as in 
Remark\/~{\rm\ref{uslin}}, let\/ $\,\phi\,$ and\/ $\,\ew\,$ be characterized 
by\/ {\rm(\ref{rec})}. If\/ $\,y\in\bs\,$ is a point with 
$\,\dim\hs\mathfrak{a}_y\nh=3\,$ and\/ $\,\ew_y\ne0$, then\/ $\,y\,$ has a 
connected neighborhood\/ $\,\,U\,$ such that\/ $\,\nabla\,$ restricted to\/ 
$\,\,U\,$ is locally homogeneous, and the only symmetric\/ $\,2$-ten\-sors 
on\/ $\,\,U\nh$, naturally distinguished by\/ $\,\nabla\nnh$, are constant 
multiples of\/ $\,\phi\otimes\phi$.
\end{lemma}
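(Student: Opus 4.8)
The plan is to upgrade the pointwise hypothesis $\,\dim\hs\mathfrak{a}_y=3\,$ to genuine local homogeneity on a neighborhood of $\,y$, and then to read off the invariant symmetric $\,2$-tensors from the resulting linear isotropy action. Since $\,\ew_y\ne0\,$ and $\,\dim\hs\mathfrak{a}_y=3$, the list (\ref{oof}) leaves only $\,(\dim\hs\mathfrak{a}_y,\hskip1pt\dim\hs\mathfrak{n}_y)=(3,1)$, so the evaluation map $\,\mathfrak{a}_y\to\tyb$, $\,\ev\mapsto\ev_y$, has $\,1$-dimensional kernel $\,\mathfrak{n}_y\,$ and is therefore surjective. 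Choosing $\,\ev,\ev'\in\mathfrak{a}_y\,$ with $\,\ev_y,\ev'_y\,$ a basis of $\,\tyb\,$ and composing the local flows of $\,\ev\,$ and $\,\ev'$ — each of which is a local af\-fine transformation of $\,\nabla\hn\,$ — the usual flow-box/orbit argument produces a connected neighborhood $\,U\,$ of $\,y\,$ such that every $\,z\in U\,$ is the image of $\,y\,$ under a local af\-fine transformation of $\,\nabla$. Composing these and their inverses, any two points of $\,U\,$ are related by a local af\-fine transformation, i.e.\ $\,\nabla|_U\,$ is locally homogeneous; in particular $\,\dim\hs\mathfrak{a}_z=3\,$ for every $\,z\in U$, and, shrinking $\,U$, we may assume $\,\ew\ne0\,$ on $\,U$.

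Next I identify the linear isotropy. As $\,\nabla$, hence its curvature, Ric\-ci tensor $\,\rho\,$ and covariant derivative $\,\nabla\rho$, are af\-fine invariants, the $\,1$-form $\,\phi\,$ and the vector field $\,\ew\,$ characterized by (\ref{rec}.i,\hs ii) are preserved by every local af\-fine transformation of $\,\nabla\hn\,$ (uniqueness being guaranteed by $\,\rho\ne0$). Hence, for $\,\ev\in\mathfrak{n}_y$, the en\-do\-mor\-phism $\,(\nav)_y\,$ of $\,\tyb\,$ annihilates $\,\ew_y$, and it is trace-free, because af\-fine transformations preserve the area form $\,\rho$, making $\,(\nav)_y\,$ skew relative to $\,\rho_y$. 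Since $\,\ew_y\ne0$, the trace-free en\-do\-mor\-phisms of $\,\tyb\,$ killing $\,\ew_y\,$ form a $\,1$-dimensional space; as $\,\dim\hs\mathfrak{n}_y=1\,$ and $\,\ev\mapsto(\nav)_y\,$ is injective on $\,\mathfrak{n}_y\,$ (an infinitesimal af\-fine transformation being determined by its $\,1$-jet at a point, through its flow and (\ref{aff})), the linear isotropy at $\,y\,$ is exactly this space, spanned by a nilpotent $\,N\,$ with $\,\mathrm{Ker}\,N=\mathrm{Im}\,N=\bbR\hh\ew_y$.

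A symmetric $\,2$-tensor naturally distinguished by $\,\nabla\,$ is, in particular, invariant under all local af\-fine transformations of $\,(U,\nabla)$; by local homogeneity its value $\,\tau_y\,$ at $\,y\,$ determines it, and $\,\tau_y\,$ must be $\,N$-invariant, i.e.\ $\,\tau_y(N\eu,\eu')+\tau_y(\eu,N\eu')=0\,$ for all $\,\eu,\eu'\in\tyb$. Writing $\,\tau_y\,$ in a basis $\,\ew_y,e\,$ of $\,\tyb\,$ normalized by $\,\rho_y(\ew_y,e)=1$ — so that $\,N\ew_y=0\,$ while $\,Ne\,$ is a nonzero multiple of $\,\ew_y$ — this condition at once forces the $\,\ew_y\otimes\ew_y\,$ and mixed components to vanish, leaving $\,\tau_y\,$ proportional to $\,e^*\otimes e^*$, where $\,e^*\,$ is the covector dual to $\,e$. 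But $\,\phi_y\,$ is the covector $\,\rho_y(\ew_y,\,\cdot\,)$, which annihilates $\,\ew_y\,$ (by (\ref{rec}.iii), or because $\,\rho_y\,$ is skew) and is nonzero (as $\,\rho_y\ne0$); with the above normalization $\,\phi_y=e^*$, so $\,\tau_y\,$ is a constant multiple of $\,\phi_y\otimes\phi_y$. Transporting by the local af\-fine transformations supplied by local homogeneity then yields $\,\tau=\ec\hs\phi\otimes\phi\,$ on $\,U\,$ for a constant $\,\ec$, as required.

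The only genuinely nontrivial step is the first one — converting the pointwise equality $\,\dim\hs\mathfrak{a}_y=3\,$ into honest local homogeneity on a fixed neighborhood; this rests on the standard facts that infinitesimal af\-fine transformations integrate to local af\-fine transformations and that (\ref{aff}) pins down their $\,1$-jets, after which the whole argument is linear algebra in the two-dimensional space $\,\tyb$.
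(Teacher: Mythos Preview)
Your proof is correct, and the first half (upgrading $\,\dim\hs\mathfrak{a}_y=3\,$ to local homogeneity via surjectivity of the evaluation map) matches the paper's one-line appeal to (\ref{isa}).

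The second half takes a genuinely different route from the paper. You identify the linear isotropy explicitly: for $\,\ev\in\mathfrak{n}_y$, the endomorphism $(\nav)_y$ is trace\-free (as it preserves the area form $\rho_y$) and annihilates $\ew_y$ (as the flow preserves the invariant $\ew$), hence is nilpotent with kernel and image both equal to $\,\bbR\hh\ew_y$; you then compute directly that the symmetric bilinear forms on $\,\tyb\,$ annihilated by this nilpotent are exactly the multiples of $\,\phi_y\otimes\phi_y$. The paper instead argues by repeated contraposition of (\ref{let}): if $\,\sigma(\ew,\ew)\ne0$, the vector $\,\ev\,$ with $\,\sigma(\ew,\,\cdot\,)=\rho\hs(\hn\ev,\,\cdot\,)\,$ is a naturally distinguished vector linearly independent from $\,\ew$, forcing $\,\dim\hs\mathfrak{a}_y\le2$; if $\,\sigma\,$ had rank $\,2$, its second null line would supply another such $\,\ev$; so $\,\mathrm{rank}\,\sigma\le1\,$ and $\,\sigma(\ew,\ew)=0$, which pins $\,\sigma\,$ to a multiple of $\,\phi\otimes\phi$. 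Your approach is cleaner linear algebra and makes the isotropy visible; the paper's approach has the virtue of reusing (\ref{let}), the workhorse of the whole section, without needing the injectivity of $\,\ev\mapsto(\nav)_y\,$ on $\,\mathfrak{n}_y$.
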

\begin{proof}Since $\,\ew_y\nh\ne0$, (\ref{isa}) implies local homogeneity of 
$\,\nabla\hn\,$ on some connected neighborhood $\,\,U\,$ of $\,y$. If 
$\,\sigma\,$ now is a symmetric $\,2$-ten\-sor with the stated properties, 
we must have $\,\sigma(\ew,\ew)=0$, for otherwise (\ref{let}) applied to the 
vector field $\,\ev\,$ with 
$\,\sigma(\ew,\,\cdot\,)=\rho\hs(\hn\ev,\,\cdot\,)\,$ would contradict the 
assumption that $\,\dim\hs\mathfrak{a}_y\nh=3$. Thus, if $\,\sigma\,$ were 
nondegenerate at $\,y$, it would have the Lo\-rentz\-i\-an signature 
$\,(\hbox{$-$\hskip1pt$+$})$, again leading to a contradiction with 
(\ref{let}), this time for $\,\ev\,$ chosen so that $\,\sigma(\hn\ev,\ev)=0\,$ 
and $\,\rho\hs(\hn\ev,\ew)=1$. Therefore, $\,\mathrm{rank}\hskip3pt\sigma$, 
obviously constant on $\,\,U\nh$, equals $\,1\,$ or $\,0$. If 
$\,\mathrm{rank}\hskip3pt\sigma=1$, we have 
$\,\sigma=\pm\hs\alpha\otimes\alpha\,$ for some $\,1$-form $\,\alpha\,$ 
without zeros, which is a constant multiple of $\,\phi\,$ due to the relation 
$\,\sigma(\ew,\ew)=0$, (\ref{rec}.iii) and local homogeneity of 
$\,\nabla\hn\,$ on $\,\,U\nh$.
\end{proof}
\begin{remark}\label{dwchi}For a tor\-sion\-free connection $\,\nabla\hn\,$ 
with eve\-ry\-\hbox{where\hskip1pt-}\hskip0ptnon\-zero, skew-sym\-met\-ric 
Ric\-ci tensor $\,\rho\,$ on a surface $\,\bs$, the vector field $\,\ew\,$ 
given by (\ref{rec}.ii), and a point $\,y\in\bs$, we have 
$\,\mathfrak{n}_y=\hs\{0\}\,$ and $\,\dim\hs\mathfrak{a}_y\le2\,$ whenever 
$\,d_\ew\psi\ne0\,$ at $\,y\,$ for some function $\,\psi\,$ which is 
naturally distinguished by $\,\nabla\nnh$, and defined on a neighborhood of 
$\,y$. (This is clear from (\ref{let}) for the vector field $\,\ev\,$ 
characterized by $\,d\psi=\rho\hs(\,\cdot\,,\ev)$.)
\end{remark}
\begin{theorem}\label{modul}The assignment\/ 
$\,(\ea,\eb)\mapsto\nabla\nh$, with\/ $\,\nabla=\nabla(\ea,\eb)\,$ defined as 
in Example\/~\ref{nonab}, establishes a bijective correspondence between 
\begin{enumerate}
  \def\theenumi{{\rm\romen{enumi}}}
\item[(i)] the union\/ $\,(\bbR\times\{0\})\cup(\{0\}\times\bbR)\,$ of the 
coordinate axes in the $\,(\ea,\eb)$-plane\/ $\,\rto\nnh$, and
\item[(ii)] the set of lo\-cal-e\-quiv\-a\-lence classes of locally 
homogeneous non\-flat tor\-sion\-free surface connections with 
skew-sym\-met\-ric Ric\-ci tensor.
\end{enumerate}
The degree of mobility of\/ $\,\nabla(\ea,\eb)$, defined as in 
Section\/~\ref{prel}, equals\/ $\,3\,$ for\/ $\,(\ea,\eb)=(1,0)$, and\/ 
$\,2\,$ when\/ $\,(\ea,\eb)\ne(1,0)$.
\end{theorem}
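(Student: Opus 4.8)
The plan is to verify, in turn, four things: that the assignment $(\ea,\eb)\mapsto\nabla(\ea,\eb)$ produces connections of the kind described in (ii); that the degree of mobility equals $3$ exactly at $(1,0)$ and $2$ elsewhere on the axes; that the assignment is injective; and that it is surjective. The first two are quick. Each $\nabla(\ea,\eb)$ is, by Example~\ref{nonab}, a left\hh-in\-var\-i\-ant tor\-sion\-free connection on the \hbox{non\hs-\nh}\hskip0ptAbel\-i\-an two\hh-di\-men\-sion\-al Lie group $\,\hp$, hence locally homogeneous, and its Ric\-ci tensor is skew-sym\-met\-ric with $\,\rho(\eu,\ew)=6\ne0$, so by (\ref{rer}.a) its curvature vanishes nowhere; thus $\nabla(\ea,\eb)$ is non\-flat and the map lands in (ii). For the degree of mobility, the right\hh-in\-var\-i\-ant vector fields on $\,\hp\,$ are infinitesimal af\-fine transformations of $\,\nabla(\ea,\eb)\,$ and span each tangent space, so $\,\dim\mathfrak{a}_y\ge2\,$ everywhere; when $\,(\ea,\eb)\ne(1,0)$, Remark~\ref{uwinv} identifies $\,\eu\,$ and $\,\ew\,$ as local geometric invariants, linearly independent at every point, so (\ref{let}) forces $\,\mathfrak{n}_y=\{0\}\,$ and $\,\dim\mathfrak{a}_y\le2$, giving degree of mobility $\,2$; and for $\,(\ea,\eb)=(1,0)\,$ Example~\ref{slinv} realizes $\,\nabla(1,0)\,$ as an $\,\mathrm{SL}\hs(\plane)$-in\-var\-i\-ant connection, so the degree of mobility is $\,3\,$ by Remark~\ref{uslin}(a),(b).

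For injectivity, suppose an af\-fine equivalence carries $\,\nabla(\ea,\eb)\,$ to $\,\nabla(\ea',\eb')$, with $\,\ea\eb=\ea'\eb'=0$. Since the degree of mobility is a local invariant, the computation just made shows that either both pairs are $\,(1,0)$ --- and we are done --- or neither is; in the latter case Remark~\ref{uwinv} forces the equivalence to send the invariant frame $\,(\eu,\ew)\,$ of the first connection to that of the second, so the component functions in (\ref{nuu}) must coincide, and reading off the $\,\ew$-com\-po\-nent of $\,\nabla_{\hskip-2.2pt\eu}\eu\,$ and then the $\,\eu$-com\-po\-nent of $\,\nsww\,$ yields $\,(\ea,\eb)=(\ea',\eb')$.

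Surjectivity is the substance of the theorem. Let $\,\nabla\,$ be a locally homogeneous, non\-flat, tor\-sion\-free connection with skew-sym\-met\-ric Ric\-ci tensor on a connected surface; then its Ric\-ci tensor is nowhere zero, for otherwise local homogeneity and (\ref{rer}.a) would make $\,\nabla\,$ flat, and so $\,\nabla\,$ falls under the discussion of Section~\ref{rsts}. Let $\,\ew\,$ be as in (\ref{rec}.ii). If $\,\ew\,$ vanishes at some point $\,z$, then $\,\dim\mathfrak{a}_z=2\,$ is impossible --- a locally simply transitive action would, since $\,\ew\,$ is a geometric invariant, make $\,\ew\,$ vanish on a neighbourhood of $\,z$, contradicting density of $\,\{\ew\ne0\}\,$ from (\ref{dns}) --- while $\,\dim\mathfrak{a}_z\le3\,$ by Remark~\ref{uslin}(a), so $\,\dim\mathfrak{a}_z=3$; then Remark~\ref{uslin}(c), together with the identity computed at the end of Example~\ref{slinv} which identifies the connections there as the $\,\nabla(1,0)$, shows that $\,\nabla\,$ is locally equivalent to $\,\nabla(1,0)$. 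If instead $\,\ew\,$ is nowhere zero, then $\,\dim\mathfrak{a}_y\,$ is constant and, by (\ref{isa}) and transitivity, equals $\,2\,$ or $\,3$. In the stratum $\,\dim\mathfrak{a}_y=2\,$ the action is locally simply transitive by a two\hh-di\-men\-sion\-al Lie algebra of af\-fine vector fields, which cannot be Abel\-i\-an (a left\hh-in\-var\-i\-ant connection with skew-sym\-met\-ric Ric\-ci tensor on the Abel\-i\-an two\hh-di\-men\-sion\-al group is Ric\-ci-flat by Lemma~\ref{liegp}, hence flat), so Remark~\ref{loclg} presents a neighbourhood of each point as a \hbox{non\hs-\nh}\hskip0ptAbel\-i\-an two\hh-di\-men\-sion\-al Lie group on which $\,\nabla\,$ is left\hh-in\-var\-i\-ant. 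In the stratum $\,\dim\mathfrak{a}_y=3\,$ the one\hh-di\-men\-sion\-al isotropy $\,\mathfrak{n}_y\,$ embeds into $\,\mathfrak{sl}\hh(\tyb)\,$ as the line of en\-do\-mor\-phisms fixing $\,\ew_y\,$ --- a nilpotent line, since it also preserves $\,\rho_y\,$ --- and, after excluding the remaining three\hh-di\-men\-sion\-al possibilities for $\,\mathfrak{a}_y\,$ by a direct curvature computation which is incompatible with skew-sym\-me\-try of $\,\rho\,$ unless $\,\nabla\,$ is flat, one is left with $\,\mathfrak{a}_y\cong\mathfrak{sl}\hh(2,\bbR)\,$ acting in its standard way on a surface, whence Lemma~\ref{slivc} again gives $\,\nabla\,$ locally equivalent to $\,\nabla(1,0)$.

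It remains --- and here lies the main obstacle --- to classify, up to dif\-feo\-mor\-phism, the left\hh-in\-var\-i\-ant connections with skew-sym\-met\-ric Ric\-ci tensor on the \hbox{non\hs-\nh}\hskip0ptAbel\-i\-an two\hh-di\-men\-sion\-al Lie group. By Lemma~\ref{liegp} these correspond to pairs $\,(\Psi,\lf)\,$ with $\,\Psi:\hi\to\mathfrak{sl}\hh(\hi)\,$ a Lie\hh-al\-ge\-bra homomorphism and $\,\lf\in\hi^*\,$ satisfying $\,\lf([\eu,\ew])\ne0$; writing $\,A=\Psi\eu$, $\,B=\Psi\ew\,$ in a basis with $\,[\eu,\ew]=2\eu$, the relation $\,[A,B]=2A\,$ forces either $\,A=0\,$ with $\,B\,$ arbitrary in $\,\mathfrak{sl}\hh(\hi)$, or $\,B\,$ semisimple with eigenvalues $\,\pm1\,$ and $\,A\,$ a nilpotent $\,\mathrm{ad}_B$-root vector, matching the two branches of the matrices (\ref{bue}). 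The task is then to quotient this finite\hh-di\-men\-sion\-al family by all admissible equivalences --- not only the automorphisms of $\,\hi$, but also the further dif\-feo\-mor\-phisms arising because the Lie\hh-group structure is not an invariant of $\,\nabla\,$ (as is already visible in Example~\ref{slinv}, where $\,\nabla(1,0)\,$ carries a whole $\,\mathrm{SL}\hs(\plane)$-orbit of such structures) --- and to check that the quotient collapses onto the coordinate axes of the $\,(\ea,\eb)$-plane, with surviving parameter $\,\ea\,$ on the branch $\,A\ne0\,$ and $\,\eb\,$ on the branch $\,A=0$; alternatively, this last step can be read off from the canonical coordinate form of Ko\-wal\-ski, O\-poz\-da and Vl\'a\-\v sek \cite{kowalski-opozda-vlasek-00}, matched term by term against (\ref{nuu}). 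Combining this classification with the injectivity and the degree\hh-of\hh-mobility dichotomy established above yields the asserted bijection; throughout, the delicate point is precisely the bookkeeping of the admissible changes of frame and of Lie\hh-group structure needed to carry out this final reduction, and, subordinately, the identification $\,\mathfrak{a}_y\cong\mathfrak{sl}\hh(2,\bbR)\,$ in the three\hh-di\-men\-sion\-al stratum.
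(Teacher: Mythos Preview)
Your outline follows the paper's architecture --- well-definedness, mobility dichotomy, injectivity, surjectivity split by $\dim\mathfrak{a}_y$ --- and the first three parts are fine. But the two substantive pieces of surjectivity are not actually proved.

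First, in the stratum $\dim\mathfrak{a}_y=3$ with $\ew_y\ne0$, you assert that ``after excluding the remaining three-dimensional possibilities \dots\ by a direct curvature computation'' one gets $\mathfrak{a}_y\cong\mathfrak{sl}(2,\bbR)$ acting in its standard way, so that Lemma~\ref{slivc} applies. None of this is carried out: you neither perform that exclusion nor justify that the resulting action is the linear one on a plane (Lemma~\ref{slivc} needs exactly that, not just the abstract isomorphism type of $\mathfrak{a}_y$). The paper avoids this entirely: it uses (\ref{let}) repeatedly, applied to various naturally distinguished vectors ($\nsww$, then eigenvectors of $\naw$), to force $\nsww=\ew$, then $\naw=\mathrm{Id}-\chi\,\phi\otimes\ew/6$, then $\chi=2$, obtaining the explicit system (\ref{nue}); a final integrability argument adjusts the remaining freedom in $\eu$ to reach (\ref{nuu}) with $(\ea,\eb)=(1,0)$. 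This is a direct constraint computation, not a Lie-algebra identification.

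Second, and more seriously, in the $\dim\mathfrak{a}_y=2$ stratum you correctly set up the pair $(\Psi,\lf)$ via Lemma~\ref{liegp} and the commutator relation $[A,B]=2A$, but then you stop: you describe the quotient by ``all admissible equivalences'' as ``the main obstacle'' and propose outsourcing it to \cite{kowalski-opozda-vlasek-00}. The paper does this step explicitly and self-containedly. After normalizing $\lf(\eu)=3$, $\lf(\ew)=0$ by rescaling $\eu$ and shifting $\ew$, it imposes the torsion-free condition $(\Psi\eu)\ew-(\Psi\ew)\eu=2\eu-3\ew$ --- a constraint you never invoke --- which writes $\mathfrak{B}_\eu,\mathfrak{B}_\ew$ in terms of four scalars $\ea,\eb,c,s$; the homomorphism condition $[\mathfrak{B}_\eu,\mathfrak{B}_\ew]=2\mathfrak{B}_\eu$ then yields the three equations (\ref{apb}), and a short elimination gives $c=s=\ea$ and $\ea\eb=0$, i.e.\ exactly (\ref{bue}). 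No further quotient by diffeomorphisms is needed, because the normalization of $\lf$ has already used up the basis freedom in $\hi$. Your worry about ``further diffeomorphisms arising because the Lie-group structure is not an invariant'' is handled in the paper not by enlarging the equivalence but by the mobility split: if $\dim\mathfrak{a}_y=2$ the Lie-group structure \emph{is} determined (Remark~\ref{uwinv}), and the $(1,0)$ case is treated separately.
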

\begin{proof}That $\,\nabla=\nabla(\ea,\eb)\,$ all have the properties listed 
in (ii) is immediate from Example~\ref{nonab}. The final clause about the 
degrees of mobility $\,\dim\hs\mathfrak{a}_y$ is in turn an obvious 
consequence either of Remark~\ref{uwinv} combined with (\ref{let}), for 
$\,(\ea,\eb)\ne(1,0)$, or of Remark~\ref{uslin}(b), when $\,(\ea,\eb)=(1,0)$. 
Our assertion will thus follow if we show that, for any given non\-flat 
locally homogeneous RSTS connection $\,\nabla\nnh$,
\begin{enumerate}
  \def\theenumi{{\rm\roman{enumi}}}
\item[(iii)] $\nabla\hn\,$ is locally equivalent to $\,\nabla(\ea,\eb)\,$ for 
some $\,(\ea,\eb)\,$ with $\,\ea\eb=0$,
\item[(iv)] the pair $\,(\ea,\eb)\,$ in (iii) is uniquely determined by 
$\,\nabla\nnh$.
\end{enumerate}
To prove (iii) -- (iv), we first note that the degree of mobility 
$\,\dim\hs\mathfrak{a}_y$ is the same for all $\,y\in\bs\,$ and, by 
Remark~\ref{uslin}(a), equals $\,2\,$ or $\,3$. If 
$\,\dim\hs\mathfrak{a}_y=\hh2$, then $\,\nabla\hn\,$ is locally equivalent 
to a left-in\-var\-i\-ant connection on a connected Lie group $\,\hp$, and so 
$\,\nabla\hn\,$ has, locally, the form appearing in Lemma~\ref{liegp}, with 
suitable $\,\Psi\,$ and $\,\lf$, while $\,\hp\,$ is not Abel\-i\-an (for 
otherwise $\,\nabla\hn\,$ would be Ric\-ci-flat by Lemma~\ref{liegp}, and 
hence flat by (\ref{rer}.a)). Choosing a basis $\,\eu,\ew\,$ of the Lie 
algebra $\,\hi\,$ with $\,[\eu,\ew]=2\eu$, we get $\,\lf(\hn\eu)\ne0$, as 
$\,\nabla\hn\,$ is not Ric\-ci-flat. Thus, rescaling $\,\eu\,$ and adding to 
$\,\ew\,$ a multiple of $\,\eu$, we may also assume that $\,\lf(\hn\eu)=3\,$ and 
$\,\lf(\ew)=0$. If $\,\mathfrak{B}_\eu,\mathfrak{B}_\ew$ are the matrices 
representing $\,\Psi\eu\,$ and $\,\Psi\ew\,$ in the basis $\,\eu,\ew$, the 
equality $\,(\Psi\eu)\ew-(\Psi\ew)\eu=2\eu-3\ew$, meaning that $\,\nabla\hn\,$ 
is tor\-sion\-free, amounts to
\[
\mathfrak{B}_\eu\textstyle{\left[\begin{matrix}0\\1\end{matrix}\right]}
-\mathfrak{B}_\ew\textstyle{\left[\begin{matrix}1\\0\end{matrix}\right]}
=\textstyle{\left[\begin{matrix}2\\-3\end{matrix}\right]},\hskip11pt
\mathrm{and\ hence\ \ }\mathfrak{B}_\eu\,
=\hs\left[\begin{matrix}\ea&c\\-s&-\ea\end{matrix}\right],\hskip11pt
\mathfrak{B}_\ew\,
=\hs\left[\begin{matrix}c-2&\ea+\eb-1\\3-\ea&2-c\end{matrix}\right]
\]
for some $\,\ea,\eb,c,s\in\bbR\hh$. As $\,\Psi\,$ is a Lie-al\-ge\-bra 
homomorphism (see Lemma~\ref{liegp}),
\begin{equation}\label{apb}
\mathrm{i)}\hskip7pt(\ea+\eb-1)\hs s=(\ea-3)\hs c+2\ea\hh,\hskip11pt
\mathrm{ii)}\hskip7pt(\ea+\eb-1)\hs\ea=(c-1)\hs c\hh,\hskip11pt
\mathrm{iii)}\hskip7pt(\ea-3)\hs\ea=(c-3)\hs s\hh,
\end{equation}
or, in other words, 
$\,\mathfrak{B}_\eu\mathfrak{B}_\ew-\mathfrak{B}_\ew\mathfrak{B}_\eu
=2\hh\mathfrak{B}_\eu$. From (\ref{apb}) it follows that $\,c=s=\ea\,$ and 
$\,\ea\eb=0\,$ (which yields (\ref{bue}), and hence (\ref{nuu}), thus proving 
(iii) when $\,\dim\hs\mathfrak{a}_y=\hh2$). Namely, (\ref{apb}.i) and 
(\ref{apb}.ii) give $\,[(\ea-3)\hs c+2\ea]\hs\ea=(c-1)\hs c\hh s$, so that, 
by (\ref{apb}.iii), $\,(c-3)\hs c\hh s+2\ea^2\nh=(c-1)\hs c\hh s$, and, 
consequently, $\,c\hh s=\ea^2\nnh$. Again using (\ref{apb}.i) and 
(\ref{apb}.ii), we thus obtain 
$\,[(\ea-3)\hs c+2\ea]\hs c=(\ea+\eb-1)\hs c\hh s=(\ea+\eb-1)\hs\ea^2\nh
=(c-1)\hs c\hh\ea$, that is, $\,(\ea-c)\hs c=0$. There are now two cases: 
$\,c=0$, and $\,\ea=c$. If $\,c=0$, (\ref{apb}.ii) implies that 
$\,\ea+\eb=1\,$ or $\,\ea=0$, so that $\,\ea=0\,$ by (\ref{apb}.i) with 
$\,c=0$, and $\,s=0\,$ from (\ref{apb}.iii) with $\,c=0$. Hence $\,c=s=\ea\,$ 
and $\,\ea\eb=0$, as required. If $\,\ea=c$, (\ref{apb}.ii) yields 
$\,\ea\eb=0$, and either $\,\ea=c\ne3\,$ (which, by (\ref{apb}.iii), gives 
$\,\ea=s$), or $\,\ea=c=3\,$ (and, as the equality $\,\ea\eb=0\,$ now reads 
$\,\eb=0$, (\ref{apb}.i) becomes 
$\,6=(\ea-3)\hs c+2\ea=(\ea+\eb-1)\hs s=(\ea-1)\hs s=2s$, so that, again, 
$\,c=s=\ea$).

Next, let $\,\dim\hs\mathfrak{a}_y=\hh3\,$ at every point $\,y$. Then the 
tangent bundle of the underlying surface $\,\bs\,$ is, locally, trivialized by 
$\,\ew\,$ with (\ref{rec}.ii) and some vector field $\,\eu\,$ with (\ref{ruw}) 
such that
\begin{equation}\label{nue}
\begin{array}{ll}
\nabla_{\hskip-2.2pt\eu}\eu=4\hh\eu-\sy\ew\hh,
&\nabla_{\hskip-2.2pt\eu}\ew=\eu+2\hh\ew\hh,\\
\nabla_{\hskip-2.2pt\ew}\eu=-\hs\eu+2\hh\ew\hh,\hskip10pt
&\nsww=\ew
\end{array}
\end{equation}
for some function $\,\sy$. In fact, $\,\ew\ne0\,$ everywhere due to 
(\ref{dns}) and local homogeneity of $\,\nabla$, so that, by (\ref{rec}), we 
may, locally, choose $\,\eu\,$ with (\ref{ruw}). Such $\,\eu\,$ is unique up 
to being replaced by $\,\eu+\psi\hh\ew$, for an arbitrary function $\,\psi$. 
We may further require that $\,[\eu,\ew]=2\eu$, and then
\begin{equation}\label{unq}
\eu\,\mathrm{\ becomes\ unique\ up\ to\ replacement\ by\ 
}\,\eu+\psi\hh\ew\,\mathrm{\ for\ a\ function\ }\,\psi\,\mathrm{\ with\ 
}\,d_\ew\psi=-2\psi\hh.
\end{equation}
(Namely, (\ref{ruw}), (\ref{rec}.iv) and (\ref{bwa}.b) yield 
$\,12=2\rho\hs(\hn\eu,\ew)=(d\hh\phi)(\hn\eu,\ew)=-\hs\phi([\eu,\ew])$, and so 
$\,[\eu,\ew]-2\eu\,$ equals a function times $\,\ew$, which, locally, gives 
$\,[\eu,\ew]=2\eu\,$ provided that instead of $\,\eu\,$ one uses 
$\,\eu+\psi\hh\ew$, for suitable $\,\psi$.) On the other hand, as 
$\,\ew\ne0\,$ everywhere, we have $\,\nsww=\kappa\hh\ew\,$ for some function 
$\,\kappa:\bs\to\bbR\,$ (or else, setting $\,\ev=\nsww\,$ in (\ref{let}), we 
would get $\,\dim\hs\mathfrak{a}_y\le\hh2\,$ at some point $\,y$). Now, by 
(\ref{dfr}.a) and (\ref{bch}.b), the functions $\,\kappa\hs$ and 
$\,2-\kappa\,$ form the eigenvalues of $\,\naw:\tb\to\tb\,$ at every point of 
$\,\bs$, which implies that $\,\kappa=1\,$ everywhere, since otherwise 
(\ref{let}), applied to $\,\ev\,$ defined by the conditions 
$\,\nabla_{\hskip-2.2pt\ev}\ew=(2-\kappa)\hh\ev\,$ and 
$\,\rho\hs(\hn\ev,\ew)=1$, would again give $\,\dim\hs\mathfrak{a}_y\le\hh2\,$ 
at some $\,y$. Consequently, $\,\nsww=\ew\,$ and 
$\,\nabla_{\hskip-2.2pt\eu}\ew=\eu+\chi\hh\ew\,$ for some function $\,\chi$. 
Thus, by (\ref{ruw}.b), $\,\naw=\mathrm{Id}-\chi\hs\phi\otimes\ew/6$, which shows 
that $\,\chi\,$ is naturally distinguished by $\,\nabla\nnh$, and so 
$\,d_\ew\chi=0\,$ everywhere (or else Remark~\ref{dwchi} would give 
$\,\dim\hs\mathfrak{a}_y\le\hh2\,$ at some $\,y$). From (\ref{ruw}.a), 
(\ref{rer}.a) and (\ref{cur}) we now get 
$\,6\hh\ew=\rho\hs(\hn\eu,\ew)\hs\ew=R\hh(\hn\eu,\ew)\hh\ew
=\nsw(\hn\eu+\chi\hh\ew)-\nabla_{\hskip-2.2pt\eu}\ew+2\nabla_{\hskip-2.2pt\eu}\ew
=-\hs[\eu,\ew]+\chi\hh\ew+2\nabla_{\hskip-2.2pt\eu}\ew=3\chi\hh\ew$, and hence 
$\,\chi=2$. Therefore, $\,\nabla_{\hskip-2.2pt\eu}\ew=\eu+2\hh\ew$. As 
$\,\rho\hs(\hn\eu,\ew)=6\,$ is constant, cf.\ (\ref{ruw}.a), differentiation 
by parts gives $\,\rho\hs(\nsu\eu,\ew)=-\hs[\nsu\rho\hs]\hs(\hn\eu,\ew)
-\rho\hs(\hn\eu,\nsu\ew)=4\rho\hs(\hn\eu,\ew)$. (Note that 
$\,\nsu\rho=\phi(\nh\eu)\hh\rho=-\hs6\rho\,$ by (\ref{rec}.i) and 
(\ref{ruw}.b).) Hence $\,\nabla_{\hskip-2.2pt\eu}\eu=4\hh\eu-\sy\ew\,$ 
for some function $\,\sy$. Combined with the equality $\,[\eu,\ew]=2\eu$, this 
proves (\ref{nue}).

Also, as in the last paragraph, 
$\,6\hh\eu=\rho\hs(\hn\eu,\ew)\hs\eu=R\hh(\hn\eu,\ew)\hh\eu
=\nsw(4\hh\eu-\sy\ew)-\nabla_{\hskip-2.2pt\eu}(2\hh\ew-\eu)
+2\nabla_{\hskip-2.2pt\eu}\eu$, so that (\ref{nue}) yields 
$\,d_\ew\sy=4\hh(1-\sy)$. However, $\,\sy\,$ depends on the 
choice of $\,\eu$, and we are free to modify $\,\eu\,$ as in (\ref{unq}). 
Some such modification gives $\,\sy=1\,$ (and so (\ref{nue}) becomes 
(\ref{nuu}) with $\,(\ea,\eb)=(1,0)$, proving (iii) also in the case 
$\,\dim\hs\mathfrak{a}_y=\hh3$). Namely, the use of 
$\,\eu+\psi\hh\ew\,$ instead of $\,\eu$, with $\,d_\ew\psi=-2\psi$, 
causes $\,\sy\,$ to be replaced by $\,\sy+\psi^2\nh-d_\eu\psi$, 
while $\,\sy+\psi^2\nh-d_\eu\psi=1\,$ if and only if  $\,\psi\,$ is 
a solution of the system $\,d_\eu\psi=\psi^2\nh+\sy-1$, 
$\,d_\ew\psi=-2\psi$. As $\,d_\ew\sy=4\hh(1-\sy)$, this system is 
completely integrable. In fact, it implies its own integrability conditions, 
and so the graphs of its solutions are, locally, the integral manifolds of 
a distribution on $\,\bs\times\bbR\hh$, which happens to be integrable.

Finally, to obtain (iv), note that, according to the final clause of the 
theorem, the case $\,(\ea,\eb)=(1,0)\,$ is uniquely distinguished by the value 
of the degree of mobility, while, if $\,(\ea,\eb)\ne(1,0)$, the invariant 
character of both $\,\eu\,$ and $\,\ew$, established in Remark~\ref{uwinv}, 
allows us to treat the last equality in (\ref{nuu}) as an explicit geometric 
definition of $\,\ea\,$ and $\,\eb$.
\end{proof}

\section{The Kil\-ling equation for an RSTS connection}\label{kers}
\setcounter{equation}{0}
If the Ric\-ci tensor $\,\rho\,$ of a tor\-sion\-free connection 
$\,\nabla\hn\,$ on a surface $\,\bs\,$ is skew-sym\-met\-ric and non\-zero at 
every point, (\ref{rer}.a) and (\ref{bta}.a) allow us to rewrite (\ref{nvn}) as
\begin{equation}\label{nnx}
\nabla\nabla\xi\,=\,(\ob\tau-\hs\xi)\otimes\rho\,+\,\nabla\tau\hskip12pt
\mathrm{for\ any\ }\,1\hyp\mathrm{form}\hskip5.5pt\xi\hskip5pt\mathrm{on}
\hskip5pt\bs\hh,\hskip6pt\mathrm{with}\hskip5pt\tau=\ki\hh\xi\hh.
\end{equation}
\begin{lemma}\label{xiqeq}Let\/ $\,\nabla\hn\,$ be a tor\-sion\-free 
connection with eve\-ry\-\hbox{where\hskip1pt-}\hskip0ptnon\-zero, 
skew-sym\-met\-ric Ric\-ci tensor $\,\rho\,$ on a surface\/ $\,\bs$. For any\/ 
$\,1$-form\/ $\,\xi\,$ on $\,\bs$, setting\/ $\,\tau=\ki\hh\xi$, we then have
\begin{equation}\label{xqe}
\mathrm{a)}\hskip6pt\nd^*\xi\,=\,\oz\tau\hh,\hskip22pt\mathrm{b)}\hskip6pt
\nabla\xi\,=\,\tau\,+\,[\hh\xi(\ew)-2\hh\od(\ob\tau)]\hh\rho/4\hh,
\end{equation}
with\/ $\,\ew\,$ given by\/ {\rm(\ref{rec}.ii)}, $\,\oz,\ob,\od\,$ as in\/ 
{\rm(\ref{bta})} -- {\rm(\ref{zte})}, and\/ $\,\nd^*$ standing for the 
dual of the morphism\/ $\,\nd\,$ in\/ {\rm(\ref{qef}.i)}, cf.\ {\rm(\ref{ast})}.
\end{lemma}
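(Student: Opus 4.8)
The plan is to derive both formulas from the second-order identity (\ref{nnx}) together with the first-order calculus recorded in (\ref{rec}), (\ref{rer}) and (\ref{bta}); no coordinates and no appeal to Theorem~\ref{wongs} are needed. Write $\,\psi=\ob\tau-\xi\,$ and recall that $\,\rho\,$ trivializes $\,[\tab]^{\wedge2}\nnh$. I would begin with (\ref{xqe}.b). Viewing $\,\nabla\xi\,$ as the $\,2$-ten\-sor $\,(\eu,\ev)\mapsto[\nsu\xi](\ev)$, its symmetric part is $\,\tau=\ki\hh\xi\,$ by (\ref{cod}.b) and its skew part is $\,d\hh\xi/2\,$ by (\ref{bwa}.c); writing $\,d\hh\xi=h\hh\rho\,$ for a function $\,h:\bs\to\bbR$, this gives $\,\nabla\xi=\tau+h\hh\rho/2$. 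I then produce two scalar relations for $\,h$. Differentiating $\,\nabla\xi=\tau+h\hh\rho/2\,$ and using $\,\nabla\rho=\phi\otimes\rho\,$ (from (\ref{rec}.i)) yields $\,\nabla\nabla\xi=\nabla\tau+(d\hh h+h\hh\phi)\otimes\rho/2$, which, compared with (\ref{nnx}), gives $\,d\hh h+h\hh\phi=2\hh\psi$. Separately, (\ref{rer}.b) with $\,\beta=\xi$, $\,\eu=\ew\,$ together with (\ref{rec}.ii) gives $\,\xi\wedge\phi=\xi(\ew)\hh\rho$, so that (\ref{bta}.b) becomes $\,h\hh\rho=d\hh\xi=[\xi(\ew)-2\hh\od\hh\xi]\hh\rho$, i.e.\ $\,h=\xi(\ew)-2\hh\od\hh\xi$.

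These two relations are linked by the elementary identity $\,\od(d\hh g+g\hh\phi)=-\hh g$, valid for every function $\,g$: substituting $\,\beta=d\hh g+g\hh\phi\,$ into (\ref{bta}.b) and using $\,\phi\wedge\phi=0\,$ and $\,d\hh\phi=2\rho\,$ (from (\ref{rec}.iv)) shows that $\,\beta\wedge\phi-d\hh\beta=-2\hh g\hh\rho$. Applying $\,\od\,$ to $\,d\hh h+h\hh\phi=2\hh\psi\,$ thus gives $\,-h=2\hh\od\hh\psi=2\hh\od(\ob\tau)-2\hh\od\hh\xi$; adding this to $\,h=\xi(\ew)-2\hh\od\hh\xi\,$ eliminates $\,\od\hh\xi$, leaving $\,\od\hh\xi=\xi(\ew)/4+\od(\ob\tau)/2\,$ and hence $\,h=\xi(\ew)/2-\od(\ob\tau)$, which is exactly (\ref{xqe}.b); in particular $\,\od\hh\psi=-h/2$.

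For (\ref{xqe}.a) I would expand $\,\nd^*\xi\,$ from (\ref{qef}.i): since $\,(\phi\otimes\ew)^*\xi=\xi(\ew)\hh\phi$, one has $\,\nd^*\xi=4\hh\xi+(\naw)^*\xi+3\hh\xi(\ew)\hh\phi/4$, while the Lei\-bniz rule gives $\,[(\naw)^*\xi](\ev)=\xi(\nsv\ew)=d_\ev[\xi(\ew)]-[\nsv\xi](\ew)$ with $\,[\nsv\xi](\ew)=[\nabla\xi](\ev,\ew)=\tau(\ew,\ev)-h\hh\phi(\ev)/2\,$ by (\ref{xqe}.b) and $\,\rho\hs(\ev,\ew)=-\phi(\ev)$; hence $\,(\naw)^*\xi=d\hh[\xi(\ew)]-\tau(\ew,\,\cdot\,)+h\hh\phi/2$. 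Substituting the value of $\,h\,$ into $\,\nd^*\xi$, and substituting $\,\ob\tau=\xi+\psi$, $\,\psi=(d\hh h+h\hh\phi)/2\,$ and $\,\od\hh\psi=-h/2\,$ into the expression (\ref{zte}) for $\,\oz\tau$, one checks that $\,\nd^*\xi=\oz\tau$: the $\,-\tau(\ew,\,\cdot\,)\,$ terms cancel directly and the rest cancels in pairs, the only nontrivial step being to invoke $\,d\hh h+h\hh\phi=2\hh\psi\,$ once more on the $\,4\hh\ob\tau\,$ contribution. I expect the sole genuine obstacle to be clerical: keeping the numerical coefficients in line and, above all, getting the index conventions of (\ref{nwt})--(\ref{ast}) right when forming the duals $\,(\naw)^*$ and $\,(\phi\otimes\ew)^*$; once the identity $\,\od(d\hh g+g\hh\phi)=-g\,$ is isolated, both parts are short and essentially forced.
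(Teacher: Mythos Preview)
Your proof is correct and uses essentially the same ingredients as the paper's: the symmetric/skew decomposition $\,\nabla\xi=\tau+\psi\hh\rho\,$ (your $\,h/2\,$ is the paper's $\,\psi$), the second-order identity (\ref{nnx}), and the recurrence data (\ref{rec}). For (\ref{xqe}.b) the paper proceeds by taking $\,d\,$ of its relation $\,d\psi=\ob\tau-\xi-\psi\hh\phi\,$ and simplifying, whereas you abstract that step as the identity $\,\od(d\hh g+g\hh\phi)=-g\,$ and apply it; this is the same calculation repackaged. For (\ref{xqe}.a) the routes differ slightly: the paper applies $\,\nsv\,$ to (\ref{xqe}.b) and compares the result with (\ref{nnx}), obtaining the scalar identity for each $\,v$, while you expand $\,\nd^*\xi\,$ and $\,\oz\tau\,$ directly and match terms. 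Both arguments are short and rely on exactly the same prior formulas, so the difference is purely organizational.
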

\begin{proof}Being skew-sym\-met\-ric, 
$\,\nabla\xi-\ki\hh\xi=\nabla\xi-\tau\,$ equals $\,\psi\rho\,$ for some 
function $\,\psi$. Now
\begin{equation}\label{dps}
\mathrm{i)}\hskip9ptd\psi\,=\,\ob\tau\,-\,\hs\xi\,-\,\psi\hh\phi\hh,
\hskip12pt\mathrm{ii)}\hskip9ptd\hh\xi\,
=\,[\hh\xi(\ew)-2\psi-2\od(\ob\tau)]\hs\rho\hh.
\end{equation}
In fact, (\ref{nnx}) yields (\ref{dps}.i) since, by (\ref{rec}.i), 
$\,\nabla\nabla\xi=\nabla(\psi\rho+\tau)=(d\psi+\psi\hh\phi)\otimes\rho
+\nabla\tau$. Next, (\ref{dps}.i) gives 
$\,d(\ob\tau-\hs\xi-\psi\hh\phi)=0\,$ and so, again from (\ref{dps}.i), 
$\,d\hh\xi=d(\ob\tau)+\phi\wedge\hh d\psi-\psi\hs d\hh\phi=d(\ob\tau)
+\phi\wedge\ob\tau+\xi\wedge\phi-\psi\hs d\hh\phi$. Using (\ref{rec}.iv), 
(\ref{bta}.b) and the relation $\,\xi\wedge\phi=\xi(\ew)\hs\rho$, immediate 
from (\ref{rer}.b) with $\,\beta=\hh\xi$, $\,\eu=\ew\,$ and (\ref{rec}.ii), we 
thus get (\ref{dps}.ii).

However, (\ref{bwa}.c) and the equality $\,\nabla\xi=\psi\rho+\tau\,$ give 
$\,d\hh\xi=2\psi\rho$. Equating this expression for $\,d\hh\xi\,$ with 
(\ref{dps}.ii), we obtain $\,4\psi=\hh\xi(\ew)-2\od(\ob\tau)$, which, as 
$\,\nabla\xi=\psi\rho+\tau$, implies (\ref{xqe}.b). In view of 
(\ref{xqe}.b) and (\ref{rec}.ii), for any vector field $\,\ev\,$ we have
\begin{equation}\label{fnv}
4(\nsv\hh\xi)(\ew)\,=\,4\tau(\ew,\ev)
+[2\hh\od(\ob\tau)-\hh\xi(\ew)]\hh\phi(\hn\ev)\hh.
\end{equation}
Applying $\,\nsv$ to (\ref{xqe}.b), we see that $\,4\nsv(\nabla\xi-\tau)\,$ 
equals $\,\xi(\nsvw)-2\hs d_v\hh[\od(\ob\tau)]+\tau(\ew,\ev)
-3\hh[2\hh\od(\ob\tau)-\hh\xi(\ew)]\hh\phi(\hn\ev)/4\,$ times $\,\rho$, due to 
the Leib\-niz rule, (\ref{fnv}) and the relation $\,\nsv\rho=\phi(v)\hs\rho$, 
cf.\ (\ref{rec}.i). Since, by (\ref{nnx}), 
$\,4\nsv(\nabla\xi-\tau)=4\hh[(\ob\tau-\hs\xi)(v)]\hs\rho$, (\ref{xqe}.a) 
follows.
\end{proof}
Lemma~\ref{xiqeq} immediately leads to the following conclusion about 
$\,1$-forms $\,\xi\in\mathrm{Ker}\,\ki$, that is, $\,C^\infty$ solutions 
$\,\xi\,$ to the {\it Kil\-ling equation\/} $\,\ki\hh\xi=0$, where $\,\ki\,$ 
is the Kil\-ling operator with (\ref{cod}.b).
\begin{theorem}\label{kileq}For the Kil\-ling operator\/ $\,\ki\,$ of a 
surface\/ $\,\bs\,$ with a tor\-sion\-free connection\/ $\,\nabla\hn\,$ such 
that the Ric\-ci tensor\/ $\,\rho\,$ of\/ $\,\nabla\hn\,$ is 
skew-sym\-met\-ric and nonzero everywhere,
\begin{enumerate}
  \def\theenumi{{\rm\roman{enumi}}}
\item[(i)] $\dim\hskip2pt\mathrm{Ker}\,\ki\le1$,
\item[(ii)] each $\,1$-form $\,\xi\in\mathrm{Ker}\,\ki\,$ is either 
identically zero, or nonzero at every point.
\end{enumerate}
\end{theorem}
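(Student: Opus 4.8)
The plan is to feed a $1$-form $\xi$ with $\ki\hh\xi=0$ into Lemma~\ref{xiqeq}. Setting $\tau=\ki\hh\xi=0$ makes $\ob\tau=0$, hence $\od(\ob\tau)=0$ and, by~(\ref{zte}), $\oz\tau=0$; so~(\ref{xqe}.a) and~(\ref{xqe}.b) reduce to the two identities $\nd^*\xi=0$ and $\nabla\xi=\xi(\ew)\hs\rho/4$, with $\ew$ as in~(\ref{rec}.ii) and $\nd$ as in~(\ref{qef}.i). These two relations will do all the work.

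First I would read $\nabla\xi=[\xi(\ew)/4]\hs\rho$ as the statement that $\xi$ is parallel for the linear connection $\nabla'$ in the cotangent bundle $\tab$ given by $\nabla'_{\ev}\xi=\nsv\xi-[\xi(\ew)/4]\hs\rho(\ev,\,\cdot\,)$ for every vector field $\ev$; this $\nabla'$ is a genuine connection, since $\nabla'\nh-\nabla$ is $C^\infty\nnh$-bi\-lin\-e\-ar in its two arguments, i.e.\ a section of $\tab\otimes\mathrm{End}\hskip1pt(\tab)$. Since $\bs$ is connected, a $\nabla'\nh$-par\-al\-lel $1$-form is uniquely determined on all of $\bs$ by its value at any single point, and in particular vanishes identically as soon as it vanishes at one point. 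That gives assertion~(ii) outright, and at the same time shows that, for each fixed $y\in\bs$, the evaluation map $\mathrm{Ker}\,\ki\to\tayb$, $\xi\mapsto\xi_y$, is injective.

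For~(i) I would then invoke $\nd^*\xi=0$: evaluated at $y$, this says that $\xi_y$ lies in the kernel of $\nd_y^*:\tayb\to\tayb$, i.e.\ in the annihilator of $\mathrm{Im}\,\nd_y$. Because $\mathrm{tr}\hskip2.5pt\nd_y=10\ne0$ by~(\ref{qef}.ii), the en\-do\-mor\-phism $\nd_y$ is nonzero, so $\mathrm{rank}\hskip2.4pt\nd_y^*=\mathrm{rank}\hskip2.4pt\nd_y\ge1$ and $\dim\hskip2pt\mathrm{Ker}\,\nd_y^*\le2-1=1$. Combined with the injectivity of $\xi\mapsto\xi_y$ on $\mathrm{Ker}\,\ki$, this yields $\dim\hskip2pt\mathrm{Ker}\,\ki\le1$, which is~(i).

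No step is really an obstacle once Lemma~\ref{xiqeq} is in hand: what remains is the formal vanishing of the operator terms at $\tau=0$, the observation that $\nabla'$ is a connection (so that uniqueness of parallel sections over a connected base applies), and the trivial linear algebra that a nonzero en\-do\-mor\-phism of a $2$-dimensional space, and its dual, have a kernel of dimension at most $1$. The only mild judgement call is to repackage the first-order system $\nabla\xi=[\xi(\ew)/4]\hs\rho$ as a connection, which collapses the connectedness argument to a single line; equivalently, one could integrate the resulting homogeneous linear ODE for the components of $\xi$ in a parallel frame along curves.
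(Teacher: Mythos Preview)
Your proof is correct and follows essentially the same route as the paper's. The only cosmetic differences are that the paper phrases~(ii) as ``$\xi$ restricted to any geodesic satisfies a first-order linear homogeneous ODE'' rather than introducing your auxiliary connection~$\nabla'$, and it states~(i) contrapositively (if $\dim\hskip2pt\mathrm{Ker}\,\ki\ge2$ then by~(ii) two independent elements would be pointwise independent, forcing $\nd^*\nh=0$, contrary to $\mathrm{tr}\hskip2.5pt\nd=10$) rather than in your direct form; the content is the same.
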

\begin{proof}Equality (\ref{xqe}.b) with $\,\tau=0\,$ states that any 
$\,\xi\in\mathrm{Ker}\,\ki$, restricted to any geodesic, satisfies a 
first-or\-der linear homogeneous ordinary differential equation, which proves 
(ii). If we now had $\,\dim\hskip2pt\mathrm{Ker}\,\ki\ge2$, formula 
(\ref{xqe}.a) with $\,\tau=0\,$ would, by (ii), imply that $\,\nd=0\,$ at 
every point, contradicting (\ref{qef}.ii). 
\end{proof}
We say that a tor\-sion\-free connection $\,\nabla\hn\,$ on a surface 
$\,\bs\,$ with skew-sym\-met\-ric Ric\-ci tensor $\,\rho\,$ is {\it generic\/} 
if $\,\rho\ne0\,$ at every point and $\,\nd\,$ defined by (\ref{qef}.i) is an 
isomorphism $\,\tb\to\tb$. For such a generic connection $\,\nabla\nnh$, 
formula (\ref{xqe}.a) leads to a complete description of both the kernel and 
the image of the Kil\-ling operator $\,\ki$. In fact, by 
(\ref{xqe}.a),
\begin{equation}\label{gkz}
\mathrm{if}\hskip6pt\nabla\hskip6pt\mathrm{is\ generic,}\hskip12pt
\mathrm{Ker}\,\ki=\{0\}\hh.
\end{equation}
About the image of $\,\ki$, see Section~\ref{ioko}. Here we just note that, 
for $\,\nabla(\ea,\eb)\,$ as in Example~\ref{nonab},
\begin{equation}\label{nab}
\nabla(\ea,\eb)\hskip7pt\mathrm{is\ generic\ except\ when}\hskip7pt
(\ea,\eb)=(-9,0)\hskip7pt\mathrm{or}\hskip7pt(\ea,\eb)=(0,-15)\hh.
\end{equation}
Namely, by (\ref{nuu}) and (\ref{ruw}.b), at each point $\,y$, in the basis 
$\,\eu_y,\ew_y$ of the tangent plane,
\begin{equation}\label{mat}
\nd\mathrm{\ \ \ is\ represented\ by\ the\ matrix\ \ }
\left[\begin{matrix}\ea\nh+\nh4&\ea\nh+\nh\eb\nh-\nh1\\
-\hs\ea\nh-\nh3/2&6\nh-\nh\ea\end{matrix}\right]\nnh.
\end{equation}
Thus, $\,2\hskip2.4pt\mathrm{det}\hskip1.9pt\nd=5\ea+3\eb+45$, and so 
(\ref{nab}) follows since $\,\ea\eb=0$.
\begin{example}\label{dkleo}For the non-ge\-ner\-ic locally homogeneous 
connection $\,\nabla(-9,0)\,$ defined in Example~\ref{nonab} with 
$\,(\ea,\eb)=(-9,0)$, we have $\,\dim\hskip2pt\mathrm{Ker}\,\ki=1$. In fact, 
choosing $\,\ef\,$ with (\ref{duf}), and then setting 
$\,\xi(\hn\eu)=3\ef^2\nnh$, $\,\xi(\ew)=2\ef^2\nnh$, we define a nonzero 
$\,1$-form $\,\xi\,$ such that $\,\ki\hh\xi=0$. By Theorem~\ref{kileq}(i), 
$\,\xi\,$ spans $\,\mathrm{Ker}\,\ki$.
\end{example}
\begin{example}\label{kerkz}The remaining non-ge\-ner\-ic locally homogeneous 
connection $\,\nabla(0,-15)\,$ in (\ref{nab}) has $\,\mathrm{Ker}\,\ki=\{0\}$. 
To see this, note that a nonzero $\,1$-form $\,\xi\,$ with $\,\ki\hh\xi=0$, 
if it existed, would give rise to the line sub\-bun\-dle 
$\,\mathrm{Ker}\,\xi\,$ in $\,\tb\,$ (cf.\ Theorem~\ref{kileq}(ii)), and, by 
(\ref{xqe}.a) with $\,\tau=0$, the image of $\,\nd\,$ would be contained in 
$\,\mathrm{Ker}\,\xi$. From (\ref{mat}) with $\,(\ea,\eb)=(0,-15)\,$ it would 
now follow that $\,8\eu-3\ew\,$ spans $\,\mathrm{Ker}\,\xi$, that is, 
$\,\xi(\hn\eu)=3\psi\,$ and $\,\xi(\ew)=8\psi\,$ for some function $\,\psi$, 
not identically equal to $\,0$. The relation $\,\ki\hh\xi=0\,$ would now yield 
$\,0=[\nsu\xi](\hn\eu)=d_\eu\hh[\hh\xi(\hn\eu)]
-\xi(\nsu\hh\eu)=3\hh(d_\eu\psi-3\psi)\,$ and, similarly, 
$\,0=[\nsw\xi](\ew)=8\hh(d_\ew\psi+4\psi)$. The resulting 
equalities $\,d_\eu\psi=3\psi$, $\,d_\ew\psi=-4\psi\,$ would in turn give 
$\,6\psi=2\hh d_\eu\psi=\hh d_{[\eu,\ew]}\psi=\hh d_\eu d_\ew\psi
-\hh d_\ew d_\eu\psi=0$, contradicting our earlier conclusion that 
$\,\psi\ne0\,$ somewhere.
\end{example}
The following lemma will be needed in Section~\ref{cmax}. As usual, $\,\ki\,$ 
denotes the Kil\-ling operator.
\begin{lemma}\label{lilim}Let\/ $\,\nabla\hn\,$ be one of the connections\/ 
$\,\nabla(\ea,\eb)\,$ described in Example\/~{\rm\ref{nonab}}. If a 
left-in\-var\-i\-ant symmetric\/ $\,2$-ten\-sor on the underlying 
\hbox{two\hh-}\hskip0ptdi\-men\-sion\-al Lie group\/ $\,\hp\,$ is the\/ 
$\,\ki$-im\-age of some\/ $\,1$-form, then it is also the\/ $\,\ki$-im\-age of 
some left-in\-var\-i\-ant\/ $\,1$-form.
\end{lemma}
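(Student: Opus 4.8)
The plan is to exploit that the group $\hp$ is two-dimensional and \hbox{non\hs-\nh}\hskip0ptAbel\-i\-an with $[\eu,\ew]=2\eu$, so that it carries a left-invariant density; in fact the left-invariant $1$-form $\mu$ dual to $\ew$ satisfies $d\mu = 2\hh\mu\wedge\text{(dual of }\eu)$, and the function $\ef$ of (\ref{duf}) provides a left-invariant, right-invariant volume form $\vol=\ef\,\omega^\eu\wedge\omega^\ew$ on a small piece of $\hp$, where $\omega^\eu,\omega^\ew$ is the left-invariant coframe dual to $\eu,\ew$. More simply: the adjoint action of $\hp$ on $\hi$ is by the one-parameter group generated by $\mathrm{ad}_\ew$, whose eigenvalues are $0$ and $2$; the left translations therefore act on sections of $[\tb]^{\odot2}$, and the existence of $\ef$ shows that the modular function is $\ef^{\pm1}$, so an $\ef$-twisted average is available. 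First I would make this precise: given a $1$-form $\alpha$ on an $\hp$-invariant neighborhood $U$ of the identity with $\ki\hh\alpha$ left-invariant, I want to produce a left-invariant $1$-form $\beta$ with $\ki\hh\beta=\ki\hh\alpha$.

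\textbf{Reducing to the kernel of $\ki$.}
The cleanest route is: $\ki$ is a left-invariant differential operator (since $\nabla$ is left-invariant), so for every $h\in\hp$ the pull-back $L_h^*\alpha$ satisfies $\ki(L_h^*\alpha)=L_h^*(\ki\hh\alpha)=\ki\hh\alpha$, using left-invariance of $\ki\hh\alpha$. Hence $L_h^*\alpha-\alpha\in\mathrm{Ker}\,\ki$ for every $h$ near the identity. Now I invoke the structure of $\mathrm{Ker}\,\ki$ from Theorem~\ref{kileq}(i): $\dim\hskip2pt\mathrm{Ker}\,\ki\le1$. If $\mathrm{Ker}\,\ki=\{0\}$ (the generic case, and also $\nabla(0,-15)$ by Example~\ref{kerkz}), then $L_h^*\alpha=\alpha$ for all $h$, i.e.\ $\alpha$ is already left-invariant, and there is nothing to prove. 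So the only case requiring work is when $\dim\hskip2pt\mathrm{Ker}\,\ki=1$, which among the $\nabla(\ea,\eb)$ occurs exactly for $\nabla(-9,0)$ (Example~\ref{dkleo}); there $\mathrm{Ker}\,\ki$ is spanned by the \emph{left-invariant} $1$-form $\xi$ with $\xi(\hn\eu)=3\ef^2$, $\xi(\ew)=2\ef^2$ — wait, that form is not left-invariant because of the $\ef^2$ factor; it is however $\hp$-semi-invariant, transforming under $L_h$ by a constant. Concretely $L_h^*\xi=\chi(h)\hh\xi$ for a character $\chi:\hp\to(0,\infty)$.

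\textbf{Killing off the cocycle.}
In that remaining case, $h\mapsto L_h^*\alpha-\alpha$ is a map $\hp\to\mathrm{Ker}\,\ki=\bbR\xi$, say $L_h^*\alpha-\alpha=\sigma(h)\hh\xi$. The cocycle identity $L_{h_1h_2}^*\alpha-\alpha=L_{h_2}^*(L_{h_1}^*\alpha-\alpha)+(L_{h_2}^*\alpha-\alpha)$ together with $L_{h_2}^*\xi=\chi(h_2)\hh\xi$ gives $\sigma(h_1h_2)=\chi(h_2)\sigma(h_1)+\sigma(h_2)$. Since $\chi$ is a nontrivial character (because $\ef$ is a nontrivial homomorphism into $(0,\infty)$), the $1$-cocycle $\sigma$ of the connected group $\hp$ valued in the non-trivial $1$-dimensional representation $\chi$ is a coboundary: $\sigma(h)=c\hh(\chi(h)-1)$ for some constant $c$, as one checks by differentiating at the identity and solving the resulting linear ODE along the two one-parameter subgroups, or by the standard vanishing of $H^1$ in a non-trivial character. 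Then $\beta:=\alpha-c\hh\xi$ satisfies $L_h^*\beta=L_h^*\alpha-c\hh\chi(h)\xi=\alpha+\sigma(h)\xi-c\hh\chi(h)\xi=\alpha-c\hh\xi=\beta$, so $\beta$ is left-invariant, and $\ki\hh\beta=\ki\hh\alpha-c\hh\ki\hh\xi=\ki\hh\alpha$ because $\xi\in\mathrm{Ker}\,\ki$. This $\beta$ is the desired left-invariant $1$-form.

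\textbf{Main obstacle.}
The genuinely delicate point is the cocycle-splitting step: one must know that $H^1(\hp,\chi)=0$ for the relevant non-trivial character, or equivalently solve the coupled system $d_\eu\sigma=$ (something)$\cdot\chi$, $d_\ew\sigma=$ (something)$\cdot\chi$ consistently — this is where local (as opposed to global) considerations and the explicit form $[\eu,\ew]=2\eu$ enter, and one should double-check that shrinking $U$ is harmless (it is, since we only need the conclusion on $\hp$-invariant neighborhoods, and $\xi$ is defined on all of them). A cleaner alternative I would fall back on if the cohomology bookkeeping gets awkward: just run the explicit Lemma~\ref{xiqeq} machinery — write the unknown left-invariant $\beta$ as $\beta=\lambda_1\omega^\eu+\lambda_2\omega^\ew$ with constants $\lambda_i$, compute $\ki\hh\beta$ from (\ref{nuu}) directly, observe it is a left-invariant symmetric $2$-tensor depending linearly on $(\lambda_1,\lambda_2)$, and show the resulting linear map from $\bbR^2$ onto the space of left-invariant symmetric $2$-tensors hitting the $\ki$-image has full rank onto that image — the obstruction being exactly one-dimensional, matching $\dim\hskip2pt\mathrm{Ker}\,\ki$.
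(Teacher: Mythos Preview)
Your proposal is correct and follows essentially the same route as the paper: reduce to the case $\mathrm{Ker}\,\ki\ne\{0\}$, which by (\ref{gkz}), (\ref{nab}) and Example~\ref{kerkz} is only $(\ea,\eb)=(-9,0)$, and there show that the obstruction to left-invariance, lying in the one-dimensional $\mathrm{Ker}\,\ki=\bbR\hh\xi$, is a coboundary that can be absorbed by subtracting a multiple of $\xi$. The paper carries this out in the infinitesimal form you list as your first fallback --- it takes Lie derivatives $\lie_j$ along the right-invariant fields $\ev_1^{\phantom i},\ev_2^{\phantom i}$ of (\ref{vot}), writes $\lie_j\beta=4\hs c_j\hs\xi$, and uses the bracket relation $[\ev_1^{\phantom i},\ev_2^{\phantom i}]=2\hh\ev_1^{\phantom i}$ to force $c_1^{\phantom i}=0$ and then integrate explicitly --- which is precisely the Lie-algebra computation underlying your vanishing of $H^1(\hp,\chi)$.
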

\begin{proof}With $\,\lie_j$ denoting the Lie derivatives with respect to the 
right-in\-var\-i\-ant vector fields $\,\ev_j$, $\,j=1,2$, chosen as in 
(\ref{vot}), the left-in\-var\-i\-ance of the symmetric $\,2$-ten\-sor 
$\,\ki\hh\beta$, for a given $\,1$-form $\,\beta$, means that 
$\,\lie_j\ki\hh\beta=0$, since the flows of right-in\-var\-i\-ant vector 
fields on $\,\hp\,$ consist of left translations. The last fact also implies 
that both $\,\lie_j$ commute with $\,\ki$. Left-in\-var\-i\-ance of 
$\,\ki\hh\beta\,$ thus gives $\,\lie_j\beta\in\mathrm{Ker}\,\ki$. If 
$\,(\ea,\eb)\ne(-9,0)$, then, according to (\ref{gkz}), (\ref{nab}) and 
Example~\ref{kerkz}, $\,\mathrm{Ker}\,\ki=\{0\}$, so that $\,\beta\,$ is 
left-in\-var\-i\-ant, which yields our assertion. On the other hand, if 
$\,(\ea,\eb)=(-9,0)$, we may use the basis $\,\alpha,\phi\,$ of 
left-in\-var\-i\-ant $\,1$-forms given by $\,\alpha(\hn\eu)=3$, 
$\,\alpha(\ew)=2\,$ (for $\,\eu,\ew\,$ as in Example~\ref{nonab}) and 
(\ref{rec}.i), and fix a function $\,\ef\,$ with (\ref{duf}). The $\,1$-form 
$\,\xi=\ef^2\alpha\,$ then spans $\,\mathrm{Ker}\,\ki\,$ (see 
Example~\ref{dkleo}), while $\,\beta=\mu\hs\alpha+\chi\hs\phi\,$ for some 
functions $\,\chi\,$ and $\,\mu$. Since $\,\lie_j\beta\in\mathrm{Ker}\,\ki$, 
there exist $\,c_j\in\bbR\,$ such that 
$\,\lie_j\beta=4\hs c_j\xi=4\hs c_j\ef^2\alpha$. At the same time, 
$\,\lie_j\beta=(\lie_j\mu)\hs\alpha+(\lie_j\chi)\hs\phi$. The resulting 
system of equations, rewritten with the aid of (\ref{vot}), states that 
$\,\chi\,$ is constant and $\,d_\eu\mu=4\hs c_1^{\phantom i}\ef^3\nnh$, 
$\,d_\ew\mu=4\hs c_1^{\phantom i}\ef^2\psi-4\hs c_2^{\phantom i}\ef^2\nnh$. 
From (\ref{duf}) and (\ref{ruw}.b) with $\,3\hs d\psi=-\ef\phi\,$ we obtain 
$\,0=d_\eu d_\ew\mu-d_\ew d_\eu\mu-2\hs d_\eu\mu
=24\hs c_1^{\phantom i}\ef^3\nnh$, that is, $\,c_1^{\phantom i}=0$. The 
equations imposed on $\,\mu$, combined with (\ref{duf}), now give 
$\,\mu=c_2^{\phantom i}\ef^2\nnh+c\hh'$ for some $\,c\hh'\nh\in\bbR\hh$. Thus, 
$\,\beta=\mu\hs\alpha+\chi\hs\phi
=c_2^{\phantom i}\xi+c\hh'\nnh\alpha+\chi\hs\phi$, and $\,\beta\,$ has the 
same $\,\ki$-im\-age as the left-in\-var\-i\-ant $\,1$-form 
$\,c\hh'\nnh\alpha+\chi\hs\phi$.
\end{proof}

\section{Degree of mobility for type~III SDNE Walker manifolds}\label{demo}
\setcounter{equation}{0}
Let $\,(\ym,\gm)\,$ be a type~III SDNE Walker manifold. Since our discussion 
is local, we may use Theorem~\ref{maith} to identify $\,\ym\,$ with $\,\tab\,$ 
for some surface $\,\bs\,$ with a tor\-sion\-free connection $\,\nabla\hn\,$ 
such that the Ric\-ci tensor $\,\rho\,$ of $\,\nabla\hn\,$ which is 
skew-sym\-met\-ric and nonzero at each point. At any $\,x\in\ym=\tab$, the 
bundle projection $\,\pi:\tab\to\bs\,$ induces a Lie-al\-ge\-bra 
homomorphism
\begin{equation}\label{hom}
\pi_x:\mathfrak{i}\hh_x\to\hs\mathfrak{a}_y\hh,\hskip10pt\mathrm{where}
\hskip5pty=\pi(x)\hh,
\end{equation}
$\,\mathfrak{a}_y$ and $\,\mathfrak{i}\hh_x$ being as in Section~\ref{prel} 
for $\,\nabla\hn\,$ (on $\,\bs$) and $\,\gm\,$ (on $\,\ym$). Thus, if 
$\,\delta=\dim\hskip2pt\mathrm{Ker}\,\pi_x$, and $\,(\mathrm{Ker}\,\ki)_y$ is 
the space of germs at $\,y\,$ of $\,1$-forms $\,\xi\,$ with $\,\ki\hh\xi=0\,$ 
on a neighborhood of $\,y\,$ in $\,\bs$,
\begin{equation}\label{did}
\mathrm{a)}\hskip6pt\dim\hs\mathfrak{i}\hh_x\,=\,\hs\delta\,
+\,\hs\mathrm{rank}\hskip2pt\pi_x\,
\le\,\delta+\dim\hs\mathfrak{a}_y\hh,\hskip22pt\mathrm{b)}\hskip6pt
\delta\,=\,\dim\hskip2pt(\mathrm{Ker}\,\ki)_y\,\le\,1\hh,
\end{equation}
relation (\ref{did}.b) being immediate from Lemma~\ref{vrtki} and 
Theorem~\ref{kileq}(i), as $\,\mathrm{Ker}\,\pi_x$ consists of germs, at 
$\,x$, of those Kil\-ling fields for $\,g\,$ which are vertical (tangent to 
$\,\mathcal{V}=\kerd\pi$). Thus,
\begin{equation}\label{thr}
\mathrm{if\ \ }\dim\hskip2pt\mathfrak{i}\hh_x\ge\hh3,\mathrm{\ \ the\ pair\ \ 
}(\dim\hs\mathfrak{a}_y,\delta\hh)\mathrm{\ \ is\ one\ of\ \ }(3,0)\,
\mathrm{\ and\ }\,(2,1)\hh.
\end{equation}
Namely, according to Remark~\ref{uslin}(a), we just need to show that 
$\,(\dim\hs\mathfrak{a}_y,\delta\hh)\,$ cannot equal $\,(3,1)$. If it did, 
however, we would be free to assume that $\,\ew_y\nh\ne0\,$ (replacing $\,y\,$ 
with a point arbitrarily close to it, cf.\ (\ref{dns})). Then $\,\nabla\hn\,$ 
would be locally homogeneous at $\,y\,$ as a consequence of (\ref{isa}), and 
the final clause of Theorem~\ref{modul} would imply that $\,\nabla\hn\,$ is 
locally equivalent to the connection $\,\nabla(1,0)\,$ of Example~\ref{nonab}, 
contradicting in turn relations (\ref{nab}) and (\ref{gkz}) (as 
$\,\delta=\dim\hskip2pt(\mathrm{Ker}\,\ki)_y\,=\,1$).
\begin{theorem}\label{maxmo}Let\/ $\,(\ym,\gm)\,$ be a neu\-tral-sig\-na\-ture 
oriented Ric\-ci-flat self-du\-al Walker 
\hbox{four\hskip.4pt-}\hskip0ptman\-i\-fold of Pe\-trov type\/ {\rm III}. 
Then its degree of mobility\/ $\,\dim\hskip2pt\mathfrak{i}\hh_x$, defined as 
in Section\/~\ref{prel}, does not exceed\/ $\,3\,$ at any point\/ $\,x\in\ym$. 
In particular, $\,(\ym,\gm)\,$ cannot be locally homogeneous.
\end{theorem}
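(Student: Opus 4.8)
The plan is to obtain the theorem by assembling relations~(\ref{did}) and~(\ref{thr}), already in place above, after the usual localization. First I would fix a point~$\,x\in\ym$. Since only germs of Kil\-ling fields at~$\,x\,$ are involved, and~$\,(\ym,\gm)\,$ meets the hypotheses of the converse part of Theorem~\ref{maith} --- its vertical distribution being null, parallel and compatible with the orientation --- I may identify a neighborhood of~$\,x\,$ with an open subset of~$\,\tab$, where~$\,\bs\,$ is a surface carrying a tor\-sion\-free connection~$\,\nabla\hn\,$ whose Ric\-ci tensor is skew-sym\-met\-ric and nowhere zero, in such a way that~$\,\gm=\gm\nnh^\nabla\nnh+\hs2\hh\pi^*\nnh\tau\,$ for some symmetric $\,2$-ten\-sor~$\,\tau$, while~$\,\mathcal{V}=\kerd\pi\,$ is the vertical distribution. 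Writing~$\,y=\pi(x)$, I then have the Lie-al\-ge\-bra homomorphism~$\,\pi_x:\mathfrak{i}\hh_x\to\mathfrak{a}_y\,$ of~(\ref{hom}) at my disposal; put~$\,\delta=\dim\hskip2pt\mathrm{Ker}\,\pi_x$.

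Next I would simply combine the inequalities already established. By~(\ref{did}.a), $\,\dim\hs\mathfrak{i}\hh_x=\delta+\mathrm{rank}\hskip2pt\pi_x\le\delta+\dim\hs\mathfrak{a}_y$, while, by Lemma~\ref{vrtki} together with Theorem~\ref{kileq}(i), $\,\delta=\dim\hskip2pt(\mathrm{Ker}\,\ki)_y\le1$. If~$\,\dim\hs\mathfrak{i}\hh_x\le2$, there is nothing to prove. If~$\,\dim\hs\mathfrak{i}\hh_x\ge3$, then~(\ref{thr}) forces the pair~$\,(\dim\hs\mathfrak{a}_y,\delta\hh)\,$ to be~$\,(3,0)\,$ or~$\,(2,1)$, so that~$\,\delta+\dim\hs\mathfrak{a}_y=3\,$ in either case, whence~$\,\dim\hs\mathfrak{i}\hh_x\le3\,$ again by~(\ref{did}.a). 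Thus~$\,\dim\hs\mathfrak{i}\hh_x\le3\,$ at every point, which is the first assertion. For the second, I would invoke the standard fact that on a locally homogeneous~\prd the local Kil\-ling fields span the whole tangent space at each point, so that the degree of mobility is at least~$\,\dim\ym=4$, in conflict with~$\,\dim\hs\mathfrak{i}\hh_x\le3$.

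I do not expect the argument above to be where the main obstacle lies --- it is a short dimension count. The genuine difficulty has already been met in relation~(\ref{thr}), whose crux is the exclusion of the case~$\,(\dim\hs\mathfrak{a}_y,\delta\hh)=(3,1)$; that exclusion rests on the classification of non\-flat locally homogeneous RSTS connections (Theorem~\ref{modul}) and on~$\,\nabla(1,0)\,$ being generic by~(\ref{nab}), hence having~$\,\mathrm{Ker}\,\ki=\{0\}\,$ by~(\ref{gkz}). Granting these, together with the kernel/image properties of the Kil\-ling operator from Theorem~\ref{kileq}, only one minor point in the proof above calls for a comment: the step from local homogeneity to the spanning property of local Kil\-ling fields. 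It is classical, and unproblematic in the present setting since an Ein\-stein metric is real-analytic, so that local homogeneity coincides with infinitesimal homogeneity.
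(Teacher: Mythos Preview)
Your proposal is correct and follows the paper's own argument essentially verbatim: the paper's proof is the single sentence ``if $\dim\mathfrak{i}\hh_x\ge3$, (\ref{thr}) gives $\delta+\dim\mathfrak{a}_y=3$, and so, by (\ref{did}.a), $\dim\mathfrak{i}\hh_x=3$,'' with the locally homogeneous clause left implicit. Your added remark justifying the passage from local homogeneity to the spanning property of local Killing fields via real-analyticity of Einstein metrics is a welcome clarification that the paper omits.
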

In fact, if $\,\dim\hskip2pt\mathfrak{i}\hh_x\ge\hh3$, (\ref{thr}) 
gives $\,\delta+\dim\hs\mathfrak{a}_y=3$, and so, by (\ref{did}.a), 
$\,\dim\hs\mathfrak{i}\hh_x=3$.

\section{The image of the Kil\-ling operator}\label{ioko}
\setcounter{equation}{0}
Let a tor\-sion\-free connection $\,\nabla\hn\,$ with skew-sym\-met\-ric 
Ric\-ci tensor $\,\rho\,$ on a surface $\,\bs\,$ be generic, as defined in the 
lines preceding formula (\ref{gkz}). We use the symbol $\,\op\,$ for the 
fourth-or\-der linear differential operator sending any symmetric 
$\,2$-ten\-sor $\,\tau\,$ on $\,\bs\,$ to the symmetric $\,2$-ten\-sor 
\begin{equation}\label{pte}
\op\tau\,\hs=\hs\,\tau\,\hs-\hs\,\ki[(\nd^*)^{-1}\oz\tau]\hs.
\end{equation}
Here $\,\ki,\oz\,$ and $\,\nd\,$ are given by (\ref{cod}.b), (\ref{zte}) and 
(\ref{qef}), while $\,(\nd^*)^{-1}\nnh:\tb\to\tb\,$ denotes the inverse 
of the dual of $\,\nd$, defined as in (\ref{ast}), so that 
$\,(\nd^*)^{-1}\oz\tau$ is the $\,1$-form $\,\xi\,$ with 
$\,\nd^*\xi=\oz\tau$, in the notation of (\ref{xqe}.a). Finally, we let 
$\,\as\nh_1$ (or, $\,\as\nh_2$) stand for the space of all $\,1$-forms (or, 
symmetric $\,2$-ten\-sors) of class $\,C^\infty$ on $\,\bs$.
\begin{theorem}\label{imker}Suppose that\/ $\,\nabla\hn\,$ is a generic 
tor\-sion\-free connection with skew-sym\-met\-ric Ric\-ci tensor on a 
surface\/ $\,\bs$, while\/ $\,\ki,\oz\,$ and\/ $\,\op\,$ are given by\/ 
{\rm(\ref{cod}.b)}, {\rm(\ref{zte})} and\/ {\rm(\ref{pte})}.
\begin{enumerate}
  \def\theenumi{{\rm\alph{enumi}}}
\item[(a)] $\ki(\as\nh_1)=\mathrm{Ker}\,\op$, that is, the image of the 
Kil\-ling operator\/ $\,\ki:\as\nh_1\to\as\nh_2$ is at the same time the 
kernel of the fourth-or\-der operator\/ $\,\op:\as\nh_2\to\as\nh_2$.
\item[(b)] We have a di\-rect-sum decomposition\/ 
$\,\as\nh_2=\ki(\as\nh_1)\oplus\mathrm{Ker}\,(\op\nh-\mathrm{Id})$, for which 
$\,\op:\as\nh_2\to\as\nh_2$ serves as the projection onto the second summand.
\item[(c)] The image of\/ $\,\op:\as\nh_2\to\as\nh_2$ coincides both with\/ 
$\,\mathrm{Ker}\,(\op\nh-\mathrm{Id})\,$ and with the space of all\/ 
$\,C^\infty$ solutions\/ $\,\tau\in\as\nh_2$ to the third-or\-der linear 
differential equation\/ $\,\oz\tau=0$.
\end{enumerate}
\end{theorem}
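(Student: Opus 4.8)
The plan is to extract everything from Lemma~\ref{xiqeq}(a) together with the two consequences of genericity, namely that $\,\nd\,$ (hence $\,\nd^*\nnh$) is a bundle isomorphism and that $\,\mathrm{Ker}\,\ki=\{0\}$, cf.\ (\ref{gkz}). First I would rewrite (\ref{xqe}.a) as the operator identity $\,\oz\circ\ki=\nd^*\nnh$ on $\,\as\nh_1$, so that $\,(\nd^*)^{-1}\oz\circ\ki=\mathrm{Id}\,$ there. From the definition (\ref{pte}) of $\,\op\,$ this at once gives (a): if $\,\tau=\ki\hh\xi$, then $\,\oz\tau=\nd^*\xi$, hence $\,(\nd^*)^{-1}\oz\tau=\xi\,$ and $\,\op\tau=\tau-\ki\hh\xi=0$; conversely $\,\op\tau=0\,$ means $\,\tau=\ki[(\nd^*)^{-1}\oz\tau]\in\ki(\as\nh_1)$. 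Thus $\,\ki(\as\nh_1)=\mathrm{Ker}\,\op$.

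Next I would observe that $\,\op\,$ is a linear projection, i.e.\ $\,\op^2=\op$. Indeed, for every $\,\tau\in\as\nh_2\,$ the tensor $\,\tau-\op\tau=\ki[(\nd^*)^{-1}\oz\tau]\,$ lies in $\,\ki(\as\nh_1)=\mathrm{Ker}\,\op\,$ by (a), so $\,\op(\tau-\op\tau)=\op\tau-\op^2\tau=0$, whence $\,\op^2=\op$. Part (b) is then the standard splitting attached to an idempotent operator: $\,\tau=(\tau-\op\tau)+\op\tau\,$ exhibits $\,\as\nh_2\,$ as a sum of $\,\mathrm{Ker}\,\op=\ki(\as\nh_1)\,$ and $\,\mathrm{Ker}\,(\op\nh-\mathrm{Id})$; these intersect only in $\,0$, since a tensor $\,\sigma\,$ in both satisfies $\,\sigma=\op\sigma=0$; and $\,\op\,$ itself sends $\,\tau\,$ to the second component $\,\op\tau$, so it realizes the claimed projection. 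For (c), the coincidence of $\,\mathrm{Im}\,\op\,$ with $\,\mathrm{Ker}\,(\op\nh-\mathrm{Id})\,$ is again the projection identity, while $\,\op\tau=\tau\,$ is equivalent to $\,\ki[(\nd^*)^{-1}\oz\tau]=0$, hence — via $\,\mathrm{Ker}\,\ki=\{0\}\,$ and invertibility of $\,\nd^*\nnh$ — to $\,\oz\tau=0$; the reverse implication $\,\oz\tau=0\Rightarrow\op\tau=\tau\,$ is immediate from (\ref{pte}).

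I do not expect a real obstacle here: the one substantive move is recognizing (\ref{xqe}.a) as the identity $\,\oz\circ\ki=\nd^*\nnh$, after which the statement is the linear algebra of idempotents, valid verbatim in the (infinite-dimensional) space of smooth tensor fields. The remaining care is bookkeeping: invertibility of $\,\nd^*\,$ is invoked both in (a) and in (c), whereas the distinct fact $\,\mathrm{Ker}\,\ki=\{0\}\,$ is what upgrades $\,\op\tau=\tau\,$ to $\,\oz\tau=0$; and one should note in passing that $\,\op\,$ genuinely maps $\,\as\nh_2\,$ into itself, since $\,\oz\,$ has smooth coefficients and $\,\nd^*\,$ is a smooth bundle isomorphism, so no regularity issue arises.
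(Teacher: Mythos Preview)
Your proof is correct and follows essentially the same line as the paper's: both extract (a) directly from (\ref{xqe}.a) and (\ref{pte}), deduce $\,\op^2=\op\,$ from (a) to get (b), and finish (c) by combining the idempotent identity with $\,\mathrm{Ker}\,\ki=\{0\}\,$ from (\ref{gkz}). Your explicit framing of (\ref{xqe}.a) as the operator identity $\,\oz\circ\ki=\nd^*\,$ and your remarks on regularity are useful clarifications but do not change the argument.
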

\begin{proof}If $\,\tau=\ki\hh\xi$, (\ref{xqe}.a) gives $\,\op\tau=0$, while, 
if $\,\op\tau=0$, then $\,\tau\in\ki(\as\nh_1)\,$ by (\ref{pte}), which yields 
(a). Next, (a) and (\ref{pte}) imply that $\,\op^2\nnh=\op\nnh$. Thus, 
$\,\op\,$ is a projection onto its image, and (b) follows from (a). 
Finally, by (b), $\,\mathrm{Ker}\,(\op\nh-\mathrm{Id})\,$ is the image of 
$\,\op\nnh$, while $\,\tau\in\as\nh_2$ lies in 
$\,\mathrm{Ker}\,(\op\nh-\mathrm{Id})\,$ if and only if 
$\,\tau=\op\tau=\tau-\ki[(\nd^*)^{-1}\oz\tau\hh]$, which amounts to 
$\,\ki[(\nd^*)^{-1}\oz\tau\hh]=0$. Since $\,\mathrm{Ker}\,\ki=\{0\}\,$ (see 
(\ref{gkz})), this is equivalent to $\,\oz\tau=0$, as required in (c).
\end{proof}

\section{Type III~SDNE generic Walker metrics}\label{ttwg}
\setcounter{equation}{0}
Generic RSTS connections were defined in the lines preceding formula 
(\ref{gkz}). We will now refer to a type~III SDNE Walker manifold 
$\,(\ym,\gm)\,$ as {\it generic\/} if so is, at every point $\,x\in\ym$, the 
RSTS connection $\,\nabla\hn\,$ associated, in the sense of 
Theorem~\ref{maith}, with the restriction of $\,\gm\,$ to a neighborhood of 
$\,x$. The connection $\,\nabla\hn\,$ is a part of the triple 
$\,(\bs,\nabla\nnh,[\tau])\,$ of local invariants of $\,\gm$, introduced in 
Remark~\ref{trinv}. In the generic case, however, the coset $\,[\tau]\,$ 
contains a distinguished element $\,\sigma\,$ given by $\,\sigma=\op\tau$, 
with the operator $\,\op\,$ determined by $\,\nabla\hn\,$ via (\ref{pte}). 
That $\,\sigma\,$ is independent of the choice of $\,\tau\,$ in the coset is 
immediate from Theorem~\ref{imker}(a). In view of Theorem~\ref{imker}(c), 
$\,\oz\sigma=0$.

Thus, by Theorem~\ref{imker}(b), if a type~III SDNE Walker metric 
$\,\gm\,$ is generic, the invariant $\,\sigma\,$ described above constitutes 
a canonical choice of the $\,2$-ten\-sor $\,\tau\,$ appearing in 
Theorem~\ref{maith}. The two final paragraphs of Remark~\ref{trinv} then 
remain valid also after $\,[\tau]\,$ has been replaced by a $\,2$-ten\-sor 
$\,\sigma\,$ on $\,\bs$, subject only to the condition $\,\oz\sigma=0\,$ 
(cf.\ Theorem~\ref{imker}(c)).

\section{The case of maximum mobility}\label{cmax}
\setcounter{equation}{0}
According to Theorem~\ref{maxmo}, the degree of mobility of a type~III SDNE 
Walker manifold cannot exceed $\,3\,$ at any point. This section provides a 
complete characterization of the case where it is $\,3$. We begin by 
constructing, in Examples~\ref{dhthr} and~\ref{minnn}, a single manifold and, 
respectively, a \hbox{one\hh-}\hskip0ptpa\-ram\-e\-ter family of type~III SDNE 
Walker manifolds having the degree of mobility equal to $\,3$. These 
mutually non-i\-so\-met\-ric manifolds are shown, in 
Theorem~\ref{three}, to represent all local isometry classes of type~III SDNE 
Walker metrics with maximum mobility. 
\begin{example}\label{dhthr}Let $\,\nabla\hn\,$ be one of the connections, 
described in Example~\ref{slinv}, on a 
\hbox{two\hh-}\hskip0ptdi\-men\-sion\-al real vector space $\,\plane$. (They 
are all dif\-feo\-mor\-phi\-cal\-ly equivalent.) Using the first part of 
Theorem~\ref{maith}, with $\,\tau=0$, we see that $\,\gm=\gm\nnh^\nabla$ then 
is a type~III SDNE Walker metric on $\,\ym=T\hskip.2pt^*\hskip-.9pt\plane$. 
Invariance of $\,\gm\,$ under the cotangent action of 
$\,\mathrm{SL}\hs(\plane)\,$ implies, in view of Remark~\ref{uslin}(a), that 
the degree of mobility of $\,\gm\,$ equals $\,3\,$ at every point.
\end{example}
\begin{example}\label{minnn}Given the connection $\,\nabla=\nabla(-9,0)\,$ 
defined as in Example~\ref{nonab}, with $\,(\ea,\eb)=(-9,0)$, on a 
\hbox{two\hh-}\hskip0ptdi\-men\-sion\-al \hbox{non\hs-\nh}\hskip0ptAbel\-i\-an 
simply connected Lie group $\,\hp$, and any left-in\-var\-i\-ant symmetric 
$\,2$-ten\-sor $\,\tau\,$ on $\,\hp$, the Riemann extension 
$\,\gm\,=\,\gm\nnh^\nabla\nnh+\hs2\hh\pi^*\nnh\tau\,$ is, according to the 
first part of Theorem~\ref{maith}, a type~III SDNE Walker metric on 
$\,\ym=T\hskip.2pt^*\hskip-.9pt\hp$, invariant under the cotangent left action 
of $\,\hp$, the orbits of which are obviously transverse to the fibres of 
$\,\ym=T\hskip.2pt^*\hskip-.9pt\hp$. In view of Example~\ref{dkleo} and 
Lemma~\ref{vrtki}, $\,(\ym,\gm)\,$ also admits a Kil\-ling vector field 
tangent to the fibres, which is nonzero everywhere (Theorem~\ref{kileq}(ii)), 
and, so by Theorem~\ref{maxmo}, its degree of mobility is $\,3\,$ at every 
point.

Even though the tensors $\,\tau\,$ used here form a 
\hbox{three\hh-}\hskip0ptdi\-men\-sion\-al space, the construction gives rise 
only to \hbox{one\hh-}\hskip0ptpa\-ram\-e\-ter family of nonequivalent 
metrics. In fact, the Kil\-ling operator $\,\ki\,$ restricted to the space of 
left-in\-var\-i\-ant $\,1$-forms $\,\alpha\,$ is injective 
(Example~\ref{dkleo}), while the metrics 
$\,\gm\,=\,\gm\nnh^\nabla\nnh+\hs2\hh\pi^*\nnh\tau\,$ 
and $\,\gm\hh'\hh=\,\gm\nnh^\nabla\nnh+\hs2\hh\pi^*\nnh\tau\hh'$ corresponding 
to the tensors $\,\tau\,$ and $\,\tau\hh'\nh=\tau+\ki\hh\alpha\,$ are 
isometric to each other in view of Lemma~\ref{kxpgn}(b).
\end{example}

\begin{theorem}\label{three}If\/ $\,(\ym,\gm)\,$ is a neu\-tral-sig\-na\-ture 
oriented Ric\-ci-flat self-du\-al Walker 
\hbox{four\hskip.4pt-}\hskip0ptman\-i\-fold of Pe\-trov type\/ {\rm III}, and 
the degree of mobility of\/ $\,\gm\,$ at a point\/ $\,x\,$ equals\/ $\,3$, 
then\/ $\,x\,$ has a connected neighborhood isometric to an open submanifold 
of one of the manifolds described in Examples\/~{\rm\ref{dhthr}} 
and\/~{\rm\ref{minnn}}.
\end{theorem}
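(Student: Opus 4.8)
The plan is to reduce, via Theorem~\ref{maith}, to a Riemann extension, and then to pin down both the underlying connection $\,\nabla\,$ and the coset $\,[\tau]\,$ of Remark~\ref{trinv} from the hypothesis that the degree of mobility equals $\,3\,$ at $\,x$. First I would identify a connected neighborhood of $\,x\,$ with an open subset of $\,\tab\,$ on which $\,\gm=\gm\nnh^\nabla\nnh+\hs2\hh\pi^*\nnh\tau$, with $\,\nabla\,$ an RSTS connection on a surface $\,\bs\,$ and $\,\tau\,$ a symmetric $\,2$-tensor; put $\,y=\pi(x)$. Since $\,\dim\mathfrak{i}_x=3$, relation (\ref{thr}) leaves precisely the two alternatives $\,(\dim\mathfrak{a}_y,\delta)=(3,0)\,$ and $\,(\dim\mathfrak{a}_y,\delta)=(2,1)$, where $\,\delta=\dim(\mathrm{Ker}\,\ki)_y$. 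In either alternative $\,\mathrm{rank}\,\pi_x=3-\delta=\dim\mathfrak{a}_y$, so $\,\pi_x\,$ maps $\,\mathfrak{i}_x\,$ onto $\,\mathfrak{a}_y$; applying Lemma~\ref{ismtr}(ii) to the flow of each Killing field --- an isometry covering an affine transformation of $\,\nabla\,$ --- it follows that the coset $\,[\tau]\,$ is invariant under the natural action of $\,\mathfrak{a}_y$.

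In the alternative $\,(3,0)$, the connection $\,\nabla\,$ satisfies $\,\dim\mathfrak{a}_y=3$. Using Remark~\ref{uslin}(c) when $\,\ew_y=0$, and otherwise (\ref{isa}) (which yields local homogeneity near $\,y$) together with the final clause of Theorem~\ref{modul} --- which makes $\,\nabla\,$ locally equivalent to $\,\nabla(1,0)\,$ of Example~\ref{nonab}, hence, by the last paragraph of Example~\ref{slinv}, to a restriction of one of the $\,\mathrm{SL}(\plane)$-invariant connections there --- I would conclude that, up to a local diffeomorphism, $\,\nabla\,$ is an open piece of a single fixed $\,\mathrm{SL}(\plane)$-invariant connection $\,\nabla_0\,$ on a plane $\,\plane$; one checks that $\,\nabla_0\,$ is generic, since $\,\nd\,$ reduces to $\,5\,\mathrm{Id}\,$ at the zero of $\,\ew\,$ and is an isomorphism elsewhere by (\ref{nab}). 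Then $\,[\tau]\,$ has the canonical representative $\,\sigma=\op\tau\,$ of Section~\ref{ttwg}, and naturality of $\,\op\,$ makes $\,\sigma\,$ invariant under $\,\mathfrak{a}_y$. On the open dense set where $\,\ew\ne0\,$ the connection is $\,\nabla(1,0)$, so Lemma~\ref{dathr} forces $\,\sigma=c\hs\phi\otimes\phi\,$ there for a constant $\,c$, while $\,\oz\sigma=0\,$ (Theorem~\ref{imker}(c)) combined with (\ref{zff}) gives $\,c=0$. Hence $\,\sigma\equiv0$, i.e.\ $\,\tau\in\ki(\as\nh_1)$, and Lemma~\ref{kxpgn}(b) identifies the neighborhood of $\,x\,$ with an open submanifold of $\,(T\hskip.2pt^*\hskip-.9pt\plane,\gm\nnh^{\nabla_0})$, which is the manifold of Example~\ref{dhthr}.

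In the alternative $\,(2,1)$, there is near $\,y\,$ a nowhere-zero $\,1$-form $\,\xi\,$ with $\,\ki\hh\xi=0\,$ (Theorem~\ref{kileq}(ii)), so $\,\nabla\,$ is not generic, cf.\ (\ref{gkz}); after checking, by an argument analogous to the one proving (\ref{thr}), that $\,\nabla\,$ is locally homogeneous with $\,\dim\mathfrak{a}_y=2$, Theorem~\ref{modul} makes it locally equivalent to some $\,\nabla(\ea,\eb)\,$ with $\,(\ea,\eb)\ne(1,0)$, and then (\ref{nab}) and Example~\ref{kerkz} leave only $\,\nabla=\nabla(-9,0)\,$ on a non-Abelian two-dimensional Lie group $\,\hp$. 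The remaining, and principal, point is that $\,[\tau]\,$ contains a left-invariant representative. For this I would use the two Killing fields projecting onto a basis of $\,\mathfrak{a}_y$, that is, onto the right-invariant vector fields $\,\ev_1,\ev_2\,$ of (\ref{vot}) whose flows are left translations: Lemma~\ref{ismtr}(ii) applied to these flows gives $\,\lie_{\ev_j}\tau=\ki\hh\beta_j\in\ki(\as\nh_1)\,$ for some $\,1$-forms $\,\beta_j$, and since $\,\lie_{\ev_j}\,$ commutes with $\,\ki$, one can correct $\,\tau\,$ by an element of $\,\ki(\as\nh_1)\,$ so as to render it left-invariant --- the relevant auxiliary system $\,\lie_{\ev_1}\beta=\beta_1$, $\,\lie_{\ev_2}\beta=\beta_2\,$ being integrable modulo $\,\mathrm{Ker}\,\ki$, which is one-dimensional (Example~\ref{dkleo}), with the residual ambiguity absorbed there. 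Once $\,\tau\,$ is left-invariant, Lemma~\ref{lilim} shows that the metric lies in the one-parameter family of Example~\ref{minnn}, which finishes the argument.

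The step I expect to be the main obstacle is exactly this last normalization of $\,\tau\,$ in the alternative $\,(2,1)$: proving that the presence of the three-dimensional Killing algebra forces $\,[\tau]\,$ into one of the left-invariant cosets. Carrying it out requires a careful analysis of how a Killing field covering a left translation splits into a complete lift plus a vertical term, and of how that vertical term interacts with $\,\ki\,$ and with the one-dimensional kernel of $\,\ki\,$ for $\,\nabla(-9,0)$; one must also verify with some care the local-homogeneity claim used to invoke Theorem~\ref{modul} in that alternative. The alternative $\,(3,0)$, by contrast, is essentially immediate once Lemma~\ref{dathr}, (\ref{zff}) and the genericity of $\,\nabla_0\,$ are in place.
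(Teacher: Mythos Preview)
Your plan is correct and matches the paper's proof almost step for step: the same dichotomy via (\ref{thr}), the same treatment of the $(3,0)$ case through genericity of $\,\nabla(1,0)$, Lemma~\ref{dathr}, and (\ref{zff}), and the same strategy in the $(2,1)$ case of using surjectivity of $\,\pi_x$ and Lemma~\ref{ismtr}(ii) to obtain $\,\lie_{\ev_j}\tau=\ki\hh\beta_j$, then correcting $\,\tau\,$ to a left-invariant tensor. The obstacle you flag is exactly the one the paper works through: it writes $\,\beta_j=\Delta(\ev_j)\hs\alpha+\varXi(\ev_j)\hs\phi\,$ in a left-invariant coframe, shows the integrability obstruction $\,\lie_1\beta_2-\lie_2\beta_1-2\beta_1$ lies in $\,\mathrm{Ker}\,\ki=\bbR\hh\xi$, integrates $\,\Delta,\varXi\,$ accordingly, and subtracts $\,\ki(\mu\hs\alpha+\chi\hs\phi+c\hs\psi\hs\xi)\,$ from $\,\tau$ --- the extra $\,c\hs\psi\hs\xi\,$ term absorbing precisely the residual kernel ambiguity you mention. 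Two small corrections: once $\,\tau\,$ is left-invariant you are already inside Example~\ref{minnn} by definition, so Lemma~\ref{lilim} is not needed at that point (it is used only to see the family is one-parameter); and the local-homogeneity step in the $(2,1)$ case is not ``analogous to (\ref{thr})'' but rather proceeds by showing $\,\xi(\ew)\ne0\,$ on a dense open set (via (\ref{xqe}.b) and (\ref{nnx}) with $\,\tau=0$), invoking (\ref{let}) there, and then extending to the closure by a continuity argument using (\ref{mat}).
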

\begin{proof}We use the same assumptions and identifications as in the lines 
preceding (\ref{hom}), so that $\,\ym=\tab\,$ for a surface $\,\bs\,$ carrying 
a tor\-sion\-free connection $\,\nabla\hn\,$ with 
eve\-ry\-\hbox{where\hskip1pt-}\hskip0ptnon\-zero, skew-sym\-met\-ric Ric\-ci 
tensor $\,\rho$. We also fix a point $\,x\in\ym\,$ at which 
$\,\dim\hs\mathfrak{i}\hh_x=3$, and set $\,y=\pi(x)$, where 
$\,\pi:\tab\to\bs\,$ is the bundle projection. By (\ref{thr}), the pair 
$\,(\dim\hs\mathfrak{a}_y,\delta\hh)\,$ equals $\,(3,0)\,$ or $\,(2,1)$.

If $\,(\dim\hs\mathfrak{a}_y,\delta\hh)=(3,0)$, then the conclusion of 
Remark~\ref{uslin}(c) holds. In fact, when $\,\ew_y\nh=0$, this is explicitly 
stated in Remark~\ref{uslin}(c), while, in the case $\,\ew_y\nh\ne0$, 
Lemma~\ref{dathr} implies local homogeneity of $\,\nabla\hn\,$ at $\,y$, and 
our claim follows from the final clause of Theorem~\ref{modul}. Thus, 
$\,\nabla\hn\,$ is generic, as one sees using (\ref{nab}) with 
$\,(\ea,\eb)=(1,0)\,$ if $\,\ew_y\nh\ne0$, and noting that the same is true 
when $\,\ew_y\nh=0\,$ since points $\,y\,$ with $\,\ew_y\nh\ne0\,$ form a 
dense set (see (\ref{dns})), and so, by (\ref{mat}), 
$\,\mathrm{det}\hskip1.9pt\nd\,$ is constant, namely, equal to $\,25$. The 
invariant $\,\sigma\,$ introduced in Section~\ref{ttwg} may thus be treated, 
locally, as a symmetric $\,2$-ten\-sor on an open connected subset of 
$\,\plane$, invariant under the infinitesimal action of 
$\,\mathrm{SL}\hs(\plane)\,$ (notation of Example~\ref{slinv}). Hence, by 
Lemma~\ref{dathr}, $\,\sigma\,$ is a constant multiple of $\,\phi\otimes\phi$. 
However, as $\,\oz\sigma=0$, cf.\ Section~\ref{ttwg}, (\ref{zff}) now gives 
$\,\sigma=0$, and our claim follows in this case since $\,\gm\,$ and 
$\,\gm\nnh^\nabla\nnh+\hs2\hh\pi^*\nnh\sigma\,$ are locally isometric 
(Section~\ref{ttwg}).

Now let $\,(\dim\hs\mathfrak{a}_y,\delta\hh)=(2,1)$. By (\ref{did}.b), this is 
also the case if $\,y\,$ is replaced with any point of some connected 
neighborhood $\,\,U\,$ of $\,y\,$ in $\,\bs$. We fix 
$\,\xi\in\mathrm{Ker}\,\ki$, defined on $\,\,U\nh$, so that $\,\xi\ne0\,$ 
everywhere in $\,\,U\,$ (cf.\ (\ref{did}.b) and Theorem~\ref{kileq}(ii)). On 
any open subset of $\,\,U\,$ on which $\,\xi(\ew)=0$, (\ref{xqe}.b) and 
(\ref{nnx}) with $\,\tau=0\,$ give $\,\nabla\xi=0\,$ and 
$\,\xi\otimes\rho=-\nabla\nabla\xi=0$. Hence that such a subset must be empty, 
and so $\,\xi(\ew)\ne0\,$ at all points of a dense open subset $\,\,U'$ of 
$\,\,U\nh$. Applying (\ref{let}) to $\,\ev\,$ given by 
$\,\rho\hs(\hn\ev,\,\cdot\,)=\xi\,$ we see that $\,\nabla\hn\,$ restricted to 
$\,\,U'$ is locally homogeneous. As 
$\,\dim\hskip2pt(\mathrm{Ker}\,\ki)_y=\delta=1$, (\ref{gkz}), (\ref{nab}) and 
Example~\ref{kerkz} show that $\,\nabla\hn\,$ represents, on $\,\,U'\nnh$, the 
point $\,(\ea,\eb)=(-9,0)\,$ of the moduli curve in Theorem~\ref{modul}(i). 
Therefore, on $\,\,U'\nnh$, (\ref{mat}) with $\,(\ea,\eb)=(-9,0)\,$ yields 
$\,\nd\hh\ew=15\hh\ew-10\hh\eu$, and, by (\ref{ruw}.a), 
$\,\rho\hs(\ew,\nd\hh\ew)=60$, which, due to denseness of $\,\,U'$ in $\,\,U\nh$, 
holds on $\,\,U\,$ as well. Consequently, $\,\ew\,$ and $\,\nd\hh\ew\,$ are 
linearly independent at each point of $\,\,U\nh$, so that the same is true 
of $\,\ew\,$ and $\,\eu$, where $\,\eu\,$ has now been extended to $\,\,U\,$ 
via the formula $\,10\hh\eu=15\hh\ew-\nd\hh\ew$. By continuity, we have 
(\ref{nuu}) everywhere in $\,\,U\nh$, and hence $\,[\eu,\ew]=2\eu$. This 
allows us to treat $\,\,U\nh$, locally, as an open set in a 
\hbox{two\hh-}\hskip0ptdi\-men\-sion\-al \hbox{non\hs-\nh}\hskip0ptAbel\-i\-an 
simply connected Lie group $\,\hp$, while $\,\eu,\ew\,$ then become 
left-in\-var\-i\-ant vector fields on $\,\hp$, and elements of 
$\,\mathfrak{a}_y$ are the germs at $\,y\,$ of right-in\-var\-i\-ant vector 
fields on $\,\hp\,$ (the flows of which consist of left translations).

Making $\,\,U\,$ smaller, if necessary, and using Theorem~\ref{maith}, we may 
assume that $\,\gm\,=\,\gm\nnh^\nabla\nnh+\hs2\hh\pi^*\nnh\tau\,$ for some 
symmetric $\,2$-ten\-sor $\,\tau\,$ on $\,\,U\nh$. Since (\ref{hom}) is 
surjective (by (\ref{did}.b)), for every left translation $\,F\,$ close to the 
identity Lemma~\ref{ismtr}(ii) yields $\,F^*\tau=\tau+\ki\hh\beta$, where 
$\,\beta\,$ is a $\,1$-form depending on $\,F$. (Local isometries of 
$\,(\ym,\gm)\,$ leave the vertical distribution $\,\mathcal{V}\,$ invariant.) 
Infinitesimally, this gives $\,\lie_\ev\tau=\ki\hh\beta_\ev$ for every 
right-in\-var\-i\-ant vector field $\,\ev$, with a $\,1$-form $\,\beta_v$ that 
depends linearly on $\,\ev$. Let us now choose a basis 
$\,\ev_1^{\phantom i},\ev_2^{\phantom i}$ of right-in\-var\-i\-ant vector 
fields such that 
$\,[\ev_1^{\phantom i},\ev_2^{\phantom i}]=2\hh\ev_1^{\phantom i}$ and write 
$\,\lie_j,\beta_j$ instead of $\,\lie_\ev$ and $\,\beta_\ev$ for 
$\,\ev=\ev_j$. As $\,\lie_j$ commute with $\,\ki$, for the $\,1$-form 
$\,\xi\hh'\nh=\lie_1^{\phantom i}\beta_2^{\phantom i}\nh
-\lie_2^{\phantom i}\beta_1^{\phantom i}\nh-2\beta_1^{\phantom i}$ we have 
$\,\ki\hh\xi\hh'\nh=\lie_1^{\phantom i}\lie_2^{\phantom i}\tau
-\lie_2^{\phantom i}\lie_1^{\phantom i}\tau-2\lie_1^{\phantom i}\tau=0$. On 
the other hand, using the basis $\,\alpha,\phi\,$ of left-in\-var\-i\-ant 
$\,1$-forms given by $\,\alpha(\hn\eu)=3$, $\,\alpha(\ew)=2\,$ and 
(\ref{rec}.i), we get $\,\beta_j=\Delta(\hn\ev_j)\hs\alpha
+\varXi(\hn\ev_j)\hs\phi\,$ with some $\,1$-forms $\,\Delta\,$ and $\,\varXi$. 
The equality $\,\xi\hh'\nh=\lie_1^{\phantom i}\beta_2^{\phantom i}\nh
-\lie_2^{\phantom i}\beta_1^{\phantom i}\nh-2\beta_1^{\phantom i}$ now reads 
$\,\xi\hh'\nh=(d\Delta)_{12}^{\phantom i}\hs\alpha
+(d\varXi)_{12}^{\phantom i}\hs\phi\,$ (cf.\ (\ref{bwa}.b)), where, for any 
$\,2$-form $\,\zeta$, we use the subscript convention 
$\,\zeta\hh_{12}^{\phantom i}
=\zeta(\hn\ev_1^{\phantom i},\ev_2^{\phantom i})$. Also, as 
$\,\ki\hh\xi\hh'\nh=0$, if we fix a function $\,\ef\,$ with (\ref{duf}), 
$\,\xi\hh'$ equals a constant times $\,\xi=\ef^2\alpha\,$ (see 
Example~\ref{dkleo}). Thus, $\,(d\varXi)_{12}^{\phantom i}=0\,$ and 
$\,(d\Delta)_{12}^{\phantom i}=4\hs c\ef^3\nnh\rho_{12}^{\phantom i}$ for 
some $\,c\in\bbR\hh$. (By (\ref{riv}.i), $\,f\rho_{12}^{\phantom i}$ is 
constant.) Hence $\,d\varXi=0\,$ and 
$\,d\Delta=\hs 4\hs c\ef^3\nnh\rho=-\hs c\hskip1ptd(\ef^3\phi)$, cf.\ 
(\ref{riv}.ii), so that $\,\varXi=d\chi\,$ and $\,\Delta=d\mu-c\ef^3\phi\,$ 
for some functions $\,\chi\,$ and $\,\mu$. We may in addition assume that, for 
a suitable function $\,\psi$,
\begin{equation}\label{ljt}
\mathrm{i)}\hskip9pt\lie_j[\tau-\ki\hh(\mu\hs\alpha+\chi\hs\phi)]\,
=\,-\hs c\hs\ki\hh[\ef\phi(\ev_j)\hs\xi]\hh,\hskip12pt\mathrm{ii)}\hskip9pt
\ki\hh[\ef\phi(\ev_j)\hs\xi]\,=\,
-\hs\lie_j\ki\hh(\psi\hs\xi)\hs.
\end{equation}
Namely, (\ref{ljt}.i) follows in any case since $\,\lie_j\tau=\ki\hh\beta_j$ 
and $\,\ki\hh\beta_j\nh
=\ki\hh[\Delta(\hn\ev_j)\hs\alpha+\varXi(\hn\ev_j)\hs\phi]
=\ki\hh[(\lie_j\mu)\hs\alpha+(\lie_j\chi)\hs\phi]
-c\hh\ki\hh[\ef^3\phi(\ev_j)\hs\alpha]$, while $\,\xi=\ef^2\alpha$, and 
$\,\lie_j$ commute with $\,\ki$. To obtain (\ref{ljt}.ii), instead of letting 
the right-in\-var\-i\-ant fields $\,\ev_j$ with 
$\,[\ev_1^{\phantom i},\ev_2^{\phantom i}]=2\hh\ev_1^{\phantom i}$ be 
otherwise arbitrary, we choose them as in (\ref{vot}). Then 
$\,\ef\phi(\ev_1^{\phantom i})=-\hh6$, 
$\,\ef\phi(\ev_2^{\phantom i})=-\hh6\hh\psi\,$ (cf.\ (\ref{ruw}.b)), and so 
$\,\ki\hh[\ef\phi(\ev_1^{\phantom i})\hs\xi]=0$, as 
$\,\xi\in\mathrm{Ker}\,\ki$, while 
$\,\ki\hh[\ef\phi(\ev_2^{\phantom i})\hs\xi]=-\hh6\hh\ki\hh(\psi\hs\xi)$. 
However, $\,\lie_1^{\phantom i}(\ef^2\psi)=2\ef^2\,$ and 
$\,\lie_2^{\phantom i}(\ef^2\psi)=6\ef^2\psi\,$ by (\ref{vot}), (\ref{duf}) 
and (\ref{ruw}.b) with $\,3\hs d\psi=-\ef\phi$. Thus, 
$\,\lie_j(\psi\hs\xi)=\lie_j(\ef^2\psi\hs\alpha)
=[\lie_j(\ef^2\psi)]\hs\alpha\,$ and 
$\,\lie_j\ki\hh(\psi\hs\xi)=\ki\hh\lie_j(\psi\hs\xi)$, so that 
$\,\lie_1^{\phantom i}\ki\hh(\psi\hs\xi)=2\hh\ki\hh(\ef^2\alpha)
=2\hh\ki\hh\xi=0=-\hs\ki\hh[\ef\phi(\ev_1^{\phantom i})\hs\xi]$, and 
$\,\lie_2^{\phantom i}\ki\hh(\psi\hs\xi)=6\hh\ki\hh(\ef^2\psi\hs\alpha)
=6\hh\ki\hh(\psi\hs\xi)=-\hs\ki\hh[\ef\phi(\ev_2^{\phantom i})\hs\xi]$, as 
required.

Setting $\,\tau\hh'\nh=\tau-\ki\hh(\mu\hs\alpha+\chi\hs\phi+c\hs\psi\hs\xi)\,$ 
we see that, by (\ref{ljt}), $\,\lie_j\tau\hh'\nh=0$, and so $\,\tau\hh'$ is 
left-in\-var\-i\-ant. Therefore, in view of Lemma~\ref{kxpgn}(b), the 
restriction of $\,\gm\,$ to some neighborhood of $\,x\,$ is isometric to one 
of the metrics of Example~\ref{minnn}.
\end{proof}
The single manifold of Example~\ref{dhthr} and the 
\hbox{one\hh-}\hskip0ptpa\-ram\-e\-ter family of Example~\ref{minnn} together 
form a collection of type~III SDNE Walker manifolds that are mutually 
non-i\-so\-met\-ric, even locally. More precisely, an open sub\-man\-i\-fold 
in one of them is never isometric to an open sub\-man\-i\-fold of another.

In fact, the manifolds of Example~\ref{dhthr} differs from those in 
Example~\ref{minnn} by the value of the local invariant 
$\,(\dim\hs\mathfrak{a}_y,\delta\hh)\,$ (see the proof of 
Theorem~\ref{three}). Thus, we may restrict our discussion to the latter 
manifolds, assuming that $\,\tau\,$ and $\,\tau\hh'$ are left-in\-var\-i\-ant 
symmetric $\,2$-ten\-sors on $\,\hp$, while the restrictions of 
$\,\gm\nnh^\nabla\nnh+\hs2\hh\pi^*\nnh\tau\,$ and 
$\,\gm\nnh^\nabla\nnh+\hs2\hh\pi^*\nnh\tau\hh'$ to some open 
sub\-man\-i\-folds are isometric. Since the vertical distribution is a local 
invariant of the metric (Section~\ref{vert}), and so is the transversal 
connection $\,\nabla\hn\,$ (cf.\ Lemma~\ref{trvcn}(b) and 
Theorem~\ref{maith}), applying Lemma~\ref{ismtr}(ii) we conclude that the 
left-in\-var\-i\-ant symmetric $\,2$-ten\-sor $\,\tau-\tau\hh'$ is the 
$\,\ki$-im\-age of some $\,1$-form on $\,\hp$. We used here the fact that the 
left translations in $\,\hp\,$ are the only dif\-feo\-mor\-phisms $\,F\,$ 
between open sub\-man\-i\-folds of $\,\hp$, satisfying the condition 
$\,F^*\nabla\nh=\nabla\nnh$, which itself easily follows from (\ref{aff}) and 
Remark~\ref{uwinv}.

As a consequence of Lemma~\ref{lilim}, the two metrics represent the same 
element of the \hbox{one\hh-}\hskip0ptpa\-ram\-e\-ter family in 
Example~\ref{minnn}.

\section{Non\hh-ge\-ner\-ic RSTS connections and type III~SDNE Walker 
metrics}\label{ngrs}
\setcounter{equation}{0}
We refer to an RSTS connection $\,\nabla\hn\,$ as {\it special\/} if, at every 
point, the Ric\-ci tensor $\,\rho\,$ is nonzero and the bundle morphism 
$\,\nd\,$ given by (\ref{qef}.i) is noninjective. This is the extreme opposite 
of the case where $\,\nabla\hn\,$ is generic, defined in the lines preceding 
(\ref{gkz}). For an RSTS connection with $\,\rho\ne0\,$ at every point of the 
underlying surface $\,\bs$, being generic {\it or\/} special is a 
gen\-er\-al-po\-si\-tion requirement: $\,\bs\,$ obviously contains a dense 
open subset $\,\,U\,$ such that the restriction of $\,\nabla\hn\,$ to each 
connected component of $\,\,U\,$ is either generic or special.

According to (\ref{qef}.ii), if an RSTS connection $\,\nabla\hn\,$ is special, 
$\nd\,$ and $\,\nd^*$ have, at each point, the eigenvalues $\,0\,$ and 
$\,10$, so that
\begin{equation}\label{qmt}
(\nd-10)\hh\nd\hs=\hs0\hh,\hskip26pt(\nd^*\nh-10)\hh\nd^*=\hs0\hh.
\end{equation}
The next result may be viewed as a counterpart of Theorem~\ref{imker} for RSTS 
connections which, this time, are assumed special rather than generic. As 
before, $\,\as\nh_1$ (or, $\,\as\nh_2$) is the space of all $\,1$-forms (or, 
symmetric $\,2$-ten\-sors) of class $\,C^\infty$ on the surface $\,\bs\,$ in 
question. In view of (\ref{qmt}), $\,\tab
=\mathrm{Ker}\,(\nd^*\nh-10)\oplus\hs\mathrm{Ker}\,\nd^*\nnh$, so 
that $\,\as\nh_1=\sop\hskip-2.1pt\oplus\soz$, where $\,\sop$ (or, $\,\soz$) is 
the space of all $\,C^\infty$ sections of the line bundle 
$\,\mathrm{Ker}\,(\nd^*\nh-10)\,$ (or, $\,\mathrm{Ker}\,\nd^*\nh$). We 
also define a fourth-or\-der linear differential operator
$\,\ow:\as\nh_2\to\as\nh_2$ by $\,10\hs\ow=\hs\ki\oz$, with $\,\ki\,$ and 
$\,\oz\,$ as in (\ref{cod}.b) and (\ref{zte}).
\begin{theorem}\label{imspc}For any special RSTS connection,
\begin{enumerate}
  \def\theenumi{{\rm\alph{enumi}}}
\item[(a)] $\ow^{\hh3}\nh=\ow^{\hh2}\nnh$, that is, 
$\,(\ow\nh-\mathrm{Id})\hs\ow^{\hh2}\nh=0$, and\/ 
$\,\as\nh_2=\mathrm{Ker}\,\ow^{\hh2}\nh
\oplus\mathrm{Ker}\,(\ow\nh-\mathrm{Id})$,
\item[(b)] $\ki(\as\nh_1)=\ki(\sop)\oplus\ki(\soz)\,$ and\/ 
$\,\ki(\sop)=\mathrm{Ker}\,(\ow\nh-\mathrm{Id})$,
\item[(c)] $\ki(\soz)\hs\subset\hs\mathrm{Ker}\,\oz\hs
\subset\hs\mathrm{Ker}\,\ow\hs\subset\hs\mathrm{Ker}\,\ow^{\hh2}
=\hs\mathrm{Ker}\,\nd^*\hskip-2pt\oz$.
\end{enumerate}
\end{theorem}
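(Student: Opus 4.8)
The plan is to reduce the whole statement to formal manipulations with a few operator identities, after which every assertion is read off from the fact that $\,\ow^{\hh2}$ is idempotent. At the outset I would record three basic relations. First, by Lemma~\ref{xiqeq}(a), $\,\nd^*\xi=\oz(\ki\hh\xi)\,$ for every $\,1$-form $\,\xi$, i.e.\ $\,\oz\circ\ki=\nd^*$ as operators $\,\as\nh_1\to\as\nh_1$. Second, $\,10\hs\ow=\ki\oz\,$ on $\,\as\nh_2\,$ by the definition of $\,\ow$. Third, for a special RSTS connection $\,(\nd^*)^2=10\hs\nd^*$ by (\ref{qmt}). A corollary used repeatedly is $\,\mathrm{Ker}\,\ki\subseteq\mathrm{Ker}\,\nd^*$ (apply $\,\oz\circ\ki=\nd^*$ to $\,\xi\in\mathrm{Ker}\,\ki$), whence $\,\mathrm{Ker}\,\ki\subseteq\soz\,$ and $\,\ki\,$ is injective on $\,\sop$. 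Everything else is bookkeeping with the decomposition $\,\as\nh_1=\sop\oplus\soz\,$ recorded just before the theorem.

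For part (a), composing the identities gives $\,(10\hs\ow)^2=\ki\oz\ki\oz=\ki\,\nd^*\oz\,$ and then $\,(10\hs\ow)^3=\ki\,(\nd^*)^2\oz=10\,\ki\,\nd^*\oz=10\,(10\hs\ow)^2$, so $\,\ow^{\hh3}=\ow^{\hh2}$, i.e.\ $\,(\ow\nh-\mathrm{Id})\hs\ow^{\hh2}=0$. Then $\,(\ow^{\hh2})^2=\ow^{\hh4}=\ow^{\hh3}=\ow^{\hh2}$, so $\,\ow^{\hh2}$ is a projection, giving $\,\as\nh_2=\mathrm{Ker}\,\ow^{\hh2}\oplus\mathrm{Im}\,\ow^{\hh2}$; and $\,\mathrm{Im}\,\ow^{\hh2}=\mathrm{Ker}\,(\ow\nh-\mathrm{Id})$, since $\,\tau=\ow^{\hh2}\sigma\,$ forces $\,\ow\tau=\ow^{\hh3}\sigma=\ow^{\hh2}\sigma=\tau$, while $\,\ow\tau=\tau\,$ forces $\,\tau=\ow^{\hh2}\tau$.

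For part (b), the splitting $\,\as\nh_1=\sop\oplus\soz\,$ yields $\,\ki(\as\nh_1)=\ki(\sop)+\ki(\soz)$, and the sum is direct because $\,\ki\hh\alpha=\ki\hh\beta\,$ with $\,\alpha\in\sop$, $\,\beta\in\soz\,$ forces $\,\alpha-\beta\in\mathrm{Ker}\,\ki\subseteq\soz$, hence $\,\alpha\in\sop\cap\soz=\{0\}$. To identify $\,\ki(\sop)\,$ with $\,\mathrm{Ker}\,(\ow\nh-\mathrm{Id})$: if $\,\nd^*\alpha=10\hs\alpha\,$ then $\,\ow(\ki\hh\alpha)=\frac{1}{10}\ki\,\nd^*\alpha=\ki\hh\alpha$; conversely, given $\,\ow\tau=\tau$, set $\,\xi=\frac{1}{10}\oz\tau$, so that $\,\ki\hh\xi=\frac{1}{10}\ki\oz\tau=\ow\tau=\tau\,$ and then $\,\nd^*\xi=\oz(\ki\hh\xi)=\oz\tau=10\hs\xi$, giving $\,\xi\in\sop\,$ and $\,\tau\in\ki(\sop)$.

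For part (c), $\,\xi\in\soz\,$ gives $\,\oz(\ki\hh\xi)=\nd^*\xi=0$, so $\,\ki(\soz)\subseteq\mathrm{Ker}\,\oz$; next, $\,\oz\tau=0\,$ gives $\,10\hs\ow\tau=\ki\oz\tau=0$, so $\,\mathrm{Ker}\,\oz\subseteq\mathrm{Ker}\,\ow\subseteq\mathrm{Ker}\,\ow^{\hh2}$; finally, from $\,100\hs\ow^{\hh2}=\ki\,\nd^*\oz\,$ one sees that $\,\ow^{\hh2}\tau=0\,$ exactly when $\,\nd^*\oz\tau\in\mathrm{Ker}\,\ki\subseteq\mathrm{Ker}\,\nd^*$, which together with $\,(\nd^*)^2=10\hs\nd^*\,$ forces $\,\nd^*\oz\tau=0$; hence $\,\mathrm{Ker}\,\ow^{\hh2}=\mathrm{Ker}\,\nd^*\oz$. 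There is essentially no analytic content here once Lemma~\ref{xiqeq} and the splitting $\,\as\nh_1=\sop\oplus\soz\,$ are available, so the write-up should be short; the single step I would flag as needing care is the passage between the pointwise eigenvalue conditions $\,\nd^*\xi=10\hs\xi\,$ or $\,\nd^*\xi=0\,$ and the statements $\,\xi\in\sop\,$ or $\,\xi\in\soz$, which is legitimate precisely because the eigenvalues $\,0\,$ and $\,10\,$ of the constant-trace, rank-$\,1\,$ morphism $\,\nd^*$ are everywhere distinct, so the eigenbundles are smooth line sub\-bun\-dles of $\,\tab\,$ whose spaces of $\,C^\infty$ sections are exactly the kernels of the corresponding pointwise operators on $\,\as\nh_1$.
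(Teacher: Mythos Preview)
Your proposal is correct and follows essentially the same approach as the paper: both proofs rest on the three operator identities $\,\oz\ki=\nd^*$, $\,10\hs\ow=\ki\oz$, and $\,(\nd^*)^2=10\hs\nd^*$, and derive each part by the same formal manipulations (including the computation $\,100\hs\ow^{\hh2}=\ki\hh\nd^*\hskip-1pt\oz\,$ and the chain $\,0=\oz\ki\hh\nd^*\hskip-1pt\oz\tau=(\nd^*)^2\oz\tau=10\hs\nd^*\hskip-1pt\oz\tau\,$ for the final equality in (c)). Your extra remarks on $\,\mathrm{Ker}\,\ki\subset\soz\,$ and on the smoothness of the eigenbundles are sound elaborations but do not change the route.
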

\begin{proof}By (\ref{xqe}.a) and (\ref{qmt}), $\,\oz\ki=\nd^*$ and 
$\,(\nd^*\nnh)^2\nh=10\hs\nd^*\nnh$, which yields 
$\ow^{\hh3}\nh=\ow^{\hh2}\nnh$, and hence (a). (Explicitly, 
$\,\tau=\ow^{\hh2}\nh\tau-(\ow\nh+\mathrm{Id})(\ow\nh-\mathrm{Id})\hh\tau$, 
and $\,(\ow\nh+\mathrm{Id})(\ow\nh-\mathrm{Id})\tau
\in\mathrm{Ker}\,\ow^{\hh2}\nnh$, 
$\,\ow^{\hh2}\nh\tau\in\mathrm{Ker}\,(\ow\nh-\mathrm{Id})\,$ for any 
$\,\tau\in\as\nh_2$.) The relation $\,\ki(\sop)\cap\ki(\soz)=\{0\}$, that is, 
the first part of (b), is clear since $\,\oz\ki=\nd^*$ is zero on 
$\,\soz\nnh$, and injective on $\,\sop\nnh$. Next, 
$\,10\hs\ow\ki\alpha=\ki\oz\ki\alpha=\ki\nd^*\nnh\alpha
=10\hs\ki\hh\alpha\,$ whenever $\,\alpha\in\sop$, so that 
$\,\ki(\sop)\subset\mathrm{Ker}\,(\ow\nh-\mathrm{Id})$. On the other hand, if 
$\,\ow\tau=\tau$, setting $\,\alpha=\oz\tau\,$ we obtain 
$\,\nd^*\nnh\alpha=\oz\ki\alpha=\oz\ki\oz\tau=10\hs\oz\ow\tau=10\hs\oz\tau
=10\hs\alpha$, and so $\,\alpha\in\sop$, while 
$\,\ki\alpha=\ki\oz\tau=10\hs\ow\tau=10\hs\tau$. Consequently, 
$\,\mathrm{Ker}\,(\ow\nh-\mathrm{Id})\subset\ki(\sop)$, which proves (b). 
The first two inclusions in (c) are immediate as $\,\oz\ki=\nd^*$ 
and $\,10\hs\ow=\hs\ki\oz$, and the third one is obvious. Finally, 
$\,100\hs\ow^{\hh2}\nh=\ki\oz\ki\oz=\ki\nd^*\nnh\oz$. Thus, 
$\,\mathrm{Ker}\,\nd^*\hskip-2pt\oz\hs\subset\hs\mathrm{Ker}\,\ow^{\hh2}\nnh$. 
The opposite inclusion follows since, if $\,\ki\nd^*\nnh\oz\tau=0$, then 
$\,0=\oz\ki\nd^*\nnh\oz\tau=(\nd^*\nnh)^2\nh\oz\tau=10\hs\nd^*\nnh\oz\tau$.
\end{proof}
By analogy with Section~\ref{ttwg}, we will also say that a type~III SDNE 
Walker manifold $\,(\ym,\gm)\,$ is {\it special\/} if so is, at every point 
$\,x\,$ of $\,\ym$, the RSTS connection $\,\nabla\hn\,$ determined, as in 
Theorem~\ref{maith}, by the restriction of $\,\gm\,$ to a neighborhood of 
$\,x$. 

The assumption that $\,(\ym,\gm)\,$ is special allows us, as in 
Section~\ref{ttwg}, to replace $\,(\bs,\nabla\nnh,[\tau])\,$ by a more 
tangible triple $\,(\bs,\nabla\nnh,\sigma)\,$ of local invariants. Here 
$\,\sigma\,$ is the $\,\mathrm{Ker}\,\ow^{\hh2}$ component of $\,\tau\,$ 
relative to the decomposition $\,\as\nh_2=\mathrm{Ker}\,\ow^{\hh2}\nh
\oplus\mathrm{Ker}\,(\ow\nh-\mathrm{Id})\,$ in Theorem~\ref{imspc}(a), so 
that, due to the second equality in Theorem~\ref{imspc}(b), the coset 
$\,[\sigma]=[\tau]\,$ remains unchanged, and 
$\,\,\nd^*\hskip-2pt\oz\hh\sigma=0$, cf.\ Theorem~\ref{imspc}(c).

In contrast with the situation discussed in Section~\ref{ttwg}, $\,\sigma\,$ 
does not constitute a unique, canonical choice of a representative from the 
coset $\,[\tau]$. In fact, according to Theorem~\ref{imspc}(c), by replacing 
$\,\tau\,$ with $\,\sigma\,$ we have merely reduced the freedom of choosing 
$\,\tau$, which originally ranged over a coset of the subspace 
$\,\ki(\as\nh_1)\,$ in the space $\,\as\nh_2$, to the freedom of selecting 
$\,\sigma\,$ out of a fixed coset of the subspace $\,\ki(\soz)\,$ in the space 
$\,\mathrm{Ker}\,\nd^*\hskip-2pt\oz$.

The remainder of this section is devoted to a description of the local 
structure of an arbitrary RSTS connection with 
$\,\dim\hskip2pt\mathrm{Ker}\,\ki=1$, where $\,\ki\,$ is the Kil\-ling 
operator. By (\ref{xqe}.a), all such connections are special. We begin with 
some examples.

Let $\,\bs\,$ be a surface with fixed vector fields $\,\ev,\ew\,$ that 
trivialize the tangent bundle $\,\tb\,$ and functions 
$\,\xf,\xh:\bs\to\bbR\hh$, satisfying the conditions 
\begin{equation}\label{bvw}
[\ev,\ew]\hs=\hs6\ev+2\xf\ew\hh,\hskip10ptd_\ev\xf=4\xf^2,\hskip10pt
d_\ew\xf=\hs-\hs4\xf\hh,\hskip10ptd_\ev\xh
=\hs4\hs,\hskip10pt\xf\ne0\hskip6pt\mathrm{everywhere.} 
\end{equation}
Such $\,\ev,\ew,\xf,\xh\,$ are in a bijective correspondence with quadruples 
$\,\eu,\ew,\xf,\xh\,$ in which $\,\eu,\ew\,$ again trivialize the tangent 
bundle, while $\,[\eu,\ew]=6\eu$, $\,d_\eu\xf=0$, $\,d_\ew\xf=\hs-\hs4\xf$, 
$\,d_{\eu-\xf\ew}\xh=\hs4\,$ and $\,\xf\ne0\,$ everywhere. (The correspondence 
is given by $\,\eu=\ev+\xf\ew$.) Locally, the triples $\,\eu,\ew,\xf\,$ within 
the latter quadruples arise precisely when $\,\eu,\ew\,$ constitute a suitably 
chosen basis of left-in\-var\-i\-ant vector fields on a 
\hbox{two\hh-}\hskip0ptdi\-men\-sion\-al 
\hbox{non\hs-\nh}\hskip0ptAbel\-i\-an Lie group $\,\hp\,$ and $\,\xf\,$ is a 
nonzero constant multiple of a specific Lie-group homomorphism from $\,\hp\,$ 
into the multiplicative group $\,(0,\infty)$. (Cf.\ Remark~\ref{loclg} and 
(\ref{duf}.)

For $\,\bs\,$ and $\,\ev,\ew,\xf,\xh\,$ as above, we define a tor\-sion\-free 
connection $\,\nabla\hn\,$ on $\,\bs\,$ by 
\begin{equation}\label{nvv}
\nabla_{\hskip-2.2pt\ev}\ev\,=\,5\xf\hh\ev\hh,\hskip10pt
\nabla_{\hskip-2.2pt\ew}\ev\,=\,\xf\hh\ew\hh,\hskip10pt
\nabla_{\hskip-2.2pt\ev}\ew\,=\,6\hh\ev+3\xf\hh\ew\hh,\hskip10pt\nsww\,
=\,15\hh\xh\hh\ev-4\hh\ew\hh.
\end{equation}
One easily verifies that the Ric\-ci tensor $\,\rho\,$ of $\,\nabla\hn\,$ is 
skew-sym\-met\-ric, $\,\rho\hs(\hn\ev,\ew)=4\xf$, and $\,\ew\,$ coincides with 
the vector field characterized by (\ref{rec}.ii). Furthermore, 
$\,\dim\hskip2pt\mathrm{Ker}\,\ki=1\,$ due to Theorem~\ref{kileq}(i) and the 
fact that $\,\ki\hh\xi=0\,$ for the nonzero $\,1$-form $\,\xi\,$ with 
$\,\xi(\hn\ev)=0\,$ and $\,\xi(\ew)=4\xf\nnh$. In addition, 
$\,60\xf\xh=\rho\hs(\nsww,\ew)\,$ is an af\-fine invariant and 
$\,d_\ev(\xf\xh)=4\hh(\xf\xh+1)\xf\nnh$. Thus, $\,\nabla\hn\,$ is 
locally homogeneous if and only if $\,\xf\xh\,$ is constant, namely, equal 
to $\,-1$. (The `if' part is immediate from Example~\ref{nonab}, since, 
setting $\,\eu=3\hh(\ew-\xh\hh\ev)/2$, we may then rewrite (\ref{nvv}) as 
(\ref{nuu}) with $\,(\ea,\eb)=(-9,0)$.)
\begin{theorem}\label{dkero}Every RSTS connection with\/ 
$\,\dim\hskip2pt\mathrm{Ker}\,\ki=1\,$ is, locally, at points in general 
position, given by\/ {\rm(\ref{nvv})} for some quadruple\/ 
$\,\ev,\ew,\xf,\xh\,$ as above, with\/ {\rm(\ref{bvw})}.
\end{theorem}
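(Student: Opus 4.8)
The plan is to reconstruct the frame $\,\ev,\ew\,$ and the functions $\,\xf,\xh\,$ directly from the pair $\,(\bs,\nabla)\,$ and the distinguished $\,1$-form spanning $\,\mathrm{Ker}\,\ki$, and then to read off both (\ref{nvv}) and (\ref{bvw}) by differentiating a handful of invariants. Throughout I work under the standing hypothesis of this section that $\,\rho\ne0\,$ everywhere (part of what ``all such connections are special'' means), so that $\,\ew\,$ from (\ref{rec}.ii), the recurrence form $\,\phi$, and the morphism $\,\nd\,$ of (\ref{qef}.i) are defined; by (\ref{qmt}) together with $\,\mathrm{tr}\,\nd=10\,$ (from (\ref{qef}.ii)) the morphism $\,\nd\,$ is, at each point, diagonalizable with eigenvalues $\,0\,$ and $\,10$, hence of rank $\,1\,$ with image equal to its $\,10$-eigen\-space. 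Let $\,\xi\,$ span the one\hh-dimensional space $\,\mathrm{Ker}\,\ki\,$ (Theorem~\ref{kileq}(i)); it is nonzero everywhere by Theorem~\ref{kileq}(ii), and $\,\nd^*\xi=0\,$ by (\ref{xqe}.a) with $\,\tau=\ki\hh\xi=0$. I then define $\,\ev\,$ by $\,\rho(\ev,\,\cdot\,)=\xi\,$ and set $\,\xf=\xi(\ew)/4$. On the set where $\,\xi(\ew)\ne0$, which is dense — on an open set where $\,\xi(\ew)=0$, formulas (\ref{xqe}.b) and (\ref{nnx}) with $\,\tau=0\,$ give $\,\nabla\nabla\xi=0\,$ and hence $\,\xi\otimes\rho=0$, contradicting $\,\xi\ne0\,$ — the identity $\,\xi(\ew)=\rho(\ev,\ew)\,$ shows $\,\ev,\ew\,$ form a frame and $\,\xf\ne0$. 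These are the ``points in general position''.

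Next I would compute $\,\nabla\ev$. Differentiating $\,\rho(\ev,\,\cdot\,)=\xi\,$ and using (\ref{rec}.i) gives $\,\nabla_{\hskip-2.2pt a}\xi=\phi(a)\hs\xi+\rho(\nabla_{\hskip-2.2pt a}\ev,\,\cdot\,)\,$ for every vector field $\,a$; comparing with $\,\nabla\xi=\xi(\ew)\hs\rho/4\,$ (that is, (\ref{xqe}.b) with $\,\tau=0$) and using nondegeneracy of $\,\rho\,$ yields $\,\nabla_{\hskip-2.2pt a}\ev=\xf\hh a-\phi(a)\hs\ev$. Since $\,\phi(\ev)=\rho(\ew,\ev)=-4\xf\,$ and $\,\phi(\ew)=0$, this gives at once the first two relations in (\ref{nvv}).

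For $\,\nabla\ew$: from $\,\nd^*\xi=0\,$ and $\,\xi=\rho(\ev,\,\cdot\,)\,$ it follows that the image of $\,\nd\,$ lies in the line spanned by $\,\ev$, so — $\nd$ having rank $\,1\,$ with image its $\,10$-eigen\-space — one gets $\,\nd\hh\ev=10\hh\ev\,$ and $\,\nd\hh\ew\in\mathrm{span}(\ev)$. Unwinding $\,\nd=4+\naw+3\hs\phi\otimes\ew/4\,$ turns $\,\nd\hh\ev=10\hh\ev\,$ into $\,\nsv\ew=6\hh\ev+3\xf\ew\,$ (using $\,\phi(\ev)=-4\xf$) and $\,\nd\hh\ew=t\hh\ev\,$ into $\,\nsw\ew=t\hh\ev-4\hh\ew$; setting $\,\xh=t/15\,$ completes (\ref{nvv}). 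Torsion\hh-freeness then gives $\,[\ev,\ew]=\nsv\ew-\nsw\ev=6\hh\ev+2\xf\ew$, the first relation of (\ref{bvw}). It remains only to extract $\,d_\ev\xf=4\xf^2$, $\,d_\ew\xf=-4\xf\,$ and $\,d_\ev\xh=4$. Applying the Leibniz rule to $\,4\xf=\rho(\ev,\ew)\,$ and using (\ref{rec}.i) with the formulas just obtained yields the first two. For the last I would compute $\,R(\ev,\ew)\hh\ew\,$ twice: by (\ref{rer}.a) it equals $\,\rho(\ev,\ew)\hh\ew=4\xf\ew$, while expanding it via (\ref{cur}) from (\ref{nvv}) and $\,[\ev,\ew]=6\hh\ev+2\xf\ew\,$ gives a second expression; isolating $\,\nsv\nsw\ew\,$ and comparing its $\,\ev$-component with that of $\,\nsv(15\xh\hh\ev-4\hh\ew)\,$ produces $\,15\,d_\ev\xh=60$. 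This shows $\,\ev,\ew,\xf,\xh\,$ satisfy (\ref{bvw}) and that $\,\nabla\,$ has the form (\ref{nvv}), as asserted.

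The only genuinely non\hh-mechanical point is the observation that being \emph{special}, via $\,\nd^*\xi=0$, forces $\,\ev\,$ to be an eigenvector of $\,\nd\,$ for the eigenvalue $\,10\,$ rather than $\,0$; this is exactly what pins down the coefficient $\,6\,$ in $\,\nsv\ew\,$ and thereby the Lie bracket in (\ref{bvw})$\,$ (the a priori alternative $\,\nd\hh\ev=0\,$ would give $\,[\ev,\ew]=-2\ev+2\xf\ew\,$ and a different connection). Everything else is routine bookkeeping, the main hazard being to keep the two routes to $\,\nsv\nsw\ew\,$ in the last step consistent.
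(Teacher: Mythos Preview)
Your proof is correct and follows essentially the same route as the paper's: define $\,\ew\,$ by (\ref{rec}.ii), $\,\ev\,$ by $\,\rho(\ev,\cdot)=\xi\,$, $\,\xf=\xi(\ew)/4\,$; show $\,\xf\ne0\,$ on a dense open set; read off $\,\nabla\ev\,$ from (\ref{xqe}.b) with $\,\tau=0\,$; use $\,\nd^*\xi=0\,$ to constrain $\,\nabla\ew\,$; and finish with the curvature identity (\ref{rer}.a). The one tactical difference is that you exploit the full eigenvalue structure of $\,\nd\,$ (special $\Rightarrow$ rank one with image the $\,10$-eigen\-space, hence $\,\nd\ev=10\ev$) to obtain $\,\nsv\ew=6\ev+3\xf\ew\,$ in a single stroke, whereas the paper first extracts only the $\,\ew$-coefficient $\,3\xf\,$ from $\,\xi(\nd\ev)=0\,$ and then pins down the $\,\ev$-coefficient $\,\mu=6\,$ separately via $\,d\phi=2\rho\,$ and (\ref{bwa}.b); your shortcut is a genuine, if modest, streamlining.
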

\begin{proof}Let us define $\,\ew\,$ by (\ref{rec}.ii) and $\,\xf\,$ by 
$\,4\xf\nnh=\xi(\ew)$, where $\,\xi\,$ is a fixed nontrivial $\,1$-form with 
$\,\ki\hh\xi=0$. Thus, $\,\xf\ne0\,$ at all points of a dense open subset: in 
fact, if we had $\,\xf=0\,$ on some nonempty open set $\,\,U\nh$, 
(\ref{xqe}.b) with $\,\tau=0\,$ would give $\,\nabla\xi=0\,$ on $\,\,U\nh$, 
and so (\ref{nnx}) with $\,\tau=0\,$ would imply that $\,\xi=0\,$ on 
$\,\,U\nh$, contradicting Theorem~\ref{kileq}(ii).

From now on we assume that $\,\xf\ne0\,$ everywhere. Using (\ref{xqe}.b) with 
$\,\tau=0\,$ we obtain $\,\nabla\xi=\xf\rho$, and so 
$\,\nabla\nabla\xi=(d\xf+\xf\hh\phi)\otimes\rho\,$ (cf.\ (\ref{rec}.i)), while 
(\ref{nnx}) with $\,\tau=0\,$ yields $\,\nabla\nabla\xi=-\hs\xi\otimes\rho$. 
Hence $\,d\xf=-\hs\xi-\xf\hh\phi$.

For the vector field  $\,\ev\,$ characterized by 
$\,\xi=\rho\hs(\hn\ev,\,\cdot\,)$, we see that $\,\xi(\hn\ev)=0\,$ and 
$\,\phi(\hn\ev)=\rho\hs(\ew,\ev)=-\hs\xi(\ew)=-\hs4\hh\xf$. Since 
$\,d\xf=-\hs\xi-\xf\hh\phi$, (\ref{rec}.iii) now gives 
$\,d_\ev\xf=4\xf^2\nnh$, $\,d_\ew\xf=\hs-\hs4\xf$, as required in (\ref{bvw}). 
From the relation $\,\nabla\xi=\xf\rho\,$ and the Leib\-niz rule, 
$\,\rho\hs(\nsu\ev,\,\cdot\,)=\nsu\xi-\phi(\hn\eu)\hs\xi
=\xf\rho\hs(\hn\eu,\,\cdot\,)-\phi(\hn\eu)\rho\hs(\hn\ev,\,\cdot\,)\,$ 
for any vector field $\,\eu$, so that 
$\,\nsu\ev=\xf\hh\eu-\phi(\hn\eu)\hh\ev$. In particular, setting $\,\eu=\ev\,$ 
or $\,\eu=\ew$, we obtain the first two equalities in (\ref{nvv}).

As $\,\ki\hh\xi=0$, (\ref{xqe}.a) shows that $\,\nd^*\xi=0$. Thus, by 
(\ref{qef}.i), $\,0=\xi(\nd\ew)=\xi(4\ew+\nsww)=16\hh\xf+\xi(\nsww)\,$ and 
$\,0=\xi(\nd\ev)=\xi(4\ev+\nsvw-3\hh\xf\ew)=\xi(\nsvw)-12\hh\xf^2\nnh$. We 
used here the fact that $\,\xi(\hn\ev)=0$, which now also implies the last 
equality in (\ref{nvv}), for some function $\,\xh$, as well as the relation 
$\,\nabla_{\hskip-2.2pt\ev}\ew=\mu\hh\ev+3\xf\hh\ew\,$ for some function 
$\,\mu$. At the same time, by (\ref{rec}.iv) and (\ref{bwa}.b), 
$\,8\hh\xf=2\hs\xi(\ew)=2\rho\hs(\hn\ev,\ew)=(d\hh\phi)(\hn\eu,\ew)
=-\hs d_\ew[\hh\phi(\hn\ev)]
-\phi(\nabla_{\hskip-2.2pt\ev}\ew-\nabla_{\hskip-2.2pt\ew}\ev)
=4\hs d_\ew\xf-\mu\hh\phi(\hn\ev)=(-16+4\hh\mu)\hh\xf$, so that $\,\mu=6$. We 
thus have (\ref{nvv}), as well as (\ref{bvw}) except for the relation 
$\,d_\ev\xh=\hs4$. To establish it, we use (\ref{rer}.a) and (\ref{cur}), 
obtaining $\,4\hh\xf\ew=\rho\hs(\hn\ev,\ew)\hh\ew=R\hh(\hn\ev,\ew)\hh\ew
=4\hh\xf\ew+15\hh(4-d_\ev\xh)\hh\ev$, which completes the proof.
\end{proof}

\section{RSTS connections associated with a Lo\-rentz\-i\-an 
3\hs-space}\label{lore}
\setcounter{equation}{0}
Let $\,\plane\,$ be a \hbox{two\hh-}\hskip0ptdi\-men\-sion\-al real vector 
space with a fixed area form $\,\varOmega\,$ (cf.\ Example~\ref{slinv}). The 
uni\-mod\-u\-lar group $\,\mathrm{SL}\hs(\plane)\,$ acts, by conjugation, on 
the \hbox{three\hh-}\hskip0ptdi\-men\-sion\-al vector space $\,\spac\,$ of all 
trace\-less en\-do\-mor\-phisms of $\,\plane$, and its action preserves the 
Lo\-rentz\-i\-an $\,(\mpp)\,$ inner product $\,\lr\,$ in $\,\spac\,$ 
characterized by $\,\lg A,A\rg=-\det\hs A\,$ for $\,A\in \spac\nh$. In other 
words, $\,\spac\,$ is the Lie algebra of $\,\mathrm{SL}\hs(\plane)$, the 
action amounts to the adjoint representation, and $\,\lr\,$ is, up to a 
factor, the Kil\-ling form of $\,\mathrm{SL}\hs(\plane)$. The action of 
$\,\mathrm{SL}\hs(\plane)\,$ is not effective: its kernel is the center 
$\,\bbZ_2\nh=\{\mathrm{Id},-\hs\mathrm{Id}\}\,$ of $\,\mathrm{SL}\hs(\plane)$, 
and $\,\mathrm{SL}\hs(\plane)/\bbZ_2$ acting on $\,\spac\,$ is nothing else 
than the identity component $\,\mathrm{SO}^\uparrow(\spac)\,$ of the Lo\-rentz 
group of $\,(\spac\nh,\lr)$.

The $\,\mathrm{SL}\hs(\plane)$-equi\-var\-i\-ant quadratic mapping 
$\,\Phi:\plane\to\spac\,$ defined by 
$\,\Phi(y)=\varOmega(y,\,\cdot\,)\otimes y\,$ sends 
$\,\plane\smallsetminus\{0\}\,$ onto the {\it future null cone\/} $\,\bs\,$ 
in $\,\spac\nh$, which is a specific connected component $\,\bs\,$ of the set 
of nonzero $\,\lr$-null vectors.
\begin{proposition}\label{rstsc}Under the above hypotheses, the future null 
cone\/ $\,\bs\,$ in\/ $\,\spac\,$ admits a 
\hbox{one\hh-}\hskip0ptpa\-ram\-e\-ter family of\/ 
tor\-sion\-free connections invariant under the transitive action of\/ 
$\,\mathrm{SO}^\uparrow(\spac)\,$ on\/ $\,\bs\,$ and having nonzero, 
skew-sym\-met\-ric Ric\-ci tensor. All these connections represent the 
point\/ $\,(\ea,\eb)=(1,0)\,$ of the moduli curve in 
Theorem\/~{\rm\ref{modul}(i)}.
\end{proposition}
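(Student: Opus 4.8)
The plan is to transport the $\,\mathrm{SL}\hs(\plane)$-in\-var\-i\-ant connections of Example~\ref{slinv} from $\,\plane\smallsetminus\{0\}\,$ down to $\,\bs\,$ along the quadratic mapping $\,\Phi$. First I would note that, for $\,y\ne0$, the endomorphism $\,\Phi(y)\,$ is rank-one and nilpotent with image $\,\bbR y$, so that $\,\Phi(y)=\Phi(y\hh')\,$ forces $\,\bbR y=\bbR y\hh'\nnh$, hence $\,y\hh'\nh=t\hh y\,$ with $\,t\in\bbR\smallsetminus\{0\}$, and then $\,\Phi(y\hh')=t^2\hh\Phi(y)\,$ gives $\,t=\pm1$; a brief computation of its differential shows $\,\Phi\,$ is an immersion, hence, $\,\plane\,$ and $\,\bs\,$ both being two-dimensional, a local dif\-feo\-mor\-phism, so $\,\Phi\,$ is a twofold covering projection onto $\,\bs\,$ whose nontrivial deck transformation is $\,y\mapsto-\hs y$.

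Next I would fix one of the connections $\,\nabla\hn\,$ of Example~\ref{slinv}, depending on the parameter $\,c\in\bbR\smallsetminus\{0\}$, restricted to $\,\plane\smallsetminus\{0\}$. Since $\,-\hh\mathrm{Id}\in\mathrm{SL}\hs(\plane)\,$ has determinant $\,1$, the connection $\,\nabla\hn\,$ is invariant under $\,y\mapsto-\hs y\,$ and hence descends to a tor\-sion\-free connection $\,\mathrm{D}\,$ on $\,\bs\,$ with $\,\Phi^*\mathrm{D}=\nabla\hn$. Equi\-var\-i\-ance of $\,\Phi\,$ then makes $\,\mathrm{D}\,$ invariant under the action of $\,\mathrm{SL}\hs(\plane)/\{\pm\hh\mathrm{Id}\}=\mathrm{SO}^\uparrow(\spac)\,$ on $\,\bs$, an action that is transitive because $\,\mathrm{SL}\hs(\plane)\,$ already acts transitively on $\,\plane\smallsetminus\{0\}\,$ and $\,\Phi\,$ is onto. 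Letting $\,c\,$ vary yields the required \hbox{one\hh-}\hskip0ptpa\-ram\-e\-ter family; and since $\,\Phi\,$ is a local dif\-feo\-mor\-phism and the Ric\-ci tensor is a pointwise invariant, each $\,\mathrm{D}\,$ inherits from $\,\nabla\hn\,$ a skew-sym\-met\-ric, nowhere-zero Ric\-ci tensor (recall $\,c\rho=6\hs\varOmega\,$ in Example~\ref{slinv}).

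Finally, for the moduli-curve identification I would argue locally on $\,\plane$: by the computation in Example~\ref{slinv}, on the dense open set $\,\plane\smallsetminus\bbR\ev\,$, with $\,\ew\,$ the radial field and $\,\eu=[\hh c\hh\varOmega(\ew,\ev)]^{-1}\ev$, the connection $\,\nabla\hn\,$ satisfies \hbox{(\ref{nuu})} for $\,(\ea,\eb)=(1,0)\,$ with $\,\eu,\ew\,$ linearly independent and $\,[\eu,\ew]=2\eu$, so that, by Remark~\ref{loclg}, $\,\nabla\hn\,$ is locally equivalent there to $\,\nabla(1,0)$; varying $\,\ev\,$ covers all of $\,\plane\smallsetminus\{0\}$, whence $\,\nabla\hn$, and therefore $\,\mathrm{D}\,$ via the local dif\-feo\-mor\-phism $\,\Phi$, represents the point $\,(\ea,\eb)=(1,0)\,$ of the moduli curve in Theorem~\ref{modul}(i). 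I expect the only slightly delicate point — the main, though still minor, obstacle — to be the descent step: checking that the $\,\mathrm{SL}\hs(\plane)$-in\-var\-i\-ant connection on $\,\plane\smallsetminus\{0\}\,$ is projectable to $\,\bs$, which, as noted, reduces to the observation that $\,-\hh\mathrm{Id}\in\mathrm{SL}\hs(\plane)$.
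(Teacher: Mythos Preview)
Your proposal is correct and follows essentially the same approach as the paper: both push the $\mathrm{SL}(\plane)$-invariant connections of Example~\ref{slinv} down to $\bs$ via the two-fold covering $\Phi$, using the fact that the deck transformation $-\mathrm{Id}$ lies in $\mathrm{SL}(\plane)$. The paper's proof is a two-sentence sketch that leaves all of the details you supply (injectivity up to sign, the descent, the moduli-curve identification via the computation in Example~\ref{slinv}) to the reader; your version simply makes these explicit.
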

\begin{proof}Since $\,\Phi:\plane\smallsetminus\{0\}\to\bs\,$ is a 
\hbox{two\hs-}\hskip0ptfold covering, we may choose the connections in 
question to be the $\,\Phi$-im\-ages of the 
$\,\mathrm{SL}\hs(\plane)$-in\-var\-i\-ant connections $\,\nabla\hn\,$ on 
$\,\plane\,$ described in Example~\ref{slinv}. (The deck transformation 
$\,-\hs\mathrm{Id}\in\mathrm{SL}\hs(\plane)\,$ leaves any such $\,\nabla\hn\,$ 
invariant.)
\end{proof}
As before, let $\,\bs\,$ be a future null cone in a $\,3$-space $\,\spac\,$ 
endowed with a Lo\-rentz\-i\-an $\,(\mpp)\,$ inner product $\,\lr$, and let 
$\,\yp\,$ be the sheet, adjacent to $\,\bs$, of the 
\hbox{two\hh-}\hskip0ptsheet\-ed hyperboloid formed by all $\,\lr$-unit 
time\-like vectors in $\,\spac\nnh$. We refer to $\,\yp\,$ as the {\it hyperbolic 
plane}, since $\,\lr\,$ induces on $\,\yp\,$ a Riemannian metric of constant 
curvature $\,-1$. Similarly, $\,\lr\,$ induces a Lo\-rentz\-i\-an 
$\,(\hbox{$-$\hskip1pt$+$})\,$ metric of constant curvature $\,1\,$ on the 
\hbox{one\hh-}\hskip0ptsheet\-ed hyperboloid $\,\ds\,$ of all $\,\lr$-unit 
space\-like vectors in $\,\spac\nnh$.

The unit tangent bundle $\,T^1\nnh\yp\,$ of $\,\yp\,$ may be 
identified with the submanifold of $\,\yp\times\ds\,$ consisting of all 
$\,\lr$-or\-thog\-o\-nal pairs $\,(p,q)\in\yp\times\ds$. The formula 
$\,F(p,q)=p+q\,$ defines a mapping $\,F:T^1\nnh\yp\to\bs\,$ which is a 
fibration, as the connected Lo\-rentz group $\,\mathrm{SO}^\uparrow(\spac)\,$ acts 
transitively on both $\,T^1\nnh\yp\,$ and $\,\bs$, while $\,F\,$ is obviously 
$\,\mathrm{SO}^\uparrow(\spac)$-equi\-var\-i\-ant. The fibres of $\,F\,$ are the 
leaves of the {\it horo\-cycle foliation\/} on $\,T^1\nnh\yp$, called so 
because they are easily verified to be the natural lifts to $\,T^1\nnh\yp\,$ 
of oriented horo\-cycles in $\,\yp$.
\begin{remark}\label{horoc}The horo\-cycle foliation descends from 
$\,T^1\nnh\yp\,$ to the unit tangent bundle $\,T^1\bs\,$ of any closed 
orientable surface $\,\bs\,$ of genus greater than $\,1\,$ endowed with a 
hyperbolic metric. This is due to its invariance under the action on 
$\,T^1\nnh\yp\,$ of the group $\,\mathrm{SO}^\uparrow(\spac)\,$ of all 
o\-ri\-en\-ta\-tion-pre\-serv\-ing isometries of the hyperbolic plane $\,\yp$. 
The invariance follows in turn from the 
$\,\mathrm{SO}^\uparrow(\spac)$-equi\-var\-i\-ance of $\,F\nh$, mentioned above.
\end{remark}
\begin{remark}\label{nlgeo}If $\,\line\,$ is a $\,\lr$-null 
\hbox{one\hh-}\hskip0ptdi\-men\-sion\-al subspace of our Lo\-rentz\-i\-an 
$\,3$-space $\,\spac\nh$, then $\,\lr\,$ restricted to the plane 
$\,\line\nnh^\perp$ is positive sem\-i\-def\-i\-nite and degenerate. Thus, the 
set $\,\line\nnh^\perp\nnh\cap\hs\ds\,$ of all $\,\lr$-unit space\-like 
vectors in $\,\line\nnh^\perp$ is the union of two parallel lines, cosets of 
$\,\line$, each of which is a null geodesic in the 
\hbox{one\hh-}\hskip0ptsheet\-ed hyperboloid $\,\ds\,$ with its submanifold 
metric. Consequently, as $\,\mathrm{SO}^\uparrow(\spac)\,$ acts on $\,\ds\,$ 
transitively, $\,\ds\,$ carries two foliations, the leaves of which are 
maximal null geodesics in $\,\ds\,$ and, simultaneously, straight lines in 
$\,\spac\nh$, in such a way that each leaf of one foliation is disjoint with 
(and, as a line, parallel to) exactly one leaf of the other foliation.
\end{remark}
\begin{proposition}\label{oshhp}For\/ $\,\spac\nh,\lr\,$ and the 
\hbox{one\hh-}\hskip0ptsheet\-ed hyperboloid\/ $\,\ds\subset \spac\,$ as 
above, let\/ $\,\line\nnh^\pm$ be the two parallel lines forming the set\/ 
$\,\line\nnh^\perp\nnh\cap\hs\ds$, where\/ $\,\line\,$ is a fixed\/ 
$\,\lr$-null \hbox{one\hh-}\hskip0ptdi\-men\-sion\-al subspace of\/ 
$\,\spac\nh$. Then the surface\/ $\,\ds'\nh=\ds\smallsetminus\line\nnh^-$ 
admits a tor\-sion\-free connection\/ $\,\nabla\hn\,$ invariant under the 
action of the  \hbox{two\hh-}\hskip0ptdi\-men\-sion\-al Lie group\/ 
$\,\hp=\{C\in\mathrm{SO}^\uparrow(\spac):C(\line)=\line\}\,$ and having 
eve\-ry\-\hbox{where\hskip1pt-}\hskip0ptnon\-zero, skew-sym\-met\-ric Ric\-ci 
tensor. Furthermore, the 
restriction of\/  $\,\nabla\hn\,$ to the open subset\/ 
$\,\ds'\nh\smallsetminus\line\nnh^+\nh=\ds\smallsetminus\line\nnh^\perp$ is 
locally homogeneous and represents the point\/ $\,(\ea,\eb)=(0,1)\,$ on the 
moduli curve of Theorem\/~{\rm\ref{modul}(i)}.
\end{proposition}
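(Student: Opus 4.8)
The plan is to construct $\nabla$ explicitly and arrange that over $\ds\setminus\line^\perp$ it reduces verbatim to the connection $\nabla(0,1)$ of Example~\ref{slsgp}. First I would fix a basis $e_0,e_1,e_2$ of $\spac$ with $e_0$ spanning $\line$ and $\lg e_0,e_0\rg=\lg e_2,e_2\rg=\lg e_0,e_1\rg=\lg e_1,e_2\rg=0$, $\lg e_0,e_2\rg=1$, $\lg e_1,e_1\rg=1$. Writing a point of $\spac$ as $se_0+te_1+re_2$, one has $\ds=\{t^2+2sr=1\}$ and $\line^\perp=\{r=0\}$, so the two lines forming $\line^\perp\cap\ds$ are $\{t=\pm1,\ r=0\}$; labelling so that $\line^-=\{t=-1,\ r=0\}$, we get $\ds'=\ds\setminus\line^-$ and $\ds'\setminus\line^+=\ds\setminus\line^\perp=\ds\cap\{r\ne0\}$. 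A short computation in this basis shows that $\hp$ consists, for $\lambda>0$ and $q\in\bbR$, of the linear maps sending $(e_0,e_1,e_2)$ to $(\lambda e_0,\ -\lambda q\,e_0+e_1,\ -\tfrac12\lambda q^2e_0+q\,e_1+\lambda^{-1}e_2)$; hence $\hp$ is connected and non-Abelian and acts on $\ds$ by $(s,t,r)\mapsto(\lambda s-\lambda qt-\tfrac12\lambda q^2r,\ t+qr,\ \lambda^{-1}r)$. Since $\hp$ preserves $\line$ it preserves $\line^\perp$ and hence --- being connected --- each of $\line^\pm$; thus $\hp$ acts on $\ds'$, with $\line^+$ a single one-dimensional orbit and the two components of $\ds\cap\{r\ne0\}$ free two-dimensional orbits.

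Over $\ds\cap\{r\ne0\}$ I would then define $\nabla$ by formula (\ref{nuu}) with $(\ea,\eb)=(0,1)$, taken with respect to the frame $\eu,\ew$ which in the ambient coordinates is
\[
\eu\,=\,-(t/r)\,\partial_s+\partial_t\,,\qquad\ew\,=\,-r^{-1}(t-1)^2\partial_s+2(t-1)\,\partial_t+2r\,\partial_r\,.
\]
A direct check shows $\eu,\ew$ are tangent to $\ds$, that $[\eu,\ew]=2\eu$, and that $\ew$ extends smoothly across $\line^+$ (using $(t-1)^2/r=-2s(t-1)/(t+1)$ on $\ds$) with $\ew=0$ there; moreover $\eu,\ew$ are $\hp$-in\-var\-i\-ant, being left-in\-var\-i\-ant along each orbit. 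By Example~\ref{nonab} the resulting $\nabla$ is torsion-free, has skew-sym\-met\-ric Ric\-ci tensor $\rho$ with $\rho(\eu,\ew)=6\ne0$, is $\hp$-in\-var\-i\-ant, and is exactly the connection $\nabla(0,1)$.

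The heart of the argument is to see that $\nabla$ extends smoothly across $\line^+$. For this I would introduce, on the neighborhood $\ds\cap\{t>0\}$ of $\line^+$, the coordinates $y^1=r$, $y^2=-2s/(t+1)$; the inverse is $t=1+y^1y^2$, $r=y^1$, $s=-y^2-\tfrac12y^1(y^2)^2$, so $\line^+=\{y^1=0\}$ in this chart. Substituting, one finds $\eu=(y^1)^{-1}\partial_{y^2}$ and $\ew=2y^1\partial_{y^1}$ --- \emph{precisely} the vector fields of Example~\ref{slsgp}. Hence in this chart $\nabla$ has the component functions $\vg^1_{12}=\vg^1_{21}=\vg^2_{22}=3y^1$ with all others $0$ --- those of Example~\ref{slsgp} --- which are $C^\infty$ on the whole chart. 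Since $\ds'$ is covered by $\ds\cap\{r\ne0\}$ and $\ds\cap\{t>0\}$, and the two descriptions of $\nabla$ agree on the overlap, $\nabla$ is a $C^\infty$ torsion-free connection on all of $\ds'$. It remains $\hp$-in\-var\-i\-ant (it is so on the dense open set $\ds\cap\{r\ne0\}$, and $\hp$ acts on $\ds'$, so $h^*\nabla=\nabla$ on $\ds'$ by continuity) and has skew-sym\-met\-ric Ric\-ci tensor, nonzero along $\line^+$ as well since in the chart $\rho_{12}=-3$; this gives the first assertion. Finally, $\nabla$ restricted to $\ds'\setminus\line^+=\ds\setminus\line^\perp$ is $\nabla(0,1)$, which is locally homogeneous by Example~\ref{nonab} and, by the bijection of Theorem~\ref{modul}, represents the point $(\ea,\eb)=(0,1)$ of the moduli curve --- the second assertion.

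The only real difficulty is organizational: computing $\hp$ and the left-in\-var\-i\-ant frame $\eu,\ew$ in closed form, and spotting the change of variables $(y^1,y^2)=(r,-2s/(t+1))$ that turns everything into the already-settled Example~\ref{slsgp}. Once that reduction is in hand there is no conceptual obstacle, and the argument parallels --- on the one-sheeted hyperboloid --- what Proposition~\ref{rstsc} (via Example~\ref{slinv}) does on the null cone.
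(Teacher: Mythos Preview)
Your argument is correct and complete; the verifications of tangency, $[\eu,\ew]=2\eu$, $\hp$-invariance, and the coordinate change all go through as you indicate. The overall architecture is the same as the paper's --- define $\nabla$ on $\ds\setminus\line^\perp$ via (\ref{nuu}) with $(\ea,\eb)=(0,1)$ for an $\hp$-invariant frame with $[\eu,\ew]=2\eu$, then argue the extension across $\line^+$ separately --- but the execution differs in a way worth noting. The paper builds the frame intrinsically, setting $\ev_\pm=y\pm q-\lg y,y\pm q\rg\lg y,p\rg^{-1}p$ for arbitrary $p\in\line\setminus\{0\}$, $q\in\line^+$, and then handles the extension by passing to a rescaled frame $X=\psi\,\ev_+$, $Z=\psi^{-1}\ev_-$ (with $\psi=\lg y,p\rg$) and computing the four covariant derivatives $\nabla_XX,\nabla_XZ,\nabla_ZX,\nabla_ZZ$ directly. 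You instead work in explicit ambient coordinates throughout and, for the extension, spot the change of variables $(y^1,y^2)=(r,-2s/(t+1))$ that turns the whole picture into Example~\ref{slsgp} verbatim. This is a genuine simplification: the smooth extension, the $\hp$-invariance across $\line^+$, and the nonvanishing of $\rho$ there are all inherited for free from that example, and you make explicit a link the paper only alludes to in the remark following the proof. The paper's route is more coordinate-free and self-contained; yours is more computational but explains \emph{why} the extension exists rather than merely checking it.
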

\begin{proof}Define vector fields $\,\ev_\pm^{\phantom i}$ on the open 
set $\,\spac\smallsetminus\line\nnh^\perp$ in $\,\spac\,$ by 
$\,\ev_\pm^{\phantom i}=y\pm q-\lg y,y\pm q\rg\lg y,p\rg^{-1}p$, where 
$\,y\,$ denotes the radial (identity) vector field on $\,\spac\,$ and 
$\,p,q\in\spac\,$ are constant vector fields with 
$\,p\in\line\smallsetminus\{0\}\,$ and $\,q\in\line\nnh^+\nnh$. Both 
$\,\ev_\pm^{\phantom i}$ are easily seen to remain unchanged when a different 
choice of $\,p\,$ or $\,q\,$ is made, so that they depend only on 
$\,\line\nnh^+\nnh$, which makes $\,\ev_+^{\phantom i}$ and 
$\,\ev_-^{\phantom i}$ invariant under the action of $\,\hp$. (As $\,\hp\,$ is 
connected, $\,C(\line\nnh^+)=\line\nnh^+$ for all $\,C\in\hp$.) Also, 
$\,\lg\ev_\pm^{\phantom i},y\rg=0$, 
$\,\lg\ev_\pm^{\phantom i},\ev_\pm^{\phantom i}\rg=1-\lg y,y\rg$, 
$\,\lg\ev_+^{\phantom i},\ev_-^{\phantom i}\rg=-1-\lg y,y\rg$, so that, at 
every point of the surface 
$\,\ds''\nnh=\ds\smallsetminus\line\nnh^\perp\nnh$, 
the vector fields $\,\ev_\pm^{\phantom i}$ are tangent to $\,\ds''\nnh$, 
null, and linearly independent. Therefore, the vector fields 
$\,\eu=3\hh(\hn\ev_-^{\phantom i}\nh-\ev_+^{\phantom i})\,$ and 
$\,\ew=2\hh\ev_-^{\phantom i}$ trivialize the tangent bundle of $\,\ds''\nnh$. 
As $\,[\eu,\ew]=2\eu$, the connection $\,\nabla\hn\,$ defined by (\ref{nuu}) 
with these $\,\eu,\ew\,$ and $\,(\ea,\eb)=(0,1)\,$ has all the required 
properties except for being defined just on $\,\ds''\nnh$, rather than 
everywhere in $\,\ds'\nnh$.

To show that $\,\nabla\hn\,$ has a $\,C^\infty$ extension to $\,\ds'\nnh$, 
let us note that the function $\,\psi=\lg y,p\rg\,$ and the vector field 
$\,X=\psi\hh\ev_+^{\phantom i}$ are of class $\,C^\infty$ on $\,\ds'\nnh$, 
and hence so is $\,Z=\psi^{-1}\ev_-^{\phantom i}$. (The last conclusion 
follows as 
$\,\lg X,Z\rg=\lg\ev_+^{\phantom i},\ev_-^{\phantom i}\rg=-2\,$ on 
$\,\ds''\nnh$, while $\,X,Z\,$ are both null, and $\,X\nh\ne0\,$ everywhere 
in $\,\ds'\nnh$.) Furthermore, $\,d_\ev\psi=\psi\,$ both for 
$\,\ev=\ev_+^{\phantom i}$ and $\,\ev=\ev_-^{\phantom i}$. Thus, 
$\,\nabla\hskip-2.3pt_X^{\phantom i}X=\psi^3Z-\psi X$, 
$\,\nabla\hskip-2.3pt_X^{\phantom i}Z=-\psi Z$, 
$\,\nabla\hskip-2.3pt_Z^{\phantom i}X=\psi Z$, 
$\,\nabla\hskip-2.3pt_Z^{\phantom i}Z=0$, and our assertion follows.
\end{proof}
In the above proof, the (skew-sym\-met\-ric) Ric\-ci tensor $\,\rho\,$ of 
$\,\nabla\hn\,$ is nonzero everywhere in $\,\ds'\nnh$, as 
$\,\rho\hs(X,Z)=\rho\hs(\ev_+^{\phantom i},\ev_-^{\phantom i})
=-\hh\rho\hs(\hn\eu,\ew)/6=-1\,$ by (\ref{ruw}.a). Also, 
$\,\ew=2\hh\ev_-^{\phantom i}=2\hh\psi Z\,$ vanishes on $\,\line\nnh^+$, since 
so does $\,\psi$. Thus, Proposition~\ref{oshhp} illustrates, just like 
Example~\ref{slsgp}, the necessity of the assumption that $\,\ea+\eb\ne1\,$ 
for conclusion (ii) in Proposition~\ref{wnonz}.

\section{Transversal RSTS connections}\label{trrs}
\setcounter{equation}{0}
We discuss here transversal RSTS connections having 
eve\-ry\-\hbox{where\hskip1pt-}\hskip0ptnon\-zero Ric\-ci tensor. Trans\-ver\-sal 
tor\-sion\-free connections which are {\it flat} were studied, in any 
co\-di\-men\-sion, by Wolak \cite{wolak}.

Suppose that $\,\mathcal{F}\,$ is a co\-di\-men\-sion $\,m\,$ foliation on a 
manifold $\,\ym\,$ and $\,\mathcal{V}\,$ is the co\-di\-men\-sion $\,m\,$ 
distribution tangent to $\,\mathcal{F}\nnh$. A local section of the quotient 
bundle $\,(\tm)/\hs\mathcal{V}\hs$ defined on a nonempty open set 
$\,\,U\subset\ym\,$ will be called 
$\,\mathcal{F}\nnh${\it-pro\-ject\-a\-ble\/} if it is the image, under the 
quotient projection $\,\tm\to(\tm)/\hs\mathcal{V}\nh$, of some 
$\,\mathcal{V}\nh$-pro\-ject\-a\-ble local vector field defined on $\,\,U\,$ 
(cf.\ Remark~\ref{liebr}). Following Molino \cite{molino}, by a {\it 
transversal connection\/} for $\,\mathcal{F}\,$ we mean any operation 
$\,\bna\,$ associating with every nonempty open set $\,\,U\subset\ym\,$ and 
every pair $\,\ev,\ev\hh'$ of $\,\mathcal{F}\nnh$-pro\-ject\-a\-ble local 
sections of $\,(\tm)/\hs\mathcal{V}\nh$, defined on $\,\,U\nh$, an 
$\,\mathcal{F}\nnh$-pro\-ject\-a\-ble local section 
$\,\bna_{\hskip-2pt\ev}\hn\ev\hh'$ of $\,(\tm)/\hs\mathcal{V}\nh$, defined on 
$\,\,U\nh$, in such a way that
\begin{enumerate}
  \def\theenumi{{\rm\roman{enumi}}}
\item[(i)] the dependence of $\,\bna_{\hskip-2pt\ev}\hn\ev\hh'$ on $\,\ev\,$ 
and $\,\ev\hh'$ is local, and, in particular, the operations $\,\bna\,$ 
corresponding to two intersecting open sets agree on their intersection,
\item[(ii)] if $\,\,U\,$ satisfies (\ref{bdl}) then, for some connection 
$\,\nabla\,$ on the base $\,\bs$, and any 
$\,\mathcal{F}\nnh$-pro\-ject\-a\-ble local sections $\,\ev,\ev\hh'$ of 
$\,(\tm)/\hs\mathcal{V}\nh$, defined on $\,\,U\nh$, the $\,\pi$-im\-age of 
$\,\bna_{\hskip-2pt\ev}\hn\ev\hh'$ is the vector field $\,\nsww\hh'$ on 
$\,\bs$, where $\,\ew,\ew\hh'$ stand for the $\,\pi$-im\-ages of $\,\ev\,$ and 
$\,\ev\hh'\nnh$.
\end{enumerate}
All local properties of connections on manifolds make sense for transversal 
connections. One can thus speak of co\-di\-men\-sion-two foliations on 
manifolds with transversal tor\-sion\-free connections, the Ric\-ci tensor of 
which is \hbox{skew\hh-}\hskip0ptsymmetric and nonzero at every point. From 
now on we refer to them as {\it transversal RSTS connections with 
eve\-ry\-\hbox{where\hskip1pt-}\hskip0ptnon\-zero Ric\-ci tensor}.
\begin{example}\label{trsts}Each of these cases leads to a 
transversal RSTS connection $\,\bna\,$ with 
eve\-ry\-\hbox{where\hskip1pt-}\hskip0ptnon\-zero Ric\-ci tensor for a 
co\-di\-men\-sion-two foliation $\,\mathcal{F}\,$ on an 
\hbox{$\,n$\hh-}\hskip0ptdi\-men\-sion\-al manifold.
\begin{enumerate}
  \def\theenumi{{\rm\alph{enumi}}}
\item[(a)] $n=4\,$ and $\,\mathcal{F}\,$ is the foliation tangent to the 
vertical distribution $\,\mathcal{V}\,$ of a type~III SDNE Walker manifold 
(immediate from Theorem~\ref{maith}).
\item[(b)] $n=2\,$ and $\,\mathcal{F}\,$ is the 
\hbox{$0$\hh-}\hskip0ptdi\-men\-sion\-al foliation on a surface $\,\bs\,$ with 
a fixed RSTS connection $\,\nabla\nh$, the Ric\-ci tensor of which is 
\hbox{skew\hh-}\hskip0ptsymmetric and nonzero everywhere (obvious).
\item[(c)] $n\ge3\,$ is arbitrary and $\,\mathcal{F}\,$ is the vertical 
foliation on the total space of a locally trivial bundle over a surface 
$\,\bs\,$ with $\,\nabla\hn\,$ as in (b) (obvious).
\item[(d)] $n=3\,$ and $\,\mathcal{F}\,$ is the horo\-cycle foliation on the 
unit tangent bundle $\,T^1\nnh\yp\,$ of the hyperbolic plane $\,\yp\,$ 
(immediate from Proposition~\ref{rstsc}, the lines preceding 
Remark~\ref{horoc}, and (c)).
\end{enumerate}
\end{example}
In cases (a), (b) and (c) above, {\it the underlying manifold cannot be 
compact}, as shown in Theorem~\ref{nocpl}(a) and 
\cite[the lines following Theorem~5.1]{derdzinski-08}. In (d), however, 
although $\,T^1\nnh\yp\,$ is noncompact, it has compact quotients to which 
$\,\mathcal{F}\,$ descends:
\begin{proposition}\label{genus}The horo\-cycle foliation on the unit tangent 
bundle $\,T^1\bs\,$ of any closed orientable surface\/ $\,\bs\,$ of genus 
greater than\/ $\,1$, for any hyperbolic metric on\/ $\,\bs$, admits a 
transversal RSTS connection with 
eve\-ry\-\hbox{where\hskip1pt-}\hskip0ptnon\-zero Ric\-ci tensor.
\end{proposition}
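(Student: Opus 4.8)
The plan is to manufacture the required transversal connection by descent from the unit tangent bundle $\,T^1\nnh\yp\,$ of the hyperbolic plane, starting from one of the connections supplied by Proposition~\ref{rstsc}. First I would write the given closed orientable hyperbolic surface as $\,\bs=\Gamma\backslash\yp$, where $\,\Gamma\,$ is the deck transformation group -- a discrete cocompact tor\-sion\-free subgroup of the group $\,\mathrm{SO}^\uparrow(\spac)\,$ of all o\-ri\-en\-ta\-tion-pre\-serv\-ing isometries of $\,\yp$; its differential action on $\,T^1\nnh\yp\,$ is free, so that $\,T^1\bs=\Gamma\backslash T^1\nnh\yp\,$ and the quotient map $\,T^1\nnh\yp\to T^1\bs\,$ is a covering of the closed $\,3$-man\-i\-fold $\,T^1\bs$. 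Next, let $\,\mathcal C\,$ denote the future null cone of Section~\ref{lore} (the manifold written $\,\bs\,$ there), and let $\,\nabla\,$ be one of the $\,\mathrm{SO}^\uparrow(\spac)$-in\-var\-i\-ant tor\-sion\-free connections on $\,\mathcal C\,$ with non\-zero, skew-sym\-met\-ric Ric\-ci tensor furnished by Proposition~\ref{rstsc}; transitivity of the action makes that Ric\-ci tensor nonzero everywhere. By the lines preceding Remark~\ref{horoc}, the $\,\mathrm{SO}^\uparrow(\spac)$-equi\-var\-i\-ant map $\,F\colon T^1\nnh\yp\to\mathcal C$, $\,F(p,q)=p+q$, is a fibration whose fibres are exactly the leaves of the horo\-cycle foliation $\,\mathcal F\,$ on $\,T^1\nnh\yp$, so that $\,\mathcal C\,$ serves as a global leaf space for $\,\mathcal F$; since each leaf is diffeomorphic to a line, hence contractible, condition~(\ref{bdl}) holds with $\,U=T^1\nnh\yp$.

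With this in hand I would pull $\,\nabla\,$ back along $\,F\,$ to a transversal connection $\,\bna\,$ for $\,\mathcal F\,$ on $\,T^1\nnh\yp$, in the sense of Section~\ref{trrs}: for $\,\mathcal F$-pro\-ject\-a\-ble local sections $\,\ev,\ev\hh'$ (as in Section~\ref{trrs}), with $\,F$-im\-ages $\,\ew,\ew\hh'$ on $\,\mathcal C$, one sets $\,\bna_{\hskip-2pt\ev}\hn\ev\hh'$ to be the unique $\,\mathcal F$-pro\-ject\-a\-ble local section whose $\,F$-im\-age is $\,\nabla_{\hskip-2pt\ew}\hn\ew\hh'$; axioms~(i)--(ii) of Section~\ref{trrs} then hold, with base connection $\,\nabla\,$ on $\,\mathcal C$. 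Because $\,\nabla\,$ is $\,\mathrm{SO}^\uparrow(\spac)$-in\-var\-i\-ant and $\,F\,$ equi\-var\-i\-ant, $\,\bna\,$ is $\,\mathrm{SO}^\uparrow(\spac)$-in\-var\-i\-ant; and its Ric\-ci tensor, being a transversal invariant that $\,F\,$ identifies with the Ric\-ci tensor of $\,\nabla\,$ (via the surjective submersion $\,F$), is skew-sym\-met\-ric and nonzero at every point of $\,T^1\nnh\yp$. Thus $\,\bna\,$ is already a transversal RSTS connection on $\,T^1\nnh\yp\,$ with eve\-ry\-\hbox{where\hskip1pt-}\hskip0ptnon\-zero Ric\-ci tensor.

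Finally, I would descend to the quotient. Since $\,\bna\,$ is in particular $\,\Gamma$-in\-var\-i\-ant, and by Remark~\ref{horoc} the foliation $\,\mathcal F\,$ descends to a foliation $\,\overline{\mathcal F}\,$ on $\,T^1\bs$, the locality axiom~(i) of Section~\ref{trrs} lets $\,\bna\,$ descend to a transversal connection for $\,\overline{\mathcal F}\,$ on $\,T^1\bs$: over any evenly covered open set its $\,F$-pull\-back is, by $\,\Gamma$-in\-var\-i\-ance, independent of the chosen lift, and the resulting local data patch. As skew-sym\-me\-try and nonvanishing of the transversal Ric\-ci tensor are local conditions, they survive the passage to $\,T^1\bs$, which is therefore a compact manifold carrying a co\-di\-men\-sion-two foliation with a transversal RSTS connection whose Ric\-ci tensor is nonzero everywhere -- exactly the assertion. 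The one step that genuinely demands verification, and hence the main obstacle, is this passage from $\,T^1\nnh\yp\,$ to $\,T^1\bs$: checking that the $\,F$-pull\-back really obeys the axioms of Section~\ref{trrs}, and that a group-invariant transversal connection on a covering descends to the base. Both are routine, but they are where the actual work lies.
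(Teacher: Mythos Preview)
Your proposal is correct and follows essentially the same route as the paper: the paper's proof is the one-line observation that the result is ``a direct consequence of Example~\ref{trsts}(d), Remark~\ref{horoc} and Proposition~\ref{rstsc},'' and you have simply unpacked that chain, pulling back the $\mathrm{SO}^\uparrow(\spac)$-invariant connection on the null cone via $F$ to a transversal connection on $T^1\nnh\yp$ and then descending by $\Gamma$-invariance. The only difference is that you supply the routine verifications (axioms (i)--(ii) of Section~\ref{trrs}, the descent argument) that the paper leaves implicit.
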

This is a direct consequence of Example~\ref{trsts}(d), Remark~\ref{horoc} and 
Proposition~\ref{rstsc}.
\begin{corollary}\label{hidim}Compact manifolds with co\-di\-men\-sion-two 
foliations that admit transversal RSTS connections having 
eve\-ry\-\hbox{where\hskip1pt-}\hskip0ptnon\-zero Ric\-ci tensor exist in all 
dimensions\/ $\,n\ge3$, but not in dimension\/ $\,2$.
\end{corollary}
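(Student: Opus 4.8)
The plan is to treat the corollary as two claims: the existence of compact manifolds with the stated kind of codimension-two foliation in every dimension $\,n\ge3$, and their nonexistence in dimension $\,2$. For existence, the case $\,n=3\,$ is precisely Proposition~\ref{genus}: the unit tangent bundle $\,T^1\bs\,$ of a closed orientable hyperbolic surface $\,\bs\,$ of genus greater than $\,1\,$ is a compact $\,3$-manifold carrying the horocycle foliation together with a transversal RSTS connection having everywhere nonzero Ricci tensor. For $\,n\ge4\,$ I would pass to products. Choose any closed manifold $\,\yn\,$ of dimension $\,n-3\,$ (for instance a torus), set $\,\hm=T^1\bs\times\yn$, and let $\,\widehat{\mathcal{F}}\,$ be the foliation on $\,\hm\,$ whose leaves are the products $\,L\times\yn\,$ over the leaves $\,L\,$ of the horocycle foliation $\,\mathcal{F}\,$ on $\,T^1\bs$. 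Then $\,\widehat{\mathcal{F}}\,$ again has codimension $\,2$, its tangent distribution is the direct sum of $\,\mathcal{V}\,$ and $\,T\yn$, and the normal bundle $\,(T\hm)/\widehat{\mathcal{V}}\,$ is canonically the pullback of the normal bundle of $\,\mathcal{F}\,$ under the first projection, with the $\,\widehat{\mathcal{F}}$-projectable local sections corresponding to the $\,\mathcal{F}$-projectable ones.

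The remaining step for existence is to check that the transversal connection of $\,\mathcal{F}\,$ descends to a transversal connection for $\,\widehat{\mathcal{F}}$. This is the only point where a verification is needed, and it is routine: a foliated chart of $\,\widehat{\mathcal{F}}\,$ has the form $\,U\times\yn\,$ with $\,U\,$ a foliated chart of $\,\mathcal{F}$, its local leaf space is the \emph{same} surface as the local leaf space of $\,U$, and one simply keeps the same torsion-free connection $\,\nabla\hn\,$ there. Conditions~(i) and~(ii) in the definition of a transversal connection are then immediate, and the associated Ricci tensor, which lives on the two-dimensional local leaf spaces, is unchanged, hence skew-symmetric and nonzero everywhere. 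Since $\,\hm\,$ is a product of closed manifolds it is compact, so every $\,n\ge4\,$ is realized; together with Proposition~\ref{genus} this gives the existence statement for all $\,n\ge3$. (Equivalently, this is Example~\ref{trsts}(d) combined with Remark~\ref{horoc}, padded by a compact factor.)

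For the nonexistence in dimension $\,2$, observe that a codimension-two foliation on a surface $\,\bs\,$ is the foliation by points, so its tangent distribution vanishes and, by condition~(ii) in the definition, a transversal connection is simply an ordinary connection on $\,\bs$. The assertion thus reduces to the statement that no \emph{closed} surface admits a torsion-free connection $\,\nabla\hn\,$ whose Ricci tensor $\,\rho\,$ is skew-symmetric and nonzero at every point. I would argue by contradiction. Being a nowhere-zero $\,2$-form, $\,\rho\,$ would be a volume form, so $\,\bs\,$ would be orientable and $\,\int_\bs\rho\ne0$. On the other hand, the recurrence $\,1$-form $\,\phi\,$ of $\,\nabla\hn\,$ is globally defined and, by~(\ref{rec}.iv), satisfies $\,d\hh\phi=2\rho$; hence $\,\rho=d\hh\phi/2\,$ is exact, and Stokes' theorem on the closed orientable surface $\,\bs\,$ forces $\,\int_\bs\rho=0$, a contradiction. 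This rules out dimension $\,2\,$ and completes the argument. The only substantive ingredient is Proposition~\ref{genus}, itself obtained earlier from the hyperbolic-plane construction in Proposition~\ref{rstsc}; the product trick and the Stokes obstruction are both elementary, so I do not anticipate a real obstacle — at most a minor bookkeeping check that foliated charts of the product are foliated charts of the factor.
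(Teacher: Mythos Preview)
Your argument is correct and matches the paper's approach: Proposition~\ref{genus} for $n=3$, then the Cartesian-product construction for higher dimensions, and the observation that in dimension~$2$ a transversal connection is just an ordinary RSTS connection on a surface. The only difference is that for the $n=2$ obstruction the paper simply cites \cite{derdzinski-08}, whereas you spell out the Stokes argument using~(\ref{rec}.iv); your version is self-contained and is exactly the natural proof.
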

In fact, if $\,n\ge3$, it suffices to combine Proposition~\ref{genus} with the 
obvious Car\-te\-sian-prod\-uct construction. For $\,n=2$, see the lines 
preceding Proposition~\ref{genus}.


\section{Type~III SDNE 
\hbox{non\hh-\hskip-1.5pt}\hskip0ptWalk\-er manifolds}\label{glpr}
\setcounter{equation}{0}
It is not known whether Theorem~\ref{nocpl} remains true without the 
assumption that $\,\gm\,$ is a Walker metric. This section presents 
a global condition unrelated to compactness, which, although 
satisfied by some type~III SDNE Walker manifolds, can never hold in 
the \hbox{non\hh-\hskip-1.5pt}\hskip0ptWalk\-er case.

Specifically, we say that a type~III SDNE manifold $\,(\ym,\gm)\,$ is 
{\it vertically complete\/} if every leaf of its vertical distribution 
$\,\mathcal{V}\,$ is complete as a manifold with the connection induced 
by the Le\-vi-Ci\-vi\-ta connection of $\,\gm$. Thus, geodesic completeness 
of $\,(\ym,\gm)\,$ implies its vertical completeness. Note that the leaves 
of $\,\mathcal{V}\,$ are totally geodesic (Section~\ref{vert}), and the 
connection induced on a leaf is always flat 
\cite[Lemma 5.2(i)]{derdzinski-09}.

All pairs $\,(\ym,\gm)=(\tab,\hs\gm\nnh^\nabla\nnh+\hs2\hh\pi^*\nnh\tau)\,$ 
described in the first part of Theorem~\ref{maith} are examples of 
vertically complete type~III SDNE {\it Walker\/} manifolds. In fact, 
by (\ref{gjk}), the connection induced on each leaf $\,\tayb\,$ is the 
standard flat connection on the vector space $\,\tayb$. On the other hand, 
no such examples are possible in the 
\hbox{non\hh-\hskip-1.5pt}\hskip0ptWalk\-er case:
\begin{theorem}\label{compl}Every vertically complete type\/ {\rm III} SDNE 
manifold has the Walker property.
\end{theorem}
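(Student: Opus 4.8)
The plan is to show that vertical completeness forces the fundamental tensor of $\mathcal{V}$ to vanish identically, so that $\mathcal{V}$ is parallel and $\gm$ is a Walker metric. The strategy combines the known structure theory of strictly non-Walker type~III SDNE manifolds (the construction of \cite[Theorem~22.1]{derdzinski-09}) with the local decomposition implied by the general-position result: away from the locus where the fundamental tensor vanishes, the metric is locally isometric to one of the strictly non-Walker universal models, and on those models one should be able to exhibit an explicit incomplete leaf of $\mathcal{V}$. First I would recall from Section~\ref{vert} that the vertical distribution $\mathcal{V}$ of any type~III SDNE manifold is integrable with totally geodesic, flat leaves, and that $\gm$ is a Walker metric precisely when the fundamental tensor (measuring the failure of $\mathcal{V}$ to be parallel) is zero; since $\mathcal{V}$ is canonically defined, this fundamental tensor is a genuine invariant, and I would argue that its vanishing set is closed, so it suffices to rule out the strictly non-Walker case, i.e.\ to show that an everywhere-nonzero fundamental tensor is incompatible with vertical completeness.

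Next I would invoke the universal-model description from \cite[Theorem~22.1]{derdzinski-09}: locally, at points in general position, every strictly non-Walker type~III SDNE metric arises from that construction. I would then examine the induced connection on the leaves of $\mathcal{V}$ in those coordinates. The leaves carry a flat connection, so completeness of a leaf means its development into the plane is surjective (the affine exponential map at one point is onto). The key computation — and I expect this to be the main obstacle — is to identify, in the coordinate form of the strictly non-Walker models, a one-parameter subgroup of the affine geodesic flow within a single leaf whose affine parameter runs only over a bounded or half-infinite interval; equivalently, to exhibit an incomplete geodesic of the induced flat leaf connection. This should follow because the non-vanishing of the fundamental tensor introduces, via the formulas for the Levi-Civita connection of the non-Walker models, a term that forces certain geodesics in a leaf to escape the coordinate patch in finite affine time, or to have a conformal factor along them that is not integrable.

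Once an incomplete leaf is produced on each local strictly non-Walker model, I would conclude: if $(\ym,\gm)$ were vertically complete but not Walker, pick a point where the fundamental tensor is nonzero (such points form a nonempty open set, by the closedness just noted and the assumption that $\gm$ is not Walker); on a neighborhood of it the metric is isometric to a strictly non-Walker model, so some leaf of $\mathcal{V}$ restricted to that neighborhood is incomplete; but an incomplete open arc of a geodesic in a leaf remains an incomplete geodesic of the maximal leaf through it, since the induced connection on the maximal leaf restricts to the one on the piece — contradicting vertical completeness. Hence the fundamental tensor vanishes everywhere and $\gm$ has the Walker property. I would be careful in the last step to note that a geodesic of the leaf connection that is inextensible in the small neighborhood need not be inextensible in the whole leaf; the resolution is that incompleteness is about the affine parameter interval, and a geodesic leaving every compact subset of a complete manifold in finite parameter time cannot exist, so it is enough that the model geodesic has bounded affine parameter, which the explicit computation should deliver.
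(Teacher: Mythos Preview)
Your proposal has a genuine gap in the local-to-global step, and your attempted resolution of it is circular. You want to exhibit, in the coordinate form of a strictly non-Walker universal model, a leaf geodesic with bounded affine parameter. But the universal-model identification from \cite[Theorem~22.1]{derdzinski-09} is only a \emph{local} isometry, valid on some neighborhood $U$. A geodesic that reaches the boundary of $U$ in finite affine time need not be incomplete in the full leaf: it may simply leave $U$ and continue. Your resolution --- ``it is enough that the model geodesic has bounded affine parameter'' --- begs the question, because in a complete leaf every maximal geodesic has parameter interval $\bbR$, and your coordinate computation only controls the segment inside $U$. To make this work you would need an invariant quantity, defined on the \emph{whole} leaf, that blows up along the geodesic; the coordinate picture alone cannot supply it. A second, smaller issue: the universal-model theorem applies only at points in general position, so even the local identification needs an extra genericity argument you have not supplied.

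The paper's proof sidesteps both problems by never passing to a local model. It uses the $1$-form $\alpha$ of \cite[Lemma~5.2]{derdzinski-09}, which is a global invariant of any type~III SDNE manifold. The key formula \cite[(8.1.b)]{derdzinski-09} says that the restriction $\xi$ of $\alpha$ to a leaf $\yn$ satisfies $\mathrm{D}\hs\xi=\xi\otimes\xi$ for the induced flat connection $\mathrm{D}$, and this holds on all of $\yn$. If $(\ym,\gm)$ is not Walker, one can choose $\yn$ so that $\xi$ is not identically zero; then along any maximal $\mathrm{D}$-geodesic $s\mapsto x(s)$ in the (assumed complete) leaf, the function $\mu(s)=\xi_{x(s)}(\dot x(s))$ satisfies $\mu'=\mu^2$ on all of $\bbR$, which is impossible once $\mu$ is somewhere nonzero. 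This is exactly the missing global invariant your approach lacks; your plan could be repaired by extracting such an invariant from the model computation, but at that point you would essentially be reproducing the paper's argument.
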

\begin{proof}Suppose that, on the contrary, $\,(\ym,\gm)\,$ is a 
vertically complete type~III \hbox{non\hh-\hskip-1.5pt}\hskip0ptWalk\-er SDNE 
manifold, and so the vertical distribution is not parallel (see 
Section~\ref{vert}). Let $\,\alpha\,$ and $\,\beta\,$ be the $\,1$-forms on 
$\,\ym$, defined in \cite[Lemma 5.2]{derdzinski-09}. Thus, according to 
\cite[Theorem 6.2(ii)]{derdzinski-09}, $\,\beta\ne0\,$ somewhere in 
$\,\ym$. Formulae (8.1.g) and (8.2.i) in \cite{derdzinski-09} now imply 
that, for some leaf $\,\yn\,$ of $\,\mathcal{V}\nh$, the $\,1$-form on 
$\,\xi\,$ on $\,\yn\,$ obtained by restricting $\,\alpha\,$ to $\,\yn\,$ is 
not identically zero, while \cite[formula (8.1.b)]{derdzinski-09} states that 
$\,d_\eu\hs[\hh\alpha(\hn\ev)]=\alpha(\hn\eu)\hs\alpha(\hn\ev)\,$ for any 
local sections $\,\eu,\ev$ of $\,\mathcal{V}\,$ parallel along 
$\,\mathcal{V}\nh$. (The three formulae are established in 
\cite[Theorem 8.4]{derdzinski-09}, without using the assumption that 
$\,\beta\ne0\,$ {\it everywhere}.) Denoting by $\,\mathrm{D}\,$ the complete 
flat connection induced on the leaf $\,\yn$, we thus have 
$\,\mathrm{D}\hs\xi=\xi\otimes\hs\xi$, and, choosing a geodesic 
$\,\bbR\ni s\mapsto x(s)\in\yn\,$ with the velocity vector field $\,v(s)\,$ 
such that $\,\mu(s)=\xi_{\hs x(s)}(v(s))\,$ is nonzero for some 
$\,s\in\bbR\hh$, we obtain $\,d\hh\mu/ds=\mu^2\nnh$. Hence $\,\mu\,$ cannot be 
defined everywhere in $\,\bbR\hh$. This contradiction completes the proof.
\end{proof}

\section*{Acknowledgements}
The author wishes to thank Ta\-de\-usz Ja\-nusz\-kie\-wicz for suggesting 
the unit tangent bundles of hyperbolic surfaces as a possibility in the 
construction leading to Proposition~\ref{genus}.

\end{document}